\documentclass[15pt]{amsart}
\usepackage{amsmath}
\usepackage{mathtools}
\usepackage{}
\usepackage{graphicx}
\usepackage[colorlinks=true, allcolors=blue]{hyperref}
\usepackage{amsfonts}
\usepackage{amsthm}
\usepackage{newlfont}
\usepackage{amscd}
\usepackage{amsgen}
\usepackage{amssymb}
\usepackage{mathrsfs}	
\usepackage{longtable}
\usepackage{listings}
\usepackage{extarrows}
\usepackage{tikz}
\usepackage{tikz-cd}
\usepackage{verbatim}
\numberwithin{equation}{section}
\usepackage[all]{xy}
\usepackage{color}
\usepackage{amssymb}
\usepackage[left=2cm,right=2cm]{geometry}
\usepackage{tikz-cd}
\usepackage{mathtools}
\usepackage{dynkin-diagrams}
\usetikzlibrary{matrix,shapes,arrows,decorations.pathmorphing}
\usepackage{calligra}
\usepackage{mathrsfs}
\usepackage{enumitem} 
\usepackage{tgpagella} 
\usepackage[parfill]{parskip} 
\usepackage{float}
\restylefloat{table}


\let\blb\mathbb
\def\CC{{\blb C}}

\def \PP{{\blb P}}

\def \ZZ{{\blb Z}}

\def \RR{{\blb R}}

\def \LL{{\blb L}}

\let\cal\mathcal

\def\Cc{{\cal C}}
\def\Dc{{\cal D}}
\def\Ec{{\cal E}}
\def\Fc{{\cal F}}
\def\Gc{{\cal G}}
\def\Hc{{\cal H}}

\def\Lc{{\cal L}}
\def\Mc{{\cal M}}
\def\Nc{{\cal N}}
\def\Oc{{\cal O}}
\def\Pc{{\cal P}}
\def\Qc{{\cal Q}}
\def\Rc{{\cal R}}
\def\Sc{{\cal S}}
\def\Tc{{\cal T}}
\def\Uc{{\cal U}}
\def\Vc{{\cal V}}
\def\Wc{{\cal W}}
\def\Xc{{\cal X}}

\def\Zc{{\cal Z}}

\def\Am{{\mathfrak A}}
\def\Bm{{\mathfrak B}}
\def\Cm{{\mathfrak C}}

\def\Fm{{\mathfrak F}}

\def\Mm{{\mathfrak M}}

\def\Vm{{\mathfrak V}}

\def\Xm{{\mathfrak X}}

\newtheorem{lemma}{Lemma}[section]
\newtheorem{proposition}[lemma]{Proposition}
\newtheorem{theorem}[lemma]{Theorem}
\newtheorem{corollary}[lemma]{Corollary}

\newtheorem{definition}[lemma]{Definition}
\newtheorem{conjecture}[lemma]{Conjecture}

\newtheorem{assumption}[lemma]{Assumption}

\theoremstyle{remark}

\newtheorem{remark}[lemma]{Remark}
\newtheorem{notation}[lemma]{Notation}

\newenvironment{customthm}[1]
  {\innercustomthm}
  {\endinnercustomthm}

\def\dbcoh{\Dc^b}
\def\wt{\widetilde}

\def\arw{\longrightarrow}
\def\Hom{\operatorname{Hom}}

\def\Ext{\operatorname{Ext}}
\def\rk{\operatorname{rk}}

\def\git{/\hspace{-3pt}/}

\DeclareMathOperator{\sHom}{\mathscr{H}\text{\kern -3pt {\calligra\large om}}\,}

\newcommand\quotient[2]{
        \mathchoice
            {
                \text{\raise1ex\hbox{$#1$}\Big/\lower1ex\hbox{$#2$}}%
            }
            {
                #1\,/\,#2
            }
            {
                #1\,/\,#2
            }
            {
                #1\,/\,#2
            }
    }

\newcommand{\adj}[4]{#1\negmedspace: #2\rightleftarrows #3:\negmedspace #4}


\makeatletter
\def\namedlabel#1#2{\begingroup
    #2%
    \def\@currentlabel{#2}%
    \phantomsection\label{#1}\endgroup
}
\makeatother


\title{Calabi--Yau fibrations, simple $K$-equivalence and mutations}
\author{Marco Rampazzo}
\address{Alma Mater Studiorum Università di Bologna\\ Dipartimento di Matematica \\ Piazza di Porta San Donato 5\\ 40126 Bologna.}
\email[M.~ Rampazzo]{marco.rampazzo3@unibo.it}
\begin{document}

\maketitle

\begin{abstract}
    A homogeneous roof is a rational homogeneous variety of Picard rank 2 and index $r$ equipped with two different $\PP^{r-1}$-bundle structures. We consider bundles of homogeneous roofs over a smooth projective variety, formulating a relative version of the duality of Calabi--Yau pairs associated to roofs of projective bundles. We discuss how derived equivalence of such pairs can lift to Calabi--Yau fibrations, extending a result of Bridgeland and Maciocia to higher-dimensional cases. We formulate an approach to prove that the $DK$-conjecture holds for a class of simple $K$-equivalent maps arising from bundles of roofs. As an example, we propose a pair of eight-dimensional Calabi--Yau varieties fibered in dual Calabi--Yau threefolds, related by a GLSM phase transition, and we prove derived equivalence with the methods above.
\end{abstract}

\section{Introduction}
\noindent Dualities of Calabi--Yau varieties have been a popular research subject in the last decades. In fact, from superstring theory and gauged linear sigma models to the many conjectures on their derived and birational geometry, Calabi--Yau varieties lie in the intersection of several diverse fields. In light of Bondal--Orlov's reconstruction theorem \cite{bondalorlovreconstruction}, they occupy a special place among algebraic varieties, namely it is possible to construct pairs of non isomorphic (or non birational) Calabi--Yau varieties which are derived equivalent. A first example has been given in terms of the Pfaffian--Grassmannian pair \cite{borisovcaldararupg}. This example has a clear link with the idea of a phase transition in a non abelian gauged linear sigma model \cite{addingtondonovansegal}. Constructions of this kind, in contrast with their abelian counterpart, are quite rare, and proving derived equivalence for these pairs has very often relied on ad hoc arguments. In fact, while some constructions like the Pfaffian-Grassmannian above and the intersection of two translates of $G(2, 5)$ \cite{ottemrennemo, borisovcaldararuperry} can be now explained by the homological projective duality and categorical joins programs \cite{kuznetsovhpd, kuznetsovperrypj}, there exists a class of conjecturally derived equivalent pairs of Calabi--Yau varieties \cite[Conjecture 2.6]{kr2} for which a general argument is missing. All these pairs share a common description in terms of the notion of \emph{roof}, which has been introduced by Kanemitsu in the context of $K$-equivalence to define special Fano manifolds which admit two projective bundle structures with the same Grothendieck line bundle \cite{kanemitsu}. Examples of Calabi--Yau pairs associated to roofs can be found in the works \cite{mukaiduality, imou, kuznetsovimou, kr, kr2}. Furthermore, while for constructions related to homological projective duality a link with the physics of gauged linear sigma models has been provided \cite{rennemosegal}, for the case of roofs a simple GLSM interpretation in terms of non abelian phase transition is missing, even if the underlying equivalence of matrix factorization categories \cite{kr2} suggests its existence.

In this paper we introduce the notion of \emph{roof bundles}, which are locally trivial fibrations on a smooth projective base $B$, with fibers isomorphic to a homogeneous roof (i.e. a roof which is a rational homogeneous variety). We prove that each roof bundle $\Zc$ has itself two projective bundle structures $p_i:\Zc\arw\Zc_i$ with the same Grothendieck line bundle $\Lc$ (defined up to twists by pullbacks of line bundles on the base). $\Lc$ restricts on each fiber over $B$ to the Grothendieck line bundle $L$ of both the projective bundle structures of the fiber. The main motivation for this construction arises in light of the $DK$-conjecture \cite{bondalorlovdk, kawamatadk}. In fact, let us consider a simple $K$-equivalence, i.e. a birational morphism $\mu:\Xc_1\dashrightarrow\Xc_2$ between smooth projective varieties resolved by blowups $g_i:\Xc_0\dashrightarrow\Xc_i$ with smooth centers such that $g_1^*\omega_{\Xc_1}\simeq g_2^*\omega{\Xc_2}$: Li proved that $g_1$ and $g_2$ have the same exceptional locus $\Zc$ \cite{li}, and Kanemitsu proved that $\Zc$ is isomorphic to a family of roofs \cite{kanemitsu}. If we assume $\Zc$ to be a roof bundle, we construct fully faithful embeddings of $\dbcoh(\Xc_1)$ and $\dbcoh(\Xc_2)$ in $\dbcoh(\Xc_0)$ and we find a derived equivalence between $\Xc_1$ and $\Xc_2$ for some classes of these birational pairs. This provides evidence for the $DK$-conjecture in the form of the following theorem:

\begin{theorem}\label{thm_intro_3}(Corollary \ref{cor_knowncasesKeq})
    Let $\mu:\Xc_1\dashrightarrow\Xc_2$ be a simple $K$-equivalence such that its exceptional locus $\Zc$ is a roof bundle of type $G/P$ over a smooth projective base $B$, where $G/P$ is a roof of type $A^M_n$, $A_n\times A_n$, $A^G_4$, $C_2$ or $G_2$ according to the list \cite[Section 5.2.1]{kanemitsu}. 
    Then $\Xc_1$ and $\Xc_2$ are derived equivalent.
\end{theorem}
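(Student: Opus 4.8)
Since the statement is a corollary, the plan is to derive it from the general construction of the paper and to spend the real effort on verifying, type by type, the hypothesis that upgrades the fully faithful embeddings into an equivalence. First I would recall the two fully faithful integral functors $g_1^*\colon\dbcoh(\Xc_1)\hookrightarrow\dbcoh(\Xc_0)$ and $g_2^*\colon\dbcoh(\Xc_2)\hookrightarrow\dbcoh(\Xc_0)$ produced by Orlov's blowup formula from the two resolutions $g_i\colon\Xc_0\arw\Xc_i$, whose common exceptional divisor is the roof bundle $\Zc$ carrying two projective bundle structures $p_i\colon\Zc\arw\Zc_i$ and a shared Grothendieck line bundle $\Lc$. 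The desired equivalence $\dbcoh(\Xc_1)\simeq\dbcoh(\Xc_2)$ is then equivalent to showing that the images of these two embeddings coincide inside $\dbcoh(\Xc_0)$, which I would approach by comparing the two semiorthogonal decompositions of $\dbcoh(\Xc_0)$ adapted to $g_1$ and $g_2$.

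The second step is to reduce this comparison to the roof bundle. The supplementary components of the two decompositions of $\dbcoh(\Xc_0)$ are built, via Orlov's formula, from the two projective bundle structures $p_i$ on $\Zc$; matching them reduces to mutating between the two semiorthogonal decompositions of $\dbcoh(\Zc)$ induced by $p_1$ and $p_2$, relative to the base $B$. Since $\Zc\arw B$ is locally trivial with fiber $G/P$, this relative mutation restricts on each fiber to the classical roof mutation exchanging the two $\PP^{r-1}$-bundle decompositions of $\dbcoh(G/P)$, which is also what induces the derived equivalence of the associated Calabi--Yau pair. For each roof of type $A^M_n$, $A_n\times A_n$, $A^G_4$, $C_2$ or $G_2$ this fiberwise mutation is an equivalence, as established for the individual homogeneous roofs in Kanemitsu's classification by \cite{mukaiduality, imou, kuznetsovimou, kr, kr2}; I would verify this case by case.

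The third step is to globalize. Using local triviality of $\Zc\arw B$, the fiberwise mutations assemble into a relative mutation functor on $\dbcoh(\Zc)$, and the simple $K$-equivalence condition $g_1^*\omega_{\Xc_1}\simeq g_2^*\omega_{\Xc_2}$ forces the intervening line-bundle twists to agree so that this relative functor identifies the two families of supplementary components inside $\dbcoh(\Xc_0)$. This is the point at which the higher-dimensional extension of Bridgeland--Maciocia enters: a relative functor that is an equivalence on every fiber and compatible with the relative Serre structure is a global equivalence, so fiberwise success yields $g_1^*\dbcoh(\Xc_1)=g_2^*\dbcoh(\Xc_2)$ and hence $\dbcoh(\Xc_1)\simeq\dbcoh(\Xc_2)$.

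The main obstacle is the passage between the second and third steps. The equivalences in the literature are proved for the absolute roofs, frequently by ad hoc arguments, so one must ensure that they all arise as restrictions of a single relative kernel on $\Zc\times_B\Zc$ (or on $\Xc_1\times_B\Xc_2$) that is flat over $B$ and supported in the correct locus, and that the relative mutation is compatible with the twists dictated by the $K$-equivalence hypothesis. Once this compatibility is in place, the derived equivalence of the individual roof pairs enters only as an input, and the remaining work is the standard verification that full faithfulness and essential surjectivity are inherited fiber by fiber.
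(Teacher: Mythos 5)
Your first two steps match the paper's skeleton (Lemma \ref{lem_sodforkequivalence} produces exactly the two decompositions $\dbcoh(\Xc_0) = \langle \wt g_1^*\dbcoh(\Xc_1), f_*[E_1],\dots, f_*[E_n]\rangle = \langle \wt g_2^*\dbcoh(\Xc_2), f_*[F_1],\dots, f_*[F_n]\rangle$ you describe, via Orlov's blowup formula and Samokhin's theorem), but your third step contains a genuine gap. The principle you invoke --- ``a relative functor that is an equivalence on every fiber and compatible with the relative Serre structure is a global equivalence'' --- is not a theorem in this generality, and it is not how the Bridgeland--Maciocia circle of ideas works (their arguments are specific to elliptic and $K3$/abelian fibrations on threefolds and rely on moduli-space kernels). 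You partially recognize this in your final paragraph, where you say one must produce a single relative kernel on $\Zc\times_B\Zc$, flat over $B$, restricting to the known fiberwise kernels --- but that compatibility is precisely the hard content of the statement, and neither your proposal nor the literature supplies such a kernel: the fiberwise equivalences for $A^G_4$, $C_2$, $G_2$ are proved by ad hoc sequences of mutations, not by kernels with any evident family structure. Treating the fiberwise equivalences as a black-box input, as your step two does, is therefore insufficient: you need information about \emph{how} each fiberwise equivalence is realized.

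The paper avoids kernels entirely by strengthening the fiberwise input. It requires that the derived equivalence of the Calabi--Yau pair on the fiber $G/P$ be realized by a sequence of mutations each of which satisfies Condition \ref{cond_mutationsroof}, i.e.\ the extra vanishing $\Ext^\bullet_{G/P}(E_1\otimes L, E_2)=0$ ($L$-semiorthogonality) beyond ordinary semiorthogonality; this is a cohomological condition on the roof itself, verified case by case in the Appendix for exactly the five types in the statement (this verification, summarized in Lemma \ref{lem_cohomologyconditions}, is the real content behind the list and does not follow formally from the cited equivalences). Then $L$-semiorthogonality on the fiber propagates to $\Lc$-semiorthogonality on the roof bundle by flatness and constancy of fiber cohomology (Lemma \ref{lem_Lsemiorthogonal}, via Grauert-type base change), and $\Lc$-semiorthogonality is precisely what makes each left mutation commute with the pushforward $f_*$ along the exceptional divisor (Lemma \ref{lem_Lsemiorthogonalcommuteskequivalence}, proved with the Koszul resolution of $\Zc\subset\Xc_0$ and the triangle for $f^*f_*$). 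The Serre-functor and twist steps are handled separately by Proposition \ref{prop_serrefunctorkequivalence} and Lemma \ref{lem_twistkequivalence}, using $\omega_{\Xc_0}|_\Zc\simeq\Lc^{\otimes(-r+1)}\otimes\pi^*T$ (Lemma \ref{lem_canonicalbundlekequivalence}) --- this is the rigorous version of your remark that the $K$-equivalence condition ``forces the twists to agree.'' The global equivalence is then obtained operation by operation in Theorem \ref{thm_derivedequivalencekequivalence}, with no fiberwise-to-global passage ever needed. To repair your proposal you would have to either construct the flat relative kernel you postulate (open, and harder than the theorem) or replace your black-box step two with the Condition \ref{cond_mutationsroof} verification.
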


Furthermore, we formulate a relative version of the Calabi--Yau duality arising from a roof: with the data of a hyperplane section of a roof bundle $\Zc$ over a base $B$, we define a pair of fibrations with Calabi--Yau general fibers which are pairwise connected by the aforementioned duality. To address the problem of derived equivalence we construct semiorthogonal decompositions for the hyperplane and develop an approach based on mutations of exceptional objects: in particular, we reduce the problem to finding a sequence of mutations in a suitable semiorthogonal decomposition of a hyperplane section of the associated roof (i.e. a general hyperplane section in the fiber of $\Zc\arw B$ over a sufficiently general point). In the following theorems let $G/P$ be a homogeneous roof, hence one of the roofs appearing in the list of Kanemitsu \cite[Section 5.2.1]{kanemitsu}, and denote by $L$ the Grothendieck line bundle of both the projective bundle structures of $G/P$ (which exists by Proposition \ref{prop_linebundle}).

\begin{theorem}\label{thm_intro_1}(Corollary \ref{cor_knowncases})
    Let $\Zc$ be a roof bundle of type $G/P$ on a smooth projective base $B$, where $G/P$ is a roof of type $A_k\times A_k$, $A^M_n$, $A^G_4$, $G_2$ or $C_2$ and let $S\in H^0(\Zc, \Lc)$ be general, where the line bundle $\Lc$ is defined according to Equation \ref{eq_Lc}. Suppose that $\Lc$ is basepoint-free and that the restriction map $H^0(\Zc, \Lc)\arw H^0(\pi^{-1}(b), L)$ is surjective for every $b\in B$. Then $X_1=Z(p_{1*}S)$ and $X_2=Z(p_{2*}S)$ are derived equivalent. Moreover, if $G/P$ is not of type $A_k\times A_k$, there are fibrations $f_i:X_i\arw B$ such that for the general $b\in B$ one has $f_i^{-1}(b)\simeq Y_i$, where $X_i = Z(p_{i*}S)$ and $S\in H^0(\Zc, \Lc)$ is a general section and $(Y_1, Y_2)$ is a Calabi--Yau pair associated to the roof $G/P$ in the sense of Definition \ref{def_CYpairs}.
\end{theorem}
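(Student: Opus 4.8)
The plan is to realize both $\dbcoh(X_1)$ and $\dbcoh(X_2)$ as the residual components of two semiorthogonal decompositions of a single triangulated category --- that of the hyperplane section $H=Z(S)\subset\Zc$ --- and then to match these components by a sequence of mutations. First I would fix the two projective bundle structures $p_i\colon\Zc\arw\Zc_i$ sharing the Grothendieck line bundle $\Lc$ (Proposition \ref{prop_linebundle}), so that $\Zc$ is the projectivization over $\Zc_i$ of a rank $r$ bundle with relative hyperplane class $\Lc$, and $\Fc_i:=p_{i*}\Lc$ is a rank $r$ bundle on $\Zc_i$. The pushforward $p_{i*}S$ is then a section of $\Fc_i$ with zero locus $X_i=Z(p_{i*}S)$; genericity of $S$ together with basepoint-freeness of $\Lc$ guarantees that $p_{i*}S$ is a regular section, so that $X_i$ has the expected codimension and $H$ is smooth.

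With this in place, the derived Cayley trick applied to the regular section $p_{i*}S$ of $\Fc_i$ yields, for $i=1,2$, a semiorthogonal decomposition
\[ \dbcoh(H)=\langle\dbcoh(X_i),\ p_i^*\dbcoh(\Zc_i)\otimes\Lc,\ \ldots,\ p_i^*\dbcoh(\Zc_i)\otimes\Lc^{\,r-1}\rangle, \]
in which $\dbcoh(X_i)$ is embedded via a Fourier--Mukai functor supported on the locus where the fibers of $p_i|_H$ jump, while the remaining $r-1$ components are pulled back from $\Zc_i$ and twisted by powers of the common line bundle. The problem is thereby reduced to producing an equivalence between the two subcategories $\dbcoh(X_1),\dbcoh(X_2)\subset\dbcoh(H)$, which I would obtain by mutating the first decomposition into the second: the mutations act only on the bundle parts $p_i^*\dbcoh(\Zc_i)\otimes\Lc^{\,j}$, whose behavior is controlled by the two Orlov decompositions of $\dbcoh(\Zc)$ and the mutation functors relating them.

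The key reduction is that such a mutation sequence exists over the whole of $B$ as soon as it exists on one general fiber, i.e. on a general hyperplane section of the roof $G/P$ itself. Indeed, the objects involved are restrictions of objects defined over $B$, the governing $\Ext$-groups are computed fiberwise, and the surjectivity of $H^0(\Zc,\Lc)\arw H^0(\pi^{-1}(b),L)$ ensures that $S$ restricts to a general section on each fiber, so that every fiber of $H\arw B$ is a smooth hyperplane section of the roof carrying the same semiorthogonal structure. I would then check, case by case using Kanemitsu's list \cite[Section 5.2.1]{kanemitsu} and the known derived geometry of the roofs of types $A_k\times A_k$, $A^M_n$, $A^G_4$, $G_2$ and $C_2$, that the required mutation sequence relating the two Cayley-trick decompositions of the fiber does exist; this is exactly the assertion that the associated Calabi--Yau pair $(Y_1,Y_2)$ is derived equivalent in each of these cases. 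Transporting the sequence relatively over $B$ then gives $\dbcoh(X_1)\simeq\dbcoh(X_2)$.

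For the final assertion the fibrations $f_i\colon X_i\arw B$ are the restrictions to $X_i\subset\Zc_i$ of the structure maps $\Zc_i\arw B$ induced by $\pi\colon\Zc\arw B$. For general $b\in B$ the surjectivity hypothesis shows that $p_{i*}S$ restricts to a general section of $\Fc_i|_{(\Zc_i)_b}$, so that $f_i^{-1}(b)=Z(p_{i*}S|_{(\Zc_i)_b})$ is the general Calabi--Yau member $Y_i$ of the pair attached to $G/P$ in the sense of Definition \ref{def_CYpairs}, the type $A_k\times A_k$ being excluded precisely because its associated pair does not produce such a fibration. The step requiring the most care --- and the main obstacle --- is the relativization of the previous paragraph: one must verify that the fiberwise mutation functors glue to genuine functors on $\dbcoh(H)$ over $B$, equivalently that the two semiorthogonal decompositions above are compatible with arbitrary base change and that no special fiber of $\pi$ destroys the $\Ext$-vanishing underlying the mutations. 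This is exactly where the basepoint-freeness and surjectivity hypotheses enter.
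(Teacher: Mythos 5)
Your overall strategy coincides with the paper's: Cayley-trick semiorthogonal decompositions of the hyperplane section with residual components $\dbcoh(X_1)$, $\dbcoh(X_2)$, relativization of a fiberwise mutation sequence over $B$, a case-by-case appeal to the known mutation proofs for the listed roof types, and Lemma \ref{lem_CYfibered} for the fibration statement (your last paragraph, including the exclusion of $A_k\times A_k$, is essentially the paper's argument). However, there is a genuine gap in your central reduction, namely the claim that ``such a mutation sequence exists over the whole of $B$ as soon as it exists on one general fiber,'' with the case-by-case check reduced to the bare assertion that the pair $(Y_1,Y_2)$ is derived equivalent. That is false for an arbitrary mutation sequence: the mutations are performed in $\dbcoh(M)$ on objects $l^*E$ restricted from $G/P$, and a left mutation $\LL_{l^*E}\,l^*F$ computed in $\dbcoh(M)$ need not agree with the restriction of $\LL_E F$ computed on $G/P$, nor need its relative counterpart commute with $\iota^*:\dbcoh(\Zc)\arw\dbcoh(\Mc)$. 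The paper isolates the precise sufficient condition, $L$-semiorthogonality $\Ext^\bullet_{G/P}(E\otimes L, F)=0$ (Condition \ref{cond_mutationsroof}), which via the Koszul resolution of $\Mc\subset\Zc$ makes the relevant counit comparison an isomorphism (Lemmas \ref{lem_Lsemiorthogonalcommutes}, \ref{lem_Lsemiorthogonal}, \ref{lem_inducedmutationsinM}). This vanishing is strictly stronger than the semiorthogonality $\Ext^\bullet_{G/P}(F,E)=0$ that defines the mutation, it does not follow from derived equivalence of the pair, and it is verified mutation by mutation via Borel--Weil--Bott in the Appendix (Lemma \ref{lem_cohomologyconditions}); moreover the sequence must decompose into the restricted operations \ref{itm:op_o1}--\ref{itm:op_o3} (pair mutations, overall $L$-twists, Serre-functor rotations, and mutations of the Calabi--Yau block through a single exceptional object), as in Assumptions \ref{itm:ass_a2} and \ref{itm:ass_a3}.

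You also misattribute the role of the hypotheses: you place basepoint-freeness and surjectivity of $H^0(\Zc,\Lc)\arw H^0(\pi^{-1}(b),L)$ at the heart of the categorical gluing and worry that ``special fibers of $\pi$'' could destroy the $\Ext$-vanishing. In the paper these hypotheses (Assumption \ref{itm:ass_a1}) play no role there: $\pi:\Zc\arw B$ is locally trivial with every fiber isomorphic to $G/P$, the bundles $\Fm(E)$ restrict to the same homogeneous bundle on every fiber, so the fiberwise cohomology dimensions are constant and cohomology-and-base-change yields $\pi_*R\Hc om_{\Zc}(\Ec,\Fc\otimes\Lc^\vee)=0$ globally (Lemma \ref{lem_Lsemiorthogonal}); there are no special fibers of $\pi$, and the possibly singular fibers of $\Mc\arw B$ never enter because all commutation statements are proven for the divisor $\Mc\subset\Zc$, not fiber by fiber on $\Mc$. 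Basepoint-freeness and surjectivity are used only for the geometric part --- smoothness of $Z(S)$ and of its fiberwise sections, and the identification of the general fibers of $f_i$ with the Calabi--Yau pair in Lemma \ref{lem_CYfibered}. Without Condition \ref{cond_mutationsroof} and the commutation lemmas, your proposal cannot complete its key step.
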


As an example in Section \ref{sec_cy8folds} we propose a pair of eight-dimensional Calabi--Yau fibrations over $\PP^5$ such that for every point in $\PP^5$ the fibers are non birational Calabi--Yau threefolds, extending a construction by Bridgeland and Maciocia \cite{bridgelandmaciocia} of derived equivalent elliptic and $K3$ fibrations to higher-dimensional examples.\\
\\
Finally, we give a gauged linear sigma model describing the fibered Calabi--Yau eightfolds introduced in Section \ref{sec_cy8folds} as geometric phases. The model is strictly related to the construction given in \cite{kr} and, as the latter, it admits a very simple description of the phase transition. We summarize all results about this pair of Calabi--Yau fibrations in the following theorem:

\begin{theorem}\label{thm_intro_4}(Theorem \ref{thm_main2_body})
There exists a pair of derived equivalent Calabi--Yau eightfolds $X_1$, $X_2$ of Picard number two, and fibrations $f_1:X_1\arw \PP^5$ and $f_2:X_1\arw \PP^5$ such that for the general $b\in \PP^5$, $Y_1:= f_1^{-1}(b)$ and $Y_2:= f_1^{-1}(b)$ are a pair of non birational, derived equivalent Calabi--Yau threefolds associated to the roof $F(2,3,5)$ (in the sense of Definition \ref{def_CYpairs}). Moreover, $X_1$ and $X_2$ are isomorphic to the vacuum manifolds of two phases of a non abelian gauged linear sigma model.
\end{theorem}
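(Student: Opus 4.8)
The plan is to realise $X_1$ and $X_2$ as a concrete instance of the roof-bundle construction above, taking the roof $F(2,3,5)$ (of type $A^G_4$ in Kanemitsu's list) fibred over the base $B=\PP^5$, and then to reverse-engineer a gauged linear sigma model whose geometric phases reproduce the two contractions. First I would fix a rank-$5$ vector bundle $E$ on $\PP^5$ and form the relative two-step flag bundle $\Zc=\mathrm{Fl}(2,3;E)\arw\PP^5$, whose fibres are copies of $F(2,3,5)$. The two forgetful maps to the relative Grassmannian bundles $\Zc_1=G(2;E)$ and $\Zc_2=G(3;E)$ give $\Zc$ two $\PP^2$-bundle structures $p_i\colon\Zc\arw\Zc_i$ which, by Proposition \ref{prop_linebundle}, share a common Grothendieck line bundle; hence $\Zc$ is a roof bundle of type $A^G_4$. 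The bundle $E$ and the twist entering Equation \ref{eq_Lc} have to be tuned so that the eightfolds are Calabi--Yau: writing $\omega_{X_i}$ by adjunction from $\omega_{\Zc_i}$ and $\det p_{i*}\Lc$, I would pin down $E$ (a suitable direct sum of line bundles whose first Chern class matches the anticanonical class $\Oc_{\PP^5}(6)$) so that $\omega_{X_i}\simeq\Oc_{X_i}$, where $X_i=Z(p_{i*}S)$ has dimension $5+6-3=8$.

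With this setup the derived equivalence is almost immediate: it is enough to verify the hypotheses of Theorem \ref{thm_intro_1}, namely that $\Lc$ is basepoint-free and that the restriction $H^0(\Zc,\Lc)\arw H^0(\pi^{-1}(b),L)$ is surjective for every $b\in\PP^5$. Both statements reduce to cohomology computations on the relative flag bundle, using that $\pi_*\Lc$ is globally generated and that $L$ has no higher cohomology on the rational homogeneous fibres. Granting these, Corollary \ref{cor_knowncases} yields at once the equivalence $\dbcoh(X_1)\simeq\dbcoh(X_2)$ together with the fibrations $f_i\colon X_i\arw\PP^5$ whose general fibre is the Calabi--Yau pair $(Y_1,Y_2)$ associated to $F(2,3,5)$ in the sense of Definition \ref{def_CYpairs}. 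The Picard number two statement I would obtain by a relative Lefschetz argument: $X_i$ acquires one divisor class pulled back from the base $\PP^5$ and one from the relative hyperplane class of $\Zc_i$, and no further classes appear on a general complete intersection of this codimension.

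That the fibres $Y_1,Y_2$ are non-birational is not new: it is a feature of the $F(2,3,5)$ roof already established in \cite{kr,kr2}, and I would simply import it, the threefolds being distinguished as non-isomorphic polarised Calabi--Yau varieties so that no birational identification of the families $f_1,f_2$ can restrict to one on a general pair of fibres. Finally, to exhibit the gauged linear sigma model I would take a gauge group combining a base factor $U(1)$ with six chiral fields, which produces $\PP^5$, and the non-abelian factor $U(2)\times U(3)$ with bifundamental and fundamental matter, which produces the relative flag $\mathrm{Fl}(2,3;E)$; the section $S$ enters as a gauge-invariant superpotential and the anomaly-free (Calabi--Yau) constraint fixes the remaining charges. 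The two signs of the Fayet--Iliopoulos parameter of the non-abelian factor select the two projective-bundle contractions $p_1$ and $p_2$, and hence the vacuum manifolds $X_1$ and $X_2$, in direct analogy with \cite{kr} and extending \cite{bridgelandmaciocia} to this higher-dimensional fibred setting.

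The main obstacle is twofold. On the geometric side it is the verification of basepoint-freeness together with the fibrewise surjectivity of the restriction map --- the precise inputs that unlock Theorem \ref{thm_intro_1} --- which is delicate because it must hold over every point of $B$ and is sensitive to the choice of $E$. On the physical side it is the matching of the two phases of the GLSM with the geometric eightfolds $X_1$ and $X_2$, which requires a careful analysis of the D-term and F-term equations and of the residual gauge symmetry in each phase.
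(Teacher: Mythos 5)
Your geometric skeleton coincides with the paper's actual proof of Theorem \ref{thm_main2_body}: specialize the $A^G_4$ roof-bundle construction to $B=\PP^5$, verify Assumption \ref{itm:ass_a1}, invoke Corollary \ref{cor_knowncases} for the derived equivalence and Lemma \ref{lem_CYfibered} for the fibrations, and import non-birationality of the general fibers from \cite[Theorem 5.7]{kr}. The one real divergence on this side is the choice of $E$. You propose a split bundle tuned by its first Chern class; the paper takes $E=T(-1)$, and that choice is doing serious work: it makes the whole relative picture rational homogeneous, $\Zc=\Fc l(2,3,T(-1))\simeq F(1,3,4,V_6)$, $\Zc_1\simeq F(1,3,V_6)$, $\Zc_2\simeq F(1,4,V_6)$, $\Lc=\Oc(1,1,1)$. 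Consequently the delicate input you flag --- surjectivity of $\rho_b$ for \emph{every} $b\in\PP^5$ --- is checked bare-hands (a section $S$ is an explicit tensor and $\rho_{[u]}$ is contraction with $u$, visibly surjective), basepoint-freeness follows from ampleness of $\Oc(1,1,1)$, and the Calabi--Yau and $h^{1,1}=2$ statements become Borel--Weil--Bott computations (Proposition \ref{prop_picardnumberCY}; your Sommese--Lefschetz restriction via \cite[Example 7.1.5]{lazarsfeld1} is the same tool the paper uses). Your reformulation of fiberwise surjectivity as global generation of $\pi_*\Lc$ is a correct criterion, but for a split $E$ you would still have to carry out all of these verifications, and the Calabi--Yau tuning is itself a nontrivial constraint: since $\rk(p_{i*}\Lc)=3$, adjunction pins down the twist $T$ in Equation \ref{eq_Lc} only when a divisibility condition in $\operatorname{Pic}(\PP^5)$ holds, so the normalization ``$c_1(E)$ matches $\Oc_{\PP^5}(6)$'' is not automatically the right one and must be checked.

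The genuine gap is the GLSM. The model of Section \ref{sec_glsm} is \emph{not} a quiver with gauge group $U(1)\times U(2)\times U(3)$ and an FI sign flip on the non-abelian factor, and your sketch faces two concrete problems. First, for a non-split $E$ such as $T(-1)$ there is no linear realization of ``fundamental matter valued in $E$''; the paper circumvents this precisely by presenting $F(1,3,4,V_6)$ as a quotient of the single matrix space $\Hom(\CC^{4},V_6)$ by a block-triangular (parabolic-type, not reductive-product) subgroup. Second, and more importantly, your assertion that the two FI signs select the two contractions $p_1$ and $p_2$ is exactly what needs proof, and the mechanism the paper uses is subtler than a flag-quiver wall crossing: the gauge theory has the asymmetric field content $V=\Hom(\CC,V_6)\oplus\Hom(\CC^3,V_6)\oplus(\CC^3)^\vee\ni(v,B,x)$, superpotential $w=x\cdot\hat s(v,B)$, and the one-parameter family of characters $\rho_\tau(g)=\lambda^{-\tau}\det h^{-\tau}$. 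The chamber $\tau>0$ yields $\operatorname{Tot}(\Pc^\vee(-1,-2))$ over $F(1,4,V_6)$ with critical locus $X_2$, while in the chamber $\tau<0$ the dual field $x$ trades places with a column of $B$: after gauge-fixing $x=(1,0,\dots,0)$, the F-term equations restricted to $\{B_r^{\,\,1}=0\}$ reproduce the pushforward section on the $G(2)$-side (Equations \ref{eq_section13} and \ref{eq_candidatequotientsection}), which is how the fiberwise duality $G(2,5)\simeq G(3,5)$ is implemented and how one lands on $X_1$. You correctly identify the D-term/F-term and residual-gauge analysis as the obstacle, but without this asymmetric field content and the explicit (semi)stability and critical-locus computations, the final clause of the theorem --- that $X_1$ and $X_2$ are the vacuum manifolds of two phases of one non-abelian GLSM --- remains unproven in your proposal.
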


\subsection{Organization of the paper}
In Section \ref{sec_construction} we recall some definitions about roofs and the associated Calabi--Yau pairs. Then we introduce roof bundles, fixing the notation which will be used in the reminder of this paper. Furthermore, in Section \ref{sec_cy8folds}, we discuss the main example of this construction: a pair of Calabi--Yau eightfolds fibered in Calabi--Yau threefolds over $\PP^5$. In Section \ref{sec_deriverdequivalenceCYfibrations} we review an approach for solving the problem of derived equivlence of a Calabi--Yau pair associated to a given roof, which led to the derived equivlences of \cite{kuznetsovimou, kr} (and, by means of a similar construction, to the ones of \cite{morimura, uedaflop}). Then we relativize the picture, providing a strategy for the problem of derived equivalence of Calabi--Yau fibrations, based on derived equivalence of the fibers. In Section \ref{sec_simplekequivalence} we establish a link between derived equivalence of a Calabi--Yau pair related to a given roof and derived equivalence of a class of simply $K$-equivalent pair of smooth projective varieties such that the exceptional locus of the associated blowups is a roof bundle whose fiber is isomorphic to the roof above. Finally, in Section \ref{sec_glsm}, we give a GLSM interpretation of the fibered duality of the Calabi--Yau eightfolds introduced in Section \ref{sec_cy8folds}.

\subsection*{Acknowledgments}
I would like to thank my PhD advisor Micha\l\ Kapustka for the constant support and encouragement throughout this work. I am also very grateful to Alexander Kuznetsov for reading early drafts of this paper and providing valuable feedback, which led to important corrections, and to Enrico Fatighenti and Riccardo Moschetti for clarifying discussions. This project has been supported by the PhD program of the University of Stavanger.

\section{Construction}\label{sec_construction}
\subsection{Notation and conventions}
    We shall work over the field of complex numbers. We call $\PP^{r-1}$ bundle the projectivization $\PP(E)$ of a rank $r$ vector bundle $E$ over a smooth projective base $B$, the corresponding projection $\PP(E)\arw B$ will be called projective bundle structure. More generally, a surjective morphism of smooth varieties with fibers isomorphic to $\PP^{r-1}$ will be denoted $\PP^{r-1}$-fibration.
    In the following, a Calabi--Yau variety is defined as an algebraic variety $X$ such that $\omega_X\simeq\Oc_X$ and $H^m(X, \Oc_X)=0$ for $0<m<\dim(X)$. We call Calabi--Yau fibration a surjective morphism $X\arw B$ such that the general fiber is a Calabi--Yau variety. Given a vector bundle $\Ec$ on a variety $X$ and a section $\sigma\in H^0(X, \Ec)$, by $Z(\sigma)\subset X$ we denote the zero locus of $\sigma$. Furthermore, given a vector space $V$ and $k\in\ZZ$, we call $V[-k]$ the complex of vector spaces which is identically zero in every \linebreak degree except for $k$, where it is equal to $V$. By $\dbcoh(X)$ we denote the bounded derived category of coherent sheaves on a variety $X$.
    
\subsection{A note on the Borel--Weil--Bott theorem}
    Throughout this work we will make extensive use of the Borel--Weil--Bott theorem to compute the cohomology of homogeneous, irreducible vector bundles over homogeneous varieties. See \cite{weyman} for a thorough introduction on the subject. The practical computations are combinatoric and they can be easily automatized, see for example the script \cite{pythonscript} which has been used in this paper.
    
\subsection{Homogeneous roofs}

    A Mukai pair \emph{\cite{mukaipairs}} is the data $(Z, E)$ of a Fano variety $Z$ and an ample vector bundle $E$ with $c_1(E) = c_1(Z)$. Among such objects, one distinguishes the class of simple Mukai pairs:
    
    \begin{definition}\label{def_simplemukaipair}\emph{\cite[Definition 0.1]{kanemitsu}}
        A \emph{simple Mukai pair} is a Mukai pair $(Z_1, E_1)$ such that $\PP(E_1)\simeq
        \PP(E_2)$ where $E_2$ is a vector bundle over a projective variety $Z_2$, satisfying $\rk(E_1)=\rk(E_2)$.
    \end{definition}

    \begin{definition}\label{def_roof}\emph{\cite[Definition 0.1]{kanemitsu}}
        A \emph{roof} of rank $r$, or roof of $\PP^{r-1}$-bundles, is a Fano variety $X$ which is isomorphic to the projectivization of a rank $r$ vector bundle $E$ over a Fano variety $Z$, where $(Z, E)$ is a simple Mukai pair.
    \end{definition}
    
    \begin{remark}
        Equivalently, a roof can be defined as a Fano variety of Picard rank 2 and index $r$ equipped with two different $\PP^{r-1}$-bundle structures \cite[page 2]{kanemitsu}.
    \end{remark}
    
    The following provides a useful characterization of roofs:
    
    \begin{proposition}\label{prop_linebundle}\cite[Proposition 1.5]{kanemitsu}
        Let $X$ be a smooth projective Fano variety of Picard number two. Assume that for $i\in\{1;2\}$ the extremal contraction $h_i: X \arw  Z_i$  is a smooth $\PP^{r - 1}$-fibration. Then the following conditions are equivalent:
        \begin{enumerate}
            \item[$\circ$] $X$ is a roof
            \item[$\circ$] The index of $X$ is $r$
            \item[$\circ$] There exists a line bundle $L$ on $X$ which restricts to $\Oc(1)$ on every fiber of $h_1$ and $h_2$.
        \end{enumerate}
    \end{proposition}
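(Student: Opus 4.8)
The plan is to link the three conditions through the anticanonical class of $X$, using crucially that $\rho(X)=2$. Since $-K_X$ restricts to $\omega_{\PP^{r-1}}^{-1}=\Oc(r)$ on every fibre of both $h_1$ and $h_2$, the index $\iota$ of $X$ always satisfies $\iota\mid r$, hence $\iota\le r$: if $-K_X=\iota A$ then $A$ restricts to $\Oc(d)$ on a fibre with $\iota d=r$, so $\iota\mid r$. For the equivalence of (2) and (3), if $\iota=r$ write $-K_X=r\mathcal H$; restricting to the fibres of $h_1$ and of $h_2$ shows $\mathcal H$ restricts to $\Oc(1)$ on both, so $L=\mathcal H$ yields (3). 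Conversely, given $L$ as in (3), the line bundle $-K_X\otimes L^{-r}$ restricts trivially to every fibre of $h_1$ and of $h_2$. As $\rho(X)=2$, the Mori cone $\overline{NE}(X)$ has exactly two extremal rays, generated by the fibre curves of $h_1$ and $h_2$, and these span $N_1(X)_{\RR}$; therefore $-K_X\otimes L^{-r}$ is numerically trivial, hence trivial because $X$ is Fano. Thus $-K_X=rL$, so $\iota\ge r$ and, with the bound above, $\iota=r$.

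Next I would prove (1)$\Rightarrow$(3). If $X$ is a roof, then $X=\PP(E_1)$ for a simple Mukai pair $(Z_1,E_1)$, and in the convention $h_{1*}\Oc_{\PP(E_1)}(1)=E_1$ the relative canonical bundle formula reads $\omega_{X/Z_1}\cong\Oc(-r)\otimes h_1^*\det E_1$. Combined with the Mukai condition $c_1(E_1)=c_1(Z_1)$ this gives $-K_X=r\xi_1$ with $\xi_1=\Oc_{\PP(E_1)}(1)$. Restricting to the fibres of $h_2$ then forces $\xi_1$ to restrict to $\Oc(1)$ there as well, so $L=\xi_1$ witnesses (3).

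The substantial implication is (3)$\Rightarrow$(1). The key point is that a smooth $\PP^{r-1}$-fibration carrying a line bundle that is fibrewise $\Oc(1)$ is an honest projectivization: such an $L$ trivializes the relative Brauer (Severi--Brauer) class of $h_i$, so with $E_i:=h_{i*}L$, which is locally free of rank $r$, the evaluation $h_i^*E_i\to L$ identifies $X\cong\PP(E_i)$ over $Z_i$ with $\Oc_{\PP(E_i)}(1)=L$. It remains to verify that $(Z_1,E_1)$ is a simple Mukai pair. Substituting $\omega_{X/Z_1}\cong\Oc(-r)\otimes h_1^*\det E_1$ into the identity $-K_X=rL$ (valid by (3)$\Rightarrow$(2)) and using that $h_1^*$ is injective on Picard groups yields $c_1(E_1)=c_1(Z_1)$, which is the Mukai condition. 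Ampleness of $E_1$ follows from ampleness of $\Oc_{\PP(E_1)}(1)=L$, itself a consequence of $-K_X=rL$ being ample. Moreover $\rho(Z_1)=1$ since $h_1$ has one-dimensional relative Picard group and $\rho(X)=2$, while $Z_1$ is smooth because $h_1$ is a smooth surjection from the smooth variety $X$; hence the class $-K_{Z_1}=\det E_1$ of the ample bundle $E_1$ is a positive multiple of the ample generator of $\operatorname{Pic}(Z_1)$, so $Z_1$ is Fano. Finally, the second structure $X\cong\PP(E_2)$ over $Z_2$ with $\rk E_2=r=\rk E_1$ shows that $(Z_1,E_1)$ is simple, so $X$ is a roof.

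The main obstacle I anticipate lies entirely in this last implication: one must correctly invoke the vanishing of the relative Brauer class to upgrade the abstract $\PP^{r-1}$-fibrations $h_i$ to genuine projective bundles $\PP(h_{i*}L)$, and then track the projectivization convention carefully so that the relative canonical bundle formula produces the Mukai condition $c_1(E_i)=c_1(Z_i)$ with the correct sign rather than its negative. The remaining inputs, namely descent of smoothness and of the Fano property to $Z_i$ and ampleness of $\det E_i$, are standard once the projective bundle description is in place.
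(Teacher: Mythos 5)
The paper does not prove this proposition itself---it quotes it from Kanemitsu's \cite[Proposition 1.5]{kanemitsu}---but your argument is correct and follows essentially the same standard route as the cited source: the relative Euler sequence plus $c_1(E_1)=c_1(Z_1)$ gives $-K_X=rL$ for $(1)\Rightarrow(3)$; restriction of $-K_X\otimes L^{-r}$ to fibres together with numerical triviality on a $\rho=2$ Fano (where the two fibre-line classes span $N_1(X)_\RR$) handles $(2)\Leftrightarrow(3)$; and $E_i:=h_{i*}L$ with the surjective evaluation $h_i^*E_i\arw L$ upgrades the fibrations to projectivizations for $(3)\Rightarrow(1)$, with the Mukai condition then forced by injectivity of $h_i^*$ on Picard groups. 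Your invocation of the relative Brauer class is harmless but dispensable, since the evaluation-map construction you write down already yields the isomorphism $X\simeq\PP(E_i)$ directly.
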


    All known examples of roofs, except for one, are rational homogeneous varieties. This motivates the following definition:
    
    \begin{definition}
        A \emph{homogeneous roof} is a roof which is isomorphic to a homogeneous variety $G/P$ of Picard number two, where $G$ is a semisimple Lie group and $P$ is a parabolic subgroup.
    \end{definition}
    
    By \cite[Proposition 2.6]{campanapeternellsurvey}, the only possible projective bundle structures are defined by the natural surjections to $G$-Grassmannians $G/P_1$ and $G/P_2$ associated to parabolic subgroups $P_1, P_2\subset P$. Thus, the data of a homogeneous roof defines the following diagram:
    
    \begin{equation}\label{eq_roof}
        \begin{tikzcd}[row sep = huge]
                & G/P\ar[swap]{dl}{h_1}\ar{dr}{h_2} & \\
                G/P_1 & & G/P_2
        \end{tikzcd}
    \end{equation}
    
    By \cite[Proposition 1.5]{kanemitsu}, there exists a (unique) $L$ which is the Grothendieck line bundle of both the projective bundle structures of a roof. In the case of homogeneous roofs, the ample line bundle $h_1^*\Oc(1)\otimes h_2^*\Oc(1)$ satisfies such requirements, hence we set $L:=h_1^*\Oc(1)\otimes h_2^*\Oc(1)$.\\
    \\
    A complete list of homogeneous roofs has been given in \cite[Section 5.2.1]{kanemitsu}. Let us summarize its content in Table \ref{tab_rooflist}. In the column ``type'' we refer to the nomenclature introduced by Kanemitsu, which will also be adopted in the reminder of this work. Given a simply connected and semisimple Lie group $G$, the expression $G/P^{n_1,\dots,n_k}$ denotes the variety associated to the Dynkin diagram of $G$ crossed in the nodes $n_1, \dots, n_k$, numbered from the left to the right and from the top to the bottom. This is a standard notation which is explained, for instance, in \cite[Section 2]{imouexceptional}.

    \begin{table}[H]
    \centering
    \small
    \begin{tabular}{ c|c|c|c|c|c}
         & $G$ & type& $G/P$ & $G/P_1$ & $G/P_2$\\ 
      \hline
         &  &   &   &   &   \\
         &  $SL(k+1)\times SL(k+1)$  & $A_k\times A_k$ & $\PP^k\times\PP^k$ & $\PP^k$ & $\PP^k$\\ 
         &  &   &   &   &   \\
         &    $SL(k+1)$  & $A_k^M$ & $F(1, k, k+1)$ & $\PP^{k}$ & $\PP^{k}$\\
         &  &   &   &   &   \\
         &    $SL(2k+1)$  & $A_{2k}^G$ & $F(k, k+1, 2k+1)$ & $G(k, 2k+1)$ & $G(k+1, 2k+1)$\\
         &  &   &   &   &   \\
         &    $Sp(3k-2)$ ($k$ even)   & $C_{3k/2-1}$& $IF(k-1, k, 3k-2)$ & $IG(k-1, 3k-2)$ & $IG(k, 3k-2)$\\
         &  &   &   &   &   \\
         &    $Spin(2k)$  & $D_k$ & $OG(k-1, 2k)$ & $OG(k, 2k)^+$ & $OG(k, 2k)^-$\\
         &  &   &   &   &   \\
         &    $F_4$ & $F_4$ &$F_4/P^{2,3}$ & $F_4/P^2$ & $F_4/P^3$\\
         &  &   &   &   &   \\
         &    $G_2$ & $G_2$ & $G_2/P^{1,2}$ & $G_2/P^1$ & $G_2/P^2$\\
         &  &   &   &   &   \\
    \end{tabular}
    \caption{Homogeneous roofs}\label{tab_rooflist}
    \end{table}
    
    One can associate to a roof a pair of Calabi--Yau varieties in the following way:
    \begin{definition}\emph{(cfr. \cite[Definition 2.5]{kr2})}\label{def_CYpairs}
        Let $X$ be a roof with projective bundle structures $h_i:X\arw Z_i$. We say $(Y_1,Y_2)$ is a \emph{Calabi--Yau pair} associated to the roof $X$ if $Y_i\simeq Z(h_{i*}\sigma)$ is a Calabi--Yau variety for $i\in\{1;2\}$, where $\sigma\in H^0(X, L)$ is a general section.
    \end{definition}
    
    \begin{remark}
        There is only one known example of roof which is not homogeneous, it is the projectivization of an Ottaviani bundle on a smooth quadric of dimension five. In fact, such variety has a second projective bundle structure on a smooth five-dimensional quadric \cite{kanemitsuottaviani}. The associated Calabi--Yau pair is a pair of $K3$ surfaces of degree 12 which has been studied in \cite{imouk3, kr2}
    \end{remark}
    
    The following lemma shows that when $X$ is a homogeneous roof the Calabi--Yau condition is always verified for pairs $(Y_1, Y_2)$ such as in Definition \ref{def_CYpairs}.
    \begin{lemma}\label{lem_CY}
        Let $G/P$ be a homogeneous roof with projective bundle structures $h_i:G/P\arw G/P_i$ and consider a general section $\sigma\in H^0(G/P, L)$. Call $ E_i:= h_{i*}L$. Then $Y_i=Z(h_{i*}\sigma)\subset G/P_i$ is either empty or a Calabi--Yau variety of codimension $r$.
    \end{lemma}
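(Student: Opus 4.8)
The plan is to present $Y_i$ as the zero locus of a section of the rank-$r$ bundle $E_i=h_{i*}L$ and then read off all the required properties from adjunction and Borel--Weil--Bott. First I would note that, since $L$ restricts to $\Oc(1)$ on each fibre $\PP^{r-1}$ of $h_i$, cohomology and base change give $R^{>0}h_{i*}L=0$, so $E_i$ is locally free of rank $r$ and the natural map $H^0(G/P,L)\arw H^0(G/P_i,E_i)$ is an isomorphism. Hence $s_i:=h_{i*}\sigma$ is a genuine section of $E_i$ and $Y_i=Z(s_i)\subset G/P_i$ has expected codimension $r$.

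For the triviality of the canonical bundle I would use the roof structure. Writing $h_i:\PP(E_i)=G/P\arw G/P_i$ with $\Oc_{\PP(E_i)}(1)=L$, the relative canonical bundle formula reads $\omega_{G/P}\simeq L^{-r}\otimes h_i^*(\omega_{G/P_i}\otimes\det E_i)$; since $G/P$ has index $r$ and $\omega_{G/P}\simeq L^{-r}$ (Proposition \ref{prop_linebundle}), injectivity of $h_i^*$ on Picard groups forces $\det E_i\simeq\omega_{G/P_i}^{-1}$, i.e. $(G/P_i,E_i)$ is a Mukai pair. Consequently, whenever $Y_i$ is smooth of codimension $r$, adjunction for the regular section $s_i$ gives $\omega_{Y_i}\simeq(\omega_{G/P_i}\otimes\det E_i)|_{Y_i}\simeq\Oc_{Y_i}$.

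To control smoothness I would invoke that $E_i$ is globally generated --- a standard fact for the homogeneous bundles at hand, which one can see either from Borel--Weil--Bott or from the description of $E_i$ as a (twisted) quotient bundle on the Grassmannian $G/P_i$, the fibres of $h_i$ being embedded linearly under $|L|$. Since $E_i$ is $G$-equivariant on the $G$-homogeneous variety $G/P_i$, the Kleiman--Bertini theorem then ensures that for general $\sigma$ (equivalently general $s_i$) the locus $Y_i$ is either empty or smooth of codimension exactly $r$; the empty case arises precisely when $r>\dim G/P_i$, as for $A_k\times A_k$.

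It remains to establish connectedness $H^0(Y_i,\Oc_{Y_i})\simeq\CC$ together with the intermediate vanishing $H^m(Y_i,\Oc_{Y_i})=0$ for $0<m<\dim Y_i$. Here I would resolve $\Oc_{Y_i}$ by the Koszul complex $K^\bullet=[\wedge^r E_i^\vee\arw\cdots\arw E_i^\vee\arw\Oc_{G/P_i}]$ and run the hypercohomology spectral sequence $E_1^{-p,q}=H^q(G/P_i,\wedge^p E_i^\vee)\Rightarrow \HH^{q-p}(K^\bullet)=H^{q-p}(Y_i,\Oc_{Y_i})$. Each $\wedge^p E_i^\vee$ is a homogeneous bundle, so its cohomology is computed by Borel--Weil--Bott, and the problem reduces to checking that the groups $H^q(G/P_i,\wedge^p E_i^\vee)$ vanish outside the two slots $(p,q)=(0,0)$ and $(p,q)=(r,\dim G/P_i)$; the latter contributes $H^{\dim G/P_i}(G/P_i,\det E_i^\vee)\simeq H^{\dim G/P_i}(G/P_i,\omega_{G/P_i})\simeq\CC$ and accounts for $H^{\dim Y_i}(Y_i,\Oc_{Y_i})$, consistent with $\omega_{Y_i}\simeq\Oc_{Y_i}$. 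Since the already-established triviality of $\omega_{Y_i}$ lets Serre duality recover the top degree from $H^0$, only the range $0<m<\dim Y_i$ genuinely needs the Bott computation, which I would carry out type by type over Table \ref{tab_rooflist} (this being exactly the cohomological check automated in \cite{pythonscript}). I expect this final, case-by-case Borel--Weil--Bott analysis --- rather than the formal adjunction --- to be the main obstacle, as it is where the individual combinatorics of each roof type really enters.
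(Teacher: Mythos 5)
Your proposal is correct in substance but departs from the paper's proof at both technical steps, most significantly at the cohomological one. For smoothness, the paper works upstairs: since $L$ is basepoint-free, a general $\sigma$ has $Z(\sigma)\subset G/P$ smooth of expected codimension, and \cite[Lemma 3.2]{debarrekuznetsovjacobians} transfers smoothness to $Y_i\subset G/P_i$; you argue directly downstairs via global generation of $E_i$ (correctly justified by the linear embedding of the $h_i$-fibres under $|L|$, so that $H^0(G/P,L)\twoheadrightarrow H^0(h_i^{-1}(x),\Oc(1))$), though what you need is the standard Bertini theorem for globally generated bundles rather than Kleiman--Bertini proper --- both routes work. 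The real divergence is the intermediate vanishing: where you set up the Koszul hypercohomology spectral sequence and defer to a case-by-case Borel--Weil--Bott verification over Table \ref{tab_rooflist} --- which you yourself flag as the main obstacle, and which your proposal leaves unexecuted --- the paper obtains everything uniformly from a single citation: $E_i$ is ample (this is built into the Mukai-pair structure of a roof, cf. Definition \ref{def_roof}, or follows because $L=\Oc_{\PP(E_i)}(1)$ is ample), so \cite[Example 7.1.5]{lazarsfeld1}, the Lefschetz-type theorem for ample vector bundles, gives that $H^q(G/P_i,\Omega^p_{G/P_i})\to H^q(Y_i,\Omega^p_{Y_i})$ is an isomorphism for $p+q<\dim Y_i$, and $H^\bullet(G/P_i,\Oc_{G/P_i})\simeq\CC[0]$ finishes the proof. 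Your spectral sequence would indeed close once the vanishings $H^q(G/P_i,\wedge^pE_i^\vee)=0$ are checked, but these are exactly the Le Potier-type vanishings packaged inside Lazarsfeld's theorem, so the type-by-type combinatorics is avoidable --- and the uniform statement also covers the types (e.g.\ $D_k$, $F_4$) where hand computations would be most painful. On the credit side, your derivation of the Mukai-pair property from $\omega_{G/P}\simeq L^{\otimes(-r)}$ and injectivity of $h_i^*$ on Picard groups is a nice self-contained replacement for the paper's appeal to the definition of a roof, and your identification of the empty case with $r>\dim G/P_i$ matches Remark \ref{rem_AnxAnemptyloci}.
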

    \begin{proof}
        Note that $L$ is basepoint-free. Therefore, by generality of $\sigma$, it follows that $Z(\sigma)$ is smooth of expected codimension: this allows to conclude by \cite[Lemma 3.2]{debarrekuznetsovjacobians} that also $Y_i$ is smooth of expected codimension. If $\dim(G/P)-\rk(E_i)\leq 0$ there is nothing more to prove. Otherwise, let us proceed in the following way: since $(G/P_i,  E_i)$ is a Mukai pair, by adjunction formula $Y_i$ has vanishing first Chern class. By \cite[Example 7.1.5]{lazarsfeld1}, since $ E_i$ is ample, the restriction map $H^q(G/P_i, \Omega^p_{G/P_i})\arw H^q(Y_i, \Omega^p_{Y_i})$ is an isomorphism for $p+q<\dim(Y_i)$, in particular $H^q(G/P_i, \Oc_{G/P_i})\simeq H^q(Y_i, \Oc_{Y_i})$ for $q<\dim(Y_i)$. But since $G/P_i$ is rational homogeneous one has $H^\bullet(G/P_i, \Oc_{G/P_i})\simeq\CC[0]$ and this concludes the proof.
    \end{proof}
    
    \begin{remark}\label{rem_AnxAnemptyloci}
        Observe that in Lemma \ref{lem_CY} the case $Y_i=\emptyset$ can appear only with roofs of type $A_k\times A_k$. For these roofs, the projective bundle structures are given by projectivizations of vector bundles of rank $k+1$ on $\PP^k$, hence the zero loci of pushforwards of a general section $\sigma\in H^0(\PP^k\times\PP^k, L)$ are empty.
    \end{remark}

    \subsection{Homogeneous roof bundles}
    
    While the problem of describing and classifying fibrations of roofs over a smooth projective variety has been addressed in \cite{kanemitsu, romanoetal}, we focus on a special class of such objects, which we call homogeneous roof bundles: they provide a natural relativization of homogeneous roofs, keeping many of the properties of the latter objects in a relative setting. To this purpose, we recall the following definition (see, for instance, the notes \cite[Section 3]{mitchellnotes}):
    
    \begin{definition}
        Given a linear reductive group $G$, a parabolic subgroup $P$, a vector space $V$ and a representation\linebreak $\Gamma: G\arw\operatorname{End}(V)$, we call \emph{balanced product} of $G$ and $V$ with respect to $P$ the space $G\times^P V:=(G\times V)/P$, where the $P$-action on $G\times P$ is given by:
        \begin{equation}
            \begin{tikzcd}[row sep = tiny, column sep = large, /tikz/column 1/.append style={anchor=base east} ,/tikz/column 2/.append style={anchor=base west}]
                P\times G\times V \ar{r} & G\times V \\
                (p, g, v)\ar[maps to]{r} &\left(g p^{-1}, \Gamma_p(v)\right).
            \end{tikzcd}
        \end{equation} 
    \end{definition}
    
    \begin{definition}\label{def_roofbundles}
        Let $G$ be a semisimple Lie group and $P$ a parabolic subgroup such that $G/P$ is a homogeneous roof. Let $\Vc$ be a principal $G$-bundle over a smooth projective variety $B$. We define a \emph{homogeneous roof bundle} over $B$ the variety $\Vc\times^G G/P$.
    \end{definition}
    Let us call $\pi:\Zc\arw B$ the map induced by the structure map $\Vc\arw B$. Then, for every $b\in B$ we have $\pi^{-1}(b)\simeq G/P$. In the same way, once we fix $\Zc_i:=\Vc\times^G G/P_i$, we define maps $r_i:\Zc_i\arw B$ with all fibers isomorphic to $G/P_i$.\\
    \\
Recall that, given a principal $H$-bundle $\Wc$ over a variety $X$, there is the following exact functor from the category of $H$-modules to the category of vector bundles over $X$ (see \cite[Section 2.2]{nori}, or the survey \cite[Page 8]{balajinotes}):
    \begin{equation}
        \begin{tikzcd}[column sep = huge]
            H-\operatorname{Mod}\ar{r}{\Wc\times^H (-)} &  \operatorname{Vect}(X)
        \end{tikzcd}
    \end{equation}
    which sends the $H$-module $R$ to the vector bundle $\Wc\times^H R$. Observe now that $\Vc\times^G G$ is a principal $P$-bundle over $\Zc$, because $\Vc\arw\Vc/P$ is a principal $P$-bundle and $\Vc/P\simeq \Vc\times^G G/P\simeq\Zc$ (see, for instance, \cite[Proposition 3.5]{mitchellnotes}). This allows to construct an exact functor:
    \begin{equation}
        \begin{tikzcd}[column sep = huge]
            P-\operatorname{Mod}\ar{r}{\Vc\times^G G\times^P (-)} &  \operatorname{Vect}(\Zc).
        \end{tikzcd}
    \end{equation}
    Moreover, it is well-known that there exists an equivalence of categories
    \begin{equation}
        \begin{tikzcd}[column sep = huge]
            P-\operatorname{Mod}\ar{r}{\Vm_{G,P}} & \operatorname{Vect}^P(G/P)
        \end{tikzcd}
    \end{equation}
    which sends a $P$-module $H$ to the $P$-homogeneous vector bundle $\Vm_{G,P}(H) = G\times^P H$. In particular, $\Vm_{G,P}^{-1}$ is an exact functor. Summing all up we can construct an exact functor $F$ sending homogeneous vector bundles over $G/P$ to vector bundles over $\Zc$:
    \begin{equation}\label{eq_relativizationfunctor}
        \begin{tikzcd}[column sep = huge, row sep = large]
            \operatorname{Vect}^P(G/P) \ar{rr}{\Fm:=\Vc\times^G G\times^P (-) \circ \Vm_{G,P}^{-1}}\ar[swap]{dr}{\Vm_{G,P}^{-1}} & & \operatorname{Vect}(\Zc)\\
            & P-\operatorname{Mod}\ar[swap]{ur}{\Vc\times^G G\times^P (-)} &
        \end{tikzcd}
    \end{equation}
    For each homogeneous roof of the list \cite[Section 5.2.1]{kanemitsu}, the vector bundles $E_1$ and $E_2$ such that $\PP(E_1)\simeq \PP(E_2)\simeq G/P$ are homogeneous: for $i=1,2$, they have the form
        \begin{equation}
            E_i = G\times^{P_i} V_{E_i}
        \end{equation}
    for given representation spaces $V_{E_i}$. This allows to define the following vector bundle on $\Zc_i$:
    \begin{equation}\label{eq_Ec_i}
        \Ec_i:= \Fm(E_i)
    \end{equation}
    Similarly, consider $L$ on $G/P$ as introduced in Proposition \ref{prop_linebundle}: for any line bundle $T$ on $B$, the line bundle
        \begin{equation}\label{eq_Lc}
            \Lc_T:= \Fm(L)\otimes\pi^* T.
        \end{equation}
    restricts to $L$ on every fiber $\pi^{-1}(B)$.
    \begin{lemma}\label{lem_roofbundlecommutativediagram}
        Let $G/P$ be a homogeneous roof with projective bundle structures $h_i:G/P\simeq \PP(E_i)\arw G/P_i$ for $i\in\{1;2\}$. Let $\Vc\arw B$ be a principal $G$-bundle over a smooth projective variety $B$. Then the following holds:
        \begin{enumerate}
            \item $\Zc:=\Vc\times^G  G/P$ has projective bundle structures $p_i:\Zc\simeq\PP(\Ec_i)\arw\Zc_i$. Moreover, $\Lc_\Oc = \Fm(L)$ restricts to $\Oc(1)$ on the fibers of both $p_1$ and $p_2$, and $p_{i*}\Lc_\Oc\simeq\Ec_i$
            \item There are smooth extremal contractions $r_i:\Zc_i\arw B$
            \item The following diagram is commutative:
         \begin{equation}\label{eq_smalldiagramfibrations}
            \begin{tikzcd}[row sep = large]
                & \Zc\ar[swap]{dl}{p_1}\ar{dr}{p_2} &  \\
                \Zc_1\ar[swap]{dr}{r_1} & & \Zc_2\ar{dl}{r_2} \\
                &B& 
            \end{tikzcd}
        \end{equation}
        \end{enumerate}
    \end{lemma}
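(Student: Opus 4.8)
The plan is to derive all three assertions from the single fact that $\Zc$, $\Zc_1$ and $\Zc_2$ are associated bundles built from the one principal $G$-bundle $\Vc$ via the exact functor $\Vc\times^G(-)$, so that each claim reduces fibrewise over $B$ to the corresponding statement for the homogeneous roof $G/P$, together with a check that the construction is compatible with the gluing data of $\Vc$. Concretely, I would fix an open cover $\{U_\alpha\}$ of $B$ trivialising $\Vc$; over $U_\alpha$ one has $\Zc|_{U_\alpha}\simeq U_\alpha\times G/P$ and $\Zc_i|_{U_\alpha}\simeq U_\alpha\times G/P_i$, and by construction $p_i$ restricts to $\mathrm{id}_{U_\alpha}\times h_i$ while $r_i$ restricts to the projection to $U_\alpha$. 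Since $h_i$, $L$ and $E_i=G\times^{P_i}V_{E_i}$ are all $G$-equivariant, the transition data of $\Vc$ (which act through $G$) preserve these structures, so the local descriptions glue. This already yields part (3): $r_i\circ p_i$ and $\pi$ both restrict to the projection $U_\alpha\times G/P\arw U_\alpha$, so diagram \ref{eq_smalldiagramfibrations} commutes; equivalently, (3) is just the image under $\Vc\times^G(-)$ of the tautological factorisation $G/P\xrightarrow{h_i}G/P_i\arw\mathrm{pt}$.

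For the projective bundle structure in part (1), the key point is that $\Vc\times^G(-)$ commutes with relative projectivisation. The isomorphism $\PP(E_i)\simeq G/P$ is $G$-equivariant over $G/P_i$; moreover, by associativity of balanced products, the total space $\Vc\times^G E_i$ is canonically the total space of $\Ec_i$, since $\Ec_i=\Fm(E_i)=\Vc\times^G G\times^{P_i}V_{E_i}$ (Equation \ref{eq_Ec_i}) and $\Vc\times^G(G\times^{P_i}V_{E_i})\simeq(\Vc\times^G G)\times^{P_i}V_{E_i}$. Applying $\Vc\times^G(-)$ to $\PP(E_i)\simeq G/P$ then gives $\PP(\Ec_i)\simeq\Vc\times^G\PP(E_i)\simeq\Vc\times^G G/P=\Zc$ over $\Zc_i$, which is the structure $p_i$. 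The line bundle $\Lc_\Oc=\Fm(L)$ is the relativisation of $L$, and since $L$ restricts to $\Oc(1)$ on every fibre of $h_i$ (Proposition \ref{prop_linebundle}) its relativisation restricts to $\Oc(1)$ on every fibre of $p_i$: each fibre of $p_i$ sits inside a fibre $\pi^{-1}(b)\simeq G/P$ as a fibre of $h_i$, and on $\pi^{-1}(b)$ one has $\Lc_\Oc|_{\pi^{-1}(b)}=L$ by the defining property of $\Lc_T$.

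The remaining assertion $p_{i*}\Lc_\Oc\simeq\Ec_i$ is where the only genuine subtlety lies, and I expect it to be the main obstacle. Because $\Lc_\Oc$ restricts to $\Oc(1)$ on the fibres of the $\PP^{r-1}$-bundle $p_i$, it is of the form $\Oc_{\PP(\Ec_i)}(1)\otimes p_i^*N$ for some line bundle $N$ on $\Zc_i$, so $p_{i*}\Lc_\Oc\simeq\Ec_i\otimes N$, and the problem is to rule out a nontrivial twist $N$ pulled back from $\Zc_i$. I would settle this by cohomology and base change: since $\Lc_\Oc$ is fibrewise $\Oc(1)$ one has $R^{>0}p_{i*}\Lc_\Oc=0$ and $p_{i*}\Lc_\Oc$ is locally free of rank $r$ and commutes with arbitrary base change, in particular with restriction to the charts $U_\alpha$ and to the fibres $r_i^{-1}(b)\simeq G/P_i$. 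On each chart the identity reduces to the roof statement $h_{i*}L\simeq E_i$, which holds since $L$ is the Grothendieck line bundle of $h_i$ (Proposition \ref{prop_linebundle}); as this isomorphism is $G$-equivariant, the local isomorphisms $p_{i*}\Lc_\Oc|_{U_\alpha}\simeq\Ec_i|_{U_\alpha}$ agree on overlaps and descend. Equivalently, restricting the putative twist $N$ to a fibre $r_i^{-1}(b)\simeq G/P_i$ of Picard number one forces it to be trivial there, and compatibility through $G$ upgrades fibrewise triviality to $N\simeq\Oc_{\Zc_i}$.

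Finally, for part (2), each $r_i:\Zc_i\arw B$ is a fibre bundle with fibre the rational homogeneous space $G/P_i$, hence smooth and proper with connected fibres. Since $P_i$ is a maximal parabolic, $G/P_i$ has Picard number one, and for a fibre bundle (where the ample generator of the fibre is $G$-invariant) this yields relative Picard number one for $\Zc_i/B$; the class of a line in a fibre thus spans the single ray of $\overline{NE}(\Zc_i/B)$, which is $K_{\Zc_i}$-negative because $G/P_i$ is Fano. Therefore $r_i$ is the smooth extremal contraction of this fibre-type ray, which completes the argument.
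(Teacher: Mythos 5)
Your proposal is correct, but it reaches part (1) by a genuinely different route than the paper. The paper's proof is shorter and more algebraic: it applies the exact relativization functor $\Fm$ to the tautological surjection $h_i^*E_i\arw L\arw 0$, obtaining $p_i^*\Ec_i\arw\Lc_\Oc\arw 0$, and then invokes \cite[Ch. II.7, Propositions 7.11 and 7.12]{hartshorne} to get both the isomorphism $\Zc\simeq\PP(\Ec_i)$ and the identity $p_{i*}\Lc_\Oc\simeq\Ec_i$ in one stroke --- in particular the surjection itself pins down $\Lc_\Oc$ as the Grothendieck line bundle of $\PP(\Ec_i)$, so the twist ambiguity you spend your third paragraph eliminating never arises. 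Your alternative, commuting the balanced product with projectivization via $\Vc\times^G(G\times^{P_i}V_{E_i})\simeq(\Vc\times^G G)\times^{P_i}V_{E_i}$ and relativizing the equivariant isomorphism $\PP(E_i)\simeq G/P$, is valid and arguably more transparent geometrically; but note that your cleanest exit from the twist problem is already contained in your second paragraph: the equivariant isomorphism $\PP(E_i)\simeq G/P$ carries $\Oc_{\PP(E_i)}(1)$ to $L$, so its relativization carries $\Oc_{\PP(\Ec_i)}(1)$ to $\Fm(L)=\Lc_\Oc$, making $N$ trivial by construction. As written, your descent argument has a fine point worth making explicit: fibrewise triviality of $N$ along $r_i$ only yields $N\simeq r_i^*N'$ for some $N'$ on $B$ (this is exactly the Fossum--Iversen sequence the paper uses elsewhere), and killing $N'$ requires that the local isomorphisms $p_{i*}\Lc_\Oc|_{U_\alpha}\simeq\Ec_i|_{U_\alpha}$ satisfy the cocycle condition on the nose; this holds because $E_i$ is an irreducible homogeneous bundle, hence simple by Schur's lemma, so the $G$-equivariant isomorphism $h_{i*}L\simeq E_i$ induced by the tautological surjection is canonical and invariant under the transition data. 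For part (2) the paper is more careful than you are: where you assert relative Picard number one for $\Zc_i/B$ from $G$-invariance of the ample generator, the paper proves $\operatorname{Pic}(\Zc_i)/\operatorname{Pic}(B)\simeq\ZZ$ via the exact sequence of \cite[Proposition 2.3]{fossumiversen} together with the exponential sequence and the vanishing of $H^1(G/P_i,\ZZ)$; your sketch needs precisely this to be complete. Part (3) is the same argument in both treatments, phrased functorially by you and pointwise in the paper.
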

    \begin{proof}
        To prove (1) consider the following surjection, which is a consequence of the fact that $G/P = \PP(E_i)$:
        \begin{equation}
            h_i^*E_i\arw L\arw 0.
        \end{equation}
        Applying $\Fm$ we get:
        
        \begin{equation}
            p_i^*\Ec_i\arw\Lc_\Oc\arw 0
        \end{equation}
        
        which is compatible with pullback by \cite[Proposition 3.6]{mitchellnotes}. This determines an isomorphism $\Zc\simeq\PP(\Ec_i)$ by \cite[Ch. II.7, Proposition 7.12]{hartshorne}, and thus $p_{i*}\Lc_\Oc\simeq\Ec$ by \cite[Ch. II.7, Proposition 7.11]{hartshorne}.\\
        \\
        Let us now prove (2). First, observe that $r_i$ is locally projective (hence proper) and every fiber is isomorphic to a Fano variety of Picard number one, which means that $-K_{\Zc_i}$ is $r_i$-ample because $-K_{\Zc_i}|_{r_i^{-1}(b)}$ is ample for every $b\in B$ \cite[Theorem 1.7.8]{lazarsfeld1}. This proves that $r_i$ is a Fano--Mori contraction \cite[Definition I.2.2]{occhettathesis}. To prove that $r_i$ is a contraction of extremal ray (or elementary Fano--Mori contraction) we just need to show that $\operatorname{Pic}(\Zc_i)/\operatorname{Pic}(B)\simeq\ZZ$ \cite[Definition I.2.2]{occhettathesis}. By \cite[Proposition 2.3]{fossumiversen}, since $\Zc_i$ is a locally trivial $G/P_i$-fibration we have an exact sequence:
        \begin{equation}
            H^0(G/P_i, \Oc^*)/\CC^*\arw\operatorname{Pic}(B)\arw\operatorname{Pic}(\Zc_i)\arw\operatorname{Pic}(G/P_i)\arw 0
        \end{equation}
        and the first term vanishes because of the long cohomology sequence associated to the exponential sequence for $G/P_i$:
        \begin{equation}
            \begin{split}
                0\arw H^0(G/P_i,\ZZ)\arw H^0(G/P_i, \Oc)\arw H^0(G/P_i, \Oc^*)\arw H^1(G/P_i,\ZZ)\arw\cdots
            \end{split}
        \end{equation}
        where we observe that $H^1(G/P_i,\ZZ)$ vanishes since the integral cohomology of rational homogeneous varieties is generated by their Bruhat decompositon, and it is nonzero only in even degree. Our claim is proven once we recall that  by Definition \ref{def_roof} one has $\operatorname{Pic}(G/P_i)\simeq\ZZ$.\\
        \\
        In order to prove Claim (3), consider a point $z\in\Zc$. Then, since $\pi$ is locally trivial, there exists a unique fiber $\pi^{-1}(b)\simeq G/P$ containing $z$, and $p_i|_\pi^{-1}(b)$. Similarly, $p_i$ is locally trivial, hence $z$ is contained in a unique $p^{-1}(z_i)$ for $i\in\{1;2\}$. Finally, by local triviality of $r_i$, we see that there is a unique $b_i$ such that $z_i\in r_i^{-1}(b_i)$. But then $b_1 = b_2 = b$, because otherwise $z$ would be contained in two different fibers of $\pi$.
    \end{proof}
    \begin{remark}
        In the lemma above, $\Lc_\Oc$ can be replaced by $\Lc_T$ for any line bundle $T$ on $B$. In fact, by projection formula and the lemma above, one has $p_{i*}\Lc_T = r_i^*T \otimes p_{i*}\Lc_\Oc = r_i^*T \otimes \Ec_i$. However, twisting $\Lc_\Oc$ by a line bundle $\pi^*T$  affects properties like its first Chern class, being basepoint-free or having nonzero global sections. Therefore, in the remainder of this paper, whenever we will fix a roof bundle $\Zc$, we will also choose $\Lc:=\Lc_T$ (by fixing the right $T$) satisfying the required hypotheses.
    \end{remark}

\subsection{Calabi--Yau fibrations}

Our main interest in Sections \ref{sec_cy8folds} and \ref{sec_deriverdequivalenceCYfibrations} is to investigate the zero loci of pushforwards of a general section $S\in H^0(\Zc , \Lc)$, hence relativizing the setting of Definition \ref{def_CYpairs}. 


\begin{lemma}\label{lem_CYfibered}
    Let $\Zc$ be a homogeneous roof bundle of type $G/P\not\simeq\PP^n\times\PP^n$ over a smooth projective variety $B$ and fix \linebreak $h_i:G/P\simeq\PP(E_i)\arw G/P_i$ for $i\in\{1;2\}$.\\
    Assume that $\Lc$ is basepoint-free and that the restriction map $\rho_b:H^0(\Zc, \Lc)\arw H^0(\pi^{-1}(b), L)$ is surjective. Given a general section $S\in H^0(\Zc, \Lc)$, let us call $X_i:= Z(p_{i*}S)$. Then there exist fibrations $f_i:X_i\arw B$ such that, for a general $b\in B$, $(f_1^{-1}(b), f_2^{-1}(b))$ is a Calabi--Yau pair associated to $G/P$ in the sense of Definition $\ref{def_CYpairs}$.
\end{lemma}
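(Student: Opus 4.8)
The plan is to leverage the commutative diagram of Lemma \ref{lem_roofbundlecommutativediagram} to turn the fiberwise assertion into a statement about a single section restricted to fibers, and then to reduce the Calabi--Yau property to the generality statement of Lemma \ref{lem_CY}. First I would record the global picture. After choosing $\Lc=\Lc_T$ as in the remark following Lemma \ref{lem_roofbundlecommutativediagram}, we have $p_{i*}\Lc\simeq\Ec_i$, so $p_{i*}S$ is a genuine section of $\Ec_i$ on $\Zc_i$ and $X_i=Z(p_{i*}S)$ is a closed subscheme of $\Zc_i$; I set $f_i:=r_i|_{X_i}$. Since $r_i$ is proper and $X_i$ is closed, $f_i$ is proper, so it will suffice to show that the general fiber is non-empty in order to conclude that $f_i$ is surjective, i.e. a genuine fibration.

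Next I would identify the fibers of $f_i$. For $b\in B$, local triviality of $\pi$, $p_i$ and $r_i$ makes the square with vertices $\pi^{-1}(b)\simeq G/P$, $\Zc$, $r_i^{-1}(b)\simeq G/P_i$ and $\Zc_i$ Cartesian (indeed $p_i^{-1}(r_i^{-1}(b))=(r_ip_i)^{-1}(b)=\pi^{-1}(b)$), and the induced map $\pi^{-1}(b)\arw r_i^{-1}(b)$ is exactly $h_i$ under these identifications. By base change the section $p_{i*}S$ restricts to $h_{i*}\bigl(S|_{\pi^{-1}(b)}\bigr)=h_{i*}(\rho_b(S))$, whence $f_i^{-1}(b)=Z(h_{i*}\rho_b(S))$. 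Thus $f_1^{-1}(b)$ and $f_2^{-1}(b)$ are the two zero loci attached to the \emph{single} section $\sigma_b:=\rho_b(S)\in H^0(G/P,L)$, which is precisely the shape required by Definition \ref{def_CYpairs}; it remains only to guarantee that $\sigma_b$ is general in the sense of Lemma \ref{lem_CY}.

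The heart of the argument, and the step I expect to be the main obstacle, is this last generality claim: for general $S$ and general $b\in B$, the section $\rho_b(S)$ lies in the dense open locus $U\subset H^0(G/P,L)$ over which Lemma \ref{lem_CY} applies. Here the surjectivity hypothesis on $\rho_b$ is essential, since it says that the evaluation $H^0(\Zc,\Lc)=H^0(B,\pi_*\Lc)\arw(\pi_*\Lc)_b\simeq H^0(G/P,L)$ is onto for every $b$, i.e. that $\pi_*\Lc$ is globally generated and that $\rho_b$ realizes $\sigma_b$ as the value at $b$ of $S$. I would then consider the evaluation map $\mathrm{ev}\colon H^0(\Zc,\Lc)\times B\arw\mathrm{Tot}(\pi_*\Lc)$, $(S,b)\mapsto(b,S(b))$, which by global generation is surjective with all fibers of constant dimension (evaluation at each $b$ being a surjective linear map). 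Hence the preimage of the bad locus $\mathrm{Tot}(\pi_*\Lc)\setminus\mathcal{U}$, where $\mathcal{U}$ is cut out fiberwise by $U$, is a proper closed subset of $H^0(\Zc,\Lc)\times B$ of codimension at least one. A dimension count on its projection to $H^0(\Zc,\Lc)$ then forces the general fiber over $S$ to have dimension strictly less than $\dim B$, so for general $S$ the set of $b$ with $\rho_b(S)\notin U$ is a proper closed subset of $B$.

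Feeding this into Lemma \ref{lem_CY} shows that for general $b$ each $f_i^{-1}(b)=Z(h_{i*}\rho_b(S))$ is smooth of expected codimension with trivial canonical bundle and the required intermediate cohomology vanishing, hence Calabi--Yau. Finally, since $G/P\not\simeq\PP^n\times\PP^n$, Remark \ref{rem_AnxAnemptyloci} rules out the empty case, so these fibers are non-empty; this both completes the verification that $(f_1^{-1}(b),f_2^{-1}(b))$ is a Calabi--Yau pair associated to $G/P$ and, by properness of $f_i$, shows that each $f_i$ is surjective and therefore a fibration as claimed.
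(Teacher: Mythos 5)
Your proposal is correct and takes essentially the same route as the paper's proof: define $f_i:=r_i|_{X_i}$, identify $f_i^{-1}(b)$ with the zero locus of the pushforward of the single restricted section $\rho_b(S)$, use surjectivity of $\rho_b$ together with an incidence/dimension count to ensure $\rho_b(S)$ is general for general $S$ and $b$, and conclude via Lemma \ref{lem_CY}. Your evaluation-map formulation of the dimension count, and the explicit non-emptiness/properness verification of surjectivity via Remark \ref{rem_AnxAnemptyloci}, simply spell out steps the paper states tersely or leaves implicit.
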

\begin{proof}
    Let us call $f_i:=r_i|_{X_i}$. By the condition $\Ec_i|_{r_i^{-1}(b)}\simeq E_i$ and $r_i^{-1}(b)\simeq G/P_i$ it follows that $(r_i^{-1}(b), \Ec_i|_{r_i^{-1}(b)})$ is a Mukai pair. By assumption, $\Lc$ is basepoint-free, hence $Z(S)$ is smooth of expected codimension if $S$ is chosen away from a closed subset $\Wc\subset H^0(\Zc, \Lc)$. Since $\rho_b$ is surjective for every $b$, if we choose $b$ sufficiently general one has that the preimage $\rho_b^{-1}(\sigma)$ of a general $\sigma\in H^0(G/P, L)$ intersects $\Wc$ in a positive-codimensional subset: hence, for general $b$ and $S$, both $Z(S)\subset\Zc$ and $Z(S|_{\pi^{-1}(b)})\subset\pi^{-1}(b)$ are smooth of expected codimension. It follows that  $f_i^{-1}(b)=Z(p_{i*}S|_{r_i^{-1}(b)})\subset r_i^{-1}(b)$ is Calabi--Yau by Lemma \ref{lem_CY}. Moreover, $E_i\simeq h_{i*}L$ and the varieties $f_1^{-1}(b)$ and $f_2^{-1}(b)$ are the zero loci of the pushforwards of the same section $S_{\pi^{-1}(b)}$, thus they are a Calabi--Yau pair associated to the roof of type $G/P$ as in Definition \ref{def_CYpairs}.
\end{proof}

\section{A pair of Calabi--Yau eightfolds}\label{sec_cy8folds}
{\subsection{Roof of type $A^G _4$}}
Let us briefly describe  the roof of type $A^G _4$ and its related dual Calabi--Yau threefolds. Fix a vector space $V_5$ of dimension five. We call $G(2,V_5)$ and $G(3,V_5)$ the Grassmannians of respectively  linear 2-spaces and linear 3-spaces in $V_5$. On these Grassmannians, there are the following tautological short exact sequences:
\begin{equation}\label{eq_universalsequenceG25}
   0\arw\Uc_{G(2, V_5)}\arw V_5\otimes\Oc\arw\Qc_{G(2, V_5)}\arw 0 
\end{equation}
\begin{equation}\label{eq_universalsequenceG35}
   0\arw\Uc_{G(3, V_5)}\arw V_5\otimes\Oc\arw\Qc_{G(3, V_5)}\arw 0 
\end{equation}
where $\det\Uc_{G(2, V_5)}^\vee\simeq\det\Qc_{G(2, V_5)}\simeq\Oc_{G(2, V_5)}(1)$ and $\det\Uc_{G(3, V_5)}^\vee\simeq\det\Qc_{G(3, V_5)}\simeq\Oc_{G(3, V_5)}(1)$.
The roof of type $A^G_4$ is the flag variety $F(2,3,V_5)$: in fact this flag is isomorphic to the projectivization of $\Qc_{G(2, V_5)}^\vee(2)$ and $\Uc_{G(3, V_5)}(2)$ and these isomorphisms define projections to the Grassmannians. We get the diagram:

\begin{equation}\label{roofA4}
    \begin{tikzcd}[row sep = large]
     & F(2,3,V_5)\ar{dl}{h_1}\ar[swap]{dr}{h_2} & \\
    G(2, V_5)   &    & G(3, V_5)
    \end{tikzcd}
\end{equation}

Observe that the line bundle $\Oc(1,1):=h_1^*\Oc(1)\otimes h_2^*\Oc(1)$ satisfies $h_{1*}\Oc(1,1)\simeq \Qc_{G(2,V_5)}^\vee(2)$ and $h_{2*}\Oc(1,1) \simeq \Uc_{G(3,V_5)}(2)$. General sections of such bundles are Calabi--Yau threefolds \cite{michalconifold, inoueitomiura, kr}. Moreover, for a general $S\in H^0(F(2,3,V_5), \Oc(1,1))$, the pushforwards $h_{1*}S$ and $h_{2*}S$ are non birational \cite[Theorem 5.7]{kr}.

{\subsection{The homogeneous roof bundle of type $A^G_{4}$ over $\PP^5$}\label{sec_CYgeometry}}
We will discuss here the geometry of the roof bundle of type $A^G_4$ over $\PP^5$, which is a locally trivial fibration with fibers isomorphic to the flag variety $SL(5)/P^{2,3}\simeq F(2,3,5)$. Let us fix a vector space $V_6\simeq \CC^6$ and the (twisted) tangent bundle $T(-1)$ of $\PP(V_6)\simeq\PP^5$. We can define a roof bundle of type $SL(5)/P^{2,3}$ over $\PP^5$ by considering the $SL(5)$-bundle $\Vc = \operatorname{Iso}(\CC^5, T(-1))$ and taking $\Vc \times^{SL(5)} (SL(5)/P^{2,3}) \simeq \Fc l(2,3, T(-1))$. In this setting, Diagram \ref{eq_smalldiagramfibrations} becomes:
\begin{equation}\label{eq_familyA4}
    \begin{tikzcd}[row sep=huge, column sep = tiny]
        & \Fc l(2,3, T(-1))\ar[swap]{dl}{p_1}\ar{dr}{p_2} &  \\
        \hspace{-30pt}\Gc r(2, T(-1))\ar[swap]{dr}{r_1} & & \Gc r(3, T(-1)) \hspace{-30pt}\ar{dl}{r_2} \\
        &\PP^5& 
    \end{tikzcd}
\end{equation}
where $\Gc r$ and $\Fc l$ denote respectively Grassmann bundles and flag bundles. Note that all varieties here are rational homogeneous: in fact one has $\Fc l(2,3, T(-1))\simeq F(1,3,4,V_6)$, $\Gc r(2, T(-1))\simeq F(1,3,V_6)$ and $\Gc r(3, T(-1))\simeq F(1,4,V_6)$. Moreover, there are also a $\PP^2$-bundle structure $\rho:F(1,3,V_6)\arw G(3, V_6)$ and a $\PP^3$-bundle structure $\tau:F(1,4,V_6)\arw G(4, V_6)$. All this can be summarized by the following diagram:
\begin{equation}\label{eq_familyA4_explicit}
    \begin{tikzcd}[row sep=huge, column sep = tiny]
        &&F(1,3,4,V_6)\ar{dl}{p_1}\ar[swap]{dr}{p_2}\ar{dd}{\pi} &&  \\
        &F(1,3,V_6)\ar[swap]{dr}{r_1}\ar{dl}{\rho} & & F(1,4,V_6)\ar{dl}{r_2}\ar[swap]{dr}{\tau}& \\
       G(3, V_6) &&G(1, V_6)&& G(4, V_6)
    \end{tikzcd}
\end{equation}
where $\pi$ is just the composition of either $r_1$ and $p_1$ or $r_2$ and $p_2$.\\
\\
In the following, given a weight $\omega$, we will call $\Ec_\omega$ the associated vector bundle. Given a dominant weight $\omega$, we will call $V_\omega$ the associated representation space. On $F(1,3,4,V_6)$ we use the following notations for line bundles: $\Oc(a,b,c):=\pi^*\Oc(a)\otimes p_1^*\rho^*\Oc(b)\otimes p_2^*\tau^*\Oc(c)$. Fix the standard basis $\{\omega_1, \dots, \omega_5\}$ of fundamental weights for $A_5$. Observe that $\Oc(1,1,1)= \Ec_{\omega_1+\omega_3+\omega_4}$ on $F(1,3,4,V_6)$ has pushforwards to the Picard rank 2 flag varieties given by $p_{1*}\Oc(1,1,1)= \rho^*\Qc_{G(3, V_6)}^\vee(1,2)$ and $p_{2*}\Oc(1,1,1)=\Pc(1,2)$ where $\Pc$ is the rank 3 vector bundle defined by the following short exact sequence on $F(1,4,V_6)$:
\begin{equation}
    0\arw \Oc(-1, 0)\arw\tau^*\Uc_{G(4, V_6)}\arw\Pc\arw 0.
\end{equation}
\subsection{A pair of Calabi--Yau eightfolds}\label{sec_CY}
\begin{proposition}\label{prop_picardnumberCY}
Let $S\in H:=H^0(F(1,3,4,V_6), \Oc(1,1,1))$ be a general section. Then $X_1 = Z(p_{1*}S)$ and $X_2 = Z(p_{2*}S)$ are Calabi--Yau eightfolds of Picard number $2$, and $H^1(X_i, T_{X_i})\simeq H/(\CC\oplus V_{\omega_1+\omega_5})\simeq\CC^{1014}$.  Moreover, there exist fibrations $f_i:X_i\arw B$ such that for the general $b\in B$ the pair $(f_1^{-1}(b), f_2^{-1}(b))$ is a Calabi--Yau pair associated to the roof of type $A^G_4$. 
\end{proposition}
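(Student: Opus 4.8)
The plan is to treat the two cases $i=1,2$ in parallel, working on the rational homogeneous ambient spaces $\Zc_1 = F(1,3,V_6)$ and $\Zc_2 = F(1,4,V_6)$, and to reduce every cohomological statement to a Borel--Weil--Bott computation on these flag varieties. First I would record that $\Lc = \Oc(1,1,1)$ is the globally generated bundle $\Ec_{\omega_1+\omega_3+\omega_4}$, so a general $S$ has smooth zero locus $Z(S)$ of expected codimension; exactly as in Lemma \ref{lem_CY}, the fact that $(\Zc_i, \Ec_i)$ restricts to a Mukai pair on each fiber, together with \cite[Lemma 3.2]{debarrekuznetsovjacobians}, yields that $X_i = Z(p_{i*}S)$ is smooth of codimension $\operatorname{rk}\Ec_i = 3$, hence of dimension $\dim\Zc_i - 3 = 8$.

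For the Calabi--Yau condition the crucial and most delicate point is the global triviality of $\omega_{X_i}$, which does not follow formally from the fibrewise Mukai--pair condition: the classes $c_1(\Ec_i)$ and $-K_{\Zc_i}$ could a priori differ by a multiple of the base class $r_i^*\Oc_{\PP^5}(1)$. I would verify equality by a direct weight computation. Using $\Oc_{G(3,V_6)}(1)=\Ec_{\omega_3}$, $\Oc_{G(4,V_6)}(1)=\Ec_{\omega_4}$ and the defining sequences of $\Ec_i$, one finds $c_1(\Ec_1)=3\omega_1+5\omega_3$ and $c_1(\Ec_2)=4\omega_1+5\omega_4$; on the other hand $-K_{\Zc_i}=\sum_{\alpha\in\Phi^+\setminus\Phi_{L_i}^+}\alpha$, expressed in fundamental weights, gives $-K_{\Zc_1}=3\omega_1+5\omega_3$ and $-K_{\Zc_2}=4\omega_1+5\omega_4$. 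The two computations agree, so adjunction gives $\omega_{X_i}\simeq\Oc_{X_i}$; this agreement is exactly where the choice of the twist $T(-1)$ in the construction is used. For the intermediate cohomology and the Picard number I would check that $\Ec_i$ is ample (it is relatively ample for the Grassmann-bundle projection and for $r_i$, being $\Oc(1)$ on the fibres of $\rho$ resp. $\tau$), and then apply \cite[Example 7.1.5]{lazarsfeld1} precisely as in Lemma \ref{lem_CY}: the isomorphisms $H^q(\Zc_i,\Omega^p_{\Zc_i})\simeq H^q(X_i,\Omega^p_{X_i})$ for $p+q<8$ give $H^q(X_i,\Oc_{X_i})=0$ for $0<q<8$ (since $\Zc_i$ is rational homogeneous) and $b_2(X_i)=\dim H^1(\Zc_i,\Omega^1_{\Zc_i})=2$, whence Picard number two.

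The computation of $H^1(X_i,T_{X_i})$ I would organise through the normal bundle sequence $0\arw T_{X_i}\arw T_{\Zc_i}|_{X_i}\arw \Ec_i|_{X_i}\arw 0$ and the Koszul resolution of $\Oc_{X_i}$ by $\wedge^\bullet\Ec_i^\vee$. Feeding the Koszul complex tensored with $T_{\Zc_i}$ and with $\Ec_i$ into Borel--Weil--Bott (automated as in \cite{pythonscript}) I expect $H^0(X_i,T_{\Zc_i}|_{X_i})\simeq H^0(\Zc_i,T_{\Zc_i})=\mathfrak{sl}(V_6)=V_{\omega_1+\omega_5}$ and $H^1(X_i,T_{\Zc_i}|_{X_i})=0$, together with $H^0(X_i,\Ec_i|_{X_i})\simeq H/\CC S$, the one-dimensional kernel being generated by the restriction of $S$ itself, which vanishes on its own zero locus. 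Since a general $S$ has finite $\operatorname{Aut}(\Zc_i)$-stabiliser, the infinitesimal action $V_{\omega_1+\omega_5}\arw H$ is injective, and the long exact sequence collapses to $H^1(X_i,T_{X_i})\simeq H/(\CC\oplus V_{\omega_1+\omega_5})$. The Weyl dimension formula for the $A_5$-representation $V_{\omega_1+\omega_3+\omega_4}$ gives $\dim H=1050$, so $\dim H^1(X_i,T_{X_i})=1050-1-35=1014$.

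Finally, the fibration statement follows at once from Lemma \ref{lem_CYfibered}: the roof bundle is of type $A^G_4\not\simeq\PP^n\times\PP^n$, the bundle $\Lc=\Oc(1,1,1)$ is basepoint-free, and the surjectivity of $\rho_b\colon H^0(\Zc,\Lc)\arw H^0(\pi^{-1}(b),L)$ holds because $\pi_*\Lc$ is globally generated with $R^{>0}\pi_*\Lc=0$, so by cohomology and base change global sections surject onto those of each fibre. The main obstacle I anticipate is twofold: securing the global equality $c_1(\Ec_i)=-K_{\Zc_i}$ (the fibrewise condition is not sufficient) and controlling the Borel--Weil--Bott vanishings needed for the Koszul spectral sequence computing $H^1(X_i,T_{X_i})$, in particular verifying that the only surviving contributions are the ones producing the quotient $H/(\CC\oplus V_{\omega_1+\omega_5})$.
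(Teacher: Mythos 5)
Your proposal reproduces the paper's proof in its essential structure: adjunction plus the identity $c_1(\Ec_i)=-K_{\Zc_i}$ for triviality of $\omega_{X_i}$ (your weight computations $3\omega_1+5\omega_3$ and $4\omega_1+5\omega_4$ agree exactly with the paper's determinant computations $\det(\Qc^\vee_{G(3,V_6)}(1,2))=\Oc(3,5)$, $\det(\Pc(1,2))=\Oc(4,5)$ against $\omega_{F(1,3,V_6)}=\Oc(-3,-5)$, $\omega_{F(1,4,V_6)}=\Oc(-4,-5)$); ampleness of $\Ec_i$ and \cite[Example 7.1.5]{lazarsfeld1} for the intermediate cohomology; the normal bundle sequence plus Koszul resolution plus Borel--Weil--Bott for $H^1(X_i,T_{X_i})$; and Lemma \ref{lem_CYfibered} for the fibration statement. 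Your dimension count $\dim V_{\omega_1+\omega_3+\omega_4}=1050$, giving $1050-1-35=1014$, matches the paper's $\CC^{1014}$. You diverge in two local steps, both legitimately. For the Picard number you read off $h^{1,1}(X_i)=h^{1,1}(\Zc_i)=2$ directly from the Lazarsfeld isomorphism with $p=q=1$, whereas the paper derives it from $H^7(X_i,T_{X_i})\simeq\CC^2$ via Serre duality and then the exponential sequence; your route is more economical since it does not depend on the tangent-bundle computation. For the surjectivity of $\rho_b$ the paper argues by writing sections of $\Oc(1,1,1)$ explicitly as tensors and observing that restriction to a fiber acts by contraction with $u\in V_6$, which is visibly surjective; you instead invoke cohomology and base change, reducing surjectivity to global generation of $\pi_*\Lc$.

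That last reduction is the one genuine gap: you assert global generation of $\pi_*\Lc$ without proof, and it is not formal (compare $\Omega^1_{\PP^5}(1)$, a homogeneous subbundle of a trivial bundle with no sections at all). It is true here and the fix is short: $\pi_*\Lc$ is a homogeneous bundle on $\PP^5$ associated to an irreducible $P$-module (its fiber is $H^0(F(2,3,V_5),\Oc(1,1))$, an irreducible $SL(5)$-representation, twisted by a character), so the $SL(V_6)$-equivariant evaluation map $H^0(\PP^5,\pi_*\Lc)\otimes\Oc\arw\pi_*\Lc$ has homogeneous image of constant rank, which is nonzero because $\Lc$ is very ample and hence restricts nontrivially to every fiber; a nonzero homogeneous subsheaf of an irreducible homogeneous bundle is everything. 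Separately, your appeal to a finite $\Aut(\Zc_i)$-stabilizer of $S$ to get injectivity of $V_{\omega_1+\omega_5}\arw H/\CC S$ is an unproven (if plausible) claim, but it is also unnecessary: once the Calabi--Yau property is established, Serre duality and Hodge symmetry give $H^0(X_i,T_{X_i})^\vee\simeq H^8(X_i,\Omega^1_{X_i})$, whose dimension is $h^{8,1}(X_i)=h^1(X_i,\Oc_{X_i})=0$, so injectivity is automatic from the long exact sequence of the normal bundle sequence.
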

\begin{proof}
One has $\det(\Qc_{G(3, V_6)}^\vee(1,2)) = \Oc(3,5)$ on $F(1,3,V_6)$ and $\det(\Pc(1,2))\simeq \Oc(4,5)$ on $F(1,4,V_6)$.

On the other hand, combining the relative tangent bundle with the relative Euler sequence of $\rho$ one finds \linebreak $\omega_{F(1,3,V_6)}\simeq\Oc(-3, -5)$, and similarly $\omega_{F(1,4,V_6)}\simeq\Oc(-4, -5)$. Hence sections of $\Ec_i=p_{i*}\Oc(1,1,1)$ define eight-dimensional varieties with vanishing first Chern class for $i\in\{1;2\}$. Since the Grothendieck line bundle of $\PP(\Ec_i)$ is ample, $\Ec_i$ is also ample and we can use again \cite[Example 7.1.5]{lazarsfeld1}: the restriction maps
\begin{equation}\label{eq_cohomologyrestriction}
    \begin{split}
         H^q(F(1,3,V_6), \Omega^p_{F(1,3,V_6)}) &\arw H^q(X_1, \Omega^p_{X_1})\\
         H^q(F(1,4,V_6), \Omega^p_{F(1,4,V_6)}) &\arw H^q(X_2, \Omega^p_{X_2})
    \end{split}
\end{equation}
are isomorphisms for $p+q<\dim(X_1)$, and since $F(1,3,V_6)$ and $F(1,4,V_6)$ are homogeneous varieties, their structure sheaves have cohomology of dimension one concentrated in degree zero. The Calabi--Yau condition follows from setting $p=0$ in \ref{eq_cohomologyrestriction}.

The cohomology of the tangent bundle $T_{X_1}$ can be computed in terms of the normal bundle sequence:
\begin{equation}
    0\arw T_{X_1}\arw T_{F(1,3,V_6)}|_{X_1}\arw \rho^*\Qc_{G(3, V_6)}^\vee(1,2)\arw 0
\end{equation}
The last term is the pullback of a homogeneous irreducible vector bundle on $G(3, V_6)$, hence its cohomology can be computed by the Borel--Weil--Bott theorem. On the other hand, $T_{F(1,3,V_6)}$ fits in the following exact sequence
\begin{equation}\label{eq_relativetangenteuler}
    0\arw\Oc\arw\rho^*\Uc(1, -1)\arw T_{F(1,3,V_6)}\arw\rho^*T_{G(3, V_6)}\arw 0
\end{equation}
which follows by the relative tangent bundle sequence of $F(1,3,V_6)\arw G(3,V_6)$ and the relative Euler sequence of the projective bundle structure $F(1,3,V_6)\simeq\PP(r^*\Uc(1, -1))$. In order to restrict it to $X_1$ we take the tensor product of all terms with the Koszul resolution of $X$. Each of the resulting terms in the resolutions can be written, using the Littlewood--Richardson formula if necessary, as a direct sum of homogeneous irreducible vector bundles on the flag, and therefore Borel--Weil--Bott applies.\\
\\
We get this result:
\begin{equation}
   H^m(X, T_X) \simeq \left\{\begin{array}{lc}
   V_{\omega_1+\omega_3+\omega_4} / (\CC\oplus V_{\omega_1+\omega_5}) & m=1\\
    & \\
   \CC^2 & m=7
   \end{array}\right.
\end{equation}
which allows us to prove our claim. In fact, since $Y$ is Calabi--Yau, by Serre duality we have:
\begin{equation}
    H^7(Y, T_Y)\simeq H^1(Y, \Omega^1_Y)= H^{(1,1)}(Y)
\end{equation}
and we conclude that the Picard number of $Y$ is two by the long exact sequence of cohomology of the exponential sequence. The case of $X_2$ is identical.
\\
Let us now prove that 
$$
\rho_b:H^0(F(1,3,4,6), \Oc(1,1,1))\arw H^0(F(2,3,5), \Oc(1,1))
$$
is surjective: a point in $F(2,3,V_5)$ can be described as a pair of totally decomposable forms $(v\wedge w, v\wedge w\wedge z)$ where $v,w,z\in V_5\simeq\CC^5$. 

Let us now consider a section $\sigma\in H^0(F(2,3,V_5), \Oc(1,1))$. It can be represented as:
\begin{equation}\label{eq_explicitsection}
    \sigma: (v\wedge w, v\wedge w\wedge z)\arw \sigma^{ijklm}v_{[i}w_{j]}v_{[k}w_l z_{m]}
\end{equation}
where indices in square bracket are anti-symmetrized and repeated indices are summed up for each of their possible values (i.e. $v^i w_i := v^1w_1+\dots+v^5w_5$ for $v\in V_5^\vee$ and $w\in V_5$). Similarly, one can describe a section $S\in H^0(F(1,3,4,V_6), \Oc(1,1,1))$ by:
\begin{equation}
    \sigma: (u\wedge v\wedge w, u\wedge v\wedge w\wedge z)\arw S^{ijklmnp}u_{[i}v_j w_{k]}u_{[l}v_m w_n z_{p]}.
\end{equation}
Consider now a point $[u] \in G(1,V_6)$ with $u\in V_6$ falling in its equivalence class. Then, $\rho_{[u]}$ acts on $S$ by contraction with $u$ (this is independent on the choice of the representative $u$ up to an overall nonzero factor) :
\begin{equation}
    \rho_{[u]} (S): (v\wedge w, v\wedge w\wedge z)\arw S^{ijklmnp}u_{[i}v_j w_{k]}u_{[l}v_m w_n z_{p]}
\end{equation}
and we see that this map is clearly surjective for every $[u]$. 
The proof is concluded by observing that $\Oc(1,1,1)$ is ample, and hence basepoint-free, allowing us to apply Lemma \ref{lem_CYfibered}.  
\end{proof}


\section{Derived equivalence of Calabi--Yau fibrations}\label{sec_deriverdequivalenceCYfibrations}

In the following we will make extensive use of mutations of semiorthogonal decompositions. A thorough introduction on the topic can be found, for example, in the works \cite{bondal, bondalorlovdk, kuznetsovhpd}. We will mostly focus on left mutations, since it is well known that given an admissible subcategory $\Am\subset\Cm$ of a triangulated category $\Cm$, the functors $\LL_\Am:^\perp\Am\arw\Am^\perp$ and $\RR_\Am:\Am^\perp\arw^\perp\Am$ are mutual inverses (see, for example, \cite[Lemma 2.7]{kuznetsovcubic4folds} and the source therein).

\subsection{Preparatory lemmas}\label{subsec_preplemmas}

The following lemma, which is a simple consequence of the definition of adjoint functors, will be necessary for the proof of Lemma \ref{lem_Lsemiorthogonalcommutes}:

\begin{lemma}\label{lem_categoricallemma}
    Let $\Bm, \Mm, \Xm$ be categories. Consider the adjoint pairs ${\adj{L_1}{\Xm}{\Mm}{R_1}}$ and $\adj{L_2}{\Bm}{\Xm}{R_2}$. Then, for every object $x\in\Xm$ one has the following identity:
    \begin{equation}
        L_1(\epsilon_{2,x}) = \epsilon_{12,\,\, L_1 x}\circ L_1L_2R_2(\eta_{1,x}).
    \end{equation}
    where by $\epsilon_{12, L_1 x}$ we denote the counit morphism of the adjoint pair ${\adj{L_1L_2}{\Bm}{\Mm}{R_2R_1}}$ on the object $L_1 x\in\Mm$.
\end{lemma}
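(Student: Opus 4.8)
The plan is to reduce the claimed identity to the triangle (zig-zag) identities of the adjunction $L_1\dashv R_1$ together with the naturality of the counit $\epsilon_2$. First I would fix the standard description of the composite adjunction $L_1L_2\dashv R_2R_1$, whose counit is given by
\begin{equation}
    \epsilon_{12, m} = \epsilon_{1, m}\circ L_1(\epsilon_{2, R_1 m})\colon L_1L_2R_2R_1 m\arw m,
\end{equation}
for $m\in\Mm$. Before anything else it is worth checking that both sides of the asserted equation are morphisms $L_1L_2R_2 x\arw L_1 x$ in $\Mm$: on the left, $\epsilon_{2,x}\colon L_2R_2 x\arw x$ lives in $\Xm$ and we apply $L_1$; on the right, $L_1L_2R_2(\eta_{1,x})\colon L_1L_2R_2 x\arw L_1L_2R_2R_1L_1 x$ is followed by $\epsilon_{12,L_1 x}\colon L_1L_2R_2R_1L_1 x\arw L_1 x$. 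So the statement is at least well posed, with both sides sharing the same source and target.

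The key step is to apply naturality of $\epsilon_2\colon L_2R_2\Rightarrow\mathrm{id}_{\Xm}$ to the morphism $\eta_{1,x}\colon x\arw R_1L_1 x$ of $\Xm$, which gives
\begin{equation}
    \eta_{1,x}\circ\epsilon_{2,x} = \epsilon_{2, R_1L_1 x}\circ L_2R_2(\eta_{1,x}).
\end{equation}
Applying the functor $L_1$ to this equality turns it into
\begin{equation}
    L_1(\eta_{1,x})\circ L_1(\epsilon_{2,x}) = L_1(\epsilon_{2, R_1L_1 x})\circ L_1L_2R_2(\eta_{1,x}),
\end{equation}
where the right-hand side is precisely the composite that appears after substituting $m=L_1 x$ into the formula for $\epsilon_{12}$.

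Putting these together, I would expand the right-hand side of the claim and simplify using the previous two identities:
\begin{equation}
\begin{aligned}
    \epsilon_{12, L_1 x}\circ L_1L_2R_2(\eta_{1,x})
    &= \epsilon_{1, L_1 x}\circ L_1(\epsilon_{2, R_1L_1 x})\circ L_1L_2R_2(\eta_{1,x})\\
    &= \epsilon_{1, L_1 x}\circ L_1(\eta_{1,x})\circ L_1(\epsilon_{2,x}).
\end{aligned}
\end{equation}
The first two factors $\epsilon_{1, L_1 x}\circ L_1(\eta_{1,x})$ are exactly one of the triangle identities for $L_1\dashv R_1$, hence equal to $\mathrm{id}_{L_1 x}$, and what remains is $L_1(\epsilon_{2,x})$, which is the left-hand side. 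Honestly there is no serious obstacle here: the whole argument is a diagram chase whose only nonformal ingredient is recognizing that $L_1(\epsilon_{2, R_1L_1 x})\circ L_1L_2R_2(\eta_{1,x})$ is the $L_1$-image of a naturality square for $\epsilon_2$. The one point that requires care is consistent bookkeeping of sources and targets across the three categories and committing to the canonical form of $\epsilon_{12}$; once those are fixed, the triangle identity does the rest.
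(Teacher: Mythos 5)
Your proof is correct and is essentially the paper's own argument: both apply $L_1$ to the naturality square of $\epsilon_2$ at the morphism $\eta_{1,x}$, postcompose with $\epsilon_{1,L_1 x}$, cancel $\epsilon_{1,L_1 x}\circ L_1(\eta_{1,x})=\mathrm{id}_{L_1 x}$ by the triangle identity, and identify $\epsilon_{1,L_1 x}\circ L_1(\epsilon_{2,R_1 L_1 x})$ with the counit of the composite adjunction at $L_1 x$. The only differences are cosmetic — you invoke naturality of $\epsilon_2$ directly where the paper spells out the corresponding commutative square, and your writeup sidesteps the paper's typo writing $\eta_{12,\,L_1 x}$ where $\epsilon_{12,\,L_1 x}$ is meant.
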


\begin{proof}
By the following chains of morphisms
\begin{equation}
    \begin{tikzcd}[column sep = huge]
        x\ar[maps to]{r}{\eta_{1,x}} & R_1L_1 x \ar[maps to]{r}{L_2 R_2}\ar[bend left=20, maps to]{rr}{Id_{R_1 L_1 x}} & L_2 R_2 R_1 L_1 x \ar[maps to]{r}{\epsilon_{2, R_1 L_1 x}} & R_1 L_1 x \\
        x\ar[maps to]{r}{L_2 R_2}\ar[bend right=20, maps to]{rr}{Id_{x}} & L_2 R_2 x \ar[maps to]{r}{\epsilon_{2,x}} & x \ar[maps to]{r}{\eta_{1, x}} & R_1 L_1 x
    \end{tikzcd}
\end{equation}

we deduce that this diagram commutes:

\begin{equation}
    \begin{tikzcd}[row sep = huge, column sep = huge]
        L_1 L_2 R_2 x \ar{r}{L_1(\epsilon_{2,x})}\ar[swap]{d}{L_1 L_2 R_2(\eta_{1,x})} & L_1 x \ar{d}{L_1(\eta_{1,x})} \\
        L_1 L_2 R_2 R_1 L_1 x\ar{r}{L_1(\epsilon_{2, R_1 L_1 x})} & L_1 R_1 L_1 x.
    \end{tikzcd}
\end{equation}

This, in turn, implies:

\begin{equation}\label{eq_adjointstuff}
    \epsilon_{1, L_1 x}\circ L_1(\epsilon_{2, R_1 L_1 x})\circ L_1 L_2 R_2(\eta_{1,x}) = \epsilon_{1, L_1 x}\circ L_1(\eta_{1,x})\circ L_1(\epsilon_{2,x}).
\end{equation}

Note that $\epsilon_{1, L_1 x}\circ L_1(\eta_{1,x}) = Id_{L_1 x}$, and that $\epsilon_{1, L_1 x}\circ L_1(\epsilon_{2, R_1 L_1 x}) = \eta_{12, L_1 x}$. These last considerations, applied to Equation \ref{eq_adjointstuff}, complete the proof.

\end{proof}

\begin{definition}\label{def_commutingmutations}
    Let $\pi:\Zc\arw B$ be a flat proper morphism of smooth projective varieties and let $\Mc$ be a smooth divisor embedded in $\Zc$ by  $\iota:\Mc\xhookrightarrow{\hspace{10pt}}\Zc$. Consider two relatively exceptional objects $\Ec, \Fc\in\dbcoh(\Zc)$ (in the sense of \cite[Section 3]{kuznetsovlevico}) and suppose there exist strong semiorthogonal decompositions
    \begin{equation}
        \begin{split}
            \dbcoh(\Zc) &= \langle\Cc, \Ec\otimes\pi^*\dbcoh(B), \Fc\otimes\pi^*\dbcoh(B)\rangle\\
        \dbcoh(\Mc) &= \langle\Dc, \iota^*\Ec\otimes\iota^*\pi^*\dbcoh(B), \iota^*\Fc\otimes\iota^*\pi^*\dbcoh(B)\rangle\\
        \end{split}
    \end{equation}
    for some admissible subcategories $\Cc, \Dc$. We say that the left mutation $\LL_{\langle\Ec\otimes\pi^*\dbcoh(B)\rangle}(\Fc\otimes\pi^*\dbcoh(B))$ \emph{commutes} with $\iota^*$ if the following equivalence holds:
    \begin{equation}
        \begin{split}
            \dbcoh(\Mc) &= \langle\Dc, \LL_{\langle\iota^*\Ec\otimes\iota^*\pi^*\dbcoh(B)\rangle}(\iota^*\Fc\otimes\iota^*\pi^*\dbcoh(B)), \iota^*\Ec\otimes\iota^*\pi^*\dbcoh(B)\rangle\\
            & = \langle\Dc, \iota^*\LL_{\langle\Ec\otimes\pi^*\dbcoh(B)\rangle}(\Fc\otimes\pi^*\dbcoh(B)), \iota^*\Ec\otimes\iota^*\pi^*\dbcoh(B)\rangle\\
        \end{split}
    \end{equation}
\end{definition}

\begin{definition}\label{def_Lsemiorthogonality}
    Let $\Zc\arw B$ be a flat and proper morphism of smooth projective varieties, let $\Lc$ be a line bundle on $\Zc$. Consider two objects $\Ec, \Fc\in\dbcoh(\Zc)$. We say that $\Ec$ is $\Lc$-\emph{semiorthogonal} to $\Fc$ if the following condition is fulfilled:
    \begin{equation}
        \pi_*R\Hc om_{\Zc}(\Ec, \Fc\otimes\Lc^\vee) = 0
    \end{equation}
\end{definition}

\begin{lemma}
    In the setting of Definition \ref{def_Lsemiorthogonality}, let $B$ be a point. Then $\Ec$ is $\Lc$-semiorthogonal to $\Fc$ if and only if\linebreak $\Ext^\bullet_{\Zc}(\Ec, \Fc\otimes\Lc^\vee)=0$.
\end{lemma}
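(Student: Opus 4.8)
The plan is to observe that when $B$ is a point the functor $\pi_*$ is nothing but derived global sections, and then to invoke the standard factorization of the global derived Hom through the local one. First I would note that $\pi:\Zc\arw B$ is the structure morphism $\Zc\arw\operatorname{Spec}\CC$, so that $\pi_*$, read as the derived pushforward $R\pi_*$, coincides with $R\Gamma(\Zc, -)$, the total derived functor of global sections.

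Next I would recall the canonical isomorphism in $\dbcoh(\operatorname{Spec}\CC)$
\[
R\Gamma\bigl(\Zc, R\Hc om_{\Zc}(\Ec, \Gc)\bigr)\simeq R\Hom_{\Zc}(\Ec, \Gc),
\]
valid for any $\Ec, \Gc\in\dbcoh(\Zc)$. This is simply the fact that $\Hom_{\Zc}(\Ec, -) = \Gamma(\Zc, -)\circ\Hc om_{\Zc}(\Ec, -)$ as functors, together with the compatibility of derived functors under composition (using that $\Hc om_{\Zc}(\Ec, -)$ carries injectives to $\Gamma$-acyclic sheaves). Applying this identity with $\Gc = \Fc\otimes\Lc^\vee$ identifies the object appearing in Definition \ref{def_Lsemiorthogonality} with the global derived Hom complex:
\[
\pi_* R\Hc om_{\Zc}(\Ec, \Fc\otimes\Lc^\vee)\simeq R\Hom_{\Zc}(\Ec, \Fc\otimes\Lc^\vee).
\]

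Finally I would use that the cohomology of the right-hand side is by definition the graded vector space $\Ext^\bullet_{\Zc}(\Ec, \Fc\otimes\Lc^\vee)$, i.e. $H^n R\Hom_{\Zc}(\Ec, \Fc\otimes\Lc^\vee) = \Ext^n_{\Zc}(\Ec, \Fc\otimes\Lc^\vee)$. Since an object of a derived category vanishes precisely when all of its cohomology objects vanish, it follows that $\pi_* R\Hc om_{\Zc}(\Ec, \Fc\otimes\Lc^\vee) = 0$ if and only if $\Ext^\bullet_{\Zc}(\Ec, \Fc\otimes\Lc^\vee) = 0$, which is exactly the asserted equivalence. There is no genuine obstacle here; the only point demanding care is reading $\pi_*$ as the derived pushforward, which over a point is just derived global sections, so that the local-to-global factorization applies verbatim.
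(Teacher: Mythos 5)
Your argument is correct and follows essentially the same route as the paper: identify $\pi_*$ over a point with derived global sections, then use the local-to-global identification $R\Gamma(\Zc, R\Hc om_{\Zc}(\Ec,-))\simeq R\Hom_{\Zc}(\Ec,-)$ whose cohomology computes $\Ext^\bullet_{\Zc}$. The only cosmetic difference is that the paper justifies the first isomorphism by citing Mumford's cohomology-and-base-change corollary, whereas you give the (equally valid, arguably more direct) composition-of-derived-functors argument.
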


\begin{proof}
    Fix $B\simeq \{pt\}$. One has:
    \begin{equation}
        \begin{split}
            \pi_*R\Hc om_{G/P}(\Ec, \Fc\otimes \Lc^\vee) &\simeq H^\bullet(G/P, R\Hc om_{G/P}(\Ec, \Fc\otimes \Lc^\vee))\\
            &\simeq \Ext^\bullet_{G/P}(\Ec, \Fc\otimes \Lc^\vee)
        \end{split}
    \end{equation}
    where the first isomorphism is a consequence of \cite[Page 50, Corollary 2]{mumfordabelian}.
\end{proof}

\begin{lemma}\label{lem_Lsemiorthogonalcommutes}
    In the setting of Definition \ref{def_commutingmutations}, let $\Mc$ be the zer locus of a section of a line bundle $\Lc$. Then, if $\Ec$ is $\Lc$-semiorthogonal to $\Fc$, $\LL_{\langle \Ec\otimes\pi^*\dbcoh(B)\rangle}\Fc\otimes\pi^*\dbcoh(B)$ commutes with $\iota^*$.
\end{lemma}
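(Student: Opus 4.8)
The plan is to realise each of the two left mutations in Definition~\ref{def_commutingmutations} as the cone of the counit of a suitable adjunction, and then to compare these counits after applying $\iota^*$. Put $\Phi_\Zc:=\Ec\otimes\pi^*(-)\colon\dbcoh(B)\arw\dbcoh(\Zc)$; combining the $\otimes$--$R\Hc om$ adjunction with $\pi^*\dashv\pi_*$ shows that $\Phi_\Zc$ is left adjoint to $\Psi_\Zc:=\pi_*R\Hc om_\Zc(\Ec,-)$, with essential image $\langle\Ec\otimes\pi^*\dbcoh(B)\rangle$ (here relative exceptionality of $\Ec$ guarantees $\Psi_\Zc\Phi_\Zc\simeq\mathrm{id}$). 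Hence the left mutation $\LL_{\langle\Ec\otimes\pi^*\dbcoh(B)\rangle}\Fc$ is the cone of the counit $\epsilon_\Zc\colon\Phi_\Zc\Psi_\Zc\Fc\arw\Fc$, i.e.\ of the relative evaluation morphism. Writing $\pi_\Mc:=\pi\circ\iota$ and repeating the construction on $\Mc$, the functor $\Phi_\Mc:=\iota^*\Ec\otimes\pi_\Mc^*(-)$ is left adjoint to $\Psi_\Mc:=\pi_{\Mc*}R\Hc om_\Mc(\iota^*\Ec,-)$, and $\LL_{\langle\iota^*\Ec\otimes\iota^*\pi^*\dbcoh(B)\rangle}\iota^*\Fc$ is the cone of its counit $\epsilon_\Mc$. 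Since $\iota^*\Phi_\Zc=\Phi_\Mc$ and $\iota^*$ is exact, it suffices to identify $\iota^*\epsilon_\Zc$ with $\epsilon_\Mc$ up to an isomorphism of sources.

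The heart of the argument is a base-change statement: I would show that the natural comparison map $\Psi_\Zc\Fc\arw\Psi_\Mc\iota^*\Fc$ is an isomorphism. This map is $\Psi_\Zc$ applied to the unit $\eta\colon\Fc\arw\iota_*\iota^*\Fc$ of $\iota^*\dashv\iota_*$, together with the identification $\Psi_\Zc\iota_*\simeq\Psi_\Mc$; the latter follows from the projection-formula isomorphism $R\Hc om_\Zc(\Ec,\iota_*(-))\simeq\iota_*R\Hc om_\Mc(\iota^*\Ec,-)$ (valid because $\Ec$ is perfect, $\Zc$ being smooth) together with $\pi_*\iota_*=\pi_{\Mc*}$. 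To prove invertibility, tensor the ideal-sheaf triangle $\Lc^\vee\arw\Oc_\Zc\arw\iota_*\Oc_\Mc$ of the divisor $\Mc=Z(s)$, $s\in H^0(\Zc,\Lc)$, with $\Fc$; the projection formula turns this into a triangle $\Fc\otimes\Lc^\vee\arw\Fc\xrightarrow{\eta}\iota_*\iota^*\Fc$, so the cone of $\eta$ is $\Fc\otimes\Lc^\vee[1]$. Applying the triangulated functor $\Psi_\Zc$ and using that $\Psi_\Zc(\Fc\otimes\Lc^\vee)=\pi_*R\Hc om_\Zc(\Ec,\Fc\otimes\Lc^\vee)=0$ by $\Lc$-semiorthogonality, I conclude that $\Psi_\Zc\eta$ is an isomorphism.

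With this invertibility in hand I would invoke Lemma~\ref{lem_categoricallemma} for the adjoint pairs $\adj{\iota^*}{\dbcoh(\Zc)}{\dbcoh(\Mc)}{\iota_*}$ and $\adj{\Phi_\Zc}{\dbcoh(B)}{\dbcoh(\Zc)}{\Psi_\Zc}$, whose composite $\Phi_\Mc=\iota^*\Phi_\Zc$ has right adjoint $\Psi_\Zc\iota_*\simeq\Psi_\Mc$. At $x=\Fc$ the lemma reads
\begin{equation}
    \iota^*\epsilon_\Zc=\epsilon_\Mc\circ\Phi_\Mc(\Psi_\Zc\eta),
\end{equation}
and since $\Psi_\Zc\eta$ is an isomorphism, so is $\Phi_\Mc(\Psi_\Zc\eta)$. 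Thus $\iota^*\epsilon_\Zc$ and $\epsilon_\Mc$ have isomorphic cones, and because $\iota^*$ commutes with cones this gives
\begin{equation}
    \iota^*\LL_{\langle\Ec\otimes\pi^*\dbcoh(B)\rangle}\Fc\simeq\LL_{\langle\iota^*\Ec\otimes\iota^*\pi^*\dbcoh(B)\rangle}\iota^*\Fc,
\end{equation}
which is exactly the equality of semiorthogonal decompositions defining commutation with $\iota^*$. To pass from the single object $\Fc$ to the whole subcategory $\Fc\otimes\pi^*\dbcoh(B)$, I would note that replacing $\Fc$ by $\Fc\otimes\pi^*G$ twists every functor above by $G$ (the projection formula gives $\Psi_\Zc(\Fc\otimes\pi^*G)\simeq\Psi_\Zc\Fc\otimes G$, and $\Lc$-semiorthogonality is stable under such twists), so the two mutated subcategories coincide.

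I expect the main obstacle to be organisational rather than conceptual: one must verify that the comparison map produced abstractly by Lemma~\ref{lem_categoricallemma}, namely $\Psi_\Zc\eta$, is genuinely the map whose cone is computed by the ideal-sheaf triangle, and that the identification of right adjoints $\Psi_\Zc\iota_*\simeq\Psi_\Mc$ is natural and compatible with all the units and counits in play. Pinning down the derived projection-formula isomorphism $R\Hc om_\Zc(\Ec,\iota_*(-))\simeq\iota_*R\Hc om_\Mc(\iota^*\Ec,-)$ and its naturality is the only step demanding real care.
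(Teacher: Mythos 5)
Your proposal is correct and follows essentially the same route as the paper: your comparison map $\Phi_\Mc(\Psi_\Zc\eta)$ is precisely the paper's morphism $\alpha = \iota^*\Psi\Psi^!(\eta)$, obtained by the same application of Lemma \ref{lem_categoricallemma} to the adjoint pairs $(\iota^*,\iota_*)$ and $(\Ec\otimes\pi^*(-),\ \pi_*R\Hc om_\Zc(\Ec,-))$, and its invertibility is established in both arguments by tensoring the ideal-sheaf (Koszul) triangle of the divisor $\Mc$ with $\Fc\otimes\pi^*\Gc$ and invoking $\Lc$-semiorthogonality via the projection formula, before concluding through the isomorphism of cones. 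The only cosmetic difference is that you apply the triangulated functor $\Psi_\Zc$ to the triangle directly, where the paper unrolls the same vanishing into a long exact sequence of higher pushforwards $R^k\pi_*$, and you make explicit the identification $\Psi_\Zc\iota_*\simeq\Psi_\Mc$ that the paper leaves implicit in its definition of $\Theta^!$.
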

\begin{proof}
    
    In order to describe the left mutation of $\iota^*\Fc\otimes\pi^*\Gc$ through $\langle\iota^*\Ec\otimes\iota^*\pi^*\dbcoh(B)\rangle$ inside $\dbcoh(\Mc)$, we consider the following functors (and their right adjoints):
    \begin{equation}\label{eq_psi}
        \begin{tikzcd}[row sep = tiny, column sep = large, /tikz/column 1/.append style={anchor=base east} ,/tikz/column 2/.append style={anchor=base west}]
            \Psi_{\langle\Ec\otimes\pi^*\dbcoh(B)\rangle}:\dbcoh(B)\ar{r} &\dbcoh(\Zc)\\
            \Gc\ar[maps to]{r} &\pi^*\Gc\otimes\Ec
        \end{tikzcd}
    \end{equation} 
    \begin{equation}\label{eq_psi!}
        \hspace{35pt}\begin{tikzcd}[row sep = tiny, column sep = large, /tikz/column 1/.append style={anchor=base east} ,/tikz/column 2/.append style={anchor=base west}]
            \Psi_{\langle\Ec\otimes\pi^*\dbcoh(B)\rangle}^!:\dbcoh(\Zc)\ar{r} & \dbcoh(B)\\
            \Rc\ar[maps to]{r} & \pi_*R\Hc om_{\Zc}(\Ec, \Rc)
        \end{tikzcd}
    \end{equation}
    
    Introduce the following notation:
    \begin{equation}
        \begin{split}
            \alpha &= \iota^*\Psi_{\langle\Ec\otimes\pi^*\dbcoh(B)\rangle}\Psi_{\langle\Ec\otimes\pi^*\dbcoh(B)\rangle}^!(\eta_{1, \Fc\otimes\pi^*\Gc})\\
            \epsilon_\Zc &= \epsilon_{2, \Fc\otimes\pi^*\Gc}\\
            \epsilon_\Mc &= \epsilon_{12, \iota^*(\Fc\otimes\pi^*\Gc)}.
        \end{split}
    \end{equation} 

    We can apply Lemma \ref{lem_categoricallemma} to the data $L_1 = \iota^*$, $R_1 = \iota_*$, $L_2 = \Psi_{\langle\Ec\otimes\pi^*\dbcoh(B)\rangle}$, $R_2 = \Psi_{\langle\Ec\otimes\pi^*\dbcoh(B)\rangle}^!$, $\Xm = \dbcoh(\Zc)$, $\Bm = \dbcoh(B)$, $\Mm = \dbcoh(\Mc)$. As a result, the following diagram commutes:
    
    \begin{equation}\label{eq_trianglediagramuniversalitygeneral}
        \begin{tikzcd}[row sep = huge, column sep = huge]
            \iota^*\Psi_{\langle\Ec\otimes\pi^*\dbcoh(B)\rangle}\Psi_{\langle\Ec\otimes\pi^*\dbcoh(B)\rangle}^!(\Fc\otimes\pi^*\Gc)\ar{d}{\alpha}\ar{r}{\iota^*\epsilon_\Zc} & \iota^*(\Fc\otimes\pi^*\Gc)\ar[equals]{d}\\
            \iota^*\Psi_{\langle\Ec\otimes\pi^*\dbcoh(B)\rangle}\Psi_{\langle\Ec\otimes\pi^*\dbcoh(B)\rangle}^!\iota_*\iota^*(\Fc\otimes\pi^*\Gc)\ar[swap]{r}{\epsilon_\Mc}  & \iota^*(\Fc\otimes\pi^*\Gc)
        \end{tikzcd}
    \end{equation}

    Let us now prove the following claim:\\
    \textbf{Claim.} \emph{$\alpha$ is an isomorphism if $\Ec$ is $\Lc$-semiorthogonal to $\Fc$}.
    To this purpose, let us focus on the following term:
    \begin{equation}
        \begin{split}
            \Psi_{\langle\Ec\otimes\pi^*\dbcoh(B)\rangle}^!\iota_*\iota^*(\Fc\otimes\pi^*\Gc)
            = \pi_*R\Hc om_\Zc(\Ec, \iota_*\iota^*(\Fc\otimes\pi^*\Gc))
        \end{split}
    \end{equation}
    One has
    \begin{equation}
        \begin{split}
            \pi_*R\Hc om_\Zc(\Ec, \iota_*\iota^*(\Fc\otimes\pi^*
            \Gc))\simeq \pi_*(\Ec^\vee\otimes\iota_*\iota^*(\Fc\otimes\pi^*
            \Gc)).
        \end{split}
    \end{equation}
    Moreover, $\iota_*\iota^*(\Fc\otimes\pi^*\Gc)$ has a resolution given by the tensor product of $\Fc$ with the Koszul resolution of $\iota_*\iota^*\Oc$:
    \begin{equation}
        0\arw\Fc\otimes\pi^*\Gc\otimes\Lc ^\vee\arw\Fc\otimes\pi^*\Gc\arw\iota_*\iota^*(\Fc\otimes\pi^*\Gc)\arw 0
    \end{equation}
    By left-exactness of the derived pushforward we get the following long exact sequence:
        \begin{equation}
            \begin{split}
                0\arw R^0\pi_*(\Ec^\vee\otimes\Fc\otimes\pi^*\Gc\otimes\Lc^\vee)\arw R^0\pi_*(\Ec^\vee\otimes\Fc\otimes\pi^*\Gc)\arw \\
                \arw R^0\pi_*(\Ec^\vee\otimes\iota_*\iota^*(\Fc\otimes\pi^*\Gc))\arw R^1\pi_*(\Ec^\vee\otimes\Fc\otimes\pi^*\Gc\otimes\Lc^\vee)\arw\\
                \arw R^1\pi_*(\Ec^\vee\otimes\Fc\otimes\pi^*\Gc)\arw R^1\pi_*(\Ec^\vee\otimes\iota_*\iota^*(\Fc\otimes\pi^*\Gc))\arw\cdots
            \end{split}
        \end{equation}
    Hence, proving the claim reduces to show that 
    \begin{equation}
        R^k\pi_*(\Ec^\vee\otimes\Fc\otimes\pi^*\Gc\otimes\Lc^\vee)=0
    \end{equation}
    for every $k$. By the (derived) projection formula one has:
    \begin{equation}
        \begin{split}
            R^k\pi_*(\Ec^\vee\otimes\Fc\otimes\pi^*\Gc\otimes\Lc^\vee) &\simeq R^k\pi_*(\Ec^\vee\otimes\Fc\otimes\Lc^\vee)\otimes\Gc\\
            &\simeq R^k\pi_*R\Hc om_\Zc(\Ec,\Fc\otimes\Lc^\vee)\otimes\Gc=0
        \end{split}
    \end{equation}
    where the last isomorphism follows by the fact that $\Ec$ is $\Lc$-semiorthogonal to $\Fc$. This proves that $\alpha$ is an isomorphism for every $\Gc\in \dbcoh(B)$.
    
    In order to prove that $\iota^*\LL_{\langle \Ec\otimes\pi^*\dbcoh(B)\rangle}(\Fc\otimes\pi^*\Gc)$ and $\LL_{\langle \iota^*\Ec\otimes\iota^*\pi^*\dbcoh(B)\rangle}\iota^*(\Fc\otimes\pi^*\Gc)$ are isomorphic for every $\Gc\in\dbcoh(B)$, note that these objects are defined as the cones of respectively $\epsilon_\Mc$ and  $\iota^*\epsilon_\Zc$, in the following distinguished triangles:
    \begin{equation}\label{eq_trianglesfibration}
            \begin{split}
                \Theta_{\langle\iota^*\Ec\otimes\iota^*\pi^*\dbcoh(B)\rangle}\Theta_{\langle\iota^*\Ec\otimes\iota^*\pi^*\dbcoh(B)\rangle}^!\iota^*\Fc\otimes\iota^*\pi^*\Gc\xrightarrow{\epsilon_\Mc}\\
                \arw\iota^*\Fc\otimes\iota^*\pi^*\Gc\arw\LL_{\langle\iota^*\Ec\otimes\iota^*\pi^*\dbcoh(B)\rangle}\iota^*\Fc\otimes\iota^*\pi^*\Gc;\\
                \\
                \iota^*\Psi_{\langle\Ec\otimes\pi^*\dbcoh(B)\rangle}\Psi_{\langle\Ec\otimes\pi^*\dbcoh(B)\rangle}^!\Fc\otimes\pi^*\Gc\xrightarrow{\iota^*\epsilon_\Zc}\\
                \arw\iota^*(\Fc\otimes\pi^*\Gc)\arw\iota^* \LL_{\langle\Ec\otimes\pi^*\dbcoh(B)\rangle}\Fc\otimes\pi^*\Gc.
            \end{split}
        \end{equation}

    where $\Theta_{\langle\iota^*\Ec\otimes\iota^*\pi^*\dbcoh(B)\rangle} = \iota^*\Psi_{\langle\Ec\otimes\pi^*\dbcoh(B)\rangle}$. Since $\alpha$ is an isomorphism, the proof is concluded by \cite[page 232, Corollary 4]{gelfandmanin} applied to Diagram \ref{eq_trianglediagramuniversalitygeneral}.

\end{proof}


\begin{lemma}\label{lem_Lsemiorthogonal}
    Let $\pi:\Zc\arw B$ be a roof bundle of type $G/P$ where $B$ is a smooth projective variety and $\Vc$ a principal $G$-bundle on $B$. Fix $\Lc:=\Lc_T$ for some $T\in B$, as in Equation \ref{eq_Lc}.
    Let $\Ec, \Fc$ be vector bundles on $\Zc$ such that for every $b\in B$ one has $\Ec|_{\pi^{-1}(b)}\simeq E$ and $\Fc|_{\pi^{-1}(b)}\simeq F$, where $E$ is $L$-semiorthogonal to $F$. Then, for every $\Gc\in\dbcoh(B)$, $\Ec$ is $\Lc$-semiorthogonal to $\Fc\otimes\pi^*\Gc$.
\end{lemma}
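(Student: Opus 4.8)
The plan is to reduce the relative statement to the fiberwise hypothesis by combining the projection formula with the theorem on cohomology and base change. By Definition \ref{def_Lsemiorthogonality}, showing that $\Ec$ is $\Lc$-semiorthogonal to $\Fc\otimes\pi^*\Gc$ amounts to proving that $\pi_*R\Hc om_\Zc(\Ec, \Fc\otimes\pi^*\Gc\otimes\Lc^\vee)=0$. Since $\Ec$ is a vector bundle, $R\Hc om_\Zc(\Ec,-)\simeq\Ec^\vee\otimes(-)$, so this object equals $\pi_*(\Ec^\vee\otimes\Fc\otimes\Lc^\vee\otimes\pi^*\Gc)$. The first step is to apply the derived projection formula to extract $\Gc$ from the pushforward:
\begin{equation}
    \pi_*R\Hc om_\Zc(\Ec, \Fc\otimes\pi^*\Gc\otimes\Lc^\vee)\simeq \pi_*\left(\Ec^\vee\otimes\Fc\otimes\Lc^\vee\right)\otimes^{\mathbf L}\Gc.
\end{equation}
This reduces the problem to the case $\Gc=\Oc_B$: it suffices to prove that the vector bundle $\Nc:=\Ec^\vee\otimes\Fc\otimes\Lc^\vee$ satisfies $\pi_*\Nc=0$, since then the right-hand side vanishes for every $\Gc\in\dbcoh(B)$.

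The second step is to establish $\pi_*\Nc=0$ through cohomology and base change. The morphism $\pi$ is proper and flat, being the locally trivial $G/P$-fibration of Definition \ref{def_roofbundles}, and $\Nc$ is a vector bundle, hence flat over $B$ with scheme-theoretic fibers $\pi^{-1}(b)\simeq G/P$. For each $b\in B$ the restriction to the fiber is
\begin{equation}
    \Nc|_{\pi^{-1}(b)}\simeq E^\vee\otimes F\otimes L^\vee,
\end{equation}
where I use the fiberwise identifications $\Ec|_{\pi^{-1}(b)}\simeq E$ and $\Fc|_{\pi^{-1}(b)}\simeq F$, together with the fact that $\Lc=\Lc_T$ restricts to $L$ on every fiber (Equation \ref{eq_Lc}, the twist $\pi^*T$ being trivial along $\pi^{-1}(b)$). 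Consequently the fiber cohomology is
\begin{equation}
    H^i\bigl(\pi^{-1}(b), \Nc|_{\pi^{-1}(b)}\bigr)\simeq H^i(G/P, E^\vee\otimes F\otimes L^\vee)\simeq\Ext^i_{G/P}(E, F\otimes L^\vee),
\end{equation}
which vanishes for all $i$ and all $b$ precisely because $E$ is $L$-semiorthogonal to $F$, by the fiberwise case of Definition \ref{def_Lsemiorthogonality} (with $B$ a point).

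Finally, since $B$ is integral and the function $b\mapsto\dim_\CC H^i(\pi^{-1}(b), \Nc|_{\pi^{-1}(b)})$ is identically zero, hence constant, Grauert's theorem on cohomology and base change forces each $R^i\pi_*\Nc$ to be locally free of rank equal to this constant, namely $0$; therefore $\pi_*\Nc=0$ and the claim follows. Equivalently, in place of Grauert one may run a descending induction on $i$ starting from the relative dimension $N=\dim G/P$: base change always holds in the top degree, so $R^N\pi_*\Nc\otimes k(b)\simeq H^N(\pi^{-1}(b), \Nc|_{\pi^{-1}(b)})=0$ gives $R^N\pi_*\Nc=0$ by Nakayama, and the standard base change exact sequences then propagate the vanishing downward.

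The step requiring the most care is the base change argument: one must genuinely verify that $\pi$ is flat and proper so that $\Nc$ is $B$-flat and the $\pi^{-1}(b)$ are the honest scheme-theoretic fibers, and that the restriction of $\Lc$ to a fiber is exactly $L$ and not some twist of it, so that the fiber cohomology is literally the $\Ext$-group controlled by the hypothesis. Once these identifications are secured, the vanishing is automatic; the projection formula in the first step is then the only other nontrivial input, and it is routine for the perfect complex $\Nc$.
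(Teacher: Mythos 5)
Your proposal is correct and follows essentially the same route as the paper: reduce via the derived projection formula to the vanishing of $R^\bullet\pi_*(\Ec^\vee\otimes\Fc\otimes\Lc^\vee)$, then use flatness over $B$ and the constancy (indeed identical vanishing) of the fiberwise cohomology $H^\bullet(G/P, E^\vee\otimes F\otimes L^\vee)\simeq\Ext^\bullet_{G/P}(E, F\otimes L^\vee)$ to conclude by cohomology and base change — the paper's citation of \cite[Page 50, Corollary 2]{mumfordabelian} is precisely the Grauert-type statement you invoke. Your added care about $\Lc|_{\pi^{-1}(b)}\simeq L$ and the alternative Nakayama/descending-induction argument are fine but do not change the substance.
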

\begin{proof}
    Since $\pi_*R\Hc om_{\Zc}(\Ec, \Fc\otimes\pi^*\Gc\otimes\Lc^\vee)\simeq R\pi_*(\Ec^\vee\otimes \Fc\otimes\pi^*\Gc\otimes\Lc^\vee)$, the claim follows by proving that \linebreak  $R^k\pi_*(\Ec^\vee\otimes\Fc\otimes\pi^*\Gc\otimes\Lc^\vee)=0$ for every $k$. By the derived projection formula one has:
    \begin{equation}
       R^k\pi_*(\Ec^\vee\otimes\Fc\otimes\pi^*\Gc\otimes\Lc^\vee)= R^k\pi_*(\Ec^\vee\otimes\Fc\otimes\Lc^\vee)\otimes\Gc. 
    \end{equation}
    The following step is to prove that the stalk $R^k\pi_*(\Ec^\vee\otimes\Fc\otimes\Lc^\vee)_b$ vanishes for every $b\in B$. Once we fix $b$, we observe that:
    \begin{enumerate}
        \item[$\circ$] $\Ec^\vee\otimes\Fc\otimes\Lc^\vee$ is flat over $B$ \cite[Proposition III.9.2]{hartshorne}
        \item[$\circ$] The map $b\longmapsto \dim H^k(\pi^{-1}(b), \Ec^\vee\otimes\Fc\otimes\Lc^\vee|_{\pi^{-1}(b)}) =$\\ $\dim H^k(G/P, E^\vee\otimes F\otimes L^\vee)$ is constant for every $k$
    \end{enumerate}
    Then we apply \cite[Page 50, Corollary 2]{mumfordabelian} and we find:
    \begin{equation}
        \begin{split}
            R^k\pi_*(\Ec^\vee\otimes\Fc\otimes\Lc^\vee)_b &\simeq H^k(G/P, E^\vee\otimes F\otimes L^\vee)\\
            &\simeq \Ext^k_{G/P}(E, F\otimes L^\vee) = 0
        \end{split}
    \end{equation}
    where the last equality holds because $E$ is $L$-semiorthogonal to $F$ by assumption. This proves that \linebreak $R^k\pi_*(\Ec^\vee\otimes\Fc\otimes\Lc^\vee)=0$, hence concluding the proof.
\end{proof}

\subsection{Roof bundles and mutations}

\begin{notation}\label{notation}
    Hereafter we establish the notation for the main results of Sections \ref{sec_deriverdequivalenceCYfibrations} and \ref{sec_simplekequivalence}.
    \begin{enumerate}
        \item[$\circ$] Consider a homogeneous roof bundle $\Zc\xrightarrow{\hspace{3pt}\pi\hspace{3pt}} B$ of type $G/P$ on a smooth projective base $B$, with projective bundle structures $p_i:\Zc\arw \Zc_i$ for $i\in\{1;2\}$. Call $\Vc$ a principal $G$-bundle over $B$ such that $\Zc = \Vc\times^G G/P$. We call $h_i:G/P\arw G/P_i$ the projective bundle structures of $G/P$.
        \item[$\circ$] Let $L$ be the Grothendieck line bundle of both the projective bundle structures of $G/P$. For some line bundle $T$ on $B$, fix $\Lc:=\Lc_T$ as in Equation \ref{eq_Lc}. 
        \item[$\circ$] Choose a section $S\in H^0(\Zc, \Lc)$ with zero locus $\Mc$, a general section $\sigma\in H^0(G/P, L)$, a point $b\in B$ such that $M := Z(\sigma) = Z(S|_{\pi^{-1}(b)})$ is smooth of expected codimension, and the following diagram:
        \begin{equation}
            \begin{tikzcd}[row sep = huge]
            M = \Mc\times_B\{b\}\ar{r}\ar{d} & \Mc \ar{d} \\
            \{b\}\ar{r} & B
            \end{tikzcd}
        \end{equation}
        where the generality conditions on $b$ and $S$ are the ones of Lemma \ref{lem_CYfibered}. Call $\iota:\Mc\xhookrightarrow{\hspace{10pt}}\Zc$ and \linebreak $l:M\xhookrightarrow{\hspace{10pt}}G/P$ the respective embeddings.
        
        \item[$\circ$] Consider a Calabi--Yau pair $(Y_1, Y_2)$ associated to the roof $G/P$. Similarly, we consider the pair $(X_1, X_2)$ of Calabi--Yau fibrations defined as $X_i = Z(p_{i*}S)$. 
        
        \item[$\circ$] Fix two full exceptional collections consisting of vector bundles:
        \begin{equation}\label{eq_collectionsGPi}
            \begin{split}
                \dbcoh(G/P_1) = \langle J_1, \dots, J_m\rangle, \hspace{10pt}
                \dbcoh(G/P_2) = \langle K_1, \dots, K_m\rangle
            \end{split}
        \end{equation}
        which by \cite[Corollary 2.7]{orlovblowup} induce:
        
        \begin{equation}\label{eq_smallcollectionsshortGP}
            \begin{split}
                \dbcoh(G/P) &= \langle E_1, \dots, E_N\rangle = \langle F_1, \dots, F_N\rangle
             \end{split}
        \end{equation}
        
        where the bundles $E_i$ have the form $E_i = h_1^*J_j\otimes L^{\otimes k}$ and $F_i = h_2^*K_j\otimes L^{\otimes k}$ for some integers $j,k$. Moreover, by Theorem (Orl03 Proposition 2.10), one gets two semiorthogonal decompositions for $\dbcoh(M)$:
        
        \begin{equation}\label{eq_smallcollectionsshort}
            \begin{split}
                \dbcoh(M) &= \langle \theta_1\dbcoh(Y_1), l^*E_1, \dots, l^*E_n\rangle = \langle \theta_2\dbcoh(Y_2), l^*F_1, \dots, l^*F_n \rangle \\
            \end{split}
        \end{equation}
        
        where $\theta_i$ is a fully faithful functor defined as the composition of the pullback to the projectivization of the normal bundle of $Y_i$ with its embedding in $M$. Note that the numbers of exceptional objects in Equations \ref{eq_smallcollectionsshortGP} and \ref{eq_smallcollectionsshort} are different, but all exceptional objects of \ref{eq_smallcollectionsshort} are pullbacks of objects from \ref{eq_smallcollectionsshortGP} (up to twisting the whole collection by $L^\vee$).
        We introduce the following ordered lists of exceptional objects: 
        
        \begin{equation}\label{eq_listsofbundles}
            \begin{split}
                \boldsymbol{E} &= \left( E_1, \dots, E_n\right) \\
                \boldsymbol{F} &= \left( F_1, \dots, F_n\right) \\
            \end{split}
        \end{equation}
        
        Observe that $\boldsymbol{E}$ and $\boldsymbol{F}$ are subsets of the generators of $\dbcoh(G/P)$ appearing in the exceptional collections \ref{eq_smallcollectionsshortGP}, and they comprise precisely the object whose pullbacks generate $\theta_i\dbcoh(Y_i)^\perp$.
        \item[$\circ$] Note that $G/P$ every exceptional object is homogeneous \cite[Proposition 2.1.4]{boehning}, hence every $J_i$ is a homogeneous vector bundle. By \cite[Theorem 3.1]{samokhin}, the subcategory $\Fm(J_i)\otimes r_1^*\dbcoh(B)$ is admissible in $\dbcoh(\Zc_1)$, where $\Fm$ is the same of Equation \ref{eq_relativizationfunctor}. Similarly we can define admissible subcategories $F(K_i)\otimes r_2^*\dbcoh(B)\subset\dbcoh(\Zc_2)$. By applying \cite[Theorem 3.1]{samokhin} and \cite[Corollary 2.7]{orlovblowup} to the collections \ref{eq_smallcollectionsshort} we find:
        
        \begin{equation}\label{eq_bigcollectionsshortGP}
            \begin{split}
                \dbcoh(\Zc) &= \langle [E_1], \dots, [E_N]\rangle = \langle [F_1], \dots, [F_N]\rangle \\
            \end{split}
        \end{equation}
        where we set $[E_i] := p_1^*\Fm(J_i)\otimes\Lc^{\otimes k}\otimes\pi^*\dbcoh(B)$, and the same kind of notation for $[F_i]$. 
        More generally, given a homogeneous vector bundle $G\times^{P} V_\Gamma$ on $G/P$, we will use the notation $[G\times^{P} V_\Gamma]$ to denote the subcategory $(\Vc\times^G G\times^{P} V_{\Gamma})\otimes\pi^*\dbcoh(B)\subset\dbcoh(\Zc)$.
        Since $L$ and $J_i$ are both homogeneous, one has $[E_i]\simeq [h_1^*J_i\otimes L^{\otimes k}]$ and $[F_i] = [h_2^*K_j\otimes L^{\otimes k}]$. Furthermore, by \cite[Proposition 2.10]{cayleytrick}, there are semiorthogonal decompositions:
        \begin{equation}\label{eq_bigcollectionsshort}
            \begin{split}
                \dbcoh(\Mc) &= \langle \phi_1\dbcoh(X_1), \iota^*[E_1], \dots, \iota^*[E_n]\rangle = \langle \phi_2\dbcoh(X_2), \iota^*[F_1], \dots, \iota^*[F_n]\rangle \\
            \end{split}
        \end{equation}
        where $\phi_1$ and $\phi_2$ are fully faithful functors given again by composing the pullback to the projectivization of the normal bundle of $X_i$ with its embedding in $\Mc$.

        \item[$\circ$] Consider a sequence of pairs $(\boldsymbol E^{(\lambda)}, \psi^{(\lambda)})$ where, for each $\lambda$, $\boldsymbol E^{(\lambda)} =  \left(E_1^{(\lambda)},\dots,E_n^{(\lambda)}\right)$ is an ordered list of exceptional objects of $\dbcoh(G/P)$ and $\psi^{(\lambda)}:\dbcoh(Y_1)\arw\dbcoh(M)$ is a fully faithful functor. Consider also a sequence of operations:
        \begin{equation}
            \xi^{(\lambda)}:(\boldsymbol E^{(\lambda)}, \psi^{(\lambda)})\longmapsto (\boldsymbol E^{(\lambda+1)}, \psi^{(\lambda+1)})
        \end{equation}
        such that each $\xi^{(\lambda)}$ falls in one of the following classes (type \ref{itm:op_o1}, \ref{itm:op_o2} or \ref{itm:op_o3}):
        \begin{description}
        \item[\namedlabel{itm:op_o1}{O1}] For $1\leq i\leq n-1$ exchanging the order of $E_{i+1}^{(\lambda)}$ with $E_{i}^{(\lambda)}$ and substituting the latter with $\LL_{E_i}E_{i+1}^{(\lambda)}$, while leaving $\psi^{(\lambda)}$ unchanged:
            \begin{equation}
                \begin{split}
                    \left(E_1^{(\lambda+1)},\dots, E_n^{(\lambda+1)}\right) &=\left(E_1^{(\lambda)},\dots,E_{i-1}^{(\lambda)}, \LL_{E_i^{(\lambda)}}E_{i+1}^{(\lambda)}, E_i^{(\lambda)}, E_{i+2}^{(\lambda)},\dots, E_n^{(\lambda)}\right)\\
                    \psi^{(\lambda+1)} &=\psi^{(\lambda)}
                \end{split}
            \end{equation}
            where $\LL_{E_i^{(\lambda)}}E_{i+1}^{(\lambda)}$ is defined by the following distinguished triangle in $\dbcoh(G/P)$:
            \begin{equation}
                E_i^{(\lambda)}\otimes\Ext_{G/P}^\bullet(E_i^{(\lambda)},E_{i+1}^{(\lambda)})\arw E_{i+1}^{(\lambda)}\arw\LL_{E_i^{(\lambda)}}E_{i+1}^{(\lambda)}.
            \end{equation}
            Similarly, we define the operation of exchanging the order of $E_{i}^{(\lambda)}$ with $E_{i+1}^{(\lambda)}$ and substituting the former with $\RR_{E_{i+1}^{(\lambda)}}E_{i}^{(\lambda)}$, while leaving $\psi^{(\lambda)}$ unchanged:
            \begin{equation}
                \begin{split}
                    \left(E_1^{(\lambda+1)},\dots, E_n^{(\lambda+1)}\right) &=\left(E_1^{(\lambda)},\dots,E_{i-1}^{(\lambda)}, E_{i+1}^{(\lambda)}, \RR_{E_{i+1}^{(\lambda)}}E_{i}^{(\lambda)}, E_{i+2}^{(\lambda)},\dots, E_n^{(\lambda)}\right)\\
                    \psi^{(\lambda+1)} &=\psi^{(\lambda)}
                \end{split}
            \end{equation}
             where $\RR_{E_{i+1}^{(\lambda)}}E_{i}^{(\lambda)}$ is defined by the following distinguished triangle in $\dbcoh(G/P)$:
            \begin{equation}
                \RR_{E_{i+1}^{(\lambda)}}E_{i}^{(\lambda)}\arw E_{i}^{(\lambda)}\arw E_{i+1}^{(\lambda)}\otimes\Ext_{G/P}^\bullet(E_i^{(\lambda)},E_{i+1}^{(\lambda)})^\vee.
            \end{equation}
            \item[\namedlabel{itm:op_o2}{O2}] Sending $E_1^{(\lambda)}$ to the end, twisting it by $L^{\otimes(r-1)}$ and substituting $\psi^{(\lambda)}$ with $\RR_{l^*E_1}\psi^{(\lambda)}$:
            \begin{equation}
                \begin{split}
                    \left(E_1^{(\lambda+1)},\dots, E_n^{(\lambda+1)}\right) &=\left(E_2^{(\lambda)},\dots,E_{n}^{(\lambda)}, E_1^{(\lambda)},\otimes L^{\otimes(r-1)}\right)\\
                    \psi^{(\lambda+1)} &=\RR_{l^*E_1}\psi^{(\lambda)}
                \end{split}
            \end{equation}
            or sending $E_n^{(\lambda)}$ to the beginning, tensoring it with $L^{\otimes(-r+1)}$ and substituting $\psi^{(\lambda)}$ with\\
            $\LL_{l^*(E_n\otimes L^{\otimes(-r+1)})}\psi^{(\lambda)}$:
            \begin{equation}
                \begin{split}
                    \left(E_1^{(\lambda+1)},\dots, E_n^{(\lambda+1)}\right) &\arw\left(E_1^{(\lambda)}\otimes L^{\otimes(-r+1)}, E_2^{(\lambda)},\dots,E_{n-1}^{(\lambda)}\right)\\
                    \psi^{(\lambda+1)} &\arw\LL_{l^*(E_n\otimes L^{\otimes(-r+1)})}\psi^{(\lambda)}
                \end{split}
            \end{equation}
            \item[\namedlabel{itm:op_o3}{O3}] For any $k\in\ZZ$, replacing $E_i^{(\lambda)}$ with $E_i^{(\lambda)}\otimes L^{\otimes k}$ for every $i$, and substituting $\psi^{(\lambda)}$ with $\psi^{(\lambda)}(-)\otimes L^{\otimes k}$:
            \begin{equation}
                \begin{split}
                    \left(E_1^{(\lambda+1)},\dots, E_n^{(\lambda+1)}\right) &=\left(E_1^{(\lambda)}\otimes L^{\otimes k},\dots, E_n^{(\lambda)}\otimes L^{\otimes k}\right)\\
                    \psi^{(\lambda+1)} &=\psi^{(\lambda)}(-)\otimes L^{\otimes k}
                \end{split}
            \end{equation}
        \end{description}
        
        \item[$\circ$] Consider a sequence of mutations and twists on the semiorthogonal decompositions \ref{eq_smallcollectionsshort} which acts in the following way:
        \begin{equation}\label{eq_mutationsinassumption}
            \begin{split}
                \langle \theta_1\dbcoh(Y_1), l^*E_1, \dots, l^*E_n\rangle \arw \langle \psi\dbcoh(Y_1), l^*F_1, \dots, l^*F_n\rangle 
            \end{split}
        \end{equation}
    \end{enumerate}
\end{notation}

\begin{lemma}\label{lem_inducedmutationsinZ}
    Let $\dbcoh(G/P) = \langle W_1, \dots, W_{n},\rangle$ be a full exceptional collection. Then, in the setting of Notation \ref{notation}, for $1\leq i\leq N-1$ one has:
    \begin{equation}\label{eq_mutatedblockZ}
        \small
        \begin{split}
            \dbcoh(\Zc) &= \langle [W_1],\dots, [W_{i-1}], [\LL_{W_i}W_{i+1}], [W_i], [W_{i+2}],\dots, [W_N]\rangle
        \end{split}
    \end{equation}
    Moreover, the following holds: 
    \begin{equation}\label{eq_homogeneousvb}
        \begin{split}
            \LL_{[W_i]}[W_{i+1}] \simeq [\LL_{W_i}W_{i+1}]
        \end{split}
    \end{equation}
\end{lemma}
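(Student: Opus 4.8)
The plan is to reduce everything to the fibrewise mutation on $G/P$ via the relativization functor $\Fm$. The displayed decomposition \ref{eq_mutatedblockZ} is the formal consequence of the mutation formula \ref{eq_homogeneousvb} together with the standard fact that mutating two adjacent admissible blocks of a semiorthogonal decomposition again produces a semiorthogonal decomposition; so the real content is \ref{eq_homogeneousvb}. First I would record the starting decomposition $\dbcoh(\Zc)=\langle[W_1],\dots,[W_N]\rangle$: since every exceptional object on $G/P$ is homogeneous by \cite[Proposition 2.1.4]{boehning}, the $W_j$ are homogeneous bundles, the blocks $[W_j]=\Fm(W_j)\otimes\pi^*\dbcoh(B)$ are admissible by \cite[Theorem 3.1]{samokhin}, and they assemble into a full decomposition exactly as in Notation \ref{notation}. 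I would also note that $\LL_{W_i}W_{i+1}$ is again exceptional, hence homogeneous, so that $[\LL_{W_i}W_{i+1}]$ is defined.

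For \ref{eq_homogeneousvb} I would present $[W_i]$ and $[W_{i+1}]$ as the images of the fully faithful functors $\Gc\mapsto\Fm(W_\bullet)\otimes\pi^*\Gc$ of the shape \ref{eq_psi}, the right adjoint of the first being $\Rc\mapsto\pi_*R\Hc om_\Zc(\Fm(W_i),\Rc)$ as in \ref{eq_psi!}. The left mutation of a generic object $\Fm(W_{i+1})\otimes\pi^*\Gc$ of $[W_{i+1}]$ is the cone of the counit
\begin{equation}
\Fm(W_i)\otimes\pi^*\pi_*R\Hc om_\Zc(\Fm(W_i),\Fm(W_{i+1})\otimes\pi^*\Gc)\arw\Fm(W_{i+1})\otimes\pi^*\Gc,
\end{equation}
whose source, by the derived projection formula, equals $\Fm(W_i)\otimes\pi^*R\pi_*(\Fm(W_i)^\vee\otimes\Fm(W_{i+1}))\otimes\pi^*\Gc$.

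The heart of the argument is the identification of $R\pi_*(\Fm(W_i)^\vee\otimes\Fm(W_{i+1}))$. Exactly as in the proof of Lemma \ref{lem_Lsemiorthogonal} — using flatness over $B$, the constancy of $b\mapsto\dim H^k(\pi^{-1}(b),W_i^\vee\otimes W_{i+1})=\dim\Ext^k_{G/P}(W_i,W_{i+1})$, and \cite[Page 50, Corollary 2]{mumfordabelian} — each $R^k\pi_*$ is locally free and commutes with base change. Since $\Fm(W_i)^\vee\otimes\Fm(W_{i+1})=\Vc\times^G(W_i^\vee\otimes W_{i+1})$ is an associated bundle, the fibrewise cohomology assembles into $\Vc\times^G\Ext^\bullet_{G/P}(W_i,W_{i+1})$, so after pullback and tensoring the source of the counit becomes $\Fm\big(W_i\otimes\Ext^\bullet_{G/P}(W_i,W_{i+1})\big)\otimes\pi^*\Gc$, with $\Ext^\bullet$ carrying its natural $G$-structure. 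The same associated-bundle description shows the counit is $\Fm$ applied to the fibrewise evaluation $W_i\otimes\Ext^\bullet_{G/P}(W_i,W_{i+1})\arw W_{i+1}$, tensored with $\pi^*\Gc$. Exactness of $\Fm$ (Equation \ref{eq_relativizationfunctor}) then carries the triangle defining $\LL_{W_i}W_{i+1}$ to the mutation triangle on $\Zc$; comparing cones gives $\LL_{[W_i]}(\Fm(W_{i+1})\otimes\pi^*\Gc)\simeq\Fm(\LL_{W_i}W_{i+1})\otimes\pi^*\Gc$, and letting $\Gc$ range over $\dbcoh(B)$ yields \ref{eq_homogeneousvb}, whence \ref{eq_mutatedblockZ}.

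The step I expect to be the main obstacle is matching the morphisms rather than the objects: the cohomology-and-base-change computation identifies the source of the counit, but to conclude I must know that the adjunction counit on $\Zc$ really is the relativization of the fibrewise evaluation. I would settle this either by the naturality of the associated-bundle construction $\Vc\times^G(-)$ with respect to proper pushforward along the fibres of $\pi$ (so that evaluation relativizes to evaluation), or, should a purely formal argument be preferred, by the adjunction bookkeeping of Lemma \ref{lem_categoricallemma} applied to the pairs at hand, mirroring the proof of Lemma \ref{lem_Lsemiorthogonalcommutes}.
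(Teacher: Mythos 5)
Your proof is correct in substance, but it takes a genuinely different route from the paper. You prove the key isomorphism \ref{eq_homogeneousvb} by a direct relative computation: projection formula, cohomology-and-base-change to identify $R\pi_*\Fm(W_i^\vee\otimes W_{i+1})$ with the associated bundle $\Vc\times^G\Ext^\bullet_{G/P}(W_i,W_{i+1})$, and exactness of $\Fm$ to relativize the fibrewise mutation triangle. The paper instead avoids all object- and morphism-level analysis: it applies \cite[Theorem 3.1]{samokhin} twice, once to $\langle W_1,\dots,W_N\rangle$ followed by \cite[Corollary 2.9]{kuznetsovcubic4folds} on $\Zc$, and once to the already-mutated collection $\langle W_1,\dots,W_{i-1},\LL_{W_i}W_{i+1},W_i,\dots,W_N\rangle$ on $G/P$ (which is legitimate since $\LL_{W_i}W_{i+1}$ is again exceptional, hence homogeneous, so $[\,\cdot\,]$ is defined), and then observes that both $\LL_{[W_i]}[W_{i+1}]$ and $[\LL_{W_i}W_{i+1}]$ occupy the same slot, namely ${}^\perp\langle[W_1],\dots,[W_{i-1}]\rangle\cap\langle[W_i],[W_{i+2}],\dots,[W_N]\rangle^\perp$, and therefore coincide. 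This uniqueness-of-the-complement trick entirely sidesteps the counit-matching problem that you correctly identify as the delicate point of your approach. What each buys: the paper's argument is shorter and formally watertight, but it leans on the existence of a \emph{full} exceptional collection bracketing the pair; your computation needs only the exceptional pair and the relative structure, and yields the stronger object-wise statement $\LL_{[W_i]}(\Fm(W_{i+1})\otimes\pi^*\Gc)\simeq\Fm(\LL_{W_i}W_{i+1})\otimes\pi^*\Gc$. One caveat on your fallback options: your route (a) via local triviality of $\pi$ and naturality of $\Vc\times^G(-)$ does close the gap (over a trivializing open the counit restricts to the fibrewise evaluation, and the identifications glue $G$-equivariantly), but route (b) via Lemma \ref{lem_categoricallemma} is doubtful here, since that lemma compares counits of two \emph{composable} adjunctions, and $\Fm$ does not sit inside such a pair between $\dbcoh(G/P)$ and $\dbcoh(\Zc)$ in the required way — it is the mechanism behind Lemma \ref{lem_Lsemiorthogonalcommutes}, where the comparison functor $\iota^*$ genuinely is a left adjoint, which is not your situation.
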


\begin{proof}
    By \cite[Theorem 3.1]{samokhin} one has a semiorthogonal decomposition $\dbcoh(\Zc)$ $=\langle [W_1],\dots, [W_N]\rangle$, then applying \cite[Corollary 2.9]{kuznetsovcubic4folds} one finds:
    \begin{equation}
        \small
        \begin{split}
            \dbcoh(\Zc) &= \langle [W_1],\dots, [W_{i-1}], \LL_{[W_i]}[W_{i+1}], [W_i], [W_{i+2}],\dots, [W_N]\rangle
        \end{split}
    \end{equation}
    On the other hand, for $1\leq i\leq N-1$ one has:
    \begin{equation}\label{eq_generalcollectionGPmutated}
        \dbcoh(G/P) = \langle W_1, \dots, W_{i-1}, \LL_{W_i} W_{i+1}, W_i, W_{i+2},\dots, W_m\rangle
    \end{equation}
    and by applying again \cite[Theorem 3.1]{samokhin} to \ref{eq_generalcollectionGPmutated} one finds:
    \begin{equation}
        \small
        \begin{split}
            \dbcoh(\Zc) &= \langle [W_1],\dots, [W_{i-1}], [\LL_{W_i}W_{i+1}], [W_i], [W_{i+2}],\dots, [W_N]\rangle
        \end{split}
    \end{equation}
    By comparison with Equation \ref{eq_mutatedblockZ} we see that both $[\LL_{W_i}W_{i+1}]$ and $\LL_{[W_i]}[W_{i+1}]$ are equivalent to the subcategory $^\perp\langle [W_1],\dots, [W_{i-1}]\rangle\hspace{5pt}\cap\hspace{5pt}\langle[W_i], [W_{i+2}],\dots, [W_N]\rangle^\perp$, hence they are equivalent.
\end{proof}
    
\begin{lemma}\label{lem_inducedmutationsinM}
    In the language of Notation \ref{notation}, consider a semiorthogonal decomposition $\dbcoh(M) = \langle\phi_1\dbcoh(Y_1), l^*W_1,\dots, l^*W_n\rangle$ where, for $1\leq j\leq n$, $W_j$ is a homogeneous vector bundle on $G/P$. Assume that $W_i$ is $L$-semiorthogonal to $W_{i+1}$ for some positive $i < n$. Then the following holds:\begin{enumerate}
        \item[$\circ$] $\LL_{W_i}W_{i+1}$ commutes with $l^*$
        \item[$\circ$] $\LL_{[W_i]}[W_{i+1}]$ commutes with $\iota^*$
        \item[$\circ$] There is a semiorthogonal decomposition:
        \begin{equation}\label{eq_mutatedblockMcomm}
            \begin{split}
                \dbcoh(\Mc) =& \langle \phi_1\dbcoh(X_1), \iota^*[W_1], \dots, \iota^*[W_{i-1}],\hspace{5pt} \iota^*[\LL_{W_i}W_{i+1}], \iota^*[W_i], \iota^*[W_{i+2}],\dots,\iota^*[W_n]\rangle
            \end{split}
        \end{equation}
    \end{enumerate}
\end{lemma}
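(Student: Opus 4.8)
The plan is to assemble all three assertions from the preparatory lemmas; the only genuine inputs are the fibrewise hypothesis that $W_i$ is $L$-semiorthogonal to $W_{i+1}$ and the fact, built into the relativization functor $\Fm$ of Equation \eqref{eq_relativizationfunctor}, that each $\Fm(W_j)$ is a vector bundle on $\Zc$ with $\Fm(W_j)|_{\pi^{-1}(b)}\simeq W_j$ for every $b\in B$, so that $[W_j]=\langle\Fm(W_j)\otimes\pi^*\dbcoh(B)\rangle$. First I would record the relativized starting decomposition $\dbcoh(\Mc)=\langle\phi_1\dbcoh(X_1),\iota^*[W_1],\dots,\iota^*[W_n]\rangle$, obtained from the given fibre decomposition exactly as \eqref{eq_bigcollectionsshort} is produced from \eqref{eq_smallcollectionsshort}: each $[W_j]$ is admissible by \cite[Theorem 3.1]{samokhin} because $W_j$ is homogeneous, and the decomposition of $\dbcoh(\Mc)$ follows from the Cayley-trick decomposition \cite[Proposition 2.10]{cayleytrick}.

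For the first bullet I would invoke Lemma \ref{lem_Lsemiorthogonalcommutes} with base $B$ a point, taking $G/P$ for $\Zc$, $M=Z(\sigma)$ for $\Mc$, $l$ for $\iota$, and $W_i,W_{i+1}$ for $\Ec,\Fc$. Over a point $\pi^*\dbcoh(B)$ is tensoring by graded vector spaces, so $\langle\Ec\otimes\pi^*\dbcoh(B)\rangle=\langle W_i\rangle$, and by the lemma following Definition \ref{def_Lsemiorthogonality} the hypothesis that $W_i$ is $L$-semiorthogonal to $W_{i+1}$ is precisely the $\Lc$-semiorthogonality required; Lemma \ref{lem_Lsemiorthogonalcommutes} then yields that $\LL_{W_i}W_{i+1}$ commutes with $l^*$.

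For the second bullet I would first transport the fibrewise semiorthogonality to the total space: since $\Fm(W_i)$ and $\Fm(W_{i+1})$ restrict fibrewise to $W_i$ and $W_{i+1}$, Lemma \ref{lem_Lsemiorthogonal} (with $\Gc=\Oc_B$) shows that $\Fm(W_i)$ is $\Lc$-semiorthogonal to $\Fm(W_{i+1})$. Feeding this into Lemma \ref{lem_Lsemiorthogonalcommutes} in the genuine relative setting $\pi:\Zc\arw B$, $\iota:\Mc\hookrightarrow\Zc$ with $\Ec=\Fm(W_i)$ and $\Fc=\Fm(W_{i+1})$ then shows that $\LL_{[W_i]}[W_{i+1}]$ commutes with $\iota^*$.

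For the third bullet I would mutate the adjacent pair $(\iota^*[W_i],\iota^*[W_{i+1}])$ inside the starting decomposition: left mutation of an adjacent pair is local, replacing it by $(\LL_{\iota^*[W_i]}(\iota^*[W_{i+1}]),\iota^*[W_i])$ and leaving the remaining blocks untouched. The commutation of the second bullet identifies $\LL_{\iota^*[W_i]}(\iota^*[W_{i+1}])$ with $\iota^*\LL_{[W_i]}[W_{i+1}]$, and Lemma \ref{lem_inducedmutationsinZ} identifies $\LL_{[W_i]}[W_{i+1}]$ with $[\LL_{W_i}W_{i+1}]$, so the mutated block equals $\iota^*[\LL_{W_i}W_{i+1}]$, which is exactly \eqref{eq_mutatedblockMcomm}. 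I expect the main obstacle to be bookkeeping rather than a new idea: Definition \ref{def_commutingmutations} packages ``commuting with $\iota^*$'' with $\Ec,\Fc$ as the last two blocks, whereas $[W_i],[W_{i+1}]$ lie in the interior of the collection. One must observe that the actual content of Lemma \ref{lem_Lsemiorthogonalcommutes} is the position-independent isomorphism $\iota^*\LL_{\langle\Ec\otimes\pi^*\dbcoh(B)\rangle}(\Fc\otimes\pi^*\dbcoh(B))\simeq\LL_{\langle\iota^*\Ec\otimes\iota^*\pi^*\dbcoh(B)\rangle}(\iota^*\Fc\otimes\iota^*\pi^*\dbcoh(B))$, and that all functors involved are $\dbcoh(B)$-linear, so the mutation localizes to the admissible subcategory generated by the adjacent pair together with its left and right orthogonals.
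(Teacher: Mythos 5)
Your proposal is correct and follows essentially the same route as the paper: the relativized decomposition via \cite[Theorem 3.1]{samokhin} and \cite[Proposition 2.10]{cayleytrick}, Lemma \ref{lem_Lsemiorthogonal} to pass from $L$-semiorthogonality of $W_i, W_{i+1}$ to $\Lc$-semiorthogonality of $\Fm(W_i), \Fm(W_{i+1})$, Lemma \ref{lem_Lsemiorthogonalcommutes} applied once over a point and once over $B$, and Lemma \ref{lem_inducedmutationsinZ} combined with the adjacent-pair mutation (\cite[Corollary 2.9]{kuznetsovcubic4folds}) to conclude. Your closing remark about the positional packaging in Definition \ref{def_commutingmutations} versus the interior placement of $[W_i], [W_{i+1}]$ is a point the paper leaves implicit, and you resolve it correctly by observing that the content of Lemma \ref{lem_Lsemiorthogonalcommutes} is the position-independent isomorphism $\iota^*\LL_{\langle\Ec\otimes\pi^*\dbcoh(B)\rangle}(\Fc\otimes\pi^*\dbcoh(B))\simeq\LL_{\langle\iota^*\Ec\otimes\iota^*\pi^*\dbcoh(B)\rangle}(\iota^*\Fc\otimes\iota^*\pi^*\dbcoh(B))$.
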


\begin{proof}
    Let us first recall that by \cite[Theorem 3.1]{samokhin} and \cite[Proposition 2.10]{cayleytrick} one has
    
    \begin{equation}
        \dbcoh(\Mc) = \langle \phi_1\dbcoh(Y_1), \iota^*[W_1], \dots, \iota^*[W_{n}]\rangle.
    \end{equation}
    
    We now prove commutativity. By Lemma \ref{lem_inducedmutationsinZ}, one has $[\LL_{W_i}W_{i+1}] = \LL_{[W_i]}[W_{i+1}] $. By Lemma \ref{lem_Lsemiorthogonal}, since $W_i$ is $L$-semiorthogonal to $W_{i+1}$ it follows that $\Wc_i$ is $\Lc$-semiorthogonal to $\Wc_{i+1}$. Then, by Lemma \ref{lem_Lsemiorthogonalcommutes} applied to \linebreak $G/P\arw\{pt\}$ one finds that $\LL_{W_i}W_{i+1}$ commutes with $l^*$, while by applying the same lemma to $\Zc\arw B$ it follows that $\LL_{[W_i]}[W_{i+1}]$ commutes with $\iota^*$. By the latter we get:
    \begin{equation}\label{eq_commutativityinlemma}
        \begin{split}
            \LL_{\iota^*[W_i]}\iota^*[W_{i+1}]&\simeq \iota^*\LL_{[W_i]}[W_{i+1}] = \iota^*[\LL_{W_i}W_{i+1}]
        \end{split}
    \end{equation}
    By \cite[Corollary 2.9]{kuznetsovcubic4folds} one has:
    \begin{equation}\label{eq_mutatedblockM}
            \begin{split}
                \dbcoh(\Mc) =& \langle \phi_1\dbcoh(X_1), \iota^*[W_1], \dots, \iota^*[W_{i-1}], \hspace{5pt}\LL_{\iota^*[W_i]}\iota^*[W_{i+1}], \iota^*[W_i], \iota^*[W_{i+2}],\dots,\iota^*[W_n]\rangle
            \end{split}
        \end{equation}
    and substituting \ref{eq_commutativityinlemma} in \ref{eq_mutatedblockMcomm} completes the proof.

\end{proof}

\begin{proposition}\label{prop_serrefunctor}
        In the language of Notation \ref{notation}, assume there is a semiorthogonal decomposition
        \begin{equation}
            \dbcoh(M) = \langle\phi_1\dbcoh(Y_1), l^*W_1,\dots, l^*W_n\rangle
        \end{equation}
        where every $W_j$ is a homogeneous vector bundle on $G/P$. Then one has:
    \begin{equation}
        \begin{split}
            \dbcoh(\Mc) &= \langle \LL_{\iota^*[W_n\otimes L^{\otimes(-r+1)}]}\phi_1\dbcoh(X_1), \iota^*[W_n\otimes L^{\otimes(-r+1)}], \iota^*[W_1], \dots, \iota^*[W_{n-1}] \rangle.
        \end{split}
    \end{equation}
\end{proposition}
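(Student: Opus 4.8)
The plan is to realise the asserted decomposition as the composition of two formal operations applied to the semiorthogonal decomposition of $\dbcoh(\Mc)$ induced by the hypothesis: a rotation by the relative Serre functor which carries the last block to the front, followed by a single left mutation exchanging the first two blocks. First I would relativize the given decomposition of $\dbcoh(M)$. Exactly as at the beginning of the proof of Lemma \ref{lem_inducedmutationsinM}, since each $W_j$ is homogeneous, applying \cite[Theorem 3.1]{samokhin} and \cite[Proposition 2.10]{cayleytrick} produces
\begin{equation}
    \dbcoh(\Mc) = \langle \phi_1\dbcoh(X_1), \iota^*[W_1], \dots, \iota^*[W_n]\rangle.
\end{equation}
All blocks here are $B$-linear: the factors $\iota^*[W_j]$ are by construction of the form $\iota^*\Fm(W_j)\otimes\pi^*\dbcoh(B)$, and $\phi_1\dbcoh(X_1)$, being the left orthogonal to the remaining $B$-linear blocks inside the $B$-linear category $\dbcoh(\Mc)$, is $B$-linear as well.

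Next I would compute the relative dualizing sheaf $\omega_{\Mc/B}$. On the fibre one has $G/P=\PP(E_1)$ for the rank-$r$ bundle $E_1$ of the Mukai pair $(G/P_1,E_1)$, so the relation $\det E_1\simeq\omega_{G/P_1}^\vee$ forces the cancellation $-K_{G/P}=\Oc_{\PP(E_1)}(r)=L^{\otimes r}$; adjunction on the divisor $M=Z(\sigma)$ then gives $\omega_M\simeq L^{\otimes(-r+1)}|_M$. Relativizing, $\omega_{\Zc/B}$ restricts to $\omega_{G/P}=L^{\otimes(-r)}$ on every fibre of $\pi$, so $\omega_{\Zc/B}\otimes\Fm(L)^{\otimes r}$ is trivial on all fibres and hence is a pullback $\pi^*N$ from $B$, by the same computation of $\operatorname{Pic}(\Zc)/\operatorname{Pic}(B)$ carried out in Lemma \ref{lem_roofbundlecommutativediagram}. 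Combining this with $\omega_{\Mc/B}\simeq(\omega_{\Zc/B}\otimes\Lc)|_\Mc$ and absorbing the $\pi^*T$ appearing in $\Lc=\Lc_T$, I obtain
\begin{equation}
    \omega_{\Mc/B}\simeq \Lc^{\otimes(-r+1)}|_\Mc\otimes\pi^*N'
\end{equation}
for some line bundle $N'$ on $B$. Consequently the relative Serre functor $\mathbb{S}_{\Mc/B}=(-)\otimes\omega_{\Mc/B}[\dim M]$ (well defined, since $\Mc\arw B$ is a proper Gorenstein morphism) acts on the last block by
\begin{equation}
    \mathbb{S}_{\Mc/B}\bigl(\iota^*[W_n]\bigr)=\iota^*\Fm(W_n)\otimes\omega_{\Mc/B}[\dim M]\otimes\pi^*\dbcoh(B)=\iota^*[W_n\otimes L^{\otimes(-r+1)}],
\end{equation}
the shift and the extra factor $\pi^*N'$ being absorbed into the defining $\pi^*\dbcoh(B)$ of the $B$-linear block.

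Finally I would invoke the standard rotation of $B$-linear semiorthogonal decompositions by the relative Serre functor, which replaces $\langle \phi_1\dbcoh(X_1), \iota^*[W_1], \dots, \iota^*[W_n]\rangle$ by $\langle \mathbb{S}_{\Mc/B}(\iota^*[W_n]), \phi_1\dbcoh(X_1), \iota^*[W_1], \dots, \iota^*[W_{n-1}]\rangle$, that is
\begin{equation}
    \dbcoh(\Mc) = \langle \iota^*[W_n\otimes L^{\otimes(-r+1)}], \phi_1\dbcoh(X_1), \iota^*[W_1], \dots, \iota^*[W_{n-1}]\rangle,
\end{equation}
and then exchange the first two blocks by the left mutation of $\phi_1\dbcoh(X_1)$ through $\iota^*[W_n\otimes L^{\otimes(-r+1)}]$, as in \cite[Corollary 2.9]{kuznetsovcubic4folds}. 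This produces exactly the claimed decomposition.

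The step I expect to be the main obstacle is the identification of $\mathbb{S}_{\Mc/B}(\iota^*[W_n])$: one must verify that the discrepancy between $\omega_{\Mc/B}$ and $\Lc^{\otimes(-r+1)}|_\Mc$ is a genuine pullback from $B$, so that it is invisible to the $B$-linear block $\iota^*[W_n]$. This is the relativized incarnation of operation \ref{itm:op_o2}, and it is precisely where the roof-bundle structure, through the control of $\operatorname{Pic}(\Zc)/\operatorname{Pic}(B)$, is essential; once $\mathbb{S}_{\Mc/B}$ is identified on the block, the remaining rotation and mutation are entirely formal.
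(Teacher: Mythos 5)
Your proposal is correct and follows essentially the same route as the paper's proof: rotate the last block to the front via the Serre functor, identify the resulting twist with $\Lc^{\otimes(-r+1)}$ up to a pullback from $B$ that is absorbed into the $B$-linear block $\pi^*\dbcoh(B)$, and then left-mutate $\phi_1\dbcoh(X_1)$ through it. The only (immaterial) differences are that you use the relative Serre functor $\mathbb{S}_{\Mc/B}$ where the paper uses the absolute one (they differ by $\pi^*\omega_B$, invisible to the $B$-linear blocks), and that you obtain $\omega_\Zc\simeq\Lc^{\otimes(-r)}\otimes\pi^*T$ by a fiberwise computation plus the $\operatorname{Pic}(\Zc)/\operatorname{Pic}(B)$ descent argument rather than by the paper's direct use of \cite[Example 3.2.11]{fulton} together with the Mukai-pair relation $\omega_{\Zc_i}\otimes\det\Ec_i\simeq r_i^*T$.
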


\begin{proof}
    Since $\Mc$ is a smooth projective variety, by the Serre functor, there is the following semiorthogonal decomposition:
    \begin{equation}\label{eq_decompositionwithserre}
        \begin{split}
            \dbcoh(\Mc) &= \langle \LL_{\iota^*[W_n]\otimes 
            \omega_\Mc}\phi_1\dbcoh(X_1), \iota^*[W_n]\otimes\omega_\Mc, \iota^*[W_1], \dots, \iota^*[W_{n-1}] \rangle.
        \end{split}
    \end{equation}
    One has \cite[Example 3.2.11]{fulton}:
    \begin{equation}\label{eq_omegaZbymukaipair}
        \omega_\Zc\simeq p_i^*\omega_{\Zc_i}\otimes p_i^*\det\Ec_i\otimes\Lc^{\otimes(-r)}
    \end{equation}
    but since $\Zc$ is a roof bundle, $(\Zc|_{r_i^{-1}(b)}, \Ec_i|_{r_i^{-1}(b)})$ is a Mukai pair for every $b\in B$, which implies that \linebreak $\omega_{\Zc_i}\otimes \det\Ec_i\simeq r_i^*T$, where $T$ is a line bundle on $B$. Then, by plugging this into \ref{eq_omegaZbymukaipair} we get $\omega_\Zc\simeq\Lc^{\otimes(-r)}\otimes \pi^*T$. Due to the following normal bundle sequence:
    \begin{equation}
        0\arw T_\Mc\arw \iota^*T_\Zc\arw\Lc\arw 0
    \end{equation}
    one has $\omega_\Mc\simeq \iota^*\omega_\Zc\otimes\iota^*\Lc^\vee\simeq \iota^*\Lc^{\otimes(-r+1)}\otimes\iota^*\pi^* T$.
    Then, the proof is completed by the following computation:
    \begin{equation}
        \iota^*[W_n]\otimes\omega_\Mc = \iota^*[W_n]\otimes\iota^*\Lc^{\otimes(-r+1)} = \iota^*[W_n\otimes L^{\otimes(-r+1)}].
    \end{equation}
    and by using it in the decomposition \ref{eq_decompositionwithserre}.
\end{proof}

Let us gather here the assumptions for the main theorem of this chapter.
\begin{assumption}\label{ass_roofbundle}
    The data of Notation \ref{notation} fulfill the following requirements:
    \begin{description}
        \item[\namedlabel{itm:ass_a1}{A1}] $\Lc$ is basepoint-free and the restriction map
        \begin{equation}
            H^0(\Zc, \Lc)\arw H^0(\pi^{-1}(b), \Lc|_{\pi^{-1}(b)})
        \end{equation}
        is surjective for every $b\in B$
        \item[\namedlabel{itm:ass_a2}{A2}] The sequence of mutations described in Notation \ref{notation} (Equation \ref{eq_mutationsinassumption}) acts by a composition of the following operations:
        \begin{enumerate}
            \item[$\circ$] Mutations of pairs of exceptional objects $\LL_{l^*E}l^*F$ where $E$, $F$ satisfy the semiorthogonality condition\linebreak $\Ext^\bullet_{G/P}(F, E)=0$ and $E$ is $L$-semiorthogonal to $F$ (in short, $\LL_{l^*E}l^*F$ satisfies Condition \ref{cond_mutationsroof}  as it will be defined in Appendix \ref{appendix})
            \item[$\circ$] Overall twists by a power of $L$
            \item[$\circ$] Applying the Serre functor of $\Sc_M$ sending the last exceptional object to the beginning of the semiorthogonal decompositions, or applying the inverse functor $\Sc_M^{-1}$
            \item[$\circ$] Applying $\LL_{l^*E}$ or $\RR_{l^*E}$ to the subcategory $\theta_i\dbcoh(Y_i)$, where $l^*E$ is an exceptional object in the right (respectively left) semiorthogonal complement of $\theta_i\dbcoh(Y_i)$.
        \end{enumerate}
        \item[\namedlabel{itm:ass_a3}{A3}] The sequence of operations $\xi = \xi^{(R)}\dots\xi^{(1)}$ acts on $(\boldsymbol E, \theta_1)$ in the following way:
        \begin{equation}
            \xi:(\boldsymbol E, \theta_1)\longmapsto (\boldsymbol F, \psi)
        \end{equation}
        where $(\boldsymbol E^{(1)}, \psi^{(1)}) = (\boldsymbol E, \theta_1)$ and $(\boldsymbol E^{(R+1)}, \psi^{(R+1)}) = (\boldsymbol F, \psi)$.
    \end{description}
    Note that, while \ref{itm:ass_a1} is a condition on $\Lc$, \ref{itm:ass_a2} and \ref{itm:ass_a3} depend only on $G/P$: if they are satisfied by one roof bundle of type $G/P$, then they are fulfilled by every roof bundle of the same type. We say that a pair $(\Zc, \Lc)$ of a roof bundle $\Zc$ of type $G/P$ together with the line bundle $\Lc$ satisfies Assumption \ref{ass_roofbundle} if $\Lc$ satisfies Assumption \ref{itm:ass_a1} and there exist two full exceptional collections $G/P = \langle E_1, \dots E_N\rangle = \langle F_1\dots F_N\rangle$ and a section $\sigma\in H^0(G/P, L)$ as required in  Notation \ref{notation}, which are compatible with Assumptions \ref{itm:ass_a2} and \ref{itm:ass_a3}.
\end{assumption}

The condition \ref{itm:ass_a1} is needed to ensure smoothness of the general sections of $\Lc$, and the fact that zero loci of pushforwards of these sections have the property of being Calabi--Yau fibrations. On the other hand, assumption \ref{itm:ass_a2} is needed to construct the mutations in $\dbcoh(\Mc)$. The last assumption \ref{itm:ass_a3} is needed to ensure that such mutations really yield an equivalence $\dbcoh(X_1)\simeq\dbcoh(X_2)$.

\begin{remark}\label{rem_collections}
    Before stating the main results, let us remind that this approach requires a full exceptional collection for the fibers of $r_1$ and $r_2$. The problem of finding full exceptional collections for homogeneous varieties is still open, but there are many cases where a solution has been found. Let $G/P$ be a roof with projective bundle structures $h_i:G/P\arw G/P_i$ for $i\in\{1;2\}$. Let us review the cases where a full exceptional collection is known for both $G/P_1$ and $G/P_2$. Note that in each of the known cases the objects are all vector bundles.
    \begin{enumerate}
        \item[$\circ$] Type $A_n\times A_n$, $A^M_n$ and $A^G_{2n}$: here $G/P_i$ is a $SL(V)$-Grassmannian for some vector space $V$. Full exceptional collections for these varieties have been constructed in \cite{kapranov}.
        \item[$\circ$] Type $C_{3n/2-1}$: in this case $G/P_i$ is a symplectic Grassmannian. The only case where a full exceptional collection is known for both $G/P_1$ and $G/P_2$ is the roof of type $C_2$. The collections have been established in \cite{beilinson, kapranov}.
        \item[$\circ$] Type $D_n$: the only two cases where both $G/P_i$ have known full exceptional collections are $D_4$ and $D_5$. In the former, by triality, $G/P_i$ is a six-dimensional quadric, for which a full exceptional collection has been found in \cite{kapranov}. In the latter, the varieties $G/P_i$ are spinor tenfolds, a full exceptional collection for them is given in \cite{kuznetsovhyperplane}.
        \item[$\circ$] Type $G_2$: there are known full exceptional collections for both $G/P_1$ and $G/P_2$ \cite{kapranov, kuznetsovhyperplane}.
        \item[$\circ$] Type $F_4$: To the best of the author's knowledge, no full exceptional collection is known for $F_4/P^2$ and $F_4/P^3$.
    \end{enumerate}
\end{remark}
    
\begin{theorem}\label{thm_derivedequivalencefibrations} 
    Let $(\Zc, \Lc)$ satisfy Assumption \ref{ass_roofbundle}. Then a general section of $\Lc$ induces a derived equivalence of Calabi--Yau fibrations $\Phi:\dbcoh(X_1)\arw\dbcoh(X_2)$.
\end{theorem}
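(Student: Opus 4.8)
The plan is to transport the entire sequence of mutations furnished by Assumption \ref{ass_roofbundle} from the fiber $M$ up to the total hyperplane section $\Mc$, using the relativization results of the previous subsection, and then to extract $\Phi$ from the uniqueness of the complement of a semiorthogonal decomposition. First, by \ref{itm:ass_a1} and Lemma \ref{lem_CYfibered}, for a general section $S\in H^0(\Zc,\Lc)$ both $\Mc=Z(S)$ and the fiber $M=Z(S|_{\pi^{-1}(b)})$ are smooth of expected codimension, so the two semiorthogonal decompositions of $\dbcoh(\Mc)$ in Equation \ref{eq_bigcollectionsshort} are available and relativize those of $\dbcoh(M)$ in Equation \ref{eq_smallcollectionsshort}. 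By \ref{itm:ass_a3} there is a sequence $\xi=\xi^{(R)}\cdots\xi^{(1)}$ of operations of types \ref{itm:op_o1}, \ref{itm:op_o2}, \ref{itm:op_o3} realizing the mutation \ref{eq_mutationsinassumption}, carrying $\langle\theta_1\dbcoh(Y_1), l^*E_1,\dots,l^*E_n\rangle$ to $\langle\psi\dbcoh(Y_1), l^*F_1,\dots,l^*F_n\rangle$; comparison with the second decomposition in Equation \ref{eq_smallcollectionsshort} and uniqueness of the complement of the tail $\langle l^*F_1,\dots,l^*F_n\rangle$ already force $\psi\dbcoh(Y_1)=\theta_2\dbcoh(Y_2)$ on the fiber.

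The heart of the argument is to lift each $\xi^{(\lambda)}$ to an operation on $\dbcoh(\Mc)$ that commutes with $\iota^*$, so that at every stage the relative collection is the relativization (in the bracket notation $[\,\cdot\,]$) of the corresponding fiber collection. For an operation of type \ref{itm:op_o1}, Assumption \ref{itm:ass_a2} ensures the mutated pair $E,F$ has $E$ that is $L$-semiorthogonal to $F$; Lemma \ref{lem_Lsemiorthogonal} promotes this to $\Lc$-semiorthogonality of the relativized bundles, Lemma \ref{lem_inducedmutationsinZ} identifies the relative mutation $\LL_{[E]}[F]$ with the relativized fiber mutation $[\LL_E F]$, and Lemma \ref{lem_inducedmutationsinM} (via Lemma \ref{lem_Lsemiorthogonalcommutes}) shows the mutation commutes with $\iota^*$. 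Operations of type \ref{itm:op_o3} lift to overall twists by $\Lc$, which commute with $\iota^*$ on the nose. Operations of type \ref{itm:op_o2}, and the mutations of the subcategory $\theta_i\dbcoh(Y_i)$ allowed in \ref{itm:ass_a2}, are supplied by Proposition \ref{prop_serrefunctor}: the relative Serre twist $\omega_\Mc\simeq\iota^*\Lc^{\otimes(-r+1)}\otimes\iota^*\pi^*T$ restricts on each fiber to the fiber Serre twist $L^{\otimes(-r+1)}$, and the extra factor $\iota^*\pi^*T$, being pulled back from $B$, is absorbed by the $B$-linearity of all subcategories in play.

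I would then run this lifted sequence starting from the first decomposition of $\dbcoh(\Mc)$ in Equation \ref{eq_bigcollectionsshort}. Since $\xi$ sends $\boldsymbol E$ to $\boldsymbol F$ and each bracket tracks its fiber mutation by Lemma \ref{lem_inducedmutationsinZ}, the outcome is
\[
\dbcoh(\Mc)=\langle\Psi\dbcoh(X_1),\iota^*[F_1],\dots,\iota^*[F_n]\rangle,
\]
where $\Psi$ is the composite of the lifted mutation functors applied to $\phi_1$. Comparing this with the second decomposition in Equation \ref{eq_bigcollectionsshort}, both $\Psi\dbcoh(X_1)$ and $\phi_2\dbcoh(X_2)$ are the complement of the common tail $\langle\iota^*[F_1],\dots,\iota^*[F_n]\rangle$, hence they coincide. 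As $\phi_2$ is fully faithful onto its image, $\Phi:=\phi_2^{-1}\circ\Psi\colon\dbcoh(X_1)\arw\dbcoh(X_2)$ is the sought equivalence.

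The crux, and the step I expect to demand the most care, is the commutativity of each mutation with $\iota^*$: plain semiorthogonality of a pair is not enough, and one truly needs the stronger $\Lc$-semiorthogonality, tailored precisely so that the twisted pushforward $\pi_*R\sHom_\Zc(\Ec,\Fc\otimes\Lc^\vee)$ in the Koszul computation of Lemma \ref{lem_Lsemiorthogonalcommutes} vanishes. The main obstacle is thus the bookkeeping behind Assumption \ref{itm:ass_a2}: checking that $\xi$ may be chosen so that \emph{every} exceptional-pair mutation it performs meets this $L$-semiorthogonality requirement, and confirming that the $\pi^*T$ twists introduced by the relative Serre functor never break the inductive matching of the lifted collection with the relativized fiber collection.
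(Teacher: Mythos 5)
Your proposal is correct and takes essentially the same route as the paper's own proof: you lift the fiberwise sequence $\xi$ to $\dbcoh(\Mc)$ operation by operation, invoking Lemma \ref{lem_Lsemiorthogonal}, Lemma \ref{lem_inducedmutationsinZ}, Lemma \ref{lem_inducedmutationsinM} (via Lemma \ref{lem_Lsemiorthogonalcommutes}) for the pair mutations and Proposition \ref{prop_serrefunctor} for the Serre-functor steps, exactly as the paper does, and conclude by comparing the resulting decomposition of $\dbcoh(\Mc)$ with the one in Equation \ref{eq_bigcollectionsshort} for $X_2$. The only differences are cosmetic: you spell out the uniqueness-of-complement step and the absorption of the $\pi^*T$ twist into $\pi^*\dbcoh(B)$-linearity, both of which the paper leaves implicit.
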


\begin{proof}
    Consider two full exceptional collections $G/P = \langle E_1, \dots E_N\rangle = \langle F_1\dots F_N\rangle$ and a general section $\sigma\in H^0(G/P, L)$ with zero locus $M$, compatible with Assumptions \ref{itm:ass_a2} and \ref{itm:ass_a3}.
    Then, by Assumption \ref{itm:ass_a2} one has a sequence of mutations providing the following identification:
    \begin{equation}\label{eq_initialcollection1}
        \begin{split}
            \dbcoh(M) &= \langle  \theta_1 \dbcoh(Y_1), l^*E_1, \dots, l^*E_n\rangle
        \end{split}
    \end{equation}
    \begin{equation}\label{eq_aftermutations}
        \begin{split}
            \hspace{30pt} = \langle \psi\dbcoh(Y_1), l^*F_1, \dots, l^*F_n\rangle
        \end{split}
    \end{equation}
    which consists in applying a sequence of operations to the first semiorthogonal decomposition, which only include mutations of $L$-semiorthogonal exceptional pairs, the Serre functor of $M$, overall twists by a line bundle and mutations of $\theta_1\dbcoh(Y_1)$ through the admissible subcategory generated by an exceptional object.\\
    \\
    Consider the sequence of operations $\xi = \xi^{(R)}\cdots\xi^{(1)}$ defined in Notation \ref{notation}. By Assumption \ref{itm:ass_a3} one has:
    \begin{equation}
        \xi:(\boldsymbol E, \theta_1)\longmapsto (\boldsymbol F, \psi)
        \end{equation}
        These operations, by \ref{itm:ass_a2}, are in one-to-one correspondence with the mutations used to transform the decomposition \ref{eq_initialcollection1} into \ref{eq_aftermutations} (the case of mutations of pairs is treated in Lemma \ref{lem_inducedmutationsinM}). In fact, for $1\leq\lambda\leq R+1$ one has a semiorthogonal decomposition:
        \begin{equation}\label{eq_collectionlambda}
            \begin{split}
                \dbcoh(M) = \langle \psi^{(\lambda)}\dbcoh(Y_1), l^*E_1^{(\lambda)},\dots,l^*E_n^{(\lambda)}\rangle.
            \end{split}
        \end{equation}
        Every operation $\xi^{(\lambda)}$ commutes also with the mapping $W\longmapsto[W]$: for the cases of Operations \ref{itm:op_o2} and \ref{itm:op_o3} this follows by the fact that the tensor product of homogeneous vector bundles associated to representations $\Gamma, \Gamma'$ is the homogeneous vector bundle associated to the representation $\Gamma\otimes\Gamma'$, while the case of type \ref{itm:op_o1} follows from Lemma \ref{lem_inducedmutationsinZ}.
        
        Therefore, we can define $[\boldsymbol E] = \{[E_1],\dots,[E_n]\}$, $[\boldsymbol F] = \{[F_1],\dots,[F_n]\}$ and construct a sequence of operations $[\xi] =[\xi^{(R)}]\cdots[\xi^{(1)}]$ as follows:
        \begin{equation}
            [\xi^{(\lambda)}]:([\boldsymbol E^{(\lambda)}], \Phi^{(\lambda)})\longmapsto ([\boldsymbol E^{(\lambda+1)}], \Phi^{(\lambda+1)})
        \end{equation}
        where $([\boldsymbol E^{(1)}], \Phi^{(1)})=([\boldsymbol E], \phi_1$) and  $([\boldsymbol E^{(R+1)}], \Phi^{(R+1)}) =([\boldsymbol F], \Phi)$, where the functor $\Phi^{(\lambda)}$ is yet to be defined.\\
        \\
        The next step is to show that we can associate to the sequence of mutations on the collection \ref{eq_initialcollection1} a sequence of mutations on the decomposition $\dbcoh(\Mc) = \langle \phi_1\dbcoh(X_1), \iota^*[E_1], \dots, \iota^*[E_n]\rangle$ defined through the operations $[\xi^{(\lambda)}]$, thus obtaining for every $\lambda$:
        \begin{equation}\label{eq_blockwiseinitialcollection1}
            \begin{split}
                \dbcoh(\Mc) = \langle \Phi^{(\lambda)}\dbcoh(X_1), \iota^*[E_1^{(\lambda)}],\dots,\iota^*[E_n^{(\lambda)}]\rangle.
            \end{split}
        \end{equation}
        To prove our claim, let us consider each of the allowed kinds of mutations on \ref{eq_initialcollection1}, and describe the associated mutation on \ref{eq_blockwiseinitialcollection1}, alongside with the definition of the appropriate $\Phi^{(\lambda)}$.
        \begin{enumerate}
            \item[$\circ$] Every time a left mutation of pairs 
            \begin{equation}
                \begin{split}
                    \langle\dots, l^*E_i^{(\lambda)}, l^*E_{i+1}^{(\lambda)},\dots\rangle & =\langle\dots,\LL_{l^*E_i^{(\lambda)}} l^*E_{i+1}^{(\lambda)}, l^*E_i^{(\lambda)},\dots\rangle
                \end{split}
            \end{equation}
            is performed in \ref{eq_collectionlambda}, we apply the following operation in \ref{eq_blockwiseinitialcollection1}:
            \begin{equation}\label{eq_mutationlambdaintheorem1}
                \begin{split}
                    \langle\Phi^{(\lambda)}\dbcoh(X_1),\dots, \iota^*[E_i^{(\lambda)}], \iota^*[E_{i+1}^{(\lambda)}],\dots\rangle
                    & =\langle\Phi^{(\lambda)}\dbcoh(X_1),\dots,\iota^*[\LL_{E_i^{(\lambda)}}E_{i+1}^{(\lambda)}], [E_i^{(\lambda)}],\dots\rangle\\
                    & =:\langle\Phi^{(\lambda+1)}\dbcoh(X_1),\iota^*[E_{1}^{(\lambda+1)}],
                       \dots,\iota^*[E_n^{(\lambda+1)}]\rangle
                \end{split}
            \end{equation}
            
            where $\Phi^{(\lambda+1)} := \Phi^{(\lambda)}$. We obtain a semiorthogonal decomposition because of the following argument: by Assumption \ref{itm:ass_a2},  $E_i^{(\lambda)}$ is $L$-semiorthogonal to $E_{i+1}^{(\lambda)}$, thus by Lemma \ref{lem_inducedmutationsinM}, $\LL_{E_i^{(\lambda)}}E_{i+1}^{(\lambda)}$ commutes with $l^*$ and $\LL_{[E_i^{(\lambda)}]}[E_{i+1}^{(\lambda)}]$ commutes with $\iota^*$. Finally, since $\LL_{[E_i^{(\lambda)}]}[E_{i+1}^{(\lambda)}] = [\LL_{E_i^{(\lambda)}}E_{i+1}^{(\lambda)}]$ by Lemma \ref{lem_inducedmutationsinZ}, we see that the operation described in Equation \ref{eq_mutationlambdaintheorem1} is simply the left mutation of $\iota^*[E_{i+1}^{(\lambda)}]$ through $\iota^*[E_{i}^{(\lambda)}]$. An analogous argument works for right mutations.
    
            \item[$\circ$] Every time the Serre functor is applied to Equation \ref{eq_collectionlambda}:
            
            \begin{equation}
                \begin{split}
                    & \langle \psi^{(\lambda)}\dbcoh(Y_1), l^*E_1^{(\lambda)}, \dots, l^*E_n^{(\lambda)}, \rangle\\
                    & = \langle \LL_{l^*E_n^{(\lambda)}\otimes L^{-r+1}}\psi^{(\lambda)}\dbcoh(Y_1), l^*E_n^{(\lambda)}\otimes L^{-r+1}, l^*E_1^{(\lambda)}, \dots, l^*E_{n-1}^{(\lambda)}\rangle
                \end{split}
            \end{equation}
            
            we perform the following operation on Equation \ref{eq_blockwiseinitialcollection1}:
            
            \begin{equation}
                \begin{split}
                    & \langle \Phi^{(\lambda)}\dbcoh(X_1), \iota^*[E_1^{(\lambda)}], \dots, \iota^*[E_n^{(\lambda)}], \rangle =\\
                    & = \langle \LL_{[\iota^*E_n^{(\lambda)}\otimes L^{-r+1}]}\Phi^{(\lambda)}\dbcoh(X_1), [\iota^*E_n^{(\lambda)}\otimes L^{-r+1}], \iota^*[E_1^{(\lambda)}], \dots, \iota^*[E_{n-1}^{(\lambda)}]\rangle\\
                    &\hspace{12pt}=:\langle\Phi^{(\lambda+1)}\dbcoh(X_1),\iota^*[E_{1}^{(\lambda+1)}],
                       \dots,\iota^*[E_n^{(\lambda+1)}]\rangle
                \end{split}
            \end{equation}
            where $\Phi^{(\lambda+1)}:= \LL_{[\iota^*E_n^{(\lambda)}\otimes L^{-r+1}]}\Phi^{(\lambda)}$. In fact, by Proposition \ref{prop_serrefunctor} the resulting collection above is the one obtained by applying the Serre functor of $\Mc$ to $\iota^*[E_1^{(\lambda)}]$ and sending the subcategory equivalent to $\dbcoh(X_1)$ to the beginning of the collection. The same holds for the inverse Serre functor.
            
            \item[$\circ$] Whenever Equation \ref{eq_collectionlambda} is twisted by $L^{\otimes k}$ for some $k\in\ZZ$, perform the following operation on Equation \ref{eq_blockwiseinitialcollection1}:
            
            \begin{equation}
                \begin{split}
                    \langle \Phi^{(\lambda)}\dbcoh(X_1), \iota^*[E_1^{(\lambda)}], \dots, \iota^*[E_n^{(\lambda)}], \rangle & = \langle\Phi^{(\lambda)}\dbcoh(X_1)\otimes\Lc^{\otimes k}, \iota^*[E_1^{(\lambda)}\otimes L^{\otimes k}], \dots, \iota^*[E_n^{(\lambda)}\otimes L^{\otimes k}], \rangle\\
                    &=:\langle\Phi^{(\lambda+1)}\dbcoh(X_1),\iota^*[E_{1}^{(\lambda+1)}],
                       \dots,\iota^*[E_n^{(\lambda+1)}]\rangle
                \end{split}
            \end{equation}
            
            where $\Phi^{(\lambda+1)}:= \Lc^{\otimes k}\otimes\Phi^{(\lambda)}$.
        \end{enumerate}
        
        In this way, we showed that for every $\lambda$ there is the following semiorthogonal decomposition:
        
        \begin{equation}
            \begin{split}
                \dbcoh(\Mc) = \langle \Phi^{(\lambda)}\dbcoh(X_1), \iota^*[E_1^{(\lambda)}],\dots,\iota^*[E_n^{(\lambda)}]\rangle.
            \end{split}
        \end{equation}
        
        In  particular, for $\lambda = R+1$ we obtain:
        
        \begin{equation}\label{eq_blockwiseaftermutations}
            \begin{split}
                \dbcoh(\Mc) &= \langle \Phi\dbcoh(X_1), \iota^*[F_1], \dots, \iota^*[F_n]\rangle
            \end{split}
        \end{equation}
        
        On the other hand, starting from the full exceptional collection $G/P_2 = \langle K_1, \dots, K_m\rangle$, by (Sam06, Theorem 3.1) and (Orl03, Proposition 2.10) one has
        
        \begin{equation}\label{eq_blockwiseinitialcollection2intheorem}
            \begin{split}
                \dbcoh(\Mc) &= \langle \phi_2\dbcoh(X_2), \iota^*[F_1], \dots, \iota^*[F_n]\rangle
            \end{split}
        \end{equation}
        
        The proof is completed by comparing \ref{eq_blockwiseaftermutations} with \ref{eq_blockwiseinitialcollection2intheorem}.
    \end{proof}

    \begin{remark}
        Note that Theorem \ref{thm_derivedequivalencefibrations} holds for any nonzero $S\in H^0(\Zc, \Lc)$ with smooth zero locus, if the pair $(\Zc, \Lc)$ satisfies Assumption \ref{ass_roofbundle}. In fact, by (Orl03 Proposition 2.10), for every nonzero section $S$ its zero locus admits the semiorthogonal decompositions \ref{eq_bigcollectionsshort}.
    \end{remark}
    
    As we will show in Lemma \ref{lem_cohomologyconditions}, Theorem \ref{thm_derivedequivalencefibrations} can be immediately applied to all cases of roofs where a sequence of mutations realizing a derived equivalence of a Calabi--Yau pair is known, provided that $\Lc$ satisfies Assumption \ref{itm:ass_a1}.

\begin{lemma}\label{lem_cohomologyconditions}
    The roofs of type $A^M_n$, $A_n\times A_n$, $A^G_{4}$, $C_2$ and $G_2$ satisfy Assumptions \ref{itm:ass_a2} and \ref{itm:ass_a3}.
\end{lemma}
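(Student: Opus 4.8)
The plan is to verify \ref{itm:ass_a2} and \ref{itm:ass_a3} separately for each of the five roof types, leveraging the fact that in every such case a derived equivalence of the associated Calabi--Yau pair is already available and is produced by an explicit chain of mutations inside $\dbcoh(M)$: these are the equivalences of \cite{kuznetsovimou, kr}, together with the analogous constructions of \cite{morimura, uedaflop}. For $A_k\times A_k$ the situation is degenerate: by Remark \ref{rem_AnxAnemptyloci} one has $Y_1=Y_2=\emptyset$, so the two semiorthogonal decompositions of $\dbcoh(M)$ are honest full exceptional collections (available from the collections of Remark \ref{rem_collections}) and the task reduces to mutating one into the other. In each case the source and the target are the decompositions of $\dbcoh(M)$ attached to $h_1$ and $h_2$ via \cite{orlovblowup}, so establishing \ref{itm:ass_a3} amounts to presenting the known mutation sequence as a map $(\boldsymbol E,\theta_1)\longmapsto(\boldsymbol F,\psi)$.

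The substance of \ref{itm:ass_a2} is to check that each step of these sequences is one of the four admissible operations, the only non-formal requirement being that every pairwise mutation $\LL_{l^*E}l^*F$ satisfy Condition \ref{cond_mutationsroof}: the ordinary semiorthogonality $\Ext^\bullet_{G/P}(F,E)=0$ together with the $L$-semiorthogonality $\Ext^\bullet_{G/P}(E,F\otimes L^\vee)=0$ of Definition \ref{def_Lsemiorthogonality}. Since all bundles appearing in the collections of Remark \ref{rem_collections}, as well as their $L$-twists and pullbacks, are homogeneous, both Ext groups equal the cohomology of a homogeneous vector bundle on $G/P$; decomposing $E^\vee\otimes F\otimes L^\vee$ into irreducibles by the Littlewood--Richardson rule reduces each condition to a finite Borel--Weil--Bott computation. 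I would carry out these computations, the ``cohomology conditions'' of the statement, case by case in Appendix \ref{appendix}.

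The main obstacle is the $L$-semiorthogonality half of Condition \ref{cond_mutationsroof}. The bare semiorthogonality $\Ext^\bullet_{G/P}(F,E)=0$ holds automatically whenever $(E,F)$ is an adjacent pair in an exceptional collection written in the correct order; by contrast the twisted vanishing $\Ext^\bullet_{G/P}(E,F\otimes L^\vee)=0$ is strictly stronger and can fail for an arbitrary adjacent pair, since tensoring by $L^\vee$ may push the relevant weights into a regular Weyl chamber and thereby create cohomology. The delicate part is therefore to arrange each known mutation sequence so that every pair actually mutated is $L$-semiorthogonal; where a naive ordering fails, I would reorder the intermediate mutations or absorb twists of type \ref{itm:op_o3} so that the weights entering Borel--Weil--Bott always land in a singular chamber. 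Once this bookkeeping is complete, Lemmas \ref{lem_inducedmutationsinZ} and \ref{lem_inducedmutationsinM} together with Proposition \ref{prop_serrefunctor} lift the entire sequence to the relative decompositions of $\dbcoh(\Mc)$, so that Assumptions \ref{itm:ass_a2} and \ref{itm:ass_a3} hold for all five types.
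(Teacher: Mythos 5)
Your proposal takes essentially the same route as the paper: the paper's proof likewise invokes the known mutation sequences of \cite{kuznetsovimou, kr, morimura, uedaflop}, observes that every step is one of the admissible operations so that the only nontrivial point is Condition \ref{cond_mutationsroof} for each pairwise mutation $\LL_{l^*E}l^*F$ or $\RR_{l^*F}l^*E$, and verifies those vanishings by case-by-case Borel--Weil--Bott computations in Appendix \ref{appendix} (Lemmas \ref{lem_mukaiderivedequivalencefibers}, \ref{lem_derivedequivalenceAnxAn}, \ref{lem_derivedequivalenceC2}, \ref{prop_derivedeqAG4} and Corollary \ref{cor_derivedequivalenceG2}), adjusting the sequences where needed exactly as you anticipate, and treating $A_k\times A_k$ as a mutation between two full exceptional collections just as you do. Your closing appeal to Lemmas \ref{lem_inducedmutationsinZ}, \ref{lem_inducedmutationsinM} and Proposition \ref{prop_serrefunctor} belongs to Theorem \ref{thm_derivedequivalencefibrations} rather than to this lemma, whose content is purely the fiberwise verification, but this is harmless.
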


\begin{proof}
    Let $G/P$ be a roof of the types listed above. Then, in the language of Notation \ref{notation}, there is a sequence of mutations of suitable semiorthogonal decomposition for the zero locus of a general section of $L$, providing a derived equivalence of the associated Calabi--Yau pair: these mutations are either applications of the Serre functor, overall twists by a power of $L$, mutations of a subcategory equivalent to $\dbcoh(Y_1)$ through an exceptional object, or mutations of pairs of exceptional objects. Hence, the proof reduces to ensure that all mutations of pairs $\LL_{l^*E} l^*F$ or $\RR_{l^*F} l^*E$ satisfy the requirement of Assumption \ref{itm:ass_a2}. This property will be checked case by case in Appendix \ref{appendix}, with Lemma \ref{lem_mukaiderivedequivalencefibers} for $A^M_n$, Lemma \ref{lem_derivedequivalenceAnxAn} for $A_n\times A_n$, Lemma \ref{lem_derivedequivalenceC2} for $C_2$ and Lemma \ref{prop_derivedeqAG4} for $A^G_4$, while the case of $G_2$ is treated in Corollary \ref{cor_derivedequivalenceG2}. 
\end{proof}

Even if a general argument is lacking, for all roof bundles where a proof of derived equivalence based on mutations of the associated Calabi--Yau pair is known, such mutations satisfy Assumptions \ref{itm:ass_a2} and \ref{itm:ass_a3}. Therefore, in light of \cite[Conjecture 2.6]{kr2}, we formulate the following:
\begin{conjecture}\label{conj_derivedequivalenceeverywhere}
Let $G/P$ be a homogeneous roof, and $\Zc$ a homogeneous roof bundle of type $G/P$ with projective bundle structures $p_i:\Zc\arw\Zc_i$ for $i\in\{1;2\}$. Given a general section $S\in H^0(\Zc, \Lc)$, the Calabi--Yau fibrations $X_i:=Z(p_{i*}S)$ are derived equivalent.
\end{conjecture}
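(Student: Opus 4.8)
The plan is to deduce the conjecture from Theorem \ref{thm_derivedequivalencefibrations} by showing that, for a suitable choice of $\Lc$, every homogeneous roof bundle $\Zc$ of type $G/P$ satisfies Assumption \ref{ass_roofbundle}. Since \ref{itm:ass_a1} is a condition on $\Lc$ alone, while \ref{itm:ass_a2} and \ref{itm:ass_a3} depend only on the roof type $G/P$, the problem splits into two independent parts: a geometric part concerning the positivity of $\Lc$, and a categorical part concerning the existence of a good mutation sequence on the fibre. The strategy is precisely to decouple these and attack them separately.

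For the geometric part, recall $\Lc = \Fm(L)\otimes\pi^*T$ (Equation \ref{eq_Lc}). I would first arrange \ref{itm:ass_a1} by choosing $T$ on $B$ sufficiently ample: since $L$ is basepoint-free on $G/P$ (Lemma \ref{lem_CY}), the sheaf $\Fm(L)$ is $\pi$-globally generated, and $\pi_*\Lc\simeq\pi_*\Fm(L)\otimes T$ becomes globally generated for $T$ positive enough, so that $\Lc$ is basepoint-free, arguing as in the proof of Proposition \ref{prop_picardnumberCY}. The surjectivity of $\rho_b\colon H^0(\Zc,\Lc)\arw H^0(\pi^{-1}(b),L)$ then follows uniformly: it factors as $H^0(B,\pi_*\Lc)\arw(\pi_*\Lc)_b\otimes\kappa(b)\hookrightarrow H^0(\pi^{-1}(b),L)$, where the second map is an isomorphism by cohomology and base change (the fibrewise $h^0$ is constant by local triviality of $\pi$), and the first is surjective exactly when $\pi_*\Lc$ is globally generated at $b$, which holds for all $b$ simultaneously by the same choice of $T$.

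The heart of the matter is the categorical part, namely verifying \ref{itm:ass_a2} and \ref{itm:ass_a3} for every homogeneous roof. As these depend only on $G/P$, it suffices to produce, for the zero locus $M$ of a general section of $L$, a sequence of operations of the allowed types (mutations of $L$-semiorthogonal pairs satisfying Condition \ref{cond_mutationsroof}, overall twists by $L$, applications of the Serre functor, and mutations of $\theta_i\dbcoh(Y_i)$ through exceptional objects) transforming the decomposition \ref{eq_smallcollectionsshort} associated to $G/P_1$ into the one associated to $G/P_2$. By Lemma \ref{lem_cohomologyconditions} this is settled for the types $A^M_n$, $A_k\times A_k$, $A^G_4$, $C_2$ and $G_2$. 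The outstanding cases are $D_n$, the symplectic family $C_{3k/2-1}$ for $k>2$, and $F_4$. For $D_4$ and $D_5$ full exceptional collections exist on both contraction targets (Remark \ref{rem_collections}), so here the task is the concrete one of exhibiting the mutation sequence and checking at each step both the ordinary semiorthogonality $\Ext^\bullet_{G/P}(F,E)=0$ and the $L$-semiorthogonality $\Ext^\bullet_{G/P}(E,F\otimes L^\vee)=0$ via Borel--Weil--Bott; this is a finite but substantial computation, which the automated script could in principle carry out.

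The main obstacle is twofold and genuinely open. First, for $F_4$ no full exceptional collection is known for $F_4/P^2$ or $F_4/P^3$, so the entire machinery — which relies on Samokhin's relativization \cite{samokhin} and Orlov's blow-up and projective-bundle formulas \cite{orlovblowup} — cannot even be initialized, and for general $D_n$ and $C_n$ the same gap persists. Second, and more fundamentally, verifying \ref{itm:ass_a2} presupposes the fibrewise derived equivalence of the Calabi--Yau pair $(Y_1,Y_2)$ associated to $G/P$, which is exactly \cite[Conjecture 2.6]{kr2} and is itself unproven in general. Thus the present conjecture is \emph{no harder} than its fibrewise counterpart together with the extra requirement that the realizing mutations use only $L$-semiorthogonal pairs: the content of the approach is precisely that this stronger fibrewise statement \emph{lifts} to the total space via Lemmas \ref{lem_inducedmutationsinZ}, \ref{lem_inducedmutationsinM} and Proposition \ref{prop_serrefunctor}. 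The crux, therefore, is not the lifting but the fibrewise input — proving the Calabi--Yau pair equivalence for the remaining roof types through mutations of the special $L$-semiorthogonal form.
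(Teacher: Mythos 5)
This statement is a \emph{conjecture} in the paper: the paper offers no proof of it, only the evidence of Corollary \ref{cor_knowncases}, which is obtained by exactly the reduction you describe — Theorem \ref{thm_derivedequivalencefibrations} combined with the case-by-case verification of Assumptions \ref{itm:ass_a2} and \ref{itm:ass_a3} in Lemma \ref{lem_cohomologyconditions} and Appendix \ref{appendix}. Your proposal therefore follows the paper's own intended route, and your diagnosis of where it stops being a proof is accurate and is precisely why the statement is left as a conjecture: for the types $D_n$, $C_{3k/2-1}$ with $k>2$, and $F_4$, either full exceptional collections on both $G/P_i$ are unavailable (Remark \ref{rem_collections}), or the realizing mutation sequence — which is the strengthened, $L$-semiorthogonal form of \cite[Conjecture 2.6]{kr2} demanded by Condition \ref{cond_mutationsroof} — is not known. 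Your observation that the lifting machinery (Lemmas \ref{lem_inducedmutationsinZ}, \ref{lem_inducedmutationsinM}, Proposition \ref{prop_serrefunctor}) is already complete, so that the entire difficulty is fibrewise, matches the paper's framing.

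Two remarks on points where you go beyond, or slightly under, the paper. First, your treatment of \ref{itm:ass_a1} is a genuine small addition: the paper keeps \ref{itm:ass_a1} as a standing hypothesis and merely remarks that $T$ in $\Lc=\Fm(L)\otimes\pi^*T$ is to be chosen appropriately, whereas your argument — relative global generation of $\Fm(L)$ from basepoint-freeness of $L$ on the fibres, constancy of $h^0$ by local triviality so that cohomology and base change gives $(\pi_*\Lc)_b\otimes\kappa(b)\simeq H^0(\pi^{-1}(b),L)$, and then global generation of $\pi_*\Fm(L)\otimes T$ for $T$ sufficiently ample — correctly shows this hypothesis can always be arranged, which the paper does not spell out. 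Second, a caution on $D_4$ and $D_5$: calling the remaining task ``a finite but substantial computation'' is optimistic. Verifying any \emph{proposed} sequence of mutations is indeed finite via Borel--Weil--Bott, but \emph{finding} a sequence transforming the collection attached to $G/P_1$ into the one attached to $G/P_2$ is a search with no a priori bound, and the known fibrewise equivalences in type $D$ (e.g.\ the degree-12 K3 duality of \cite{imouk3, kr2}) were not obtained by mutations, so there is no template to lift; the same applies to $C_{3k/2-1}$ even where Lagrangian-Grassmannian collections \cite{fonarev} might apply to one side. With these caveats, your proposal is a faithful account of the paper's strategy and of the precise open content of the conjecture, not a proof of it — nor does the paper contain one.
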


We summarize all the evidence we have in support of Conjecture \ref{conj_derivedequivalenceeverywhere} in the form of a corollary for Theorem \ref{thm_derivedequivalencefibrations} and Lemma \ref{lem_cohomologyconditions}.

\begin{corollary}\label{cor_knowncases} (Theorem \ref{thm_intro_1})
    Let $\Zc$ be a roof bundle of type $A_n\times A_n$, $A^M_n$, $A^G_4$, $C_2$ or $G_2$ and let $\Lc$ satisfy Assumption \ref{itm:ass_a1}. Then, given a general section $S$ of $\Lc$, the associated pair of Calabi--Yau fibrations is derived equivalent.
\end{corollary}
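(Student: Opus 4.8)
The plan is to reduce the statement to the combination of Theorem \ref{thm_derivedequivalencefibrations} and Lemma \ref{lem_cohomologyconditions}, so that the only thing to verify is that the hypotheses of the theorem are in force. Recall that, by the final paragraph of Assumption \ref{ass_roofbundle}, a pair $(\Zc, \Lc)$ satisfies that assumption precisely when the three conditions \ref{itm:ass_a1}, \ref{itm:ass_a2} and \ref{itm:ass_a3} hold, and that \ref{itm:ass_a1} is a property of the line bundle $\Lc$ alone while \ref{itm:ass_a2} and \ref{itm:ass_a3} depend only on the type $G/P$. This clean separation is what makes the argument essentially immediate.

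First I would dispatch \ref{itm:ass_a1}: it is granted directly, since the corollary assumes $\Lc$ satisfies Assumption \ref{itm:ass_a1}. For the remaining, purely combinatorial conditions \ref{itm:ass_a2} and \ref{itm:ass_a3}, I would invoke Lemma \ref{lem_cohomologyconditions}, which establishes them for each of the roofs $A_n\times A_n$, $A^M_n$, $A^G_4$, $C_2$ and $G_2$. Using the observation recorded after Assumption \ref{ass_roofbundle} --- that these two conditions are insensitive to the choice of principal $G$-bundle $\Vc$ and hence transfer from the abstract fiber $G/P$ to any roof bundle of the same type --- I would conclude that they hold for our $\Zc$.

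Having verified all three parts, the pair $(\Zc, \Lc)$ satisfies Assumption \ref{ass_roofbundle}, and Theorem \ref{thm_derivedequivalencefibrations} then produces the desired equivalence $\Phi:\dbcoh(X_1)\arw\dbcoh(X_2)$ induced by a general section $S\in H^0(\Zc, \Lc)$, where $X_i = Z(p_{i*}S)$ are the associated Calabi--Yau fibrations. This would complete the proof.

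I do not expect a genuine obstacle at the level of this corollary, since all of the substance has been isolated into Lemma \ref{lem_cohomologyconditions}. The real difficulty --- where I would anticipate the work if forced to prove that lemma from scratch --- lies in the case-by-case check that every mutation of a pair $\LL_{l^*E}l^*F$ appearing in the known derived equivalences of the associated Calabi--Yau pairs is a mutation of a pair for which both the ordinary semiorthogonality $\Ext^\bullet_{G/P}(F, E)=0$ and the $L$-semiorthogonality $\Ext^\bullet_{G/P}(E, F\otimes L^\vee)=0$ hold. It is exactly this strengthening to $L$-semiorthogonality that allows the mutations to descend from the fiber to the total space $\Zc$ through Lemmas \ref{lem_Lsemiorthogonal}, \ref{lem_Lsemiorthogonalcommutes} and \ref{lem_inducedmutationsinM}, and whose verification is deferred to the Appendix.
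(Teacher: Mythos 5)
Your proposal is correct and follows exactly the paper's own proof: Assumption \ref{itm:ass_a1} holds by hypothesis, Assumptions \ref{itm:ass_a2} and \ref{itm:ass_a3} hold by Lemma \ref{lem_cohomologyconditions} (and are independent of the principal bundle $\Vc$, so they transfer to any roof bundle of the given type), and Theorem \ref{thm_derivedequivalencefibrations} then yields the equivalence. Your closing observation that the real content lives in the Appendix's case-by-case verification of Condition \ref{cond_mutationsroof} is likewise consistent with how the paper structures the argument.
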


\begin{proof}
    By Lemma \ref{lem_cohomologyconditions}, Assumptions $\ref{itm:ass_a2}$ and $\ref{itm:ass_a3}$ are satisfied. Thus, by requiring that $\Lc$ satisfies Assumption $\ref{itm:ass_a1}$, the claim is proven by Theorem \ref{thm_derivedequivalencefibrations}.
\end{proof}
    
{\section{Simple {K-equivalence} and roof bundles}\label{sec_simplekequivalence}}

{\subsection{Setup and notation}}
\noindent Let $\Xc_1$, $\Xc_2$ be smooth projective varieties. We call $K$-equivalence a birational morphism $\mu:\Xc_1\dashrightarrow \Xc_2$ such that there is the following diagram:
\begin{equation}\label{eq_keqdiagramsmall}
    \begin{tikzcd}[row sep=large, column sep = normal]
    & \Xc_0\ar[swap]{ld}{g_1}\ar{rd}{g_2} &\\
    \Xc_1\ar[swap, dashed]{rr}{\mu} & & \Xc_2
    \end{tikzcd}
\end{equation}
where $\Xc_0$ is a smooth projective variety and $g_1$ and $g_2$ are birational maps fulfilling $g_1^*K_{\Xc_1}\simeq g_2^*K_{\Xc_2}$. By the $DK$-conjecture \cite{bondalorlovdk, kawamatadk}, two $K$-equivalent varieties are expected to be derived equivalent. We can provide some evidence to this conjecture, and establish a method to verify it for the class of simple $K$-equivalent maps, under some assumption on the resolution $\Xc_0$.\\
\\
A simple $K$-equivalence, following the notation of Diagram \ref{eq_keqdiagramsmall}, is a $K$-equivalence $\mu$ such that $g_1$ and $g_2$ are blowups in smooth centers: by \cite[Lemma 2.1]{li} $g_1$ and $g_2$ have the same exceptional divisor $\Zc$. Then, by the structure theorem for simple $K$-equivalence \cite[Thm. 0.2]{kanemitsu}, $\Zc$ is a family of roofs over a smooth projective variety $B$. In short, we will say that $\mu$ has exceptional divisor $\Zc$ when $\Zc$ is the exceptional divisor of $g_1$ and $g_2$ (cfr. \cite[ Definition 1.2]{kanemitsu}). It is natural to consider the case where $\Zc$ is a homogeneous roof bundle in the sense of Definition \ref{def_roofbundles}:

\begin{definition}
We say that a simple $K$-equivalence $\mu$ is \emph{homogeneous} of type $G/P$ if its exceptional divisor is a homogeneous roof bundle of type $G/P$ over a smooth projective variety $B$.
\end{definition}

For every homogeneous simple $K$-equivalence $\mu$ of type $G/P$ there exists the following diagram:
\begin{equation}\label{eq_keqdiagram}
    \begin{tikzcd}[row sep=large, column sep = scriptsize]
    &   & \Zc\ar[hook]{d}{f}\ar[swap]{lldd}{p_1}\ar{rrdd}{p_2} &  &\\  
    &   & \Xc_0\ar[swap]{ld}{g_1}\ar{rd}{g_2} & &\\
    \Zc_1\ar[hook]{r}\ar[swap]{rrdd}{r_1}&   \Xc_1\ar[dashed]{rr}{\mu} & & \Xc_2\ar[hookleftarrow]{r} &\Zc_2\ar{lldd}{r_2}\\
    &&\color{white}{AAAA}&&\\
    &&B&&
    \end{tikzcd}
\end{equation}
which is a simple adaptation of \cite[Diagram 0.2.1]{kanemitsu} to our setting.

\section{Simple K-equivalence and mutations}
In the remainder of this chapter we will extensively use the language established in Notation \ref{notation}. Let us perform the following computation for later convenience.
\begin{lemma}\label{lem_canonicalbundlekequivalence}
    In the language of Notation \ref{notation}, one has $\omega_{\Xc_0}|_\Zc\simeq\Lc^{\otimes(-r+1)}\otimes\pi^* T $ for some line bundle $ T \in\dbcoh(B)$.
\end{lemma}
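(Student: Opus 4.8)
The plan is to reduce the computation of $\omega_{\Xc_0}|_\Zc$ to the already-known canonical bundle of the roof bundle $\Zc$ itself, by exploiting that $\Zc$ is a smooth divisor in $\Xc_0$. Since $\Zc$ is the common exceptional divisor of the two blowups $g_1$ and $g_2$ \cite[Lemma 2.1]{li}, it is a smooth Cartier divisor in $\Xc_0$, embedded by $f$, with normal bundle $N_{\Zc/\Xc_0}\simeq\Oc_{\Xc_0}(\Zc)|_\Zc$. The adjunction formula for this divisor reads
\begin{equation}
    \omega_\Zc\simeq f^*\omega_{\Xc_0}\otimes\Oc_{\Xc_0}(\Zc)|_\Zc = \omega_{\Xc_0}|_\Zc\otimes N_{\Zc/\Xc_0},
\end{equation}
so that $\omega_{\Xc_0}|_\Zc\simeq\omega_\Zc\otimes N_{\Zc/\Xc_0}^\vee$. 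Everything then reduces to identifying the conormal bundle $N_{\Zc/\Xc_0}^\vee$ and substituting the known value of $\omega_\Zc$.

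For the conormal bundle I would invoke the structure theorem for simple $K$-equivalences: by \cite[Proposition 1.4]{kanemitsu} the line bundle $\Oc_{\Xc_0}(-\Zc)|_\Zc = N_{\Zc/\Xc_0}^\vee$ is the Grothendieck line bundle of both projective bundle structures $p_1,p_2$ of $\Zc$, hence it restricts to $L$ on every fiber $\pi^{-1}(b)\simeq G/P$. The line bundle $\Lc$ fixed in Notation \ref{notation} enjoys the same property. Using the exact sequence $0\to\operatorname{Pic}(B)\xrightarrow{\pi^*}\operatorname{Pic}(\Zc)\to\operatorname{Pic}(G/P)\to 0$, which holds for the locally trivial $G/P$-fibration $\pi$ exactly as in the proof of Lemma \ref{lem_roofbundlecommutativediagram} (via \cite[Proposition 2.3]{fossumiversen} and the vanishing of $H^1(G/P,\ZZ)$), two line bundles with the same restriction to a fiber differ by the pullback of a line bundle on $B$. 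Therefore $N_{\Zc/\Xc_0}^\vee\simeq\Lc\otimes\pi^*T_2$ for some $T_2\in\operatorname{Pic}(B)$.

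It remains to substitute the canonical bundle of $\Zc$. In the proof of Proposition \ref{prop_serrefunctor} it is established, via \cite[Example 3.2.11]{fulton} together with the Mukai-pair relation $\omega_{\Zc_i}\otimes\det\Ec_i\simeq r_i^*T_1$ valid fiberwise, that $\omega_\Zc\simeq\Lc^{\otimes(-r)}\otimes\pi^*T_1$ for some line bundle $T_1$ on $B$. Combining this with the previous identity gives
\begin{equation}
    \omega_{\Xc_0}|_\Zc\simeq\omega_\Zc\otimes N_{\Zc/\Xc_0}^\vee\simeq\Lc^{\otimes(-r)}\otimes\pi^*T_1\otimes\Lc\otimes\pi^*T_2 = \Lc^{\otimes(-r+1)}\otimes\pi^*T,
\end{equation}
with $T:=T_1\otimes T_2$, which is the asserted formula.

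The only delicate step is the identification of the conormal bundle in the second paragraph: one must know that $\Oc_{\Xc_0}(-\Zc)|_\Zc$ is precisely the relative hyperplane bundle of the roof structure on $\Zc$ (Kanemitsu's description of the exceptional divisor of a simple $K$-equivalence), and one must check that the possible discrepancy between this geometric line bundle and the chosen $\Lc_T$ lies in $\pi^*\operatorname{Pic}(B)$, so that it can be absorbed into the base twist $T$. Once this is in place, together with the computation of $\omega_\Zc$ established in Proposition \ref{prop_serrefunctor}, the statement follows formally from adjunction.
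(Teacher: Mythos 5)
Your proof is correct and follows essentially the same route as the paper: adjunction along the divisor $\Zc\subset\Xc_0$ (the paper phrases it via the normal bundle sequence), the computation $\omega_\Zc\simeq\Lc^{\otimes(-r)}$ up to pullbacks from $B$ via \cite[Example 3.2.11]{fulton} together with the fiberwise Mukai-pair relation, and \cite[Proposition 1.4]{kanemitsu} to identify $\Oc(-\Zc)|_\Zc$ with the relative Grothendieck line bundle. Your additional verification, via the Fossum--Iversen Picard sequence, that the discrepancy between $\Oc(-\Zc)|_\Zc$ and the chosen $\Lc_T$ lies in $\pi^*\operatorname{Pic}(B)$ is a welcome precision that the paper leaves implicit when it simply writes $\Nc_{\Zc|\Xc_0}\simeq\Lc^\vee$.
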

\begin{proof}
    We recall that by \cite[Example 3.2.11]{fulton} one has $\omega_\Zc\simeq p_i^*\omega_{\Zc_i}\otimes p_i^*\det\Ec_i\otimes\Lc^{\otimes(-r)}$, and since $(r_i^{-1}(b), \Ec_i|_{r_i^{-1}(b)})$ is a Mukai pair for every $b\in B$, we get $\omega_\Zc \simeq\Lc^{\otimes (-r)}$ up to twists by pullbacks of line bundles from $B$. By the normal bundle sequence:
    \begin{equation}
        0\arw T_\Zc\arw f^*T_{\Xc_0}\arw\Nc_{\Zc|\Xc_0}\arw 0
    \end{equation}
    one finds $f^*\omega_{\Xc_0} = \omega_\Zc\otimes\det\Nc_{\Zc|\Xc_0}^\vee = \Lc^{\otimes (-r)}\otimes\det\Nc_{\Zc|\Xc_0}^\vee$. The proof is completed by the fact that, since $\Zc$ is the exceptional divisor of the blowup $g_i:\Xc_0\arw\Xc_i$, by \cite[Proposition 1.4]{kanemitsu} one has $\Nc_{\Zc|\Xc_0}\simeq f^*\Oc(\Zc) \simeq \Lc^\vee$.
\end{proof}

\begin{lemma}\label{lem_twistkequivalence}
    In the language of Notation \ref{notation}, assume that for either $i=1$ or $i=2$ there is a semiorthogonal decomposition $\dbcoh(\Xc_0) = \langle \sigma\dbcoh(\Xc_i), f_*[W_1],\dots, f_*[W_n]\rangle$ where every $W_j$ is a homogeneous vector bundle on $G/P$ and $\sigma$ is a fully faithful functor. Then one has:
    \begin{equation}
        \begin{split}
            \dbcoh(\Xc_0) &= \langle \sigma\dbcoh(\Xc_i)\otimes\Oc(-t\Zc), f_*[W_1\otimes L^{\otimes t}], \dots,  f_*[W_n\otimes L^{\otimes t}]\rangle
        \end{split}
    \end{equation}
    for every $t\in\ZZ$.
\end{lemma}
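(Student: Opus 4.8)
The plan is to realize the claimed decomposition as the image of the given one under the autoequivalence $(-)\otimes\Oc(-t\Zc)$ of $\dbcoh(\Xc_0)$. Since $\Oc(-t\Zc)$ is a line bundle on the smooth projective variety $\Xc_0$, tensoring by it is an exact autoequivalence (with inverse $(-)\otimes\Oc(t\Zc)$), and autoequivalences carry any semiorthogonal decomposition to a semiorthogonal decomposition. Thus the entire content of the lemma reduces to identifying the image of each component of $\langle \sigma\dbcoh(\Xc_i), f_*[W_1],\dots, f_*[W_n]\rangle$. The first component is sent to $\sigma\dbcoh(\Xc_i)\otimes\Oc(-t\Zc)$ essentially by definition, so the work lies entirely in the blocks $f_*[W_j]$.

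For these, first I would compute $f^*\Oc(-t\Zc)$. By \cite[Proposition 1.4]{kanemitsu}, already used in Lemma \ref{lem_canonicalbundlekequivalence}, one has $\Nc_{\Zc|\Xc_0}\simeq f^*\Oc(\Zc)\simeq\Lc^\vee$, whence $f^*\Oc(-t\Zc)\simeq\Lc^{\otimes t}$. The projection formula then gives, for every $\Gc\in\dbcoh(B)$ and every object $\Fm(W_j)\otimes\pi^*\Gc$ of $[W_j]$,
\[
    f_*\bigl(\Fm(W_j)\otimes\pi^*\Gc\bigr)\otimes\Oc(-t\Zc)\simeq f_*\bigl(\Fm(W_j)\otimes\pi^*\Gc\otimes\Lc^{\otimes t}\bigr).
\]

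Next I would rewrite $\Fm(W_j)\otimes\Lc^{\otimes t}$. Recall from Equation \ref{eq_Lc} that $\Lc\simeq\Fm(L)\otimes\pi^*T$, and that $\Fm$ sends tensor products of homogeneous bundles to tensor products (the tensor product of the homogeneous bundles attached to representations $\Gamma,\Gamma'$ is the one attached to $\Gamma\otimes\Gamma'$), so $\Fm(W_j)\otimes\Fm(L)^{\otimes t}\simeq\Fm(W_j\otimes L^{\otimes t})$. Hence $\Fm(W_j)\otimes\Lc^{\otimes t}\simeq\Fm(W_j\otimes L^{\otimes t})\otimes\pi^*T^{\otimes t}$, and the computation above becomes $f_*\bigl(\Fm(W_j\otimes L^{\otimes t})\otimes\pi^*(\Gc\otimes T^{\otimes t})\bigr)$. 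As $\Gc$ ranges over $\dbcoh(B)$ the twist $\Gc\otimes T^{\otimes t}$ ranges over all of $\dbcoh(B)$, so as subcategories $f_*[W_j]\otimes\Oc(-t\Zc) = f_*[W_j\otimes L^{\otimes t}]$, which is precisely the $j$-th block of the target decomposition.

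The argument requires no hard input: the only subtle bookkeeping is to verify that the spurious factor $\pi^*T^{\otimes t}$, coming from the discrepancy between $\Lc$ and $\Fm(L)$, is harmlessly absorbed into the $\pi^*\dbcoh(B)$-linear structure defining $[W_j\otimes L^{\otimes t}]$, and that $W_j\otimes L^{\otimes t}$ is again a homogeneous vector bundle so that the bracket notation applies. Both are immediate, and the conclusion follows by reassembling the blocks for arbitrary $t\in\ZZ$ (positive, negative, or zero), since $(-)\otimes\Oc(-t\Zc)$ is an autoequivalence for every such $t$.
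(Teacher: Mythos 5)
Your proposal is correct and follows essentially the same route as the paper's proof: twist the given decomposition by the line bundle $\Oc(-t\Zc)$, use $f^*\Oc(-\Zc)\simeq\Lc$ from \cite[Proposition 1.4]{kanemitsu}, and apply the projection formula objectwise to identify $f_*[W_j]\otimes\Oc(-t\Zc)$ with $f_*[W_j\otimes L^{\otimes t}]$. Your explicit check that the spurious $\pi^*T^{\otimes t}$ factor from $\Lc\simeq\Fm(L)\otimes\pi^*T$ is absorbed into the $\pi^*\dbcoh(B)$-linear structure of the blocks is a point the paper's proof passes over silently, but it is the same argument.
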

\begin{proof}
    Let us start with the decomposition 
    \begin{equation}
        \begin{split}
            \dbcoh(\Xc_0) &= \langle \sigma\dbcoh(\Xc_i), f_*[W_1], \dots,  f_*[W_n]\rangle.
        \end{split}
    \end{equation}
    Note that, since $\Zc$ is the exceptional divisor of the blowup $\Xc_0\arw\Xc_i$ and $\Lc$ is the Grothendieck line bundle of $\Zc$ as a projective bundle over $\Zc_i$, by \cite[Proposition 1.4]{kanemitsu} one has $f^*\Oc(-\Zc)\simeq \Lc$. Hence, for any integer $t$, let us twist the collection above by $\Oc(-t\Zc)$, obtaining:
    \begin{equation}
        \begin{split}
            \dbcoh(\Xc_0) = \langle \sigma\dbcoh(\Xc_i)\otimes\Oc(-t\Zc),\hspace{122pt}\\ f_*[W_1]\otimes\Oc(-t\Zc), \dots,  f_*[W_n]\otimes\Oc(-t\Zc)\rangle.
        \end{split}
    \end{equation}
    We conclude the proof by showing that for every $j$ one has $f_*[W_j]\otimes\Oc(-t\Zc)\simeq f_*[W_j\otimes L^{\otimes t}]$. This assertion follows simply by projection formula. In fact, for every $\Gc\in\dbcoh(B)$ we have
    \begin{equation}
        \begin{split}
            f_*(\Wc_j\otimes\pi^*\Gc)\otimes \Oc(-t\Zc) & \simeq f_*(\Wc_j\otimes\pi^*\Gc\otimes f^*\Oc(-t\Zc)) \\
            & \simeq f_*(\Wc_j\otimes\pi^*\Gc\otimes \Lc^{\otimes t}).
        \end{split}
    \end{equation}
\end{proof}

\begin{lemma}\label{lem_sodforkequivalence}
    In the language of Notation \ref{notation} there are the following semiorthogonal decompositions:
    \begin{equation}
        \begin{split}
            \dbcoh(\Xc_0) = \langle \wt g_1^*\dbcoh(\Xc_1), f_*[E_1], \dots, f_*[E_n] \rangle & = \langle \wt g_2^*\dbcoh(\Xc_2), f_*[F_1], \dots, f_*[F_n] \rangle.
        \end{split}
    \end{equation}
    where $\wt g_i^*:= g_i^*(-)\otimes\Oc(-\Zc)$.
\end{lemma}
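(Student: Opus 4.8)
The plan is to produce each decomposition in three moves: Orlov's blowup formula for $g_i$, Samokhin's relative exceptional collection over $B$, and a global twist via Lemma \ref{lem_twistkequivalence} to reach the stated normalisation. I carry out the case $i=1$; the case $i=2$ is verbatim with $J_j,\boldsymbol E$ replaced by $K_j,\boldsymbol F$. Recall from the setup of Notation \ref{notation} that $\Zc$ is the exceptional divisor of the blowup $g_1\colon\Xc_0\arw\Xc_1$ in the smooth centre $\Zc_1$, which has codimension $r$ since $p_1\colon\Zc\arw\Zc_1$ is a $\PP^{r-1}$-bundle; by Lemma \ref{lem_canonicalbundlekequivalence} (via \cite[Proposition 1.4]{kanemitsu}) one has $\Nc_{\Zc|\Xc_0}\simeq\Lc^\vee$, so that $\Lc=f^*\Oc(-\Zc)$ is the relative $\Oc(1)$ of $p_1$. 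Orlov's blowup theorem \cite{orlovblowup} then gives
\[
\dbcoh(\Xc_0)=\langle g_1^*\dbcoh(\Xc_1),\ \Phi_1\dbcoh(\Zc_1),\ \dots,\ \Phi_{r-1}\dbcoh(\Zc_1)\rangle,\qquad \Phi_k:=f_*\big(p_1^*(-)\otimes\Lc^{\otimes k}\big),
\]
each $\Phi_k$ being fully faithful, and $g_1^*$ fully faithful with admissible image.

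Next I relativise the inner blocks. As $\Zc_1\arw B$ is a locally trivial $G/P_1$-fibration, Samokhin's theorem \cite[Theorem 3.1]{samokhin} applied to $\dbcoh(G/P_1)=\langle J_1,\dots,J_m\rangle$ gives $\dbcoh(\Zc_1)=\langle\Fm(J_1)\otimes r_1^*\dbcoh(B),\dots,\Fm(J_m)\otimes r_1^*\dbcoh(B)\rangle$. Feeding this into each $\Phi_k\dbcoh(\Zc_1)$ and using $p_1^*r_1^*=\pi^*$ together with the projection formula, every resulting block is
\[
\Phi_k\big(\Fm(J_j)\otimes r_1^*\dbcoh(B)\big)=f_*\big(p_1^*\Fm(J_j)\otimes\Lc^{\otimes k}\otimes\pi^*\dbcoh(B)\big)=f_*[\,h_1^*J_j\otimes L^{\otimes k}\,],
\]
the last equality being the definition of the bracket in Notation \ref{notation}. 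Hence $\dbcoh(\Xc_0)$ is generated by $g_1^*\dbcoh(\Xc_1)$ together with the $(r-1)m=n$ blocks $f_*[h_1^*J_j\otimes L^{\otimes k}]$, whose underlying bundles $h_1^*J_j\otimes L^{\otimes k}$ are, up to one global power of $L$, exactly the list $\boldsymbol E=(E_1,\dots,E_n)$ extracted from the Cayley-trick decomposition \ref{eq_smallcollectionsshort}.

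Finally I fix the normalisation. Tensoring by $\Oc(-t\Zc)$ is an autoequivalence of $\dbcoh(\Xc_0)$ which, by Lemma \ref{lem_twistkequivalence}, carries the decomposition above to one with first block $g_1^*\dbcoh(\Xc_1)\otimes\Oc(-t\Zc)$ and inner bundles $h_1^*J_j\otimes L^{\otimes k}$ replaced by $h_1^*J_j\otimes L^{\otimes(k+t)}$. There is a unique $t$ for which the inner bundles become exactly $\boldsymbol E=(E_1,\dots,E_n)$; for that same $t$ the first block is $g_1^*\dbcoh(\Xc_1)\otimes\Oc(-\Zc)=\wt g_1^*\dbcoh(\Xc_1)$, giving the asserted decomposition. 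The case $i=2$ is identical, starting from $\dbcoh(G/P_2)=\langle K_1,\dots,K_m\rangle$ and the list $\boldsymbol F$.

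The crux is exactly this last coincidence of twists. Orlov's blowup formula, the bracket notation $[\,\cdot\,]$, and the twist range appearing in $\boldsymbol E,\boldsymbol F$ are each fixed by a sign convention, and one must check that the unique global twist aligning the inner blocks with $\boldsymbol E$ (resp. $\boldsymbol F$) is simultaneously the one producing $\wt g_i^*=g_i^*(-)\otimes\Oc(-\Zc)$ in the first slot, so that the interior blocks land on $\boldsymbol E$ and not on some uniform $L$-power of them. This is consistent with the blowup discrepancy $r-1$ and with the identity $\omega_{\Xc_0}|_\Zc\simeq\Lc^{\otimes(-r+1)}\otimes\pi^*T$ of Lemma \ref{lem_canonicalbundlekequivalence}. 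Granting this bookkeeping, the statement follows by assembling the three cited structural results.
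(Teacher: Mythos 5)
Your overall architecture (Orlov's blowup formula, then Samokhin's relativization over $B$, then a global twist via Lemma \ref{lem_twistkequivalence}) is the same as the paper's, but your starting decomposition is incorrect, and the error propagates into exactly the bookkeeping you flag as ``the crux'' without resolving it. Orlov's theorem, in the conventions of this paper, gives $\dbcoh(\Xc_0)=\langle \Theta_{-r+1}\dbcoh(\Zc_1),\dots,\Theta_{-1}\dbcoh(\Zc_1),\, g_1^*\dbcoh(\Xc_1)\rangle$ with $\Theta_k=f_*(p_1^*(-)\otimes\Lc^{\otimes k})$: the twists are \emph{negative} and the blocks sit to the \emph{left} of $g_1^*\dbcoh(\Xc_1)$. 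The rotated form you assert, $\langle g_1^*\dbcoh(\Xc_1),\Phi_1\dbcoh(\Zc_1),\dots,\Phi_{r-1}\dbcoh(\Zc_1)\rangle$, is not semiorthogonal: by Grothendieck duality for the divisorial embedding $f$ (using $f^!(-)=f^*(-)\otimes\Lc^\vee[-1]$) one computes
\begin{equation*}
    \Hom_{\Xc_0}(\Phi_k A,\, g_1^*B)\simeq \Hom_{\Zc_1}\bigl(A,\,(B|_{\Zc_1})\otimes Rp_{1*}\Lc^{\otimes(-k-1)}[-1]\bigr),
\end{equation*}
which vanishes precisely for $0\leq k\leq r-2$, so the admissible twist range once $g_1^*\dbcoh(\Xc_1)$ is in the first slot is $0,\dots,r-2$, not $1,\dots,r-1$; for $k=r-1$ one has $R^{r-1}p_{1*}\Lc^{\otimes(-r)}\neq 0$ and semiorthogonality fails. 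Moreover, the passage from Orlov's form to the ``$g_1^*$-first'' form is not free: the paper performs it by applying the inverse Serre functor of $\Xc_0$ to the whole semiorthogonal complement of $g_1^*\dbcoh(\Xc_1)$, and this is where Lemma \ref{lem_canonicalbundlekequivalence} enters essentially, since by the projection formula each block $f_*(\Fm(J_i)\otimes\Lc^{\otimes(-j)}\otimes\pi^*\dbcoh(B))\otimes\omega_{\Xc_0}^\vee$ becomes $f_*(\Fm(J_i)\otimes\Lc^{\otimes(r-1-j)}\otimes\pi^*\dbcoh(B))$. You cite that identity only as a consistency check and never use it in an actual mutation.

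With the twist range corrected to $0,\dots,r-2$, your final normalisation step does close, and uniquely so: the Cayley-trick list $\boldsymbol E$ carries twists $1,\dots,r-1$, so the aligning twist is $t=1$, i.e.\ tensoring by $\Oc(-\Zc)$, which by Lemma \ref{lem_twistkequivalence} sends $f_*[h_1^*J_j\otimes L^{\otimes k}]$ to $f_*[h_1^*J_j\otimes L^{\otimes(k+1)}]$ and simultaneously turns the first block into $\wt g_1^*\dbcoh(\Xc_1)$ --- exactly the paper's last step. Under your stated starting decomposition, by contrast, the bookkeeping is internally inconsistent: the inner blocks would already match $\boldsymbol E$ with $t=0$, leaving $g_1^*\dbcoh(\Xc_1)$ rather than $\wt g_1^*\dbcoh(\Xc_1)$ in the first slot, contradicting the statement being proved. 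So the missing idea is the Serre-functor rotation, with Lemma \ref{lem_canonicalbundlekequivalence} computing its effect on the blocks; once that is inserted, your argument coincides with the paper's proof.
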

\begin{proof}
    Let us start by applying Orlov's blowup decomposition. For $-r+1\leq k\leq -1$ one has the following fully faithful functors:
    \begin{equation}
        \begin{tikzcd}[row sep = tiny, column sep = large, /tikz/column 1/.append style={anchor=base east} ,/tikz/column 2/.append style={anchor=base west}]
            \Theta_k: \dbcoh(\Zc_1) \ar{r} & \dbcoh(\Xc_0) \\
            \Fc \ar[maps to]{r} & f_*(p_1^*\Fc\otimes\Lc^{\otimes k})
        \end{tikzcd}
    \end{equation}
    and the semiorthogonal decomposition:
    \begin{equation}
        \begin{split}
            \dbcoh(\Xc_0) &= \langle \Theta_{-r+1}\dbcoh(\Zc_1), \dots, \Theta_{-1}\dbcoh(\Zc_1), g_1^*\dbcoh(\Xc_1) \rangle.
        \end{split}
    \end{equation}
    By applying \cite[Theorem 3.1]{samokhin} to each copy of $\dbcoh(\Zc_1)$ with respect to the collection $\dbcoh(\Zc_1) = \langle J_1, \dots, J_m\rangle$ we find:
    \begin{equation}\label{eq_kequivalenceorlovsamokhin}
        \begin{split}
            \dbcoh(\Xc_0) = \langle f_*(\Fm(J_1)\otimes\Lc^{\otimes(-r+1)}\otimes\pi^*\dbcoh(B)), \dots, f_*(\Fm(J_m)\otimes\Lc^{\otimes(-r+1)}\otimes\pi^*\dbcoh(B)),\\
            \vdots \hspace{150pt} \vdots \hspace{45pt}\\
            f_*(\Fm(J_1)\otimes\Lc^{\otimes(-1)}\otimes\pi^*\dbcoh(B)), \dots, f_*(\Fm(J_m)\otimes\Lc^{\otimes(-1)}\otimes\pi^*\dbcoh(B)),\\
            g_1^*\dbcoh(\Xc_1) \rangle
        \end{split}
    \end{equation}
    where $\Fc$ is the functor defined in Equation \ref{eq_relativizationfunctor}. Let us now apply the inverse Serre functor to the whole semiorthogonal complement of $g_1^*\dbcoh(\Xc_1)$. Then, for $1\leq i\leq m$ and $1\leq j\leq r-1$ the block $f_*(\Fm(J_i)\otimes\Lc^{\otimes(-j)}\otimes\pi^*\dbcoh(B))$ will become $f_*(\Fm(J_i)\otimes\Lc^{\otimes(-j)}\otimes\pi^*\dbcoh(B))\otimes\omega_{\Xc_0}^\vee$. By the projection formula we compute:
    \begin{equation}
        \begin{split}
            f_*(\Fm(J_i)\otimes\Lc^{\otimes(-j)}\otimes\pi^*\dbcoh(B))\otimes\omega_{\Xc_0}^\vee \\ =f_*(\Fm(J_i)\otimes\Lc^{\otimes(-j)}\otimes\pi^*\dbcoh(B))\otimes f^*\omega_{\Xc_0}^\vee) \\
            =f_*(\Fm(J_i)\otimes\otimes\pi^*\dbcoh(B))\otimes \Lc^{\otimes(r-j-1)})
        \end{split}
    \end{equation}
    where the second equality follows from Lemma \ref{lem_canonicalbundlekequivalence}. Substituting this in the decomposition \ref{eq_kequivalenceorlovsamokhin} we get:
    \begin{equation}
        \begin{split}
            \dbcoh(\Xc_0) = \langle & g_1^*\dbcoh(\Xc_1),\hspace{250pt} \\
            & f_*(\Fm(J_1)\otimes\pi^*\dbcoh(B)), \dots\dots\dots\dots\dots\dots\dots, f_*(\Fm(J_m)\otimes\pi^*\dbcoh(B)),\\
            &\hspace{30pt} \vdots \hspace{190pt} \vdots \hspace{45pt}\\
            &f_*(\Fm(J_1)\otimes\Lc^{\otimes(r-2)}\otimes\pi^*\dbcoh(B)), \dots, f_*(\Fm(J_m)\otimes\Lc^{\otimes(r-2)}\otimes\pi^*\dbcoh(B))\rangle.
        \end{split}
    \end{equation}
    Finally, we twist the whole collection by $\Oc(-\Zc)$. By Lemma \ref{lem_twistkequivalence}, what we obtain is:
    \begin{equation}
        \begin{split}
            \dbcoh(\Xc_0) = \langle & \wt g_1^*\dbcoh(\Xc_1),\hspace{250pt} \\
            & f_*(\Fm(J_1)\otimes\pi^*\dbcoh(B)\otimes\Lc), \dots\dots\dots\dots\dots\dots\dots, f_*(\Fm(J_m)\otimes\pi^*\dbcoh(B)\otimes\Lc),\\
            & \hspace{30pt} \vdots \hspace{190pt} \vdots \hspace{45pt}\\
            & f_*(\Fm(J_1)\otimes\Lc^{\otimes(r-1)}\otimes\pi^*\dbcoh(B)), \dots, f_*(\Fm(J_m)\otimes\Lc^{\otimes(r-1)}\otimes\pi^*\dbcoh(B))\rangle.
        \end{split}
    \end{equation}
    The result follows by comparing this decomposition with Equation \ref{eq_bigcollectionsshort}.
\end{proof}

The following lemma and its relevance in the further computations are analogous to Lemma \ref{lem_Lsemiorthogonalcommutes} and the role it played in Section \ref{sec_deriverdequivalenceCYfibrations}.

\begin{lemma}\label{lem_Lsemiorthogonalcommuteskequivalence}
    Let $\pi:\Zc\arw B$ be a flat and proper morphism of smooth projective varieties, consider a closed immersion $f:\Zc\arw\Xc$ of codimension one, where $\Xc$ is smooth and projective. Suppose there exist admissible subcategories $\Cc\subset \dbcoh(\Xc)$, $\Dc\subset\dbcoh(\Zc)$ and vector bundles $\Ec, \Fc\in\dbcoh(\Zc)$ relatively exceptional over $B$ such that one has the following strong, $B$-linear semiorthogonal decompositions:
    
    \begin{equation}
        \begin{split}
            \dbcoh(\Zc) &= \langle\Dc, \Ec\otimes\pi^*\dbcoh(B),                                    \Fc\otimes\pi^*\dbcoh(B)\rangle\\ 
            \dbcoh(\Xc) &= \langle\Cc, f_*(\Ec\otimes\pi^*\dbcoh(B)),                                    f_*(\Fc\otimes\pi^*\dbcoh(B))\rangle.
        \end{split}
    \end{equation}
    Then, if $\Ec$ is $\Lc$-semiorthogonal to $\Fc$, $\LL_{\langle \Ec\otimes\pi^*\dbcoh(B)\rangle}\Fc\otimes\pi^*\dbcoh(B)$ commutes with $f_*$.
\end{lemma}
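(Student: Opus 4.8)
The plan is to run the argument of Lemma \ref{lem_Lsemiorthogonalcommutes} essentially verbatim, replacing the adjoint pair $(\iota^*,\iota_*)$ of the hyperplane section by the adjoint pair $(f_*, f^!)$ attached to the divisorial immersion $f$. Write $\Psi$ and $\Psi^!$ for the functors $\Psi_{\langle\Ec\otimes\pi^*\dbcoh(B)\rangle}$ and $\Psi^!_{\langle\Ec\otimes\pi^*\dbcoh(B)\rangle}$ of Equations \ref{eq_psi} and \ref{eq_psi!}. I would apply Lemma \ref{lem_categoricallemma} with $L_1 = f_*$, $R_1 = f^!$, $L_2 = \Psi$, $R_2 = \Psi^!$ and $\Xm = \dbcoh(\Zc)$, $\Bm = \dbcoh(B)$, $\Mm = \dbcoh(\Xc)$, so that $L_1 L_2$ is precisely the functor $\Gc\mapsto f_*(\Ec\otimes\pi^*\Gc)$ generating the block $f_*(\Ec\otimes\pi^*\dbcoh(B))$ of $\dbcoh(\Xc)$, with right adjoint $\Psi^!\circ f^!$. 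Setting $x = \Fc\otimes\pi^*\Gc$ and $\alpha := f_*\Psi\Psi^!(\eta_{1,x})$, where $\eta_1$ is the unit of $f_*\dashv f^!$, Lemma \ref{lem_categoricallemma} yields the commutative square
\begin{equation}
    \begin{tikzcd}[row sep = large, column sep = huge]
        f_*\Psi\Psi^!(\Fc\otimes\pi^*\Gc)\ar{d}{\alpha}\ar{r}{f_*\epsilon_\Zc} & f_*(\Fc\otimes\pi^*\Gc)\ar[equals]{d}\\
        f_*\Psi\Psi^! f^! f_*(\Fc\otimes\pi^*\Gc)\ar[swap]{r}{\epsilon_\Xc} & f_*(\Fc\otimes\pi^*\Gc)
    \end{tikzcd}
\end{equation}
with $\epsilon_\Zc = \epsilon_{2,\Fc\otimes\pi^*\Gc}$ and $\epsilon_\Xc = \epsilon_{12, f_*(\Fc\otimes\pi^*\Gc)}$. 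Since $f_*$ is triangulated, $f_*\LL_{\langle\Ec\otimes\pi^*\dbcoh(B)\rangle}(\Fc\otimes\pi^*\Gc)$ is the cone of $f_*\epsilon_\Zc$ and $\LL_{\langle f_*(\Ec\otimes\pi^*\dbcoh(B))\rangle}f_*(\Fc\otimes\pi^*\Gc)$ is the cone of $\epsilon_\Xc$, so the statement reduces to the claim that $\alpha$ is an isomorphism for every $\Gc\in\dbcoh(B)$.

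\textbf{Claim.} \emph{$\alpha$ is an isomorphism if $\Ec$ is $\Lc$-semiorthogonal to $\Fc$.} Here the role played in the hyperplane case by the Koszul resolution of $\iota_*\iota^*$ is taken over by the self-intersection triangle of the smooth divisor $\Zc$. Since $f$ is a codimension-one regular immersion, Grothendieck duality gives $f^!(-)\simeq f^*(-)\otimes\Nc_{\Zc|\Xc}[-1]$, and by \cite[Proposition 1.4]{kanemitsu} one has $\Nc_{\Zc|\Xc}\simeq\Lc^\vee$ (equivalently $f^*\Oc(-\Zc)\simeq\Lc$, as in Lemma \ref{lem_twistkequivalence}). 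Combined with the computation $f^* f_*\Gc'\simeq\Gc'\oplus(\Gc'\otimes\Lc)[1]$, obtained by pulling back the two-term Koszul resolution of $f_*\Oc_\Zc$ (whose differential vanishes on $\Zc$), this produces the distinguished triangle
\begin{equation}
    \Fc\otimes\pi^*\Gc \xrightarrow{\eta_1} f^! f_*(\Fc\otimes\pi^*\Gc)\arw \Fc\otimes\pi^*\Gc\otimes\Lc^\vee[-1]
\end{equation}
in which the first arrow is the adjunction unit. Applying $\Psi^! = \pi_*R\Hc om_\Zc(\Ec,-)$ and using that, by the projection formula together with the $\Lc$-semiorthogonality hypothesis (cf. Lemma \ref{lem_Lsemiorthogonal}),
\begin{equation}
    \Psi^!\big(\Fc\otimes\pi^*\Gc\otimes\Lc^\vee[-1]\big) = \pi_*R\Hc om_\Zc(\Ec,\Fc\otimes\Lc^\vee)\otimes\Gc\,[-1] = 0,
\end{equation}
we conclude that $\Psi^!(\eta_1)$ is an isomorphism, hence so is $\alpha = f_*\Psi\Psi^!(\eta_1)$.

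With $\alpha$ invertible I would finish exactly as in Lemma \ref{lem_Lsemiorthogonalcommutes}: the commuting square, the isomorphism $\alpha$ and \cite[page 232, Corollary 4]{gelfandmanin} identify the cone of $f_*\epsilon_\Zc$ with the cone of $\epsilon_\Xc$ functorially in $\Gc$, giving $f_*\LL_{\langle\Ec\otimes\pi^*\dbcoh(B)\rangle}(\Fc\otimes\pi^*\Gc)\simeq\LL_{\langle f_*(\Ec\otimes\pi^*\dbcoh(B))\rangle}f_*(\Fc\otimes\pi^*\Gc)$. Both mutated subcategories are $B$-linear (by the analogue of \cite[Lemma 2.36]{kuznetsovhyperplane}), so the object-wise isomorphism upgrades to the required equivalence of subcategories of $\dbcoh(\Xc)$, i.e. the mutation commutes with $f_*$.

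The main obstacle I anticipate lies in the second paragraph: pinning down the self-intersection triangle for $f^! f_*$ with the correct twist $\Nc_{\Zc|\Xc}\simeq\Lc^\vee$ and the correct homological shift, and in particular verifying that the natural map appearing there genuinely is the adjunction unit $\eta_1$ and not merely some morphism with the same cone. Once those shifts and the identification of $\eta_1$ are settled, the vanishing is immediate from $\Lc$-semiorthogonality, so this is the only essentially new input beyond the hyperplane argument of Lemma \ref{lem_Lsemiorthogonalcommutes}.
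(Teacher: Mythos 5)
Your proposal is correct and follows the paper's own proof essentially step for step: the same application of Lemma \ref{lem_categoricallemma} with $L_1=f_*$, $R_1=f^!$, $L_2=\Psi$, $R_2=\Psi^!$, the same commutative square and reduction to showing $\alpha$ is an isomorphism, and the same key inputs $f^!(-)\simeq f^*(-)\otimes\Lc^\vee[-1]$ (via $\Nc_{\Zc|\Xc}\simeq\Lc^\vee$) and the self-intersection triangle for $f^*f_*$, with $\Lc$-semiorthogonality killing the twisted term, concluding via \cite[page 232, Corollary 4]{gelfandmanin}. The only cosmetic difference is that you assert the splitting $f^*f_*(\Fc\otimes\pi^*\Gc)\simeq(\Fc\otimes\pi^*\Gc)\oplus(\Fc\otimes\pi^*\Gc\otimes\Lc)[1]$ (true, since the Koszul differential acts by a section vanishing on $\Zc$), whereas the paper invokes only the non-split triangle of \cite[Corollary 11.4]{huybrechts} and runs the long exact sequence of $R^\bullet\pi_*$ after tensoring by $\Ec^\vee\otimes\Lc^\vee[-1]$; the unit-identification subtlety you flag at the end is likewise present, and treated implicitly, in the paper's argument.
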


\begin{proof}
    Let us recall the functors $\Psi_{\langle\Ec\otimes\pi^*\dbcoh(B)\rangle}$ and $\Psi_{\langle\Ec\otimes\pi^*\dbcoh(B)\rangle}^!$ defined by Equations \ref{eq_psi} and \ref{eq_psi!}. If we apply Lemma \ref{lem_categoricallemma} to the functors $L_1:= f_*$, $R_1:=f^!$, $L_2:= \Psi_{\langle\Ec\otimes\pi^*\dbcoh(B)\rangle}$, $R_2:=\Psi_{\langle\Ec\otimes\pi^*\dbcoh(B)\rangle}^!$, we find the commutative diagram:
    
    \begin{equation}\label{eq_diagramkequivalencecats}
        \begin{tikzcd}[row sep = huge]
            f_*\Psi_{\langle\Ec\otimes\pi^*\dbcoh(B)\rangle}\Psi_{\langle\Ec\otimes\pi^*\dbcoh(B)\rangle}^!\Fc\otimes\pi^*\Gc\ar{r}{f_*\epsilon_\Zc}\ar{d}{\beta} & f_*(\Fc\otimes\pi^*\Gc)\ar[equals]{d}\\
            \Xi_{\langle f_*(\Ec\otimes\pi^*\dbcoh(B))\rangle}\Xi_{\langle f_*(\Ec\otimes\pi^*\dbcoh(B))\rangle}^!f_*(\Fc\otimes\pi^*\Gc)\ar{r}{\epsilon_{\Xc}} & f_*(\Fc\otimes\pi^*\Gc)
        \end{tikzcd}
    \end{equation}
    
    where we defined $\beta:= f_*\Psi_{\langle\Ec\otimes\pi^*\dbcoh(B)\rangle}\Psi_{\langle\Ec\otimes\pi^*\dbcoh(B)\rangle}^!(\eta_{1, \Fc\otimes\pi^*\Gc})$, $\epsilon_\Zc:= \epsilon_{2,\Fc\otimes\pi^*\Gc}$, $\epsilon_\Xc := \epsilon_{12, f_*(\Fc\otimes\pi^*\Gc)}$ and \linebreak  $\Xi_{\langle f_*(\Ec\otimes\pi^*\dbcoh(B))\rangle}:= f_*\Psi_{\langle\Ec\otimes\pi^*\dbcoh(B)\rangle}$. Let us now prove the following:\\
    \textbf{Claim.} \emph{The map $\beta$ is an isomorphism if $\Ec$ is $\Lc$-semiorthogonal to $\Fc$.}\\
    Proving the claim is equivalent to show that under the requirement of $\Lc$-semiorthogonality one has:
    
    \begin{equation}
        \begin{split}
            f_*\Psi_{\langle\pi^*\dbcoh(B)\otimes\Ec\rangle}\Psi_{\langle\pi^*\dbcoh(B)\otimes\Ec\rangle}^! f^!f_*(\Fc\otimes\pi^*\Gc) =& f_*\Psi_{\langle\pi^*\dbcoh(B)\otimes\Ec\rangle}\Psi_{\langle\pi^*\dbcoh(B)\otimes\Ec\rangle}^! (\Fc\otimes\pi^*\Gc).
        \end{split}
    \end{equation}
    
    We start with the following chain of equalities:
    
    \begin{equation}\label{eq_kequivalencecomputation}
        \begin{split}
            f_*\Psi_{\langle\pi^*\dbcoh(B)\otimes\Ec\rangle}\Psi_{\langle\pi^*\dbcoh(B)\otimes\Ec\rangle}^! f^!f_*(\Fc\otimes\pi^*\Gc) =& f_*(\Ec\otimes \pi^*\pi_*R\Hc om_\Zc(\Ec, f^!f_*(\Fc\otimes\pi^*\Gc))) \\
            =& f_*(\Ec\otimes \pi^*\pi_*(\Ec^\vee\otimes f^!f_*(\Fc\otimes\pi^*\Gc)))\\
            =& f_*(\Ec\otimes\pi^*\pi_*(\Ec^\vee\otimes f^*f_*(\Fc\otimes\pi^*\Gc)\otimes\Lc^\vee[-1]))
        \end{split}
    \end{equation}
    
    where we used \cite[Corollary 3.35]{huybrechts}. By \cite[Corollary 11.4]{huybrechts} and Lemma \ref{lem_canonicalbundlekequivalence} one has a distinguished triangle
    
    \begin{equation}
        \begin{split}
            \Fc\otimes\pi^*\Gc\otimes\Lc[1]\arw f^*f_*(\Fc\otimes\pi^*\Gc)\arw 
            \Fc\otimes\pi^*\Gc\arw \Fc\otimes\pi^*\Gc\otimes\Lc[2]
        \end{split}
    \end{equation}
    
    on which we apply the tensor product by $\Ec^\vee\otimes\Lc^\vee[-1]$, to find:
    
    \begin{equation}
        \begin{split}
            \Ec^\vee\otimes\Fc\otimes\pi^*\Gc\arw \Ec^\vee\otimes f^*f_*(\Fc\otimes\pi^*\Gc)\otimes\Lc^\vee[-1]\arw
            \Ec^\vee\otimes\Fc\otimes\pi^*\Gc\otimes\Lc^\vee[-1]\arw \Ec^\vee\otimes\Fc\otimes\pi^*\Gc[1]
        \end{split}
    \end{equation}
    
    By applying the derived $\pi_*$ we get:
    
    \begin{equation}
        \begin{split}
            \dots\arw R^m\pi_*(\Ec^\vee\otimes\Fc\otimes\pi^*\Gc)\arw R^{m-1}\pi_*(\Ec^\vee\otimes f^*f_*(\Fc\otimes\pi^*\Gc)\otimes\Lc^\vee)\arw \\
            R^{m-1}\pi_*(\Ec^\vee\otimes\Fc\otimes\pi^*\Gc\otimes\Lc^\vee)\arw R^{m+1}(\pi_*\Ec^\vee\otimes\Fc\otimes\pi^*\Gc)\arw\cdots
        \end{split}
    \end{equation}
    
    By $\Lc$ -semiorthogonality of $\Ec$ and $\Fc$ one has $R^m\pi_*(\Ec^\vee\otimes\Fc\otimes\pi^*\Gc\otimes\Lc^\vee) = 0$ for every $m$, hence we get an isomorphism:
    
    \begin{equation}
        \pi_*(\Ec^\vee\otimes f^*f_*(\Fc\otimes\pi^*\Gc)\otimes\Lc^\vee[-1])\simeq \pi_*(\Ec^\vee\otimes \Fc\otimes\pi^*\Gc)
    \end{equation} 
    
    If we substituting this in Equation \ref{eq_kequivalencecomputation} we find:
    
    \begin{equation}
        \begin{split}
            f_*\Psi_{\langle\pi^*\dbcoh(B)\otimes\Ec\rangle}\Psi_{\langle\pi^*\dbcoh(B)\otimes\Ec\rangle}^! f^!f_*(\Fc\otimes\pi^*\Gc) =& f_*(\Ec\otimes\pi^*\pi_*(\Ec^\vee\otimes\Fc\otimes\pi^*\Gc)) \\
            =& f_*\Psi_{\langle\pi^*\dbcoh(B)\otimes\Ec\rangle}\Psi_{\langle\pi^*\dbcoh(B)\otimes\Ec\rangle}^! (\Fc\otimes\pi^*\Gc).
        \end{split}
    \end{equation}
    
    thus proving the claim.\\
    We are now ready to prove the isomorphism:
    \begin{equation}
        f_*\LL_{\langle \Ec\otimes\pi^*\dbcoh(B)\rangle}\Fc\otimes\pi^*\Gc\simeq \LL_{\langle f_*(\Ec\otimes\pi^*\dbcoh(B))\rangle}f_*(\Fc\otimes\pi^*\Gc).
    \end{equation}
    Let us recall the distinguished triangle in $\dbcoh(\Zc)$:
    \begin{equation}
        \begin{split}
            \Psi_{\langle\Ec\otimes\pi^*\dbcoh(B)\rangle}\Psi_{\langle\Ec\otimes\pi^*\dbcoh(B)\rangle}^!\Fc\otimes\pi^*\Gc\xrightarrow{\epsilon_\Zc}\\
                \arw\Fc\otimes\pi^*\Gc\arw \LL_{\langle\Ec\otimes\pi^*\dbcoh(B)\rangle}\Fc\otimes\pi^*\Gc.
        \end{split}
    \end{equation}
    If we apply the derived pushforward $f_*$ we find \cite[proof of Corollary 2.50]{huybrechts}:
    \begin{equation}
        \begin{split}
             R^0 f_*\Psi_{\langle\Ec\otimes\pi^*\dbcoh(B)\rangle}\Psi_{\langle\Ec\otimes\pi^*\dbcoh(B)\rangle}^!\Fc\otimes\pi^*\Gc\xrightarrow{f_*\epsilon_\Zc}\\
            \arw R^0 f_*(\Fc\otimes\pi^*\Gc)\arw R^0 f_* \LL_{\langle\Ec\otimes\pi^*\dbcoh(B)\rangle}\Fc\otimes\pi^*\Gc\\
            R^{1} f_*\Psi_{\langle\Ec\otimes\pi^*\dbcoh(B)\rangle}\Psi_{\langle\Ec\otimes\pi^*\dbcoh(B)\rangle}^!\Fc\otimes\pi^*\Gc\arw\cdots
        \end{split}
    \end{equation}
    On the other hand, one has the distinguished triangle:
    \begin{equation}
        \begin{split}
            \Xi_{\langle f_*(\Ec\otimes\pi^*\dbcoh(B))\rangle}\Xi_{\langle f_*(\Ec\otimes\pi^*\dbcoh(B))\rangle}^!f_*(\Fc\otimes\pi^*\Gc)\xrightarrow{\epsilon_{\Xc}} \\
            f_*(\Fc\otimes\pi^*\Gc)\arw \LL_{\langle f_*(\Ec\otimes\pi^*\dbcoh(B))\rangle}f_*(\Fc\otimes\pi^*\Gc)\arw\cdots
        \end{split}
    \end{equation}
    and the proof is concluded by commutativity of Diagram \ref{eq_diagramkequivalencecats} and \cite[page 232, Corollary 4]{gelfandmanin}.
\end{proof}

\begin{lemma}\label{lem_kequivalenceinducedmutationsinX0}
    In the language of Diagram \ref{eq_keqdiagram}, consider a set $\{W_1, \dots, W_n\}$ of exceptional, semiorthogonal objects of $\dbcoh(G/P)$ such that for either $i=1$ or $i=2$ one has a semiorthogonal decomposition:
    \begin{equation}\label{eq_sodX0withWs}
        \dbcoh(\Xc_0) = \langle \sigma\dbcoh(\Xc_i),  f_*[W_1], \dots,  f_*[W_n]\rangle.
    \end{equation}
    where $\sigma$ is a fully faithful functor. Then, if $W_i$ is $L$-semiorthogonal to $W_{i+1}$ one also has:
    \begin{equation}
        \begin{split}
            \dbcoh(\Xc_0) = \langle & \sigma\dbcoh(\Xc_i),  f_*[W_1], \dots,  f_*[W_{i-1}], \\
                                    &  f_*[\LL_{W_i}W_{i+1}],  f_*[W_{i+1}], f_*[W_{i+2}], \dots,  f_*[W_n]\rangle\\
            \dbcoh(\Xc_0) = \langle & \sigma\dbcoh(\Xc_i),  f_*[W_1], \dots,  f_*[W_{i-1}], \\
                                    &  f_*[W_{i+1}],  f_*[\RR_{W_{i+1}}W_i], f_*[W_{i+2}], \dots,  f_*[W_n]\rangle.
        \end{split}
    \end{equation}
\end{lemma}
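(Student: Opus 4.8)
The plan is to follow the pattern of Lemma \ref{lem_inducedmutationsinM}, replacing the restriction functor $\iota^*$ by the pushforward $f_*$ and using Lemma \ref{lem_Lsemiorthogonalcommuteskequivalence} in place of Lemma \ref{lem_Lsemiorthogonalcommutes}. Mutating the adjacent admissible blocks $f_*[W_i], f_*[W_{i+1}]$ inside the decomposition \ref{eq_sodX0withWs} is a standard operation; the only nontrivial point is to identify the resulting block explicitly as $f_*[\LL_{W_i}W_{i+1}]$ (respectively $f_*[\RR_{W_{i+1}}W_i]$), which is where the $L$-semiorthogonality hypothesis enters.

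First I would record that, since $W_i$ is $L$-semiorthogonal to $W_{i+1}$, Lemma \ref{lem_Lsemiorthogonal} guarantees that the relativized bundle $\Wc_i:=\Fm(W_i)$ is $\Lc$-semiorthogonal to $\Wc_{i+1}:=\Fm(W_{i+1})$ (taking $\Gc=\Oc_B$). Starting from \ref{eq_sodX0withWs} I would apply the standard left mutation of adjacent blocks \cite[Corollary 2.9]{kuznetsovcubic4folds} to obtain
\[
\dbcoh(\Xc_0) = \langle \sigma\dbcoh(\Xc_i), f_*[W_1], \dots, f_*[W_{i-1}], \LL_{\langle f_*[W_i]\rangle} f_*[W_{i+1}], f_*[W_i], f_*[W_{i+2}], \dots, f_*[W_n]\rangle.
\]
By Lemma \ref{lem_Lsemiorthogonalcommuteskequivalence} the left mutation commutes with $f_*$, so $\LL_{\langle f_*[W_i]\rangle} f_*[W_{i+1}] \simeq f_*\LL_{\langle [W_i]\rangle}[W_{i+1}]$, and by Lemma \ref{lem_inducedmutationsinZ} the latter equals $f_*[\LL_{W_i}W_{i+1}]$. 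Substituting this identification gives the first claimed decomposition (with $f_*[W_i]$ in the slot following the mutated block).

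The right-mutation case is entirely analogous: I would perform the standard right mutation of $f_*[W_i]$ through $f_*[W_{i+1}]$ and identify $\RR_{\langle f_*[W_{i+1}]\rangle} f_*[W_i] \simeq f_*[\RR_{W_{i+1}}W_i]$, either via the right-mutation counterpart of Lemma \ref{lem_Lsemiorthogonalcommuteskequivalence} or, equivalently, by the mutual invertibility of $\LL$ and $\RR$ on the relevant admissible subcategories recalled at the beginning of Section \ref{sec_deriverdequivalenceCYfibrations}. The main obstacle I anticipate is precisely this last point, since Lemma \ref{lem_Lsemiorthogonalcommuteskequivalence} is stated for left mutations only: I must verify that the single hypothesis that $W_i$ is $L$-semiorthogonal to $W_{i+1}$ still controls the cohomological vanishing needed for the right mutation to commute with $f_*$. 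As in the proof of Lemma \ref{lem_Lsemiorthogonalcommuteskequivalence}, this reduces to the vanishing of $R^\bullet\pi_*(\Wc_i^\vee\otimes\Wc_{i+1}\otimes\Lc^\vee)$, which is exactly the $\Lc$-semiorthogonality of $\Wc_i$ to $\Wc_{i+1}$ already secured by Lemma \ref{lem_Lsemiorthogonal}; once this is checked, the identification and the substitution proceed word for word as in the left-mutation case.
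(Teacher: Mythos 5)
Your proposal is correct and follows essentially the same route as the paper's proof: apply the standard block mutation of \cite[Corollary 2.9]{kuznetsovcubic4folds} to the decomposition \ref{eq_sodX0withWs}, upgrade $L$-semiorthogonality to $\Lc$-semiorthogonality via Lemma \ref{lem_Lsemiorthogonal}, commute the mutation with $f_*$ via Lemma \ref{lem_Lsemiorthogonalcommuteskequivalence}, and identify $\LL_{[W_i]}[W_{i+1}]\simeq[\LL_{W_i}W_{i+1}]$ via Lemma \ref{lem_inducedmutationsinZ}. The paper dismisses the right-mutation case as ``nearly identical,'' whereas you correctly spell out why the single hypothesis suffices there too (the correction term $f^*f_*$ contributes $\Wc_i\otimes\Lc[1]$, so the needed vanishing is again $\pi_*(\Wc_i^\vee\otimes\Wc_{i+1}\otimes\Lc^\vee)=0$), and your parenthetical placing $f_*[W_i]$ after the mutated block matches the paper's proof rather than the (typographically garbled) statement.
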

\begin{proof}
    We will only prove the statement about left mutations, the other being nearly identical. Starting from the decomposition \ref{eq_sodX0withWs} and applying \cite[Corollary 2.9]{kuznetsovcubic4folds}, one finds:
    \begin{equation}\label{eq_lemmastartingpointsod}
        \begin{split}
            \dbcoh(\Xc_0) = \langle & \sigma\dbcoh(\Xc_1),  f_*[W_1], \dots,  f_*[W_{i-1}], \\
                                    & \LL_{ f_*[W_i]} f_*[W_{i+1}],  f_*[W_{i}], f_*[W_{i+2}], \dots,  f_*[W_n]\rangle.
        \end{split}
    \end{equation}
    By Lemma \ref{lem_Lsemiorthogonal}, since $W_i$ is $L$-semiorthogonal to $W_{i+1}$ it follows that $\Wc_i$ is $\Lc$-semiorthogonal to $\Wc_{i+1}$ (the terminology is from Notation \ref{notation}), hence we can apply Lemma \ref{lem_Lsemiorthogonalcommuteskequivalence} to replace $\LL_{ f_*[W_i]} f_*[W_{i+1}]$ with $ f_*\LL_{[W_i]}[W_{i+1}]$ in the decomposition \ref{eq_lemmastartingpointsod}. By Lemma \ref{lem_inducedmutationsinZ} we have $\LL_{[W_i]}[W_{i+1}]\simeq [\LL_{W_i} W_{i+1}]$ and this concludes the proof.
\end{proof}

\begin{proposition}\label{prop_serrefunctorkequivalence}
        In the language of Notation \ref{notation}, assume that for either $i=1$ or $i=2$ there is a semiorthogonal decomposition $\dbcoh(\Xc_0) = \langle \sigma\dbcoh(\Xc_i), f_*[W_1],\dots, f_*[W_n]\rangle$ where every $W_j$ is a homogeneous vector bundle on $G/P$ and $\sigma$ is a fully faithful functor. Then one has:
    \begin{equation}
        \begin{split}
            \dbcoh(\Xc_0) &= \langle \RR_{f_*[W_1]}\sigma\dbcoh(\Xc_i), f_*[W_2], \dots,  f_*[W_n],  f_*[W_1\otimes L^{\otimes(r-1)}]\rangle
        \end{split}
    \end{equation}
\end{proposition}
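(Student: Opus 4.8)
The plan is to transport the block $f_*[W_1]$ from the left end to the right end of the decomposition, combining one mutation with the cyclic shift induced by the Serre functor $\Sc_{\Xc_0}$ of $\Xc_0$. This is the exact mirror of the argument in Proposition \ref{prop_serrefunctor}, now carried out on $\Xc_0$ instead of $\Mc$ and in the opposite direction, so that the inverse Serre functor (rather than $\omega_\Mc$) produces the twist.

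First I would right-mutate the pair $(\sigma\dbcoh(\Xc_i), f_*[W_1])$ in the given decomposition. Since $f_*[W_1]$ is admissible, \cite[Corollary 2.9]{kuznetsovcubic4folds} yields
\begin{equation*}
    \dbcoh(\Xc_0) = \langle f_*[W_1], \RR_{f_*[W_1]}\sigma\dbcoh(\Xc_i), f_*[W_2], \dots, f_*[W_n]\rangle,
\end{equation*}
so that $f_*[W_1]$ becomes the leftmost component. Since $\Xc_0$ is smooth and projective, its Serre functor satisfies $\Sc_{\Xc_0}({}^\perp\Cc) = \Cc^\perp$ for every admissible subcategory $\Cc$; consequently, moving the leftmost block of a semiorthogonal decomposition to the rightmost position replaces it by its image under $\Sc_{\Xc_0}^{-1}$, leaving the remaining blocks in order. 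Applying this gives
\begin{equation*}
    \dbcoh(\Xc_0) = \langle \RR_{f_*[W_1]}\sigma\dbcoh(\Xc_i), f_*[W_2], \dots, f_*[W_n], \Sc_{\Xc_0}^{-1}f_*[W_1]\rangle.
\end{equation*}

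It then remains to identify $\Sc_{\Xc_0}^{-1}f_*[W_1]$ with $f_*[W_1\otimes L^{\otimes(r-1)}]$, which is the core of the proof. Writing $\Sc_{\Xc_0}^{-1}(-) = (-)\otimes\omega_{\Xc_0}^\vee[-\dim\Xc_0]$ and setting $\Wc_1 = \Fm(W_1)$, the projection formula for the closed immersion $f$ gives, for every $\Gc\in\dbcoh(B)$,
\begin{equation*}
    \Sc_{\Xc_0}^{-1}f_*(\Wc_1\otimes\pi^*\Gc) \simeq f_*\big(\Wc_1\otimes\pi^*\Gc\otimes f^*\omega_{\Xc_0}^\vee\big)[-\dim\Xc_0].
\end{equation*}
By Lemma \ref{lem_canonicalbundlekequivalence} one has $f^*\omega_{\Xc_0}^\vee \simeq \Lc^{\otimes(r-1)}\otimes\pi^*T^\vee$, so the right-hand side equals $f_*(\Wc_1\otimes\Lc^{\otimes(r-1)}\otimes\pi^*(\Gc\otimes T^\vee))$ up to shift. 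Since the shift is an autoequivalence and $\Gc\mapsto\Gc\otimes T^\vee$ permutes $\dbcoh(B)$, the $B$-linear subcategory $\Sc_{\Xc_0}^{-1}f_*[W_1]$ coincides with $f_*(\Wc_1\otimes\Lc^{\otimes(r-1)}\otimes\pi^*\dbcoh(B))$; and because $\Fm$ sends tensor products of homogeneous bundles to tensor products, this is exactly $f_*[W_1\otimes L^{\otimes(r-1)}]$, completing the identification.

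The only genuine subtlety, and the step I expect to require the most care, is this last identification: the canonical bundle computation of Lemma \ref{lem_canonicalbundlekequivalence} introduces a spurious factor $\pi^*T$ coming from the base, and one must verify that it is absorbed into $\pi^*\dbcoh(B)$ and hence disappears at the level of the $B$-linear subcategory $[-]$. Everything else is the formal interplay of a single mutation with the Serre functor, entirely parallel to Proposition \ref{prop_serrefunctor}.
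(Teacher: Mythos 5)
Your proposal is correct and follows essentially the same route as the paper: right-mutate $\sigma\dbcoh(\Xc_i)$ past $f_*[W_1]$, move the now-leftmost block to the end via the inverse Serre functor, and identify $f_*[W_1]\otimes\omega_{\Xc_0}^\vee$ with $f_*[W_1\otimes L^{\otimes(r-1)}]$ by the projection formula and Lemma \ref{lem_canonicalbundlekequivalence}. The point you flag as the main subtlety — absorbing the spurious $\pi^*T$ into the $B$-linear block $\pi^*\dbcoh(B)$ — is exactly how the paper concludes as well.
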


\begin{proof}
    Let us start by mutating $\sigma\dbcoh(\Xc_i)$ one step to the right. We obtain:
    \begin{equation}
        \begin{split}
            \dbcoh(\Xc_0) &= \langle f_*[W_1], \RR_{f_*[W_1]}\sigma\dbcoh(\Xc_i), f_*[W_2], \dots,  f_*[W_n]\rangle.
        \end{split}
    \end{equation}
    By the inverse Serre functor  we find:
    \begin{equation}\label{eq_startingpointserrefunctorkequivalence}
        \dbcoh(\Xc_0) = \langle \RR_{f_*[W_1]}\sigma\dbcoh(\Xc_i), f_*[W_2],\dots, f_*[W_n], f_*[W_1]\otimes\omega^\vee_{\Xc_0}\rangle.
    \end{equation}
    For every $\Gc\in\dbcoh(B)$ one has:
    \begin{equation}
        \begin{split}
            f_*(\Wc_1\otimes\pi^*\Gc)\otimes\omega^\vee_{\Xc_0} &= f_*(\Wc_1\otimes\pi^*\Gc\otimes f^*\omega^\vee_{\Xc_0})\\
            & = f_*(\Wc_1\otimes\pi^*\Gc\otimes\Lc^{\otimes(r-1)}\otimes\pi^* T ).
        \end{split}
    \end{equation}
    where $ T $ is a line bundle on $B$. The first isomorphism is by projection formula and the second by Lemma \ref{lem_canonicalbundlekequivalence}. This shows that we can substitute $f_*[W_1]\otimes\omega^\vee_{\Xc_0}$ with $f_*[W_1\otimes L^{\otimes(r-1)}]$ in the decomposition \ref{eq_startingpointserrefunctorkequivalence}, hence proving our claim.
\end{proof}

We are now ready to prove the main theorem of this chapter:

\begin{theorem}\label{thm_derivedequivalencekequivalence}
    Let $\mu:\Xc_1\dashrightarrow\Xc_2$ be a homogeneous simple $K$-equivalence of type $G/P$, with exceptional divisor $\Zc$ satisfying Assumptions \ref{itm:ass_a2} and \ref{itm:ass_a3}. Then $\mu$ satisfies the DK conjecture, i.e. there is an equivalence of categories $\rho:\dbcoh(\Xc_1)\arw\dbcoh(\Xc_2)$.
\end{theorem}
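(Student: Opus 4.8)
The plan is to mirror the proof of Theorem \ref{thm_derivedequivalencefibrations}, replacing the hyperplane section $\Mc$ and the restriction functor $\iota^*$ by the resolution $\Xc_0$ and the pushforward $f_*$ along the divisor embedding $f:\Zc\hookrightarrow\Xc_0$. First I would fix two full exceptional collections $G/P = \langle E_1,\dots,E_N\rangle = \langle F_1,\dots,F_N\rangle$ and a general section $\sigma\in H^0(G/P, L)$ compatible with Assumptions \ref{itm:ass_a2} and \ref{itm:ass_a3}, exactly as in Notation \ref{notation}. By Lemma \ref{lem_sodforkequivalence} these data yield
\begin{equation*}
    \dbcoh(\Xc_0) = \langle \wt g_1^*\dbcoh(\Xc_1), f_*[E_1], \dots, f_*[E_n]\rangle = \langle \wt g_2^*\dbcoh(\Xc_2), f_*[F_1], \dots, f_*[F_n]\rangle,
\end{equation*}
which realise $\dbcoh(\Xc_1)$ and $\dbcoh(\Xc_2)$ as the left orthogonals of the two tails generated by the $f_*[E_i]$ and the $f_*[F_i]$.

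Next I would transport the sequence of operations $\xi = \xi^{(R)}\cdots\xi^{(1)}$ of Assumption \ref{itm:ass_a3}, which by Assumption \ref{itm:ass_a2} carries $(\boldsymbol E, \theta_1)$ to $(\boldsymbol F, \psi)$, to a sequence acting on the first decomposition above. The crucial point is that each allowed operation lifts to $\dbcoh(\Xc_0)$ under the assignment $W\mapsto f_*[W]$: a mutation of an $L$-semiorthogonal exceptional pair lifts by Lemma \ref{lem_kequivalenceinducedmutationsinX0} (which rests on the commutation Lemma \ref{lem_Lsemiorthogonalcommuteskequivalence}, the $f_*$-analogue of Lemma \ref{lem_Lsemiorthogonalcommutes}); an application of the Serre functor together with the mutation of the $\dbcoh(\Xc_i)$-block lifts by Proposition \ref{prop_serrefunctorkequivalence}; and an overall twist by a power of $L$ lifts by Lemma \ref{lem_twistkequivalence}. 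These three results play here the roles that Lemmas \ref{lem_inducedmutationsinZ}, \ref{lem_inducedmutationsinM} and Proposition \ref{prop_serrefunctor} played in the fibered case. Proceeding inductively I would build, for each $1\leq\lambda\leq R+1$, a fully faithful functor $\Sigma^{(\lambda)}:\dbcoh(\Xc_1)\arw\dbcoh(\Xc_0)$ with $\Sigma^{(1)} = \wt g_1^*$ and a semiorthogonal decomposition
\begin{equation*}
    \dbcoh(\Xc_0) = \langle \Sigma^{(\lambda)}\dbcoh(\Xc_1), f_*[E_1^{(\lambda)}], \dots, f_*[E_n^{(\lambda)}]\rangle,
\end{equation*}
whose blocks evolve in lockstep with the $l^*E_i^{(\lambda)}$ of Equation \ref{eq_collectionlambda}.

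At $\lambda = R+1$ this produces $\dbcoh(\Xc_0) = \langle \Sigma\dbcoh(\Xc_1), f_*[F_1], \dots, f_*[F_n]\rangle$ for a fully faithful $\Sigma := \Sigma^{(R+1)}$. Comparing with the second decomposition of Lemma \ref{lem_sodforkequivalence}, both decompositions share the identical tail $\langle f_*[F_1],\dots,f_*[F_n]\rangle$; hence their left orthogonals coincide, giving $\Sigma\dbcoh(\Xc_1) = \wt g_2^*\dbcoh(\Xc_2)$ as subcategories of $\dbcoh(\Xc_0)$. As $\Sigma$ and $\wt g_2^*$ are fully faithful, the composite $\rho := (\wt g_2^*)^{-1}\circ\Sigma$ is the desired equivalence $\dbcoh(\Xc_1)\arw\dbcoh(\Xc_2)$.

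I expect the main obstacle to be the step-by-step compatibility of the lifting $W\mapsto f_*[W]$ with the operation acting on the $\dbcoh(\Xc_i)$-block, namely the Serre-functor move of Proposition \ref{prop_serrefunctorkequivalence}: one must check that mutating $\sigma\dbcoh(\Xc_i)$ past the leading block and twisting by $\omega_{\Xc_0}^\vee$ reproduces exactly the operation prescribed by Assumption \ref{itm:ass_a2} on the fiber side, which hinges on the identity $f^*\omega_{\Xc_0}^\vee\simeq\Lc^{\otimes(r-1)}\otimes\pi^*T^\vee$ supplied by Lemma \ref{lem_canonicalbundlekequivalence}. By contrast, the mutations of exceptional pairs and the overall twists are comparatively routine once Lemmas \ref{lem_kequivalenceinducedmutationsinX0} and \ref{lem_twistkequivalence} are in hand.
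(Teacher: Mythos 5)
Your proposal is correct and follows essentially the same route as the paper's own proof: starting from the two decompositions of Lemma \ref{lem_sodforkequivalence}, transporting the operations of Assumptions \ref{itm:ass_a2} and \ref{itm:ass_a3} to $\dbcoh(\Xc_0)$ via $W\mapsto f_*[W]$ using Lemma \ref{lem_Lsemiorthogonalcommuteskequivalence} (through Lemma \ref{lem_kequivalenceinducedmutationsinX0}), Proposition \ref{prop_serrefunctorkequivalence} and Lemma \ref{lem_twistkequivalence}, and then arguing by induction on $\lambda$ with $\rho^{(1)}=\wt g_1^*$ before comparing tails at $\lambda=R+1$. Your closing step making the equivalence explicit as $\rho=(\wt g_2^*)^{-1}\circ\Sigma$ is just a slightly more detailed rendering of the paper's final comparison.
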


\begin{proof}
    Consider the data of Diagram \ref{eq_keqdiagram} defining a homogeneous simpke $K$-equivalence $\mu$ of type $G/P$. Let us give a modification of the sequence of pairs $(\boldsymbol E^{(\lambda)}, \psi^{(\lambda)})$ introduced in Notation \ref{notation}: we consider the same ordered sequence $\boldsymbol E^{(\lambda)} = (E_1^{(\lambda)}, \dots, E_n^{(\lambda)})$, and redefine the functor $\rho^{(\lambda)}$ to be compatible with the present setting. In analogy with the operations \ref{itm:op_o1}, \ref{itm:op_o2}, \ref{itm:op_o3} of Notation \ref{notation}, we define $(\boldsymbol E^{(\lambda+1)}, \rho^{(\lambda+1)})$ to be obtained by one of the following operations:
    \begin{description}
        \item[\namedlabel{itm:kequivalencelistmutation}{O4}]  exchange $E_i^{(\lambda)}$ with $E_{i+1}^{(\lambda)}$ and replace either $E_{i+1}^{(\lambda)}$ with $\LL_{E_i^{(\lambda)}}E_{i+1}^{(\lambda)}$ or $E_i^{(\lambda)}$ with $\RR_{E_{i+1}^{(\lambda)}}E_i^{(\lambda)}$, where $E_i^{(\lambda)}$ is $L$-semiorthogonal to $E_{i+1}^{(\lambda)}$, leave $\rho^{(\lambda)}$ unchanged
        \item[\namedlabel{itm:kequivalencelistserre}{O5}] move $E_1^{(\lambda)}$ right after $E_n^{(\lambda)}$ and twist it by $L^{\otimes(r-1)}$, replace $\rho^{(\lambda)}$ with $\RR_{f_*[E_1^{(\lambda)}]}\rho^{(\lambda)}$, or, conversely, move $E_n^{(\lambda)}$ right before $E_1^{(\lambda)}$, twist it by $L^{\otimes(-r+1)}$ and replace $\rho^{(\lambda)}$ with $\LL_{f_*[E_n^{(\lambda)}]}\rho^{(\lambda)}$
        \item[\namedlabel{itm:kequivalencelisttwist}{O6}] for any $t\in\ZZ$, replace $E_i^{(\lambda)}$ with $E_i^{(\lambda)}\otimes L^{\otimes t}$ for all $i$ and replace $\rho^{(\lambda)}$ with $T_{-t} \rho^{(\lambda)}$, where $T_{-t}$ is the twist functor by $\Oc(-t\Zc)$
    \end{description}
    Moreover, we recall that $E_i^{(1)} = E_i$, $E^{(R+1)} = F_i$ for $1\leq i\leq n$ and we impose $\rho^{(1)} = \wt g_1^*$. In light of Lemma \ref{lem_sodforkequivalence}, we can prove our claim by showing that for $1\leq\lambda\leq R+1$ there is the following semiorthogonal decomposition:
    \begin{equation}\label{eq_sodlambdakequivalence}
        \begin{split}
            \dbcoh(\Xc_0) = \langle & \rho^{(\lambda)}\dbcoh(\Xc_1),  f_*[E_1^{(\lambda)}], \dots,  f_*[E_n^{(\lambda)}]\rangle
        \end{split}
    \end{equation}
    In fact, if this claim is true, for $\lambda = R+1$ we obtain a semiorthogonal decomposition 
    \begin{equation}\label{eq_sodlambdakequivalenceending}
        \begin{split}
            \dbcoh(\Xc_0) = \langle & \rho^{(R+1)}\dbcoh(\Xc_1),  f_*[F_1], \dots,  f_*[F_n]\rangle
        \end{split}
    \end{equation}
    and, once we compare it with the semiorthogonal decompositions found by Lemma \ref{lem_sodforkequivalence}, we conclude setting $\rho = \rho^{(R+1)}$.\\
    \\
    The existence of \ref{eq_sodlambdakequivalence} for every $\lambda$ can be proved by induction. For $\lambda = 1$ the decomposition 
    \ref{eq_sodlambdakequivalence} exists by Lemma \ref{lem_sodforkequivalence}. Let us now suppose that \ref{eq_sodlambdakequivalence} exists for $\lambda = \lambda_0$ as well, we will show the existence of such decomposition for $\lambda = \lambda_0+1$ by mutating the former accordingly. In fact, the pair $(\rho^{(\lambda_0+1)},\boldsymbol E^{(\lambda_0+1)})$ is obtained by $(\rho^{(\lambda_0)},\boldsymbol E^{(\lambda_0)})$ by one of the three operations \ref{itm:kequivalencelistmutation}, \ref{itm:kequivalencelistserre} and \ref{itm:kequivalencelisttwist} and all of them induce a mutation on the decomposition \ref{eq_sodlambdakequivalence} for $\lambda=\lambda_0$, hence they define a new semiorthogonal decomposition
    \begin{equation}\label{eq_sodlambda0+1kequivalence}
        \begin{split}
            \dbcoh(\Xc_0) = \langle & \rho^{(\lambda_0+1)}\dbcoh(\Xc_1),  f_*[E_1^{(\lambda_0+1)}], \dots,  f_*[E_n^{(\lambda_0+1)}]\rangle
        \end{split}
    \end{equation}
    as we proved by previous results. More precisely, Operation \ref{itm:kequivalencelistmutation} gives rise to a semiorthogonal decomposition by Lemma \ref{lem_Lsemiorthogonalcommuteskequivalence}, Operation \ref{itm:kequivalencelistserre} has been treated in Proposition \ref{prop_serrefunctorkequivalence} and Operation \ref{itm:kequivalencelisttwist} follows by Lemma \ref{lem_twistkequivalence}. Therefore the semiorthogonal decomposition \ref{eq_sodlambda0+1kequivalence} exists and this concludes the proof.
\end{proof}

The following corollary provides an extension to the results of \cite{bondalorlovflop, kawamatadk, namikawa} on derived equivalence for varieties related by $K$-equivalence of type $A_n\times A_n$ and $A^M_n$ which are respectively standard flops and Mukai flops.

\begin{corollary}\label{cor_knowncasesKeq}(Theorem \ref{thm_intro_3})
    Let $\mu:\Xc_1\dashrightarrow\Xc_2$ be a homogeneous simple $K$-equivalence of type $G/P$, where $G/P$ is a roof of type $A^M_n$, $A_n\times A_n$, $A^G_4$, $C_2$ or $G_2$. 
    Then $\Xc_1$ and $\Xc_2$ are derived equivalent.
\end{corollary}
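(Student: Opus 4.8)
The plan is to deduce the statement directly from Theorem \ref{thm_derivedequivalencekequivalence} together with Lemma \ref{lem_cohomologyconditions}, so that no genuinely new argument is required. First I would recall that Theorem \ref{thm_derivedequivalencekequivalence} produces an equivalence $\dbcoh(\Xc_1)\simeq\dbcoh(\Xc_2)$ for \emph{any} homogeneous simple $K$-equivalence of type $G/P$ whose exceptional divisor $\Zc$ satisfies Assumptions \ref{itm:ass_a2} and \ref{itm:ass_a3}. Hence the entire content of the corollary reduces to verifying that these two assumptions hold whenever $G/P$ is one of the five listed roofs.

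Next I would invoke the remark following Assumption \ref{ass_roofbundle}: Assumptions \ref{itm:ass_a2} and \ref{itm:ass_a3} depend only on the type $G/P$, and not on the particular roof bundle $\Zc$, since they are statements about a sequence of mutations on a semiorthogonal decomposition of a hyperplane section of the fiber $G/P$. In particular, if they are satisfied by one roof bundle of a given type, then they are satisfied by every roof bundle of that type, and so in particular by the exceptional divisor $\Zc$ of our $K$-equivalence. This is the observation that lets us transfer the purely fiberwise combinatorial input of Lemma \ref{lem_cohomologyconditions} to the relative setting of the $K$-equivalence at hand.

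Finally I would quote Lemma \ref{lem_cohomologyconditions}, which establishes precisely that the roofs of type $A^M_n$, $A_n\times A_n$, $A^G_4$, $C_2$ and $G_2$ satisfy Assumptions \ref{itm:ass_a2} and \ref{itm:ass_a3}. Combining the three observations above, the exceptional divisor of $\mu$ satisfies the hypotheses of Theorem \ref{thm_derivedequivalencekequivalence}, and therefore $\Xc_1$ and $\Xc_2$ are derived equivalent.

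Since every ingredient is already in place, there is essentially no obstacle to overcome in this last step: the real work lies upstream, in the case-by-case mutation computations carried out in Lemma \ref{lem_cohomologyconditions} (and in turn in the appendix), and in the functorial mutation-lifting argument of Theorem \ref{thm_derivedequivalencekequivalence}. If anything, the one point worth double-checking is that the hypotheses of the two cited results match exactly, namely that Theorem \ref{thm_derivedequivalencekequivalence} requires nothing beyond \ref{itm:ass_a2} and \ref{itm:ass_a3}. Unlike the Calabi--Yau fibration version in Corollary \ref{cor_knowncases}, which additionally needs the basepoint-freeness and surjectivity condition \ref{itm:ass_a1}, no genericity hypothesis on a line bundle $\Lc$ enters here, because the relevant line bundle $\Lc = \Oc(-\Zc)|_\Zc$ and the smooth divisor $\Mc$ are supplied automatically by the structure of the simple $K$-equivalence.
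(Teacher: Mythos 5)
Your proposal is correct and matches the paper's own proof exactly: the paper likewise cites Lemma \ref{lem_cohomologyconditions} to verify Assumptions \ref{itm:ass_a2} and \ref{itm:ass_a3} for the five listed roof types and then concludes directly from Theorem \ref{thm_derivedequivalencekequivalence}. Your additional observation that no analogue of Assumption \ref{itm:ass_a1} is needed here is also accurate and consistent with the paper's setup.
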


\begin{proof}
    We observe that by Lemma \ref{lem_cohomologyconditions}, Assumptions \ref{itm:ass_a2} and \ref{itm:ass_a3} are satisfied by roof bundles of types $A^M_n$, $A_n\times A_n$, $A^G_4$, $C_2$ or $G_2$. Then, the proof follows directly from Theorem \ref{thm_derivedequivalencekequivalence}.
\end{proof}

{\section{GLSM and Calabi--Yau fibrations: the roof bundle of type \texorpdfstring{$A^G_{2k}$}{something}}\label{sec_glsm}}

Let us fix a roof bundle $\Zc$ of type $G/P=F(2,3,V_5)$. Hereafter we present a GLSM describing the zero loci $X_1$ and $X_2$ as vacuum manifolds associated to the critical loci of a superpotential $w$ related by a phase transition, i.e. the quotients of the critical loci by $GL(k+2)$. We will mainly focus our attention to the Calabi--Yau pair of Section \ref{sec_CY}, but we will keep the discussion slightly more general: we fix $B=\PP^{2k+1}$ and consequently $\Zc=F(1,k+1,k+2,2k+2)$.

\subsection{The roof bundle of type \texorpdfstring{$A^G_{2k}$}{something} over \texorpdfstring{$\PP^{2k+1}$}{something}}
The geometry for the case $k=2$ has been established in Section \ref{sec_CYgeometry}, hence we will be brief. First, let us consider the bundle $\Pc$ defined by the embedding of pullbacks of tautological bundles on $F(1, k+2, V_{2k+2})$:

\begin{equation}
    0\arw u^*\Uc_1\arw t^*\Uc_{k+2}\arw \Pc\arw 0
\end{equation}

where $u:F(1,k+2,2k+2)\arw G(1,2k+2)$ and $t: F(1,k+2,2k+2)\arw G(k+2,2k+2)$. It follows that $\Pc$ has rank $k+1$ and determinant $\det(\Pc)=\Oc(1, -1)$. Moreover, by the tautological sequences of both $ G(1,2k+2)$ and $G(k+2,2k+2)$ and the snake lemma, one finds also the following sequence:

\begin{equation}\label{eq_sequencewithquotientsGLSM}
    0\arw \Pc\arw u^*\Qc_1\arw t^*\Qc_{k+2}\arw  0.
\end{equation}

where $\Qc_1$ and $\Qc_{k+2}$ are the quotients of $V_{2k+2}\otimes\Oc$ by respectively $\Uc_1$ and $\Uc_{k+2}$. This sequence describes the natural embedding of $t^*\Uc_{k+2}/u^*\Uc_1$ inside $u^*\Qc_1$. Observe that restricting the sequence to a fiber $u^{-1}([v])$ we recover the pullback to $F(2,3,V_{2k+2}/\operatorname{Span}(v))$ of the tautological sequence of $G(2,V_{2k+2}/\operatorname{Span}(v))$.\\
\\
Let us now consider the following description of $F(1,k+2,V_{2k+2})$:

\begin{equation}\label{eq_GITF14}
   F(1,k+2, V_{2k+2})\simeq\quotient{\Hom(\CC^{k+2}, V_{2k+2})\setminus D_{k+1}}{G}
\end{equation}

where

\begin{equation}
    G = \left\{\left(
        \begin{array}{cc}
            \lambda & a\\
            0  & h
        \end{array}
        \right)\right\}\subset GL(k+2), \,\,\,\,\, \lambda\in\CC^*, \,\,\, a\in\CC,\,\,\,\,\,h\in GL(k+1).
\end{equation}

and $D_k:=\{C\in\Hom(\CC^{k+2}, V_{2k+2}) : \rk(C)<k+2\}$. The quotient is taken with respect to the right $G$-action defines as $C\sim C g^{-1}$. Given a $k+1$-dimensional vector space $V_{k+1}$, we can describe $\Pc(1,2)$ as a $G$-equivariant vector bundle over $F(1,k+2,V_{2k+2})$ in the following way:
\begin{equation}\label{eq_G}
    \begin{tikzcd}[row sep = huge]
    \Pc(1,2)\ar{d} =   \displaystyle\quotient{\Hom(\CC^{k+2}, V_{2k+2})\setminus D_{k+1}\oplus V_{k+1}}{G} \\
    F(1, k+2,V_{2k+2}) 
    \end{tikzcd}
\end{equation}
where the equivalence relation on $\Hom(\CC^{k+2}, V_{2k+2})\setminus D_{k+1}\times V_{k+1}$ is $(C, x)\sim (Cg^{-1}, \lambda^{-3}\det h^{-2}hx)$. In fact, since $\Oc(1, 0)=t^*\Uc^\vee$ and $\Oc(0, 1)=u^*\det\Uc^\vee$, the weight of $\Oc(0,1)$ under its associated one-dimensional representation is $\det g^{-1}=\lambda^{-1}\det h^{-1}$.\\
\\
Following the same approach adopted for the GLSM of a Calabi--Yau pair of type $A^G_{2k}$ in \cite{kr}, a section $s$ of $\Pc(1,2)$ is defined by an equivariant map $\hat s: \Hom(\CC^{k+2}, V_{2k+2})\arw\CC^{k+1}$ fulfilling the equivariancy condition $s([C])=[C, \hat s(C)]$. Therefore it must satisfy
\begin{equation}
    \hat s(C g^{-1}) = \lambda^{-1}\det g^{-2}h\hat s(C).
\end{equation}

In  order to characterize $\hat s$, given a point $[C]\in F(1, k+2, V_{2k+2})$, let us pick a representative $C$ and rename as $v$ the first column of $C$, call $B$ the rest of the matrix. We use the notation $C = (v|B)$ for juxtaposition. Then, observe that on the fiber over $(v|B)$ the first map of \ref{eq_sequencewithquotientsGLSM} embeds in $V_{2k+2}/\text{Span}(v)$ the image of $B$. In addition, the function $(v|B)\arw B\hat s((v|B))$ transforms like the fiber of $V_{2k+2}\otimes\Oc(1,2)$ under the $G$-action. Since its image lies in the image of $B$, by the maximal rank condition on $(v|B)$ it must lie in $V_{2k+2}/\text{Span}(v)$, which is the fiber of $t^*\Qc$ over $v$.

\subsection{Preliminary notions}
While a gauged linear sigma model is a precise and well-defined entity in theoretical physics, there is no unique mathematical definition of such object. We will use the following:

\begin{definition}\label{def_glsm}
    We call \emph{gauged linear sigma model} $(V, G, \CC^*_R, W)$ the following data:
    \begin{enumerate}
        \item A finite dimensional vector space $V$
        \item A linear reductive group $G$ with an action on $V$
        \item A \emph{$R$-symmetry}, which is an action of $\CC^*$ on $V$, traditionally denoted by $\CC^*_R$
        \item A polynomial $W:V\arw\CC$ called \emph{superpotential}.
    \end{enumerate}
    Moreover, we require the following conditions to hold:
    \begin{enumerate}
        \item The $G$-action and the $\CC^*_R$-action commute on $V$
        \item $W$ is $G$-invariant and $\CC^*_R$-homogeneous with positive weight
    \end{enumerate}
\end{definition}

\begin{theorem} \cite[Theorem 3.3]{halic}
    Let $X$ be a normal, affine $G$-variety. The GIT-equivalence classes in $\Hom(G, \CC^*)$ corresponding to the $G$-action on $X$ are the relative interiors of the cones of a rational, polyhedral fan $\Delta^G(X)$. Such regions will be denoted \emph{chambers}.
\end{theorem}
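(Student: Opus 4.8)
The plan is to realize $\chi$-semistability as a system of \emph{linear} inequalities in the character $\chi$ and then to read off the fan $\Delta^G(X)$ as the polyhedral decomposition of $\hat G_\RR := \Hom(G,\CC^*)\otimes_\ZZ\RR$ cut out by finitely many rational hyperplanes. Concretely, for a character $\chi$ let $X^{ss}(\chi)$ denote the semistable locus for the linearization of the trivial bundle twisted by $\chi$, and declare $\chi$ and $\chi'$ GIT-equivalent when $X^{ss}(\chi)=X^{ss}(\chi')$; the assertion to be proved is that the equivalence classes are exactly the relative interiors of the cones of a rational polyhedral fan.

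The first step is King's numerical criterion: since $X$ is affine and $G$ is reductive, a point $x\in X$ lies in $X^{ss}(\chi)$ if and only if $\langle\chi,\lambda\rangle\ge 0$ for every one-parameter subgroup $\lambda\colon\CC^*\to G$ for which $\lim_{t\to 0}\lambda(t)\cdot x$ exists in $X$. I would then make the admissible $\lambda$'s visible combinatorially: because $\CC[X]$ is finitely generated and $G$ is reductive, there is a $G$-equivariant closed embedding $X\hookrightarrow W$ into a finite-dimensional $G$-representation. Fixing a maximal torus $T\subset G$ and decomposing $W=\bigoplus_{m\in\Ac}W_m$ into its finitely many $T$-weight spaces, each point $x$ acquires a \emph{state} $\operatorname{wt}(x)=\{m\in\Ac: x_m\neq 0\}$, and for $\lambda$ valued in $T$ the limit $\lim_{t\to 0}\lambda(t)x$ exists precisely when $\langle m,\lambda\rangle\ge 0$ for all $m\in\operatorname{wt}(x)$. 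By Hilbert--Mumford every $\lambda$ may be conjugated into $T$, and since $\chi$ is a homomorphism into the abelian $\CC^*$ the pairing $\langle\chi,\lambda\rangle$ depends only on the $T$-part; hence semistability of $x$ is controlled entirely by the states of the points of the orbit $G\cdot x$.

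The fan then emerges as follows. For fixed $x$ the set $C_x=\{\chi:\ x\in X^{ss}(\chi)\}$ is the dual of the cone generated by the admissible cocharacters of $x$, hence a closed rational polyhedral cone in $\hat G_\RR$. As $x$ varies its state takes values in the finite set $2^{\Ac}$, so only finitely many distinct cones $C_x$ occur, and the hyperplanes bounding them, being spanned by integral weights in $\Ac$, are rational. Taking the common refinement of this finite collection of cones produces a rational polyhedral fan $\Delta^G(X)$ on whose relatively open cones the condition "$\chi\in C_x$" is constant for every $x$, so that $X^{ss}(\chi)$ is constant on each such relatively open cone. Refining until distinct relatively open cones carry distinct semistable loci identifies the GIT-equivalence classes with the relative interiors of the cones of $\Delta^G(X)$, and closure under passage to faces is automatic for a fan obtained as a common refinement of finitely many rational cones.

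The main obstacle is the finiteness and the genuine fan structure, rather than the pointwise linear algebra: one must guarantee that only finitely many semistable loci arise and that the walls organize into a fan (closed under faces, with pairwise intersections again faces), which is exactly what the passage through the finite weight set $\Ac$ and the resulting rational hyperplane arrangement secures. The remaining delicate point is a constructibility argument ensuring that $X^{ss}(\chi)$ is literally constant on each relatively open cone and jumps only along the walls; here the finite generation (Noetherianity) of $\CC[X]$, which bounds the relevant semiinvariants and weights, is essential.
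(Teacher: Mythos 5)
Since the paper quotes this result directly from Halic (arXiv:math/0412278, Theorem 3.3) without reproving it, there is no internal proof to compare against; judging your argument on its own merits, the first two thirds are sound but the final step contains a genuine gap. The reduction via King's numerical criterion, the equivariant embedding $X\hookrightarrow W$, the finite weight set and states, the observation that $C_x=\{\chi : x\in X^{ss}(\chi)\}$ is a rational polyhedral cone depending only on the (finitely many possible) collections of states along the orbit, and the constancy of $X^{ss}(\chi)$ on the relatively open cells of the hyperplane arrangement generated by the facets of the finitely many cones $C_x$ --- all of this is correct and is essentially how the cones $\Omega(x)$ are set up in Halic's paper. Note, by the way, that GIT-equivalence is literally the condition of having the same membership pattern in the finite family $\{C_x\}$, since $X^{ss}(\chi)=\{x:\chi\in C_x\}$.

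The gap is the sentence ``refining until distinct relatively open cones carry distinct semistable loci identifies the GIT-equivalence classes with the relative interiors of the cones.'' Refinement goes in the wrong direction: subdividing can only split cells further, so it can never force the assignment (cone) $\mapsto X^{ss}$ to be injective, and if two disjoint open cells of your refined fan carry the same semistable locus, the corresponding GIT class is a \emph{union} of relative interiors rather than a single one, contradicting the bijection the theorem asserts. What is actually needed --- and is the substantive content of Halic's proof --- is a \emph{coarsening} argument: one must show each GIT class is convex, i.e.\ that the class of $\chi$ is exactly the relative interior of the single cone $\bigcap_{x\in X^{ss}(\chi)}C_x$, and that these cones are face-compatible. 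This is not automatic from the cell structure: two characters lying outside a convex cone $C_y$ can have their midpoint inside it (take $C_y$ the first quadrant in the plane and the endpoints $(-1,2)$, $(2,-1)$), so a priori the semistable locus could jump \emph{upward} at an interior point of a segment joining equivalent characters; excluding this requires a genuine GIT argument about destabilizing one-parameter subgroups, not just the linear algebra of the arrangement. Relatedly, the class of characters with $X^{ss}(\chi)=\emptyset$ is typically non-convex (e.g.\ the complement of the closed effective cone for the standard $(\CC^*)^2$-action on $\AA^2$), so the fan must be supported on the $G$-effective cone --- a restriction your construction never addresses --- and the hypothesis of normality of $X$, which Halic does use, plays no role anywhere in your argument, which should itself raise a flag.
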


\begin{definition}\label{def_glsmphase}
    Let $(V, G, \CC^*_R, W)$ be a GLSM. We call \emph{phase} of the GLSM a chamber of the associated polyhedral fan.
\end{definition}

\begin{definition}
    Let $(V, G, \CC^*_R, W, I)$ be a GLSM phase, where $I$ is the associated chamber. We call \emph{critical locus} of the superpotential:
    \begin{equation}
        \operatorname{Crit}(W):=Z(dW)
    \end{equation}
    where $dW$ is the gradient of $W$. Moreover, we call \emph{vacuum manifold} the GIT quotient:
    \begin{equation}
        Y_I = \operatorname{Crit}(W)\git_\tau G. 
    \end{equation}
    for any $\rho_\tau\in I$.
\end{definition}

\subsection{The model}
We are now ready to present the GLSM. Let us call $V$ the vector space
\begin{equation}
    V = \Hom(\CC, V_{2k+2})\oplus\Hom(\CC^{k+1}, V_{2k+2})\oplus V_{k+1}^\vee
\end{equation}

endowed with the following $G$-action:

\begin{equation}\label{eq_dualaction}
    \begin{tikzcd}[row sep = tiny, column sep = huge, /tikz/column 1/.append style={anchor=base east} ,/tikz/column 2/.append style={anchor=base west}] 
    G\times V\ar{r} & V \\
    g, (v, B, x)\ar{r} & (v\lambda^{-1}, Bh^{-1}, \lambda^3\det h^2 x h^{-1})
    \end{tikzcd}
\end{equation}

where $g$ decomposes as in Equation \ref{eq_G}. Given a smooth section $s\in H^0(F, 1,k+2,V_{2k+2}), \Pc(1,2))$ we fix our superpotential as the following $G$-invariant function:

\begin{equation}\label{eq_superpotential}
    \begin{tikzcd}[row sep = tiny, column sep = large, /tikz/column 1/.append style={anchor=base east} ,/tikz/column 2/.append style={anchor=base west}]
    V\ar{r}{w} & \CC \\
    (v, B, x)\ar[maps to]{r} & x\cdot \hat s(v, B)
    \end{tikzcd}
\end{equation}

where the dot is the usual contraction $V_{k+1}^\vee\times V_{k+1}\arw\CC$.\\
We define a family of characters

\begin{equation}
\begin{tikzcd}[row sep = tiny, column sep = huge, /tikz/column 1/.append style={anchor=base east} ,/tikz/column 2/.append style={anchor=base west}]
    G \ar{r}{\rho_\tau} &\CC^* \\
    g\ar[maps to]{r}&\lambda^{-\tau}\det h^{-\tau}
\end{tikzcd}
\end{equation}

which describes a line inside the character group. We consider the variation of GIT related to the crossing of the vertex between the loci $\tau>0$ and $\tau<0$. In order to do this we observe that \cite[Lemma 2.4]{king} and \cite[Theorem 3.3]{halic} hold, hence, fixed one of the two chambers, we investigate the locus $Z_\pm\in V$ of triples $(v,B,x)$ such that there exists a one-parameter subgroup $\{g_t\}\subset G$ with $\rho_\pm^{-1}(g_t)\arw 0$ and $g_t(v,B,x)$ has a limit in $V$ for $t\arw 0$. Then, the corresponding semistable locus is $V^{ss}_\pm=V\setminus Z_\pm$.\\
\\
Let us fix a one-parameter subgroup in $G$ depending on $k+2$ parameters $\alpha_0,\dots, \alpha_{k+1}$ whose elements are
\begin{equation}
    g_t = \left(
    \begin{array}{ccc}
        t^{\alpha_0} &  & \\
         & \ddots & \\
         & & t^{\alpha_{k+1}}
    \end{array}
    \right)
\end{equation}
    
\subsubsection{The chamber $\tau>0$}

Here the condition $\rho_+^{-1}(g_t)\arw 0$ translates to $\sum_{i=0}^{k+1} \alpha_i > 0$. Then $(v,B,x)\in Z_+$ if and only if there exist a tuple $\alpha_0, \dots \alpha_{k+1}$ satisfying a set of inequalities which for the general $(v,B,x)$ are:
\begin{equation}\label{eq_inequalitiesfibrationone}
    \left\{
    \begin{array}{rl}
        \sum_{i=0}^{k+1} \alpha_i & >0  \\
         -\alpha_j& \geq 0 \\
         3 \alpha_0 + 2\sum_{i=1}^{k+1}\alpha_i - \alpha_j& \geq 0 \\
    \end{array}
    \right.
\end{equation}
Such inequalities admit no common solution, which translates to the fact the general point $(v, B, x)$ is semistable. One observes that the locus $\Delta_h:=\{(v, B, x) : v = 0, rk B = k+1-h\}$ is fixed by $G$ for every $h\leq k+1$ and that $\Delta_h$ contains an element $(v, B, x)$ where the first $h$ columns of $B$ are composed entirely of zeroes. On such element the inequalities \ref{eq_inequalitiesfibrationone} become:
\begin{equation}
    \left\{
    \begin{array}{rl}
        \sum_{i=0}^{k+1} \alpha_i & >0  \\
         -\alpha_j& \geq 0 \hspace{10pt} j>l\\
         3 \alpha_0 + 2\sum_{i=1}^{k+1}\alpha_i - \alpha_j& \geq 0 \\
    \end{array}
    \right.
\end{equation}
which admit solution if and only if $l>0$. On the other hand, if $\rk B = k+1$ the only way to produce an unstable point is to set $v=0$, hence the unstable locus is given by $\{\rk B < k+1\}\cup\{ v = 0\}$. Summing all up, we get:
\begin{equation}
    V^{ss}_+ = \{(v,B,x)\in  V | \rk v=1, \rk B = k+1\}.
\end{equation}
Therefore, since
\begin{equation}
    V\git_+G = V^{ss}_+/G = \Pc^\vee(-1, -2).
\end{equation}
we conclude that $\operatorname{Crit}(w)\git_+G\simeq X$ by \cite[Remark 1.2]{okonekteleman}.

\subsubsection{The chamber $\tau<0$}
Here the condition $\rho_-^{-1}(g_n)\arw 0$ gives the inequality to $\sum_{i=0}^{k+1} \alpha_i < 0$. The other inequalities are unchanged, but the solution is radically different:
\begin{equation}
    V^{ss}_- = \{(v,B,x)\in  V | \rk v=1, \rk x=1,  \ker B\cap\ker x = \{0\} \}.
\end{equation}
Acting with $G$ we can reduce to the situation where $x=(1,0,\dots, 0)$. Then the stabilizer has the form

\begin{equation}\label{eq_stabilizerfiberedglsm}
    G_S = \left\{
    g\in G : g = \left(
    \begin{array}{ccccc}
        \lambda & z_{k+1} & z_{k+2} &\dots &z_{2k+1} \\
        0 & \delta & 0 &\dots & 0 \\
        0 & z_1 & m_{11} & & m_{1k}\\
        \vdots & \vdots&\vdots& & \vdots\\
        0 & z_k & m_{k1}&\dots & m_{kk}
    \end{array}
    \right)
    \right\}
\end{equation}

We observe that the action of the stabilizer on $B$ preserves linear combinations of the last $k$ columns, while the first one transforms like the image of the fiber of $t^*\Qc(-1,-2)$. Hence, the GIT quotient is 
\begin{equation}
    V\git_-G = V^{ss}_+/G = r^*\Qc^\vee(-1, -2).
\end{equation}
\subsection{The phase transition}
In order to prove that the critical locus in the second phase is isomorphic to $X_2$, we need to describe the section $s$ more explicitly. First let us describe $S\in H^0(F(1,k+1,k+2, V_{2k+2}), \Oc(1,1,1))$. In analogy with Equation \ref{eq_GITF14}, the flag variety $F(1,k+1,k+2,V_{2k+2})$ is given by the following GIT description:
\begin{equation}\label{eq_GITF134}
   F(1,k+1,k+2,V_{2k+2})\simeq(\Hom(\CC^{k+2}, V_{2k+2})\setminus Z)/H
\end{equation}
where
\begin{equation}
    H = \left\{\left(
        \begin{array}{ccc}
            \lambda & \times &\times\\
            0  & h & \times\\
            0 & 0 & \delta
        \end{array}
        \right)\right\}\subset GL(k+2), \,\,\,\,\, \lambda, \delta\in\CC^*, \,\,\, h\in GL(k).
\end{equation}
and the action is $C\simeq Cg^{-1}$ for every $g\in H$. The symbol $\times$ denotes entries corresponding to the nilpotent part, where there is no condition. Let us write $C=(v|A|u)\in\Hom(\CC^{k+2}, V_{2k+2})$ where $v, u\in \Hom(\CC, V_{2k+2})$ and $A\in\Hom(\CC^2, V_{2k+2} )$. Then, a section of $\Oc(1,1,1)$ acts in the following way:
\begin{equation}
        (v|A|u)\xrightarrow{\hspace{30pt}}\ S((v|A|u)) = S^{i j_1\dots j_{k+1} l_1\dots l_{k+2}}\,v_i\psi_{j_1\dots j_{k+1}}(v|A)\psi_{l_1\dots l_{k+2}}(v|A|u)
\end{equation}
where $\psi_{k_1,\dots k_r}$ is the minor obtained choosing the lines $k_1, \dots k_r$, hence the coordinates $\psi_{k_1,\dots k_r}$ defines a Pl\"ucker map to $\wedge^r V_{2k+2}$. As we did for Equation \ref{eq_explicitsection}, to unclutter the notation, we omit sums over repeated high and low indices, and we use square brackets to anti-symmetrize indices. We observe that
\begin{equation}\label{eq_section134}
    S(g.(v|A|u)) = \lambda^{-3}\det h^{-2} \delta^{-1} S((v|A|u))
\end{equation}
which is the correct equivariancy condition since $\Oc(1,1,1)\simeq\Oc(1)\boxtimes\Oc(1)\boxtimes\Oc(1)$. Then, the pushforwards of this section to $F(1,k+1,V_{2k+2})$ and $F(1,k+2,V_{2k+2})$ are described by the following equivariant functions: 
\begin{equation}\label{eq_section13}
        (v|A)\xrightarrow{\hspace{30pt}}\hat\sigma^r((v|A|u)) = S^{i j_1\dots j_{k+1} l_1\dots l_{k+2}}\,v_i\psi_{j_1\dots j_{k+1}}(v|A)\delta^r_{\,\,[l_{k+2}}\psi_{l_1\dots l_{k+1}]}(v|A)
\end{equation}
\begin{equation}\label{eq_section14}
        (v|B)\xrightarrow{\hspace{30pt}}\hat s^r((v|B)) = S^{i j_1\dots j_{k+1} l_1\dots l_{k+2}}\,v_i\left(\frac{\partial}{\partial B_r^{\,\,t}}\psi_{j_1\dots j_{k+1}t}(v|B)\right)\psi_{l_1\dots l_{k+2}}(v|B)
\end{equation}
where square brackets around a set of indices means totally skew-symmetric.
What is left to prove is that the quotient of ther critical locus of $w$ restricted to $V_-^{ss}$ by $G$ is isomorphic to $X_1$. Let us write the superpotential explicitly: by Equations \ref{eq_superpotential} and \ref{eq_section14} we have
\begin{equation}\label{eq_superpotentialexplicit}
        (v, B, x)\xrightarrow{\hspace{30pt}} x^r S^{i j_1\dots j_{k+1} l_1\dots l_{k+2}}\,v_i\left(\frac{\partial}{\partial B_r^{\,\,t}}\psi_{j_1\dots j_{k+1}t}(v|B)\right)\psi_{l_1\dots l_{k+2}}(v|B)
\end{equation}
As we showed before, for every $G_S$-orbit in $V_-^{ss}$ there exist a unique point such that $x=x_0:=(1,0,\dots,0)$. Let us work on such points. Define:
\begin{equation}
    \wt V = \{ (v, B) : \rk v = 1, B_r^{\,\,1}=0\,\,\forall r\leq {2k+2}\}.
\end{equation}
We are interested in the locus
\begin{equation}
    dw\cap\wt V =  \{ (v, B, x) : x=x_0, (v,B)\in\wt V, \hat s(v,B, x)=0, x\cdot d\hat s(v,B,x)=0 \}.
\end{equation}
If $(v, B)\in\wt V$ the first equation is automatically satisfied, since $\psi(v|B)$ is identically zero for lower rank matrices, and the first column of $B$ is zero. Let us now focus on the second equation defining the critical locus. By Equation \ref{eq_superpotentialexplicit}, restricted to $(\wt V, x_0)$ it becomes (up to sign):
\begin{equation}\label{eq_candidatequotientsection}
    \begin{split}
        x\cdot ds(v,B,x_0)^z|_{(v, B)\in\wt V} &= \left. S^{i j_1\dots j_{k+1} l_1\dots l_{k+2}}v_i\left(\frac{\partial}{\partial B_1^{\,\,t}}\psi_{j_1\dots j_{k+1}t}(v|B)\right)\left(\frac{\partial}{\partial B_1^{\,\,z}}\psi_{l_1\dots l_{k+2}}(v|B)\right)\right|_{(v, B)\in\wt V} \\
        &= S^{i j_1\dots j_{k+1} l_1\dots l_{k+2}}v_i\psi_{j_1\dots j_{k+1}}(v|\wt A)\delta^z_{\,\,[l_{k+2}}\psi_{l_1\dots l_{k+1}]}(v|\wt A) =: R^z(A).
    \end{split}
\end{equation}
where $\wt A\in\Hom(\CC^k, V_{2k+2})$ is the matrix resulting by removing the first (vanishing) column from $B$. This last equation coincides with \ref{eq_section13}, hence it describes the image in $H^0(F(1, k+1, V_{2k+2}), V_{2k+2}\otimes\Oc(1,2))$ of a section of $r^*\Qc^\vee(1,2)$ on $F(1,k+1,V_{2k+2})$. Summing all up, the critical locus of $w$ on $V^{ss}_-$ is a bundle over the zero locus of the $2k+2$ equations $x\cdot dw$. We observe that the $2k+2$ equations vanish exactly where the associated section of $r^*\Qc^\vee(1,2)$ vanish, hence the critical locus is a bundle over $X_1$. In fact, the contraction of the vector $R(A)$ with any column of $A$ is identically zero, hence $R(A)\in\ker(A^T)$, which allows us to conclude that $R$ is the image of a section $s$ of $r^*\Qc^\vee(1,2)$ by an injective morphism of vector bundles, hence $Z(R)\simeq Z(s)$. \\
\\
The last step is to observe that the action of the stabilizer $G_S$ described by Equation \ref{eq_dualaction} is transitive and free on $\{x=(x_1,\dots, x_{k+1})\}$. Hence, taking the quotient by $G_S$, we obtain $X_1$.\\
\\
If we choose $k=2$ we obtain a GLSM description of a pair of Calabi--Yau fibrations associated to the roof bundle of type $A^G_4$. Hence, we can state the following theorem summarizing all dualities appearing in this picture. 
\begin{theorem}\label{thm_main2_body}(Theorem \ref{thm_intro_4})
There exists a pair of derived equivalent Calabi--Yau eightfolds $X_1$, $X_2$ of Picard number two, and fibrations $f_1:X_1\arw \PP^5$ and $f_2:X_1\arw \PP^5$ such that for the general $b\in \PP^5$ the varieties $Y_1:= f_1^{-1}(b)$ and $Y_2:= f_1^{-1}(b)$ are non birational, derived equivalent Calabi--Yau threefolds. Moreover, $X_1$ and $X_2$ are isomorphic to the vacuum manifolds of two phases of a non abelian gauged linear sigma model.
\end{theorem}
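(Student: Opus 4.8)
The plan is to assemble, for the specific roof bundle of type $A^G_4 = F(2,3,V_5)$ over $\PP^5$ realized in Section \ref{sec_CYgeometry} as $\Zc = \Fc l(2,3,T(-1)) \simeq F(1,3,4,V_6)$ with Grothendieck line bundle $\Lc = \Oc(1,1,1)$, the statements already proved throughout the paper. First I would fix a general section $S \in H^0(F(1,3,4,V_6), \Oc(1,1,1))$ and set $X_1 = Z(p_{1*}S)$ and $X_2 = Z(p_{2*}S)$. By Proposition \ref{prop_picardnumberCY} these are Calabi--Yau eightfolds of Picard number two, and the restrictions $f_i := r_i|_{X_i}$ of the base maps $r_i \colon \Zc_i \arw \PP^5$ furnish the desired fibrations over $\PP^5$.

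Next I would identify the general fibers. Proposition \ref{prop_picardnumberCY} verifies that $\Oc(1,1,1)$ is basepoint-free and that the restriction map $\rho_b \colon H^0(\Zc, \Lc) \arw H^0(\pi^{-1}(b), L)$ is surjective for every $b$; these are exactly the hypotheses of Lemma \ref{lem_CYfibered}, so for general $b \in \PP^5$ the pair $(Y_1, Y_2) = (f_1^{-1}(b), f_2^{-1}(b))$ is a Calabi--Yau pair associated to the roof $A^G_4$ in the sense of Definition \ref{def_CYpairs}, with $Y_1$ and $Y_2$ Calabi--Yau threefolds by Lemma \ref{lem_CY}. Non-birationality of the two threefolds is \cite[Theorem 5.7]{kr}, and their derived equivalence is the known equivalence for the Calabi--Yau pair of type $A^G_4$ established in \cite{kr}.

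To obtain derived equivalence of the eightfolds I would invoke Corollary \ref{cor_knowncases} for the roof bundle of type $A^G_4$: Assumption \ref{itm:ass_a1} is precisely the basepoint-freeness and surjectivity just recalled from Proposition \ref{prop_picardnumberCY}, while Assumptions \ref{itm:ass_a2} and \ref{itm:ass_a3} hold for type $A^G_4$ by Lemma \ref{lem_cohomologyconditions}; hence Theorem \ref{thm_derivedequivalencefibrations} produces an equivalence $\dbcoh(X_1) \simeq \dbcoh(X_2)$. Finally, the GLSM statement is the content of Section \ref{sec_glsm} specialized to $k = 2$: one exhibits the model $(V, G, \CC^*_R, w)$ defined there and, through the VGIT analysis of the line of characters $\rho_\tau$, shows that the chamber $\tau > 0$ yields vacuum manifold $\crit(w) \git_+ G \simeq X_2$ while the chamber $\tau < 0$ yields $\crit(w) \git_- G \simeq X_1$.

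Since the statement is a summary, no new technical input is required beyond citing these results; the only point that demands genuine care is the last one, namely matching the two critical-locus quotients with $X_1$ and $X_2$. This rests on the explicit descriptions of the pushed-forward sections in Equations \ref{eq_section13} and \ref{eq_section14} and on the stabilizer computation \ref{eq_stabilizerfiberedglsm}, from which one deduces that $R$ in \ref{eq_candidatequotientsection} is the image of a section of $r^*\Qc^\vee(1,2)$ under an injective bundle morphism, so that the critical locus in the second phase fibers over $X_1$ and descends to $X_1$ after quotienting by the free stabilizer action. Identifying the phases in this way is therefore where I would expect the main verification effort to lie.
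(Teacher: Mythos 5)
Your proposal is correct and takes essentially the same route as the paper's own proof: since Theorem \ref{thm_main2_body} is a summary statement, the paper likewise assembles Proposition \ref{prop_picardnumberCY} (Calabi--Yau eightfolds of Picard number two, with the basepoint-freeness and surjectivity of $\rho_b$ feeding into Lemma \ref{lem_CYfibered}), Corollary \ref{cor_knowncases} for the derived equivalence $\dbcoh(X_1)\simeq\dbcoh(X_2)$, the phase analysis of Section \ref{sec_glsm} for the vacuum-manifold identifications, and \cite[Theorems 3.6 and 5.7]{kr} for non-birationality and derived equivalence of the fiber threefolds. Your matching of the chambers $\tau>0$ and $\tau<0$ with $X_2$ and $X_1$, hinging on Equations \ref{eq_section13}, \ref{eq_section14} and the stabilizer computation \ref{eq_stabilizerfiberedglsm}, is exactly where the paper places the verification work as well.
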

\begin{proof}
Let us consider the roof bundle of type $A_4^G$ over $\PP^5$. By the discussion of Section \ref{sec_CY}, $X_1$ and $X_2$ are Calabi--Yau eightfolds. In particular, by Proposition \ref{prop_picardnumberCY}, they have Picard number two. Derived equivalence follows from Corollary \ref{cor_knowncases}. By the above, $X_1$ and $X_2$ are isomorphic to the $G$-quotients of the critical loci of $w$ in the two stability chambers $\tau<0$ and $\tau>0$. Finally, the fibers $Y_1:= f_1^{-1}(b)$ and $Y_2:= f_1^{-1}(b)$ over a general $b\in B$ are a Calabi--Yau pair associated to the roof of type $A_4^G$, hence they are non birational and derived equivalent by \cite[Theorems 3.6 and 5.7]{kr}.
\end{proof}

\appendix

\section{Derived equivalences of Calabi--Yau pairs}\label{appendix}
In this appendix we will review all known cases of roofs where a sequence of mutations to prove derived equivanelce of the asssociated Calabi--Yau pair is known. We will also check, case by case, that the following condition is satisfied, which is equivalent to say that the roof $G/P$ fulfills Assumptions \ref{itm:ass_a2} and \ref{itm:ass_a3}:

\begin{customthm}{($\dagger$)}\label{def_mutationsroof}
    Consider a mutation $(F_1, F_2)\arw(\LL_{F_1}F_2, F_1)$ of exceptional objects on $M$. We say that such mutation satisfies Condition \emph{\ref{cond_mutationsroof} } if there exist exceptional objects $E_1$, $E_2$ on $G/P$ such that for $i\in\{1;2\}$ one has $l^*E_i = F_i$ and the following vanishings hold:
    \[
        \Ext^\bullet_{G/P}(E_2, E_1) = \Ext^\bullet_{G/P}(E_1\otimes L, E_2)=0.\label{cond_mutationsroof}
    \]
    The same condition is defined for right mutations $(F_1, F_2)\arw(F_2, \RR_{F_2}F_1)$.
\end{customthm}

Condition \ref{cond_mutationsroof}  is crucial for the proof of Theorem \ref{thm_intro_3} and Theorem \ref{thm_intro_1}.

\subsection{Setup and general strategy}\label{sec_setupandgeneralstrategy}
Let us recall some notation. In the following, $G/P$ is a homogeneous roof of rank $r$ with projective bundle structures $h_i:\PP(\Ec_i)\arw G/P_i$. Let $L=h_1^*\Oc(1)\otimes h_2^*\Oc(1)$ be the Grothendieck line bundle of both the projective bundle structures and call $M\subset G/P$ the (smooth) zero locus of a general section of $L$. We will commit the abuse of notation of denoting by $L$ also the pullback of such line bundle to $M$. Let $(Y_1, Y_2)$ be the associated Calabi--Yau pair, i.e. $Y_1$ and $Y_2$ are zero loci of pushforwards of a section defining $M$ along the projective bundle maps (see Definition \ref{def_CYpairs}). One has the following diagram:

\begin{equation}\label{eq_diagramfiber}
    \begin{tikzcd}[row sep = huge, column sep = large]
           &T_1\ar[swap]{ddl}{\nu_1}\ar[hook]{r}{k_1}   & M\ar[hook,swap]{d}{l}\ar[hookleftarrow]{r}{k_2}\ar[swap]{ddl}{h_1|_M}\ar{ddr}{h_2|_M}  & T_2\ar{ddr}{\nu_2}  &   \\
           &   & G/P\ar{dl}{h_1}\ar[swap]{dr}{h_2}   &   &   \\
        Y_1\ar[hook]{r}{t_1} & G/P_1 &    & G/P_2\ar[hookleftarrow]{r}{t_2} &   Y_2    
    \end{tikzcd}
\end{equation}

where $T_i$ is the preimage of $Y_i$ under $h_i|_M$, and $\nu_i$ is the restriction of $h_i|_M$ to $T_i$ for $i\in\{1;2\}$.

\subsection{Derived equivalence for the roof of type \texorpdfstring{$C_2$}{something}}
Let $V_4$ be a vector space of dimension four. The roof of type $C_2$ is the symplectic flag variety $IF(1,2,V_4)$ with projective bundle structures respectively over $\PP(V_4)$ and $IG(2, V_4)$. Note that $IG(2,V_4)$ is a three-dimensional quadric in $\PP^4$. Both $h_1$ and $h_2$ are $\PP^1$-fibrations. By dimensional reasons and Lemma \ref{lem_CY}, the zero loci $Y_1 = Z(h_{1*}\sigma)$ and $Y_2 = Z(h_{2*}\sigma)$ are elliptic curves.\\
\\
Let us call $\Uc$ the pullback to $IF(1, 2, V_4)$ of the tautological vector bundle of $IG(2, V_4)$. By Cayley trick \cite[Proposition 2.10]{cayleytrick}, or equivalently by Orlov's blowup formula \cite[Theorem 4.3]{orlovblowup} and an application of the Serre functor of $M$, we write the following semiorthogonal decompositions:
\begin{equation}\label{eq_collectionsC2initial}
\begin{split}
        \dbcoh(M) \simeq & \langle \phi_1\dbcoh(Y_1), \Oc_M(-1,1),\Oc_M(0,1),\Oc_M(1,1),\Oc_M(2,1)\rangle \\
                  \simeq & \langle \phi_2\dbcoh(Y_2), \Oc_M(1,1), l^*\Uc^\vee(1,1), \Oc_M(1,2),\Oc_M(1,3)\rangle
 \end{split}
\end{equation}
where $\phi_i=k_{i*} \nu_i^*$ in the notation of Diagram \ref{eq_diagramfiber}. We formulate the following lemma:

\begin{lemma}\label{lem_derivedequivalenceC2}
In the setting above, there is a sequence of mutations realizing a derived equivalence $\dbcoh(Y_1)\arw\dbcoh(Y_2)$ and satisfying Condition \ref{cond_mutationsroof} .
\end{lemma}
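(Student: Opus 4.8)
The plan is to produce an explicit chain of operations transforming the first semiorthogonal decomposition in \eqref{eq_collectionsC2initial} into the second, using only the moves permitted by Assumption \ref{itm:ass_a2}: mutations of adjacent exceptional pairs, overall twists by powers of $L=\Oc(1,1)$, applications of the Serre functor $\Sc_M=(-)\otimes\omega_M[\dim M]$, and mutations of the block $\phi_1\dbcoh(Y_1)$ through an adjacent exceptional object. Here $\dim M=3$ and, since $IF(1,2,V_4)$ has index $r=2$ so that $\omega_{G/P}\simeq L^{\otimes -2}$, one has $\omega_M\simeq\Oc_M(-1,-1)$; thus every application of $\Sc_M$ sends the last object of a collection to its front, twisted by $L^{\otimes(-r+1)}=\Oc_M(-1,-1)$. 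The goal is to carry the block $\langle\Oc_M(-1,1),\Oc_M(0,1),\Oc_M(1,1),\Oc_M(2,1)\rangle$ to $\langle\Oc_M(1,1),l^*\Uc^\vee(1,1),\Oc_M(1,2),\Oc_M(1,3)\rangle$ and simultaneously transport $\phi_1\dbcoh(Y_1)$ across it; once this is achieved the residual admissible subcategory is forced to equal $\phi_2\dbcoh(Y_2)$, which yields the equivalence $\dbcoh(Y_1)\simeq\dbcoh(Y_2)$.

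The substance of the lemma is not the existence of the equivalence --- the two fibres are curves, for which derived equivalence is comparatively soft --- but the requirement that every mutation of a pair $(F_1,F_2)\mapsto(\LL_{F_1}F_2,F_1)$ used in the chain satisfy Condition \ref{cond_mutationsroof}. For each such step I would exhibit homogeneous bundles $E_1,E_2$ on $G/P=IF(1,2,V_4)$ with $l^*E_i=F_i$ --- these are line bundles $\Oc(a,b)$ and the twists $\Uc^\vee(a,b)$ --- and verify the two vanishings
\[
    \Ext^\bullet_{G/P}(E_2,E_1)=0,\qquad \Ext^\bullet_{G/P}(E_1\otimes L,E_2)=0.
\]
As all the $E_i$ are homogeneous and irreducible, each group reduces to the cohomology of a homogeneous bundle on $IF(1,2,V_4)$, namely $H^\bullet(IF(1,2,V_4),\Oc(m,n))$ or $H^\bullet(IF(1,2,V_4),\Uc^\vee\otimes\Oc(m,n))$ for explicit $(m,n)$, which is a finite Borel--Weil--Bott computation and can be carried out with the script \cite{pythonscript}. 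The operations on $\phi_1\dbcoh(Y_1)$ and the twist and Serre moves do not involve Condition \ref{cond_mutationsroof} and require no such check.

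The only non-line-bundle object entering the picture is the spinor-type bundle $\Uc^\vee$. To produce it, and to organise the computations above, I would use the tautological sequence on $IG(2,V_4)$, which through the symplectic form reads $0\to\Uc\to V_4\otimes\Oc\to\Uc^\vee\to 0$; pulled back along $h_2$ and suitably twisted it both realises $l^*\Uc^\vee(1,1)$ as the cone of a canonical morphism in the collection (so that it is genuinely the output of the relevant mutation) and reduces any $\Ext$ involving $\Uc^\vee$ to line-bundle cohomology. That each of these mutations then commutes with the restriction $\iota^*$ to a hyperplane section, as needed for the relativisation, is exactly the content of Lemmas \ref{lem_inducedmutationsinZ} and \ref{lem_inducedmutationsinM} once Condition \ref{cond_mutationsroof} is in force.

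The main obstacle is the second vanishing, $\Ext^\bullet_{G/P}(E_1\otimes L,E_2)=0$, i.e. the $L$-semiorthogonality of $E_1$ to $E_2$ in the sense of Definition \ref{def_Lsemiorthogonality}. This is strictly stronger than the ordinary semiorthogonality $\Ext^\bullet_{G/P}(E_2,E_1)=0$ that merely records that $(E_1,E_2)$ is an exceptional pair on the fibre, and it is not automatically inherited from the fact that the decompositions in \eqref{eq_collectionsC2initial} are honest semiorthogonal decompositions. The real work is therefore to use the freedom in ordering the mutations, and in placing the twist and Serre-functor moves, so that at the instant each pair is mutated it is simultaneously exceptional and $L$-semiorthogonal; concretely this amounts to keeping the indices $(m,n)$ appearing in the $\Oc(m,n)$ within the window where the cohomology still vanishes after the extra twist by $L=\Oc(1,1)$. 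Exhibiting one path that meets this constraint at every step and closes up on $\phi_2\dbcoh(Y_2)$ is the crux; the remaining verifications are routine Borel--Weil--Bott bookkeeping.
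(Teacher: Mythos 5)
Your proposal correctly identifies the framework --- it is the same one the paper uses (following Morimura): apply the Serre functor with $\omega_M\simeq\Oc_M(-1,-1)$, transform the exceptional block of the first decomposition in \ref{eq_collectionsC2initial} into the pullback of the $IG(2,V_4)$ collection while transporting $\phi_1\dbcoh(Y_1)$ across it, and verify Condition \ref{cond_mutationsroof} at each pair mutation by Borel--Weil--Bott. But the proposal stops exactly where the proof begins. The entire content of the lemma \emph{is} the explicit path: which object is rotated by the Serre functor and when, which adjacent pairs are exchanged, and the specific cohomology vanishings $\Ext^\bullet_F(\Oc(2,1),\Oc(-1,1))=\Ext^\bullet_F(\Oc(-1,1),\Oc(1,0))=\Ext^\bullet_F(\Oc(-1,1),\Oc)=\Ext^\bullet_F(\Oc(1,1),\Oc(-1,1))=0$ that certify Condition \ref{cond_mutationsroof} at each step. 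You explicitly defer this (``exhibiting one path \dots is the crux''), so what you have written is a strategy statement, not a proof; since the statement is precisely an existence claim for such a path, the gap is the whole argument. In a four-object collection the search is short, but it still must be carried out and recorded.

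There is also a concrete inaccuracy in the one mechanism you do commit to. You propose to produce $\Uc^\vee$ from the tautological sequence $0\arw\Uc\arw V_4\otimes\Oc\arw\Uc^\vee\arw 0$ on $IG(2,V_4)$, realizing $l^*\Uc^\vee(1,1)$ ``as the cone of a canonical morphism in the collection''. That sequence computes mutations of the form $\LL_{\Oc}\Uc^\vee$ (as in the $A^G_4$ case), but neither $\Uc$ nor $V_4\otimes\Oc$ ever appears in the $C_2$ collections, so no mutation in the chain has this shape. In the actual proof $l^*\Uc^\vee$ arises as the right mutation $\RR_{\Oc_M(-1,1)}\Oc_M(1,0)$: one computes $\Ext^\bullet_M(\Oc_M(1,0),\Oc_M(-1,1))=\CC[-1]$, so the mutation is the extension of two line bundles on the flag given by $0\arw\Oc(-1,1)\arw\Uc^\vee\arw\Oc(1,0)\arw 0$ (Morimura's sequence), pulled back to $M$. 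Your tautological sequence is still serviceable for reducing $\Ext$ groups involving $\Uc^\vee$ to line-bundle cohomology, but it cannot substitute for the extension sequence that actually generates $\Uc^\vee$ inside the mutation chain; without that identification the block $\langle\Oc_M,l^*\Uc^\vee,\Oc_M(0,1),\Oc_M(0,2)\rangle$ never materializes and the comparison with the second decomposition of \ref{eq_collectionsC2} cannot close.
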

\begin{proof}
First let us apply the Serre functor to the last four objects of each collection, obtaining:
\begin{equation}\label{eq_collectionsC2}
\begin{split}
        \dbcoh(M) \simeq & \langle \Oc_M(-2,0),\Oc_M(-1,0),\Oc_M,\Oc_M(1,0), \phi_1\dbcoh(Y_1)\rangle \\
                            \simeq & \langle \Oc_M,l^*\Uc^\vee,\Oc_M(0,1),\Oc_M(0,2), \phi_2\dbcoh(Y_2)\rangle
 \end{split}
\end{equation}
where we used the fact that $\omega_M = \Oc(-1, -1)$. For brevity, let us fix $F := IF(1,2,V_4)$. Our approach for finding the right mutations follows the approach of \cite{morimura} closely. Let us start from the first collection. We can send the first bundle to the far right, then move $\phi_1\dbcoh(Y_1)$ one step to the right, obtaining
\begin{equation}
    \begin{split}
            \dbcoh(M) \simeq & \langle \Oc_M(-1,0),\Oc_M,\Oc_M(1,0), \Oc_M(-1,1), \RR_{\Oc_M(-1,1)}\phi_1\dbcoh(Y_1)\rangle
    \end{split}
\end{equation}
We have the following short exact sequence on $F$ (and its pullback on $M$) \cite[Equation 2.2]{morimura}:
\begin{equation}
    0\arw\Oc(-1,1)\arw \Uc^\vee\arw\Oc(1,0)\arw 0
\end{equation}
All cohomology in the following is computed by Borel--Weil--Bott's theorem. 
First, since one has
\begin{equation}
    \Ext_F^\bullet(\Oc(1,0), \Oc(-1,1))=\Ext_M^\bullet(\Oc_M(1,0), \Oc_M(-1,1))=\CC[-1]
\end{equation}
we can mutate $\Oc_M(1,0)$ and get:
\begin{equation}
    \begin{split}
            \dbcoh(M) \simeq & \langle \Oc_M(-1,0),\Oc_M,\Oc_M(-1,1),l^* \Uc^\vee, \RR_{\Oc_M(-1,1)}\phi_1\dbcoh(Y_1)\rangle
    \end{split}
\end{equation}
and we compute the following vanishings:
\begin{equation}
    \Ext_F^\bullet(\Oc(2,1), \Oc(-1,1)) = \Ext_F^\bullet(\Oc(-1,1), \Oc(1,0))=0
\end{equation}
which are required to fulfill Condition \ref{cond_mutationsroof} . The next step is to exchange the second and the third bundles. We have:
\begin{equation}
    \Ext_F^\bullet(\Oc, \Oc(-1,1))=\Ext_M^\bullet(\Oc_M, \Oc_M(-1,1))=0
\end{equation}
hence we can move the first two to the end and send $\RR_{\Oc_M(-1,1)}\phi_1\dbcoh(Y_1)$ to the far right. Again, this mutation fulfills Condition \ref{cond_mutationsroof}  since:
\begin{equation}
    \Ext_F^\bullet(\Oc(-1, 1), \Oc) = \Ext_F^\bullet(\Oc(1,1), \Oc(-1,1))=0.
\end{equation}
We find:
\begin{equation}\label{eq_collectionC2final}
    \begin{split}
            \dbcoh(M) \simeq & \langle\Oc_M,l^*\Uc^\vee, \Oc_M(0,1),\Oc_M(0,2),\\ &\hspace{6pt}\RR_{\Oc_M(0,2)}\RR_{\Oc_M(0,1)}\RR_{\Oc_M(-1,1)}\phi_1\dbcoh(Y_1)\rangle
    \end{split}
\end{equation}
In the first four bundles we recognise $\dbcoh(IG(2, V_4))$. Hence, comparing Equation \ref{eq_collectionsC2} with Equation \ref{eq_collectionC2final} we prove our claim.
\end{proof}

\begin{remark}
    Note that the derived equivalence $\dbcoh(Y_1)\simeq\dbcoh(Y_2)$ is a consequence of the derived equivalence of local Calabi--Yau fivefolds described in \cite{morimura}: in fact, one can follow the approach of \cite{uedaflop} based on matrix factorization categories. In general, given a roof of type $G/P$ with $\PP^{r-1}$-bundle structures $h_i:G/P\arw G/P_i$, let us call $\Ec_i:= h_{i*}\Oc(1,1)$ and $Y_i=Z(h_{i*}\sigma)$, where $\sigma$ is a general section of $\Oc(1,1)$. Then, one can define by the data of a section of $\Ec_i$ a superpotential $w_i$ such that the derived category of matrix factorizations of the Landau--Ginzburg model $(\Ec_i^\vee, w_i)$ is equivalent to $\dbcoh(Y_i)$ via Kn\"orrer periodicity (for more details, see Chapter 14). Then, by \cite{uedaflop} if there is a derived equivalence $\dbcoh(\operatorname{Tot }\Ec_1^\vee)\simeq \dbcoh(\operatorname{Tot }\Ec_2^\vee)$ satisfying a $\CC^*$-equivariancy condition, it lifts to a derived equivalence of the matrix factorization categories of $(\Ec_i^\vee, w_i)$, and $\dbcoh(Y_1)\simeq\dbcoh(Y_2)$ follows from this last equivalence composed with Kn\"orrer periodicity. This gives a derived equivalence for Calabi--Yau pairs of type $A^G_4$, $C_2$ \cite{morimura} and $G_2$ \cite{uedaflop}.
\end{remark}

\subsection{Derived equivalence for the roof of type \texorpdfstring{$A^M_{n}$}{something}}
Let $V$ be  a vector space of dimension $n+1$. A roof of type $A^M_{n}$ is given by the following diagram:
\begin{equation}\label{eq_mukairoofbundlefiber}
    \begin{tikzcd}[row sep=huge, column sep = small]
        &F(1,n,V)\ar[swap]{dl}{h_1}\ar{dr}{h_2} &  \\
        G(1,V) & & G(n, V)
    \end{tikzcd}
\end{equation}
The zero loci $Y_i=Z(h_{i*}\sigma)$ are zero-dimensional. Nonetheless we discuss 
mutations, since it is necessary to prove that the roof of type $A_n^M$ satisfies Assumptions \ref{itm:ass_a2} and \ref{itm:ass_a3}. A similar result, with a different proof, has been previously found in the context of Mukai flops by \cite{kawamatadk, namikawa}, later \cite{morimura} developed a sequence of mutations to achieve the same result. The proof we will describe is similar to the one by \cite{morimura}.\\ 
\\
By Cayley trick we recover the following semiorthogonal decompositions:
\begin{equation}\label{eq_cayleytrickmukaifiber}
    \small
    \begin{split}
    \dbcoh(M)\simeq& \langle \phi_1 \dbcoh(Y_1), h_1|_M^*\dbcoh(G(1,V))\otimes  L,\dots, h_1|_M^*\dbcoh(G (1,V))\otimes  L^{\otimes(n-1)}\rangle \\
    \simeq & \langle \phi_2 \dbcoh(Y_2), h_2|_M^*\dbcoh(G(n,V))\otimes  L,\dots, h_2|_M^*\dbcoh(G (n,V))\otimes  L^{\otimes(n-1)}\rangle
    \end{split}
\end{equation}
where $\phi_i := l_*\nu_i^*$. By choosing the right twists of Beilinson's full exceptional collection for $\PP^n$ \cite{beilinson} we write:
\begin{equation}\label{eq_collectionsAm}
    \begin{split}
        \dbcoh(M)\simeq\langle \phi_1 \dbcoh(Y_1), 
                   &\Oc_M(1,1),\dots\dots\dots\dots\dots,\Oc_M(n+1,1),\\
                   &\hspace{15pt}\vdots\hspace{150pt}\vdots \\
                   &\Oc_M(n-1,n-1),\dots,\Oc_M(2n-1,n-1)\rangle\\
             \simeq\langle \phi_2 \dbcoh(Y_2), 
                   &\Oc_M(1,1-n),\dots\dots\dots\dots\dots,\Oc_M(1,1),\\
                   &\hspace{15pt}\vdots\hspace{150pt}\vdots \\
                   &\Oc_M(n-1,-1),\dots,\dots\Oc_M(n-1, n-1)\rangle
    \end{split}
\end{equation}
First, we need the following vanishing results:

\begin{lemma}\label{lem_mukaivanishing}
    For $1<m<n+1$ and $0<t<m$ one has:
    \begin{equation}
        \Ext^\bullet_M(\Oc_M(m+1, 1), \Oc_M(2+t,2+t)) = \Ext^\bullet_{F(1,n, V)}(\Oc(m+1, 1), \Oc(2+t,2+t)) = 0.
    \end{equation}
    
\end{lemma}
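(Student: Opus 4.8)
The plan is to transfer the computation from $M$ to the ambient roof $F := F(1,n,V)$ and then to read off the answer from Borel--Weil--Bott. Write $\Oc(a,b) := h_1^*\Oc(a)\otimes h_2^*\Oc(b)$, so that $L = \Oc(1,1)$; for the two line bundles in the statement one has $\Oc(m+1,1)^\vee\otimes\Oc(2+t,2+t) = \Oc(1+t-m,\,1+t)$.

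First I would use the Koszul resolution of $\Oc_M$. Since $M = Z(\sigma)$ for a section $\sigma\in H^0(F,L)$, twisting $0\arw\Oc_F(-1,-1)\arw\Oc_F\arw\Oc_M\arw 0$ by $\Oc(1+t-m,1+t)$ gives $0\arw\Oc_F(t-m,t)\arw\Oc_F(1+t-m,1+t)\arw\Oc_M(1+t-m,1+t)\arw 0$. Taking cohomology, $\Ext^\bullet_M(\Oc_M(m+1,1),\Oc_M(2+t,2+t)) = H^\bullet(M,\Oc_M(1+t-m,1+t))$ is squeezed between $H^\bullet(F,\Oc(1+t-m,1+t))$ and $H^\bullet(F,\Oc(t-m,t))$. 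Since $H^\bullet(F,\Oc(1+t-m,1+t))$ is exactly $\Ext^\bullet_F(\Oc(m+1,1),\Oc(2+t,2+t))$, proving that this group vanishes, together with $H^\bullet(F,\Oc(t-m,t))=0$, establishes both equalities in the lemma at once.

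The core is then a Borel--Weil--Bott computation on $F = SL(V)/P$, where $\Oc(a,b)$ is the homogeneous line bundle of weight $a\omega_1+b\omega_n$. In $\epsilon$-coordinates for $A_n$ one has $\omega_1=\epsilon_1$, $\omega_n=-\epsilon_{n+1}$, and a representative of $\rho$ is $(n,n-1,\dots,1,0)$, so for $\lambda=a\omega_1+b\omega_n$
\[
\lambda+\rho=(a+n,\;n-1,\;n-2,\;\dots,\;1,\;-b).
\]
The entries $n-1,\dots,1$ are fixed and distinct, and $-b<0$ never meets them; hence the cohomology vanishes in every degree precisely when the first entry $a+n$ coincides with one of $n-1,\dots,1$, i.e. when $1\le a+n\le n-1$. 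For $\Oc(t-m,t)$ this reads $1\le n+t-m\le n-1$, which holds for all admissible $m,t$ since $t\ge 1$ and $t<m\le n$, so this group vanishes unconditionally. For $\Oc(1+t-m,1+t)$ it reads $1\le n+1+t-m\le n-1$, and this is where the hypotheses $m>1$ and $t<m$ must be used to push $a+n$ onto the wall.

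The main obstacle is exactly this last regularity check near the boundary of the parameter range: one must verify that $n+1+t-m$ really lands among $\{1,\dots,n-1\}$ rather than producing a regular weight, and it is the strict inequalities in $1<m<n+1$, $0<t<m$ that are doing the work (the case where $t$ is close to $m$ is the delicate one and should be recorded with care). As an independent cross-check one may instead push forward along the $\PP^{n-1}$-bundle $h_1\colon F=\PP(\Qc^\vee)\arw\PP(V)$, with $\Qc=V/\Uc$: since $1+t\ge 0$ and $t\ge 0$, the higher direct images vanish and the two claims reduce to the acyclicity of $\operatorname{Sym}^{1+t}\Qc\otimes\Oc(1+t-m)$ and $\operatorname{Sym}^{t}\Qc\otimes\Oc(t-m)$ on $\PP^n$, which are governed by the same Borel--Weil--Bott input and can also be confirmed with the automated routine referenced in the paper.
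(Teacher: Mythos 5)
Your reduction coincides with the paper's own proof: twist the Koszul resolution of $\Oc_M$ to get $0\arw\Oc(t-m,t)\arw\Oc(1+t-m,1+t)\arw\Oc_M(1+t-m,1+t)\arw 0$ on $F=F(1,n,V)$, and deduce both equalities at once from acyclicity of the two line bundles on $F$. The only real difference is bookkeeping: the paper proves the acyclicity by pushing forward along $h_2$ and checking that the restrictions $\Oc(t-m)$, resp.\ $\Oc(1+t-m)$, to the $\PP^{n-1}$-fibers are acyclic (flatness, constancy of fiber cohomology, Mumford's base-change result, then Leray), while you read the same vanishing off the singularity of $\lambda+\rho=(a+n,n-1,\dots,1,-b)$. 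Your verification for $\Oc(t-m,t)$ is complete and correct (in general one should also exclude the coincidence $a+n=-b$, but here $a+n=-b$ would force $2t=m-n\leq 0$, which is impossible).

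The step you defer, however, is a genuine gap, and it cannot be closed. Singularity of $\lambda+\rho$ for $\Oc(1+t-m,1+t)$ requires $1\le n+1+t-m\le n-1$, i.e.\ $t\le m-2$, whereas the hypotheses allow $t=m-1$. There $\lambda+\rho=(n,n-1,\dots,1,-(1+t))$ is regular and $\lambda$ is dominant, so $\Ext^\bullet_F(\Oc(m+1,1),\Oc(2+t,2+t))=H^\bullet(F,\Oc(0,1+t))\neq 0$, since $H^0(F,\Oc(0,1+t))=H^0(G(n,V),\Oc(1+t))\neq 0$; moreover the subobject $\Oc(t-m,t)=\Oc(-1,m-1)$ is acyclic at $t=m-1$ (its shifted weight repeats the entry $n-1$), so the long exact sequence forces $\Ext^\bullet_M\neq 0$ as well. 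Thus the hypotheses do \emph{not} ``push $a+n$ onto the wall'' at the boundary: the statement is false at $t=m-1$ (take $m=2$, $t=1$: $\Ext^\bullet_F(\Oc(3,1),\Oc(3,3))=H^\bullet(F,\Oc(0,2))\neq 0$), and the correct range is $0\le t\le m-2$. You are in good company: the paper's proof checks the fiberwise vanishing range only for $\Oc(t-m,t)$ and dismisses the middle term as ``totally analogous'', even though at $t=m-1$ the fiber restriction is $\Oc(0)$ on $\PP^{n-1}$, which is not acyclic. The applications in the mutation argument (Lemma \ref{lem_mukaiderivedequivalencefibers}) only ever need the range $t\le m-2$ — for instance $t=0$ when moving $\Oc_{2,2}$ across $\Oc_{m+1,1}$ — so nothing downstream breaks once the hypothesis is tightened. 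Your instinct that the case $t$ close to $m$ is delicate was exactly right; carrying out the check honestly disproves, rather than proves, the boundary case.
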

\begin{proof}
    We need to compute the cohomology of $\Oc(1+t-m, 1+t)$ and $\Oc_M(1+t-m, 1+t)$. Twisting the Koszul resolution for $M$ yields:
    \begin{equation}
        0\arw\Oc(t-m, t)\arw\Oc(1+t-m, 1+t)\arw\Oc_M(1+t-m,1+t)\arw 0
    \end{equation}
    Let us first compute the cohomology of $\Oc(t-m, t)$. For $t=0$ this is clearly acyclic, while for $t\neq0$ we consider the stalk of the pushforward: observe that $\Oc(a, b)$ is flat over $G(n, V)$ for every $a$ and $b$ \cite[Proposition III.9.2]{hartshorne}, and for every $l$ one has that $\dim H^l(h_2^{-1}(x),\Oc(a)|_{h_2^{-1}(x)})$ does not depend on $x$. Hence, by \cite[Page 50, Corollary 2]{mumfordabelian} for every $x\in G(n,V)$:
    \begin{equation}\label{eq_mukaipf}
        R^\bullet h_{2*}\Oc(t-m, t)_x \simeq H^\bullet(h_2^{-1}(x),\Oc(t-m)|_{h_2^{-1}(x)})
    \end{equation}
    where $h_2^{-1}(x)\simeq\PP^{n-1}$. This is identically zero for $0<t<m<n+1$. and by the Leray spectral sequence $\Oc(t-m, t)$ is acyclic. Similarly, the vanishing for the middle term  of the exact sequence follows by the totally analogous computation 
    \begin{equation}
        R^\bullet h_{2*}\Oc(1+t-m, 1+t)_x \simeq H^\bullet(h_2^{-1}(x),\Oc(1+t-m)|_{h_2^{-1}(x)})
    \end{equation}
\end{proof}

\begin{lemma}\label{lem_mukaivanishingsecondclaim}
    For $1<m<n+1$ and $1+t<m$ the following holds:
    \begin{equation}
        \begin{split}
            \Ext^\bullet_{F(1,n, V)}(\Oc(2+t, 2+t), \Oc(m+1, 1)) & = 0\\
            \Ext^\bullet_{F(1,n, V)}(\Oc(m+2, 2), \Oc(2+t,2+t)) & = 0.
        \end{split}
        \end{equation}
\end{lemma}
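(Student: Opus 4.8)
The plan is to reduce both vanishings to the cohomology of line bundles on the homogeneous variety $F:=F(1,n,V)$ and then to evaluate that cohomology by pushing forward along one of the two projective bundle structures $h_1,h_2:F\arw\PP^n$, exactly in the spirit of the proof of Lemma \ref{lem_mukaivanishing}. Since all the objects involved are line bundles, I would first rewrite each $\Ext^\bullet$ as the cohomology of the difference bundle,
\[
    \Ext^\bullet_{F}(\Oc(a,b),\Oc(c,d))\simeq H^\bullet(F,\Oc(c-a,d-b)),
\]
which turns the first claim into $H^\bullet(F,\Oc(m-1-t,-1-t))=0$ and the second into $H^\bullet(F,\Oc(t-m,t))=0$.

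Next I would recall that the fibers of both $h_1$ and $h_2$ are isomorphic to $\PP^{n-1}$, and that $\Oc(a,b)$ restricts on a fiber of $h_1$ to $\Oc_{\PP^{n-1}}(b)$ and on a fiber of $h_2$ to $\Oc_{\PP^{n-1}}(a)$. Because $\Oc(a,b)$ is flat over the base \cite[Proposition III.9.2]{hartshorne} and the fiberwise cohomology has locally constant dimension, the base-change result \cite[Page 50, Corollary 2]{mumfordabelian} together with the Leray spectral sequence reduces the acyclicity of $\Oc(a,b)$ to the acyclicity of its fiber restriction $\Oc_{\PP^{n-1}}(s)$, which holds precisely when $-(n-1)\le s\le -1$. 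The whole argument is then a matter of choosing, for each of the two bundles, the projection along which the relevant twist lands in the range $[-(n-1),-1]$.

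For the first vanishing I would push forward along $h_1$: the fiber restriction is $\Oc_{\PP^{n-1}}(-1-t)$, and the hypotheses $1<m<n+1$ and $1+t<m$ give $0\le t\le m-2\le n-2$, hence $-(n-1)\le -1-t\le -1$, so the bundle is acyclic. For the second vanishing, whenever $t\ge 1$ I would push forward along $h_2$: the fiber restriction $\Oc_{\PP^{n-1}}(t-m)$ satisfies $-(n-1)\le t-m\le -2$ (using $1\le t\le m-2$ and $m\le n$), giving acyclicity; in fact $\Oc(t-m,t)$ is exactly the line bundle already shown acyclic in Lemma \ref{lem_mukaivanishing} under the weaker hypothesis $0<t<m<n+1$, so this range can simply be cited from there.

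The one case the $h_2$-pushforward does not cover is $t=0$, which genuinely occurs — for instance $m=2$ forces $t=0$ — since then the fiber restriction $\Oc_{\PP^{n-1}}(-m)$ can fail to be acyclic, namely when $m=n$. This is the only real subtlety, and I would handle it by pushing $\Oc(-m,0)$ forward along $h_1$ instead: its restriction to the $h_1$-fibers is trivial, so $Rh_{1*}\Oc(-m,0)\simeq\Oc_{\PP^n}(-m)$, and $H^\bullet(\PP^n,\Oc(-m))=0$ for $1\le m\le n$, which holds since $2\le m\le n$. Thus the main obstacle is purely bookkeeping: one must split off the boundary value $t=0$ and replace the fiber $\PP^{n-1}$ by the base $\PP^n$ to recover acyclicity, while the bulk $t\ge 1$ is immediate from the fiberwise computation or directly from Lemma \ref{lem_mukaivanishing}.
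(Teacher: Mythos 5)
Your argument is correct, and on the second vanishing it is essentially the paper's: there too, $\Ext^\bullet_{F}(\Oc(m+2,2),\Oc(2+t,2+t))$ is reduced to the acyclicity of $\Oc(t-m,t)$, cited from the proof of Lemma \ref{lem_mukaivanishing}. Note that the statement of that lemma assumes $0<t<m$, but its proof disposes of $t=0$ only with the remark that the bundle is ``clearly acyclic''; your pushforward $Rh_{1*}\Oc(-m,0)\simeq\Oc_{\PP^n}(-m)$, acyclic for $2\le m\le n$, is precisely the missing justification, and you correctly identify why the $h_2$-pushforward cannot be used at $t=0$ (it fails exactly when $m=n$). On the first vanishing you genuinely diverge: the paper deduces $\Ext^\bullet_{F}(\Oc(2+t,2+t),\Oc(m+1,1))=0$ from the semiorthogonality of the full exceptional collection whose rows are $\Oc(b,b),\dots,\Oc(n+b,b)$ for $b=0,\dots,n-1$, mentioning a direct computation only as an alternative, whereas you carry out that alternative: flatness, constancy of fiber cohomology, base change and Leray reduce everything to $H^\bullet(\PP^{n-1},\Oc(-1-t))=0$ for $0\le t\le n-2$. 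The trade-off is this: the semiorthogonality argument is immediate once the collection is granted, though as displayed it contains $\Oc(2+t,2+t)$ only for $2+t\le n-1$, so the extreme case $t=m-2$ with $m=n$ requires first replacing the pair by its twist by $\Oc(-1,-1)$ (Ext groups being twist-invariant, one then compares $\Oc(1+t,1+t)$ with $\Oc(m,0)$ inside the collection); your computation is self-contained, uniform with the method of Lemma \ref{lem_mukaivanishing}, and covers the whole range, boundary cases included, without such bookkeeping.
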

\begin{proof}
    The second claim is equivalent to $\Oc(t-m, t)$ being acyclic, already proven in the previous lemma. The first claim is a consequence of the semiorthogonality of
    \begin{equation}
        \begin{split}
            \dbcoh(F(1,n,V))\simeq 
                &\langle\Oc_M(0,0),\dots\dots\dots\dots\dots,\Oc_M(n,0),\\
                &\hspace{15pt}\vdots\hspace{130pt}\vdots \\
                &\Oc_M(n-1,n-1),\dots,\Oc_M(2n-1,n-1)\rangle.
        \end{split}
    \end{equation}
    Alternatively, one can formulate a direct computation as in the proof of Lemma \ref{lem_mukaivanishing}.
\end{proof}

\begin{lemma}\label{lem_mukaiderivedequivalencefibers}
    In the setting above, there is a sequence of mutations of exceptional objects of $\dbcoh(M)$ satisfying Condition \ref{cond_mutationsroof}  and realizing a derived equivalence $\dbcoh(Y_1)\simeq\dbcoh(Y_2)$.
\end{lemma}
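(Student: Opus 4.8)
The plan is to transport the first semiorthogonal decomposition of \ref{eq_collectionsAm} onto the second one through a chain of admissible operations, and then read off the equivalence by comparison. Writing the first collection as $\langle \phi_1\dbcoh(Y_1), \boldsymbol E\rangle$ and the second as $\langle \phi_2\dbcoh(Y_2), \boldsymbol F\rangle$, where $\boldsymbol E$ is the Beilinson-type collection of line bundles pulled back from $G(1,V)\simeq\PP(V)$ and $\boldsymbol F$ the one pulled back from $G(n,V)\simeq\PP(V^\vee)$, I would produce a sequence of mutations of the allowed types (pairwise mutations satisfying Condition \ref{cond_mutationsroof}, applications of the Serre functor, overall twists by powers of $L$, and mutations of the $\dbcoh(Y_1)$-block through exceptional line bundles) carrying $\langle\phi_1\dbcoh(Y_1),\boldsymbol E\rangle$ to a decomposition $\langle\psi\dbcoh(Y_1),\boldsymbol F\rangle$ whose line-bundle part is exactly $\boldsymbol F$. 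Comparison with the second decomposition then forces $\psi\dbcoh(Y_1)=\phi_2\dbcoh(Y_2)$, whence $\dbcoh(Y_1)\simeq\dbcoh(Y_2)$; this is the scheme of Lemma \ref{lem_derivedequivalenceC2} and of \cite{morimura}.

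The mechanism of the reorganization rests on two observations. First, as in the computation of Proposition \ref{prop_serrefunctor} (here the index is $r=n$) one has $\omega_M\simeq\Oc_M(1-n,1-n)$, so both the Serre functor $\Sc_M=(\mathord-)\otimes\omega_M[\dim M]$ and the twists by $L^{\otimes k}=\Oc_M(k,k)$ act on an exceptional line bundle $\Oc_M(a,b)$ by the diagonal shifts $(a,b)\mapsto(a-(n-1),b-(n-1))$ and $(a,b)\mapsto(a+k,b+k)$, and in particular they preserve the class $a-b$. Second, a direct count shows that the multiset of values $a-b$ occurring in $\boldsymbol E$ agrees with that occurring in $\boldsymbol F$, and that inside each fixed class $a-b=c$ one can match the $n-1$ members $\Oc_M(b+c,b)$ ($1\le b\le n-1$) of $\boldsymbol E$ with the $n-1$ members $\Oc_M(a,a-c)$ ($1\le a\le n-1$) of $\boldsymbol F$ via the bijection determined by $a\equiv b+c \pmod{n-1}$; with this matching the diagonal shift $(a-b-c)(1,1)$ needed for each individual object is a multiple of $n-1$, hence realizable by repeated applications of $\Sc_M$ together with a global $L$-twist. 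Thus the whole transformation can be carried out by interleaving Serre functors (which give each object its prescribed number of diagonal shifts once it is brought into the last slot) with commutations reordering the line bundles across distinct diagonal classes, treating the $\dbcoh(Y_1)$-block — cycled to the front whenever $\Sc_M$ reaches it — by mutating it through the adjacent exceptional line bundle.

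The purpose of the vanishing Lemmas \ref{lem_mukaivanishing} and \ref{lem_mukaivanishingsecondclaim} is exactly to certify Condition \ref{cond_mutationsroof} for the pairwise moves used in the reordering. For the model commutation of a first-row object $\Oc_M(m+1,1)$ past a diagonal leading object $\Oc_M(2+t,2+t)$ of a lower row, Lemma \ref{lem_mukaivanishing} together with the first vanishing of Lemma \ref{lem_mukaivanishingsecondclaim} gives $\Ext^\bullet_{F(1,n,V)}(\Oc(m+1,1),\Oc(2+t,2+t))=\Ext^\bullet_{F(1,n,V)}(\Oc(2+t,2+t),\Oc(m+1,1))=0$, so the two bundles are fully orthogonal and the move is a free transposition; the second vanishing of Lemma \ref{lem_mukaivanishingsecondclaim}, namely $\Ext^\bullet_{F(1,n,V)}(\Oc(m+2,2),\Oc(2+t,2+t))=\Ext^\bullet_{F(1,n,V)}(\Oc(m+1,1)\otimes L,\Oc(2+t,2+t))=0$, is precisely the $L$-semiorthogonality demanded by Condition \ref{cond_mutationsroof} with $E_1=\Oc(m+1,1)$ and $E_2=\Oc(2+t,2+t)$. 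The remaining pairwise exchanges have the same shape and are checked by the identical Borel--Weil--Bott computation, passing from $F(1,n,V)$ to $M$ through the Koszul resolution already used in the two lemmas.

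The main obstacle is the combinatorial organization of the full mutation sequence for general $n$: one must pin down an explicit order in which objects are brought to the last slot and shifted, ensuring that at every step the adjacent pair to be exchanged lies in the orthogonality range covered by the vanishing lemmas, that the $\dbcoh(Y_1)$-block is only ever moved past objects to which it is semiorthogonal, and that after the prescribed diagonal shifts the line bundles land in exactly the order of $\boldsymbol F$. Maintaining simultaneous control of the shifts (quantized by $n-1$) and of the admissibility of each transposition is the delicate point; once the ordering is fixed, every elementary step reduces to a routine cohomology vanishing.
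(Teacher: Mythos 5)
You have the right skeleton — the two semiorthogonal decompositions of \ref{eq_collectionsAm}, the computation $\omega_M\simeq\Oc_M(1-n,1-n)$, and the correct reading of the two vanishing lemmas (the forward vanishing of Lemma \ref{lem_mukaivanishing} makes each exchange a plain transposition, while the two $F(1,n,V)$-level vanishings of Lemma \ref{lem_mukaivanishingsecondclaim} are exactly the two requirements of Condition \ref{cond_mutationsroof} with $E_1=\Oc(m+1,1)$, $E_2=\Oc(2+t,2+t)$). But your proposal stops precisely where the paper's proof begins: you explicitly defer ``the combinatorial organization of the full mutation sequence for general $n$,'' and that organization \emph{is} the content of the lemma. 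The paper supplies it concretely: at the $k$-th of $n$ steps, move $\phi_1\dbcoh(Y_1)$ $k$ slots to the right, cycle the leading block of $k$ line bundles to the end via the inverse Serre functor (twisting them by $\omega_M^\vee=\Oc_M(n-1,n-1)$), and then in each row slide the trailing block of $k$ bundles leftward past the $n-k$ bundles in between — transpositions that lie exactly in the range covered by Lemmas \ref{lem_mukaivanishing} and \ref{lem_mukaivanishingsecondclaim}; after $n$ steps a single overall twist by $\Oc_M(-n,-n)$ yields the second collection of \ref{eq_collectionsAm} verbatim.

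Worse, the invariant you propose to guide the construction points at the \emph{wrong} matching. The true constraint is only that all individual diagonal shifts be congruent to one another (to the global twist) modulo $n-1$, not that each be a multiple of $n-1$: the paper's realizable scheme produces shifts congruent to $-n\equiv -1 \pmod{n-1}$. For $n=3$, its sequence sends $\Oc_M(1,1)$ to the slot of $\Oc_M(2,2)$ (via $\Oc_M(3,3)$, then $\Oc_M(5,5)$, then the global twist) and $\Oc_M(2,2)$ to the slot of $\Oc_M(1,1)$ — both shifts odd, violating your rule $a\equiv b+c\pmod{n-1}$. Your matching (residue $0$, e.g.\ fixing $\Oc_M(1,1)$ in place while other objects cycle) forces objects with different numbers of Serre passes to interleave in the final order, which pure cycling cannot produce; the extra transpositions it would require — e.g.\ moving descendants of $\Oc_M(1,1)$ past descendants of $\Oc_M(4,1)$ — are not covered by the stated vanishing lemmas, and neighboring pairs do not commute in general (already $\Ext^\bullet_M(\Oc_M(1,1),\Oc_M(2,1))\neq 0$). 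So the gap is genuine: the arithmetic consistency of your bookkeeping does not by itself yield an admissible sequence, and the specific peel-and-cycle iteration with shift residue $-n$ is the missing idea.
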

\begin{proof}
    Let us switch to a more compact notation: hereafter $\Oc_{a,b}:=\Oc_M(a,b)$. Hence, Equation \ref{eq_cayleytrickmukaifiber} becomes:
    \begin{equation}
        \begin{split}
        \dbcoh(M)\simeq \langle \phi_1\dbcoh(Y_1),
                       &\Oc_{1,1},\dots\dots\dots\dots\dots,\Oc_{n+1,1},\\
                       &\Oc_{2,2},\dots\dots\dots\dots\dots,\Oc_{n+2,2},\\
                       &\hspace{15pt}\vdots\hspace{100pt}\vdots \\
                       &\Oc_{n-1,n-1},\dots\dots\dots,\Oc_{2n-1,n-1}\rangle.
        \end{split}
    \end{equation}
    First, let us move $\phi_1\dbcoh(Y_1)$ one step to the right, then let us send $\Oc_{1,1}$ to the end of the collection. We get:
    \begin{equation}
        \begin{split}
        \dbcoh(M)\simeq \langle \psi_1\phi_1\dbcoh(Y_1), 
                       &\Oc_{2,1}\dots\dots\dots\dots\dots\dots,\Oc_{n+1,1},\Oc_{2,2}\\
                       &\Oc_{3,2},\dots\dots\dots\dots\dots\dots,\Oc_{n+2,2},\Oc_{3,3}\\
                       & \hspace{15pt}\vdots\hspace{130pt}\vdots \\
                       &  \Oc_{n,n-1},\dots\dots\dots\dots,\Oc_{2n-1,n-1},\Oc_{n,n}\rangle
        \end{split}
    \end{equation}
    where $\psi_1:=\RR_{\Oc_{1,1}}$ and we used the fact that $\omega_M  =\Oc(-n+1, -n+1)$. By Lemma \ref{lem_mukaivanishing} we can move $\Oc_{2,2}$ leftwards until it stops at the right of $\Oc_{2,1}$, since it is orthogonal to all the bundles in between. These mutations, by Lemma \ref{lem_mukaivanishingsecondclaim}, satisfy Condition \ref{cond_mutationsroof} . We can repeat the same step on each row obtaining:
    \begin{equation}
        \begin{split}
        \dbcoh(M)\simeq \langle \psi_1 \phi_1\dbcoh(Y_1), 
                       &\Oc_{2,1},\Oc_{2,2},\Oc_{3,1},\dots\dots\dots\dots\dots\dots,\Oc_{n+1,1},\\
                       & \Oc_{3,2},\Oc_{3,3},\Oc_{4,2},\dots\dots\dots\dots\dots\dots,\Oc_{n+2,2},\\
                       & \hspace{15pt}\vdots\hspace{160pt}\vdots \\
                       & \Oc_{n,n-1},\Oc_{n+1,n-1},\Oc_{n+2,n-1},\dots\dots,\Oc_{2n-1,n-1}\rangle.
        \end{split}
    \end{equation}
    Now we mutate $\psi_1\phi_1\dbcoh(Y_1)$ two steps to the right, then we move the first two bundles to the end of the collection obtaining:
    \begin{equation}
        \begin{split}
        \dbcoh(M)\simeq \langle \psi_2 \phi_1\dbcoh(Y_1)
                       &\Oc_{3,1},\dots\dots\dots\dots\dots\dots,\Oc_{n+1,1},\Oc_{3,2},\Oc_{3,3},\\
                       &\Oc_{4,2},\dots\dots\dots\dots\dots\dots,\Oc_{n+2,2},\Oc_{4,3}, \Oc_{4,4},\\
                       & \hspace{15pt}\vdots\hspace{160pt}\vdots \\
                       &\Oc_{n+2,n-1},\dots\dots\dots,\Oc_{2n-1,n-1}, \Oc_{n+1, n}, \Oc_{n+1,n+1}\rangle
        \end{split}
    \end{equation}
    where $\psi_2 = \RR_{\langle\Oc_{2,1},\Oc_{2,2}\rangle}\circ\psi_1$. Then, on each row, using Lemma \ref{lem_mukaivanishing} we shift the last two bundles all the way left, until they stop at the right of the first bundle. Again, by Lemma \ref{lem_mukaivanishingsecondclaim} such mutations satisfy Condition \ref{cond_mutationsroof} . We find:
    \begin{equation}
        \small
        \begin{split}
        \dbcoh(M)\simeq \langle \psi_2 \phi_1\dbcoh(Y_1),
                       &\Oc_{3,1},\Oc_{3,2},\Oc_{3,3},\Oc_{4,1},\dots\dots\dots\dots\dots\dots\dots,\Oc_{n+1,1},\\
                       &\Oc_{4,2},\Oc_{4,3},\Oc_{4,4},\Oc_{5,2},\dots\dots\dots\dots\dots\dots\dots,\Oc_{n+2,2},\\
                       & \hspace{15pt}\vdots\hspace{170pt}\vdots \\
                       & \Oc_{n+1,n-1},\Oc_{n+1,n},\Oc_{n+1,n+1},\Oc_{n+2,n-1}, \dots,\Oc_{2n-1,n-1}\rangle.
        \end{split}
    \end{equation}
    This process can be iterated moving the first three bundles to the end, then on each row sending the last three bundles to the right of the first one, and repeating these steps increasing by one the number of bundles we move. We stop once we get a semiorthogonal decomposition given by $n-1$ twists of  $\langle\Oc_{1,1},\dots\Oc_{1,n+1}\rangle$ and the image of $\phi_1\dbcoh(Y_1)$ under a composition of mutations. This eventually happens after $n$ steps. We get the following collection:
    \begin{equation}
        \begin{split}
        \dbcoh(M)\simeq \langle \psi_n \phi_1\dbcoh(Y_1),
                       &\Oc_{n+1,1},\Oc_{n+1,2},\dots\dots\dots\dots,\Oc_{n+1,n+1},\\
                       &\Oc_{n+2,2},\Oc_{n+2,3},\dots\dots\dots\dots,\Oc_{n+2,n+2},\\
                       & \hspace{15pt}\vdots\hspace{130pt}\vdots \\
                       &\Oc_{2n-1,n-1},\Oc_{2n-1,n},\dots\dots,\Oc_{2n-1,2n-1}\rangle
        \end{split}
    \end{equation}
    If we twist the whole collection by $\Oc_{-n, -n}$ we obtain:
    \begin{equation}\label{eq_collectionsAmfinal}
        \begin{split}
        \dbcoh(M)\simeq \langle \Tc_{-n,-n}\circ\psi_n \phi_1\dbcoh(Y_1),
                       &\Oc_{1,1-n},\Oc_{1,2-n},\dots\dots\dots\dots\dots,\Oc_{1,1},\\
                       &\Oc_{2,2-n},\Oc_{2,3-n},\dots\dots\dots\dots\dots,\Oc_{2,2},\\
                       & \hspace{15pt}\vdots\hspace{130pt}\vdots \\
                       &\Oc_{n-1,-1},\Oc_{n-1,0},\dots\dots\dots,\Oc_{n-1,n-1}\rangle
        \end{split}
    \end{equation}
    where $\Tc_{-n,-n}$ is the twist functor. The proof is concluded by comparing Equation \ref{eq_collectionsAm} with Equation \ref{eq_collectionsAmfinal}.
\end{proof}

\subsection{Derived equivalence for the roof of type \texorpdfstring{$A_n\times A_n$}{something}}\label{sec_dequivalenceAnxAn}

Given a vector space $V_{n+1}$ of dimension $n+1$, the roof of type $A_n\times A_n$ is $\PP^n\times\PP^n$, with two (trivial) projective bundle structures over $\PP^n$.
Here the zero locus of a general section of $L$ is isomorphic to $F(1,n, V_{n+1})$. In the analogous context of standard flops, a proof of derived equivalence by mutations has been found by \cite{bondalorlovflop} and later by \cite{addingtondonovanmeachan, morimura}. We will present a similar sequence of mutations adapted to the present setting.\\
\\
Instead of using the Cayley trick as in the previous sections, we construct two semiorthogonal decompositions for $M$ using the fact that it admits two projective bundle structures ($M$ is itself a roof!) obtaining:
\begin{equation}\label{eq_AnxAnleftcollection}
    \begin{split}
        \dbcoh(M) = \langle & \Oc(0,0)\dots\dots\dots\dots\dots\dots\Oc(n,0)\\
                            & \hspace{20pt}\vdots\hspace{100pt}\vdots\\
                            & \Oc(n-1,n-1)\dots\dots\dots\Oc(2n-1,n-1)\rangle\\
    \end{split}
\end{equation}
\begin{equation}\label{eq_AnxAnrightcollection}
    \begin{split}
        \dbcoh(M) = \langle & \Oc(0,-n)\dots\dots\dots\dots\dots\Oc(0,0)\\
                            & \hspace{20pt}\vdots\hspace{100pt}\vdots\\
                            & \Oc(n-1,-1)\dots\dots\dots\Oc(n-1,n-1)\rangle
    \end{split}
\end{equation}
As we observed in Remark \ref{rem_AnxAnemptyloci}, there is no Calabi--Yau pair associated to this roof since the zero loci of $h_{1*}s$ and $h_{2*}s$ are empty, which makes the problem of derived equivalence somewhat trivial. Nonetheless, the existence of a sequence of mutations transforming the collection \ref{eq_AnxAnleftcollection} into \ref{eq_AnxAnrightcollection} is still useful for applying Theorem \ref{thm_intro_3}, therefore we formulate the following result:
\begin{lemma}\label{lem_derivedequivalenceAnxAn}
    In the setting introduced above, there is a sequence of mutations transforming the collection \ref{eq_AnxAnleftcollection} into \ref{eq_AnxAnrightcollection}, and each of these mutations satisfies Condition \ref{cond_mutationsroof} .
\end{lemma}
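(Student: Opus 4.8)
The plan is to follow the strategy of Bondal--Orlov for standard flops \cite{bondalorlovflop}, in the variant used in the proof of Lemma \ref{lem_mukaiderivedequivalencefibers} and in \cite{morimura, addingtondonovanmeachan}, the essential simplification being that here the associated Calabi--Yau loci are empty (Remark \ref{rem_AnxAnemptyloci}), so both \ref{eq_AnxAnleftcollection} and \ref{eq_AnxAnrightcollection} are full exceptional collections of line bundles with no subcategory $\phi_i\dbcoh(Y_i)$ to carry along. First I would record the two inputs that drive every step. By adjunction, since $M\in|\Oc(1,1)|$ and $\omega_{\PP^n\times\PP^n}\simeq\Oc(-n-1,-n-1)$, one has $\omega_M\simeq\Oc(-n,-n)|_M$; as $M\simeq F(1,n,V_{n+1})$ has Picard rank two with the two pullback classes independent, the symbol $\Oc(a,b)$ is unambiguous on $M$ and the Serre functor acts by $\otimes\,\Oc(-n,-n)[2n-1]$. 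Second, by the Künneth formula and Bott vanishing on $\PP^n$,
\begin{equation}
    H^\bullet(\PP^n\times\PP^n,\Oc(c,d))=0 \iff c\in\{-n,\dots,-1\}\ \text{or}\ d\in\{-n,\dots,-1\},
\end{equation}
and the corresponding vanishing on $M$ follows from the Koszul resolution $0\arw\Oc(c-1,d-1)\arw\Oc(c,d)\arw\Oc(c,d)|_M\arw 0$ whenever both $\Oc(c,d)$ and $\Oc(c-1,d-1)$ are acyclic upstairs.

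With these in hand, a single left mutation moving $\Oc(a_2,b_2)$ past $\Oc(a_1,b_1)$ (taking $E_1=\Oc(a_1,b_1)$, $E_2=\Oc(a_2,b_2)$, $L=\Oc(1,1)$) involves three vanishings: freeness of the transposition on $M$, namely $\Ext^\bullet_M(\Oc(a_1,b_1),\Oc(a_2,b_2))=H^\bullet(M,\Oc(a_2-a_1,b_2-b_1))=0$, together with the two conditions constituting \ref{cond_mutationsroof} upstairs, $H^\bullet(\PP^n\times\PP^n,\Oc(a_1-a_2,b_1-b_2))=0$ and $H^\bullet(\PP^n\times\PP^n,\Oc(a_2-a_1-1,b_2-b_1-1))=0$. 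By the acyclicity criterion above the latter two hold exactly when the difference in one coordinate lies in $\{-n,\dots,-1\}$ while the difference in the other lies in $\{0,\dots,n-1\}$, and the freeness vanishing reduces by the Koszul sequence to the same criterion. The substance of the argument is then the explicit mutation sequence, which I would organise exactly as the staircase in Lemma \ref{lem_mukaiderivedequivalencefibers} with the $\phi_i\dbcoh(Y_i)$ block deleted: starting from \ref{eq_AnxAnleftcollection}, cycle the leading line bundle to the far right via the Serre functor, then in each row slide the freshly exposed diagonal line bundle leftward past the line bundles separating it from its row, every such slide being a free transposition satisfying \ref{cond_mutationsroof} by the computation above. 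Iterating this, enlarging by one the block cycled to the right at each pass, after $n$ passes the collection becomes $n$ twists of the relative Beilinson collection of the second projective bundle structure, and a final overall twist by $L^{\otimes(-n)}=\Oc(-n,-n)$ produces exactly \ref{eq_AnxAnrightcollection}.

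The main obstacle I anticipate is the combinatorial bookkeeping for general $n$: keeping track of which line bundle occupies which slot after each Serre rotation, and checking that at every slide the coordinate differences do fall into the complementary ranges $\{-n,\dots,-1\}$ and $\{0,\dots,n-1\}$, so that simultaneously the transposition is free on $M$ and both halves of Condition \ref{cond_mutationsroof} hold. As in the $A^M_n$ case this is cleanest if I isolate two vanishing lemmas analogous to Lemma \ref{lem_mukaivanishing} and Lemma \ref{lem_mukaivanishingsecondclaim} — one guaranteeing freeness of each slide, one guaranteeing \ref{cond_mutationsroof} — and then argue row by row. No input beyond Künneth, Bott vanishing and the Koszul sequence is needed, which is precisely why the computation is lighter than the $A^M_n$ one, where $M$ was a divisor in $F(1,n,V)$ rather than in $\PP^n\times\PP^n$.
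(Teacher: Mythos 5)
Your proposal is correct and takes essentially the same approach as the paper's proof: the identical staircase of Serre-functor rotations and free transpositions borrowed from the $A^M_n$ argument of Lemma \ref{lem_mukaiderivedequivalencefibers}, with the Calabi--Yau block deleted, ending in an overall twist by $\Oc(-n,-n)$. The only cosmetic difference is that you verify the vanishings directly via K\"unneth/Bott on $\PP^n\times\PP^n$ and the Koszul sequence, whereas the paper cites Lemmas \ref{lem_mukaivanishing} and \ref{lem_mukaivanishingsecondclaim} (stated on $F(1,n,V)$); your direct check is, if anything, the more accurate justification of Condition \ref{cond_mutationsroof} here, since the ambient roof is $\PP^n\times\PP^n$ rather than the flag.
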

\begin{proof}
    The approach is nearly identical to the one we used to prove Lemma \ref{lem_mukaiderivedequivalencefibers}, which is not a surprise since the collections are very similar. In fact, \ref{eq_collectionsAm} can be formally obtained by removing one row from \ref{eq_AnxAnleftcollection} and adding the Calabi--Yau subcategory, therefore we can apply essentially the same argument.\\
    \\
    The main part of the process starts from the collection \ref{eq_AnxAnleftcollection} and it can be described by $n$ steps where the $k^{th}$ step consists in:
    \begin{itemize}
        \item[$\circ$] sending the first block of $k$ bundles to the end by means of the inverse Serre functor, they get twisted by $\Oc(n, n)$
        \item[$\circ$] Starting with the second line and proceeding down to the last, move then first $k$ bundles to the end of the previous line 
        \item[$\circ$] the last block of $k$ bundles of each line is orthogonal to the $n-k$ bundles at its left by Lemma \ref{lem_mukaivanishing}, hence it can be moved $n-k$ steps to the left.
    \end{itemize}
    After $n$ steps we obtain the following collection:
    \begin{equation}
        \begin{split}
            \dbcoh(M) = \langle & \Oc(n,0)\dots\dots\dots\dots\dots\dots\Oc(n,n)\\
                                & \hspace{20pt}\vdots\hspace{100pt}\vdots\\
                                & \Oc(2n-1,n-1)\dots\dots\dots\Oc(2n-1,2n-1)\rangle\\
        \end{split}
    \end{equation}
    and we can recover \ref{eq_AnxAnrightcollection} with a twist by $\Oc(-n, -n)$. Note that by Lemma \ref{lem_mukaivanishingsecondclaim} all mutations satisfy Condition \ref{cond_mutationsroof} , thus the assertion is proven.
\end{proof}


\subsection{Derived equivalence for the roof of type \texorpdfstring{$A^G_{4}$}{something}}\label{sec_dequivalenceAg4}
Let $V_5$ be a vector space of dimension five. We recall the construction of the roof of type $A^G_4$:
\begin{equation}
    \begin{tikzcd}[row sep=huge, column sep = small]
        &F(2,3,V_5)\ar[swap]{dl}{h_1}\ar{dr}{h_2} &  \\
        G(2,V_5) & & G(3, V_5)
    \end{tikzcd}
\end{equation}
In the following we will call $\Uc_k$ and $\Qc_k$ respectively the tautological and the quotient bundle of $G(k,V_5)$.
We will use the minimal Lefschetz decomposition for $G(2,V_5)$ introduced in \cite{kuznetsovisotropiclines}:
\begin{equation}\label{kuznetsov25}
\small
D^b G(2,V_5) = \left<\mathcal O, \mathcal U_2^\vee,\mathcal O(1), \mathcal U_2^\vee (1),\mathcal O(2), \mathcal U_2^\vee (2),\mathcal O(3), \mathcal U_2^\vee (3),\mathcal O(4), \mathcal U_2^\vee (4)\right>
\end{equation}
The duality isomorphism between $G(2,V_5)$ and $G(3,V_5)$ exchanges $\mathcal U_2^\vee $ with $\mathcal Q_3$ and allows us to write a minimal Lefschetz exceptional collection for $G(3,V_5)$:

\begin{equation}\label{kuznetsov35}
\small
D^b G(3,V_5) = \left<\mathcal O, \mathcal Q_3,\mathcal O(1), \mathcal Q_3(1),\mathcal O(2), \mathcal Q_3(2),\mathcal O(3), \mathcal Q_3(3),\mathcal O(4), \mathcal Q_3(4)\right>.
\end{equation}

The content of this section has been developed in \cite{kr}, we will review it in order to check that Condition \ref{cond_mutationsroof} is fulfilled. For the sake of brevity, in the remainder of this section, we will omit pullbacks to $M$ while denoting exceptional objects of $\dbcoh(M)$, but we will always keep track of the variety where Exts are computed.

\begin{lemma}\label{vanishingQO}
    One has the following results:
    \begin{equation}
        \Ext_F^\bullet(\Qc_3(0,k), \Oc(1,1)) =\Ext_M^\bullet(\Qc_3(0,k), \Oc(1,1)) = \left\{ 
        \begin{array}{cc}
           \CC[-1]  & k = 2 \\
            0 & k\in\{1;3;4\}
        \end{array}
        \right.
    \end{equation}
    \begin{equation}
        \begin{split}
            \Ext_F^\bullet(\Oc(1,1),\Qc_3(0,k)) = 0 \hspace{10pt} & k\in\{1;2;3;4\}.
        \end{split}
    \end{equation}
\end{lemma}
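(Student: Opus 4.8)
The plan is to turn every $\Ext$ group into the cohomology of a homogeneous vector bundle and then apply the Borel--Weil--Bott theorem \cite{weyman}, exploiting the $\PP^2$-bundle structure $h_2\colon F\to G(3,V_5)$: recall that $F=\PP(\Uc_3(2))$, so $\Oc(a,0)$ restricts to the relative $\Oc(a)$ on the fibres of $h_2$ and $h_{2*}\Oc(1,1)\simeq\Uc_3(2)$. First I would rewrite the two Ext groups on $F$ as
\begin{equation}
\Ext_F^\bullet(\Qc_3(0,k),\Oc(1,1))=H^\bullet(F,\Qc_3^\vee(1,1-k)),\qquad
\Ext_F^\bullet(\Oc(1,1),\Qc_3(0,k))=H^\bullet(F,\Qc_3(-1,k-1)).
\end{equation}

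The second vanishing is then immediate from the projective bundle structure. Indeed $\Oc(-1,0)$ restricts to the relative $\Oc(-1)$ on every fibre of $h_2$, and $H^\bullet(\PP^2,\Oc_{\PP^2}(-1))=0$, so $R^\bullet h_{2*}\Oc(-1,0)=0$. By the projection formula $R^\bullet h_{2*}\Qc_3(-1,k-1)\simeq\Qc_3(k-1)\otimes R^\bullet h_{2*}\Oc(-1,0)=0$, hence $H^\bullet(F,\Qc_3(-1,k-1))=0$ for every $k$, which is exactly the second assertion.

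For the first group I would again push forward along $h_2$. Since $\Qc_3^\vee(1,1-k)=\Qc_3^\vee\otimes\Oc(1,1)\otimes h_2^*\Oc(-k)$ and $R^{>0}h_{2*}\Oc(1,1)=0$, the projection formula gives $h_{2*}\Qc_3^\vee(1,1-k)\simeq\Qc_3^\vee\otimes\Uc_3(2-k)$, so that $H^\bullet(F,\Qc_3^\vee(1,1-k))\simeq H^\bullet(G(3,V_5),\Qc_3^\vee\otimes\Uc_3(2-k))$. The key observation is that $\Qc_3^\vee\otimes\Uc_3=\sHom(\Qc_3,\Uc_3)\simeq\Omega^1_{G(3,V_5)}$, which reduces the computation to $H^\bullet(G(3,V_5),\Omega^1_{G(3,V_5)}(2-k))$. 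For $k=2$ this is $H^\bullet(G(3,V_5),\Omega^1_{G(3,V_5)})=\CC[-1]$, because $G(3,V_5)$ is rational homogeneous of Picard rank one, so $h^{1,1}=1$ and $H^q(\Omega^1_{G(3,V_5)})=0$ for $q\neq1$; this yields precisely the stated class. For $k\in\{1;3;4\}$ the twisted cotangent bundle $\Omega^1_{G(3,V_5)}(2-k)$ is acyclic, which I would verify by decomposing it into irreducible homogeneous summands and checking, via Borel--Weil--Bott, that each shifted highest weight is singular.

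It remains to pass from $F$ to $M=Z(\sigma)$ with $l\colon M\hookrightarrow F$. I would tensor the structure sequence $0\to\Oc_F(-1,-1)\to\Oc_F\to\Oc_M\to0$ with $\Qc_3^\vee(1,1-k)$ and feed it into the long exact cohomology sequence, which relates $H^\bullet(M,l^*\Qc_3^\vee(1,1-k))$ to $H^\bullet(F,\Qc_3^\vee(1,1-k))$ and to the correction term $H^\bullet(F,\Qc_3^\vee(0,-k))$. The latter bundle is pulled back from $G(3,V_5)$, so $H^\bullet(F,\Qc_3^\vee(0,-k))\simeq H^\bullet(G(3,V_5),\Qc_3^\vee(-k))$, which vanishes for $k\in\{1;2;3;4\}$ once more by Borel--Weil--Bott; the vanishing of this correction term forces $H^\bullet(M,l^*\Qc_3^\vee(1,1-k))\simeq H^\bullet(F,\Qc_3^\vee(1,1-k))$, so the $F$- and $M$-sides of the first equation coincide. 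The main obstacle is purely organizational: keeping the Borel--Weil--Bott singularity checks for $\Omega^1_{G(3,V_5)}(2-k)$ and for $\Qc_3^\vee(-k)$ straight so that the single surviving class $\CC[-1]$ at $k=2$ is correctly isolated. The identification $\Qc_3^\vee\otimes\Uc_3\simeq\Omega^1_{G(3,V_5)}$ is what makes this transparent and is the cleanest safeguard against arithmetic slips.
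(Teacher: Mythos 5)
Your proof is correct, and while it shares the paper's overall skeleton --- the same twisted Koszul sequence $0\arw\Qc_3^\vee(0,-k)\arw\Qc_3^\vee(1,1-k)\arw\Qc_3^\vee|_M(1,1-k)\arw 0$ mediates between $F$ and $M$, with the same correction term to be killed --- the way you compute the $F$-side groups is genuinely different. The paper applies Borel--Weil--Bott directly to the irreducible homogeneous bundles $\Qc_3^\vee(0,-k)$ and $\Qc_3^\vee(1,1-k)$ on the flag $F$, and settles the second equation in one line by semiorthogonality of the full exceptional collection \ref{eq_collectionF235} obtained from Orlov's projective bundle theorem. You instead push everything down along $h_2$: the identification $h_{2*}\Qc_3^\vee(1,1-k)\simeq\Qc_3^\vee\otimes\Uc_3(2-k)\simeq\Omega^1_{G(3,V_5)}(2-k)$ turns the $k=2$ case into the Hodge-theoretic fact $H^\bullet(G(3,V_5),\Omega^1_{G(3,V_5)})=\CC[-1]$, which is cleaner and less error-prone than a raw Bott computation on the flag, and your relative vanishing $R^\bullet h_{2*}\Oc(-1,0)=0$ replaces the exceptional-collection input by a self-contained computation that in fact proves the second vanishing for every $k\in\ZZ$, not only $k\in\{1;2;3;4\}$ (the paper's one-liner costs nothing, though, since the collection \ref{eq_collectionF235} is needed elsewhere anyway). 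Your deferred Borel--Weil--Bott checks do come out as claimed: writing weights as $(\Uc_3^\vee\text{-part};\Qc_3^\vee\text{-part})$ with $\rho=(4,3,2,1,0)$, the bundle $\Omega^1_{G(3,V_5)}(2-k)$ has weight $(2-k,2-k,1-k;1,0)$, so $\lambda+\rho=(6-k,5-k,3-k,2,0)$ has a repeated entry for $k\in\{1;3;4\}$ (hence acyclicity) and for $k=2$ sorts by a single transposition to $\rho$, giving exactly $\CC[-1]$; likewise $\Qc_3^\vee(-k)=\Sigma^{(k+1,k)}\Qc_3^\vee$ yields $\lambda+\rho=(4,3,2,k+2,k)$, which is singular for all $k\in\{1;2;3;4\}$, so the correction term vanishes and the $F$- and $M$-sides agree as in the paper. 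One cosmetic point: in the $k\in\{1;3;4\}$ step no decomposition into summands is needed, since $\Omega^1_{G(3,V_5)}(2-k)\simeq\Uc_3\otimes\Qc_3^\vee(2-k)$ is already an irreducible homogeneous bundle, being a tensor product of irreducible representations of the two factors of the Levi subgroup.
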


\begin{proof}
    The first computation follows by the exact sequence on $F$:
    \begin{equation}
        0\arw\Qc_3^\vee(0,-k)\arw\Qc_3^\vee(1, 1-k)\arw\Qc^\vee_3|_M (1, 1-k)\arw 0.
    \end{equation}
    Both the first two bundles are irreducible on $F$ and their cohomology can be computed with the Borel--Weil--Bott theorem yielding the expected result.\\
    \\
    The second computation follows simply by the semiorthogonality of the full exceptional collection:
    \begin{equation}\label{eq_collectionF235}
        \begin{split}
            D^b(F) = \langle & \Oc(0,0), \Qc_3(0,0),\dots\dots\dots, \Oc(0,4), \Qc_3(0,4),\\
                             & \Oc(1,1), \Qc_3(1,1),\dots\dots\dots, \Oc(1,5), \Qc_3(1,5),\\
                             & \Oc(2,2), \Qc_3(2,2),\dots\dots\dots, \Oc(2,6), \Qc_3(2,6) \rangle
        \end{split}
    \end{equation}
    which follows from an application of \cite[Corollary 2.7]{orlovblowup}.
\end{proof}

A similar result can be obtained with the same argument:

\begin{lemma}\label{vanishingOO}
The following relation holds for $2\leq k\leq 4$:
\begin{equation}
    \begin{split}
        \Ext_F^\bullet(\Oc(0,k), \Oc(1,1)) = \Ext_M^\bullet(\Oc(0,k), \Oc(1,1)) & = 0 \\
        \Ext_F^\bullet(\Oc(2,2), \Oc(0,k)) & = 0
    \end{split}
\end{equation}

\end{lemma}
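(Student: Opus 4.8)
The plan is to follow the proof of Lemma \ref{vanishingQO} verbatim, replacing the bundle $\Qc_3$ by the structure sheaf. The statement splits into two parts: the pair of vanishings relating $\Oc(0,k)$ and $\Oc(1,1)$ (first on $F$, then on $M$), and the single vanishing relating $\Oc(2,2)$ and $\Oc(0,k)$ on $F$.

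First I would dispose of the computations on $F=F(2,3,V_5)$. Since $\Ext_F^\bullet(\Oc(0,k),\Oc(1,1))\simeq H^\bullet(F,\Oc(1,1-k))$ and $\Ext_F^\bullet(\Oc(2,2),\Oc(0,k))\simeq H^\bullet(F,\Oc(-2,k-2))$, everything reduces to the cohomology of line bundles on a rational homogeneous variety, which I would evaluate with the Borel--Weil--Bott theorem (for instance via the script \cite{pythonscript}). Normalizing $\Oc(a,b)$ to the weight $(a+b,a+b,b,0,0)$ and taking $\rho=(4,3,2,1,0)$, in each relevant case the vector $\lambda+\rho$ has a repeated entry and is therefore singular: $\Oc(1,1-k)$ gives $(6-k,5-k,3-k,1,0)$ and $\Oc(-2,k-2)$ gives $(k,k-1,k,1,0)$, both singular for $2\le k\le 4$. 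Hence these cohomology groups all vanish.

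To pass from $F$ to $M$ in the first vanishing, I would tensor the Koszul resolution $0\arw\Oc(-1,-1)\arw\Oc\arw\Oc_M\arw 0$ of $\Oc_M$ by $\Oc(1,1-k)$, exactly as in Lemma \ref{vanishingQO}, obtaining
\[
0\arw\Oc(0,-k)\arw\Oc(1,1-k)\arw\Oc_M(1,1-k)\arw 0.
\]
A further Borel--Weil--Bott check shows that the extra term $\Oc(0,-k)$ is acyclic on $F$ as well (its weight yields $(4-k,3-k,2-k,1,0)$, again singular for $2\le k\le 4$). The long exact sequence in cohomology then forces $H^\bullet(M,\Oc_M(1,1-k))\simeq\Ext_M^\bullet(\Oc(0,k),\Oc(1,1))=0$.

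For the remaining vanishing $\Ext_F^\bullet(\Oc(2,2),\Oc(0,k))=0$ I would, as at the end of the proof of Lemma \ref{vanishingQO}, simply invoke the semiorthogonality of the full exceptional collection \ref{eq_collectionF235}: the object $\Oc(0,k)$ with $k\in\{2,3,4\}$ sits in the first block of \ref{eq_collectionF235}, whereas $\Oc(2,2)$ is the first object of the third block, so it lies strictly to the right and the Ext group vanishes. The computations are entirely routine; the only points requiring care are the normalization of the weight attached to $\Oc(a,b)$ and the direction of semiorthogonality (so that $\Oc(2,2)$ genuinely follows $\Oc(0,k)$), and there is no real obstacle beyond checking the parallel with Lemma \ref{vanishingQO}.
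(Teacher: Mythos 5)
Your proposal is correct and follows the paper's proof essentially verbatim: the first pair of vanishings via the twisted Koszul sequence $0\arw\Oc(0,-k)\arw\Oc(1,1-k)\arw\Oc_M(1,1-k)\arw 0$ with Borel--Weil--Bott applied to the two terms on $F$, and the second vanishing via semiorthogonality of the collection \ref{eq_collectionF235}. Your explicit singular-weight computations (all the shifted weights $\lambda+\rho$ have repeated entries for $2\le k\le 4$) are accurate, and the extra direct Borel--Weil--Bott check of $H^\bullet(F,\Oc(-2,k-2))$ is a harmless redundancy on top of the semiorthogonality argument the paper uses.
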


\begin{proof}
    For the first claim, we consider the sequence on $F$
    \begin{equation}
        0\arw\Oc(0, -k)\arw\Oc(1, 1-k)\arw\Oc_m(1, 1-k)\arw 0
    \end{equation}
    and the vanishing follow from applying the Borel--Weil--Bott theorem to the first two terms. The second vanishing is again a consequence of the collection \ref{eq_collectionF235}
\end{proof}

\noindent Another useful vanishing condition comes from the Leray spectral sequence and the Koszul resolution of $M$:

\begin{lemma}\label{vanishingpushforward}
    Let $E_1$ and $E_2$ be vector bundles on $F$ such that they are pullbacks of vector bundles on $G(2,V_5)$. Then the following relation holds:
    \begin{equation}
        \operatorname{Ext}^\bullet_F( E_1(0,1),  E_2) = \operatorname{Ext}^\bullet_M( E_1(0,1),  E_2) = 0
    \end{equation}
    Moreover, if $\Ext^\bullet_F(E_2(1,1), E_1) = \Ext^\bullet_M(E_2, E_1) = 0$ one has: 
    \begin{equation}\label{eq_secondvanishingsecondclaim}
        \Ext^\bullet_F(E_2, E_1) = 0.
    \end{equation}
    The same kind of result holds for pullbacks from $G(3,V_5)$.
\end{lemma}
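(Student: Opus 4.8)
The plan is to handle the three assertions with the two tools already employed throughout this appendix: the projection formula along a projective bundle structure together with cohomology and base change (\cite[Page 50, Corollary 2]{mumfordabelian}), and the Koszul resolution $0\arw\Oc(-1,-1)\arw\Oc\arw\Oc_M\arw 0$ of $M=Z(s)$ for a section $s\in H^0(F,\Oc(1,1))$.

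First I would prove the vanishing on $F$. Writing $E_i=h_1^*\tilde E_i$ for vector bundles $\tilde E_i$ on $G(2,V_5)$, one has $\Ext^\bullet_F(E_1(0,1),E_2)\simeq H^\bullet(F,h_1^*(\tilde E_1^\vee\otimes\tilde E_2)\otimes\Oc(0,-1))$, and the projection formula rewrites this as $H^\bullet(G(2,V_5),(\tilde E_1^\vee\otimes\tilde E_2)\otimes Rh_{1*}\Oc(0,-1))$. Since $h_1$ is a $\PP^2$-fibration on whose fibers $\Oc(0,1)$ restricts to the relative hyperplane bundle $\Oc(1)$ (because $L=\Oc(1,1)$ is the Grothendieck line bundle of $h_1$ by Proposition \ref{prop_linebundle} and $h_1^*\Oc(1)$ is fibrewise trivial), the line bundle $\Oc(0,-1)$ restricts to $\Oc(-1)$, which is acyclic on $\PP^2$. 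As the fibrewise cohomology dimensions are constantly zero, \cite[Page 50, Corollary 2]{mumfordabelian} gives $Rh_{1*}\Oc(0,-1)=0$, and the vanishing on $F$ follows.

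Next I would deduce the vanishing on $M$. Tensoring the Koszul resolution with $h_1^*(\tilde E_1^\vee\otimes\tilde E_2)\otimes\Oc(0,-1)$ yields a short exact sequence whose middle term is acyclic by the previous paragraph and whose twisted term is $h_1^*(\tilde E_1^\vee\otimes\tilde E_2)\otimes\Oc(-1,-2)$; restricting the latter to the fibers of $h_1$ gives $\Oc(-2)$ on $\PP^2$, again acyclic, so the identical projection-formula argument shows its cohomology vanishes as well. The associated long exact sequence then forces $\Ext^\bullet_M(E_1(0,1),E_2)=H^\bullet(M,(E_1^\vee(0,-1)\otimes E_2)|_M)=0$.

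Finally, for Equation \ref{eq_secondvanishingsecondclaim} I would tensor the Koszul resolution with $E_2^\vee\otimes E_1$. The one observation driving the lemma is that the twisted term $E_2^\vee\otimes E_1\otimes\Oc(-1,-1)$ computes exactly $\Ext^\bullet_F(E_2(1,1),E_1)$, while the restriction term computes $\Ext^\bullet_M(E_2,E_1)$, both of which vanish by hypothesis. The long exact sequence attached to $0\arw E_2^\vee\otimes E_1\otimes\Oc(-1,-1)\arw E_2^\vee\otimes E_1\arw (E_2^\vee\otimes E_1)|_M\arw 0$ then sandwiches $H^\bullet(F,E_2^\vee\otimes E_1)=\Ext^\bullet_F(E_2,E_1)$ between two vanishing groups, giving the claim. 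The statement for pullbacks from $G(3,V_5)$ is verbatim after exchanging the roles of $h_1,h_2$ and of $\Oc(0,1),\Oc(1,0)$, since $h_2$ is also a $\PP^2$-fibration whose relative hyperplane is $\Oc(1,0)$. I expect no genuine obstacle here; the only care required is the bookkeeping of the fibrewise restrictions of $\Oc(a,b)$ and the verification that the constancy hypotheses for base change hold.
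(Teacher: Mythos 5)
Your proposal is correct and takes essentially the same route as the paper: both arguments tensor the Koszul resolution of $M$ with the relevant bundle and reduce everything, via the projection formula and cohomology-and-base-change, to the acyclicity of $\Oc(-1)$ and $\Oc(-2)$ on the $\PP^2$-fibers of $h_1$ (resp.\ $h_2$), and both obtain the second claim by applying $\Hom_F(E_2,-)$ to the Koszul resolution of $E_1|_M$ and sandwiching $\Ext^\bullet_F(E_2,E_1)$ between the two vanishing hypotheses. The only cosmetic difference is that you first establish $Rh_{1*}\Oc(0,-1)=0$ as a standalone step and then feed it into the Koszul sequence, whereas the paper kills the two Koszul terms simultaneously by the same pushforward computation.
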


\begin{proof}
    The Koszul resolution of $M$ gives the sequence:
    
    \begin{equation}
        0\arw h_1^*(E_1^\vee\otimes E_2(-1))\otimes h_2^*\Oc(-2)\arw h_1^*(E_1^\vee\otimes E_2)\otimes h_2^*\Oc(-1)\arw h_1^*(E_1^\vee\otimes E_2)\otimes h_2^*\Oc(-1)|_M\arw 0.
    \end{equation}
    
    Both the first and the second term have no cohomology because their pushforward vanishes: For instance, consider the first one:
    
    \begin{equation}
        \begin{split}
            h_{1*} h_1^*(E_1^\vee\otimes E_2(-1))\otimes h_2^*\Oc(-2) &= E_1^\vee\otimes E_2(-1)\otimes h_{1*} h_2^*\Oc(-2)
        \end{split}
    \end{equation}
    and on every stalk the sheaf  $h_{1*} h_2^*\Oc(-2)$ vanishes. The same argument holds for the second term of the sequence. The proof for the statement about pullbacks from $G(3, V_5)$ is identical.\\
    \\
    The second part of the lemma follows by applying $\Hom_F(E_2, -)$ to the Koszul resoluton of $E_1|_M$ once we observe that by the previous part we get $\Ext^\bullet_F(E_2(1,1), E_1)$=0.
\end{proof}

\begin{lemma}\label{mutationUQ}
We have the following mutations in the derived category of $M$ for every choice of the integers $a,b$ and for $k\in\{1;2\}$:
\begin{equation}
    \begin{split}
        \LL_{\mathcal O(a,b)}\mathcal U^\vee_k(a,b) &=\mathcal Q^\vee_k(a,b) \\
        \RR_{\mathcal O(a,b)}\mathcal Q^\vee_k(a,b) &=\mathcal U^\vee_k(a,b)
    \end{split}
\end{equation}
and they satisfy Condition \ref{cond_mutationsroof} .
\end{lemma}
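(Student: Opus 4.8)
The plan is to read both mutations off the dual tautological sequence and to recognize Condition \ref{cond_mutationsroof} as a pair of cohomological vanishings that are already available from Lemma \ref{vanishingpushforward}. On $G(k,V_5)$ the dual of the tautological sequence is
\begin{equation*}
0\arw\Qc_k^\vee\arw V_5^\vee\otimes\Oc\arw\Uc_k^\vee\arw 0 ;
\end{equation*}
pulling it back to $F$, restricting to $M$ and twisting by $\Oc(a,b)$ yields a short exact sequence of bundles on $M$ with middle term $V_5^\vee\otimes\Oc_M(a,b)$. The argument below uses only this sequence together with the standard cohomology of tautological bundles, so it is uniform in $k$ and applies to each projective bundle structure of the roof. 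I claim these sequences are exactly the defining triangles of the two mutations, so that the work reduces to identifying the source of the evaluation (resp. the target of the coevaluation) map.

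First I would compute the graded morphism spaces on $M$. The twist cancels, so $\Ext_M^\bullet(\Oc_M(a,b),\Uc_k^\vee(a,b))\simeq H^\bullet(M,\Uc_k^\vee|_M)$ and $\Ext_M^\bullet(\Qc_k^\vee(a,b),\Oc_M(a,b))\simeq H^\bullet(M,\Qc_k|_M)$, both independent of $a,b$. Twisting the Koszul resolution $0\arw\Oc_F(-1,-1)\arw\Oc_F\arw\Oc_M\arw 0$ by $\Uc_k^\vee$, resp. by $\Qc_k$, reduces these to cohomology on $F$. Since $\Uc_k^\vee$ and $\Qc_k$ are pulled back along a projective bundle structure, the projection formula gives $H^\bullet(F,\Uc_k^\vee)\simeq V_5^\vee$ and $H^\bullet(F,\Qc_k)\simeq V_5$, each concentrated in degree zero; and the vanishings $H^\bullet(F,\Uc_k^\vee(-1,-1))=0$ and $H^\bullet(F,\Qc_k(-1,-1))=0$, which I would deduce from Lemma \ref{vanishingpushforward} by writing $\Oc(-1,-1)=\Oc(-1,0)\otimes\Oc(0,-1)$ and absorbing one factor into the pullback, make the restriction maps $H^\bullet(F,-)\arw H^\bullet(M,-|_M)$ isomorphisms. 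Hence both $\Ext^\bullet$-spaces are a single copy of $V_5^\vee$, resp. $V_5$, in degree zero.

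With these identifications the mutations are immediate. The evaluation map of the left mutation is the restricted, twisted surjection $V_5^\vee\otimes\Oc_M(a,b)\arw\Uc_k^\vee(a,b)$, whose cone computes $\Qc_k^\vee(a,b)$; dually, the coevaluation map of the right mutation is the inclusion $\Qc_k^\vee(a,b)\arw V_5^\vee\otimes\Oc_M(a,b)$, whose cone recovers $\Uc_k^\vee(a,b)$. Up to the cohomological shift implicit in the triangle convention of Notation \ref{notation} this gives the two displayed formulas, the shift being immaterial for the subsequent use of the lemma.

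It then remains to verify Condition \ref{cond_mutationsroof}. For the left mutation I take $E_1=\Oc_F(a,b)$ and $E_2=\Uc_k^\vee(a,b)$, so that $l^*E_i=F_i$; the two required vanishings are $\Ext_F^\bullet(E_2,E_1)=H^\bullet(F,\Uc_k)=0$ and $\Ext_F^\bullet(E_1\otimes L,E_2)=H^\bullet(F,\Uc_k^\vee(-1,-1))=0$. For the right mutation I take $E_1=\Qc_k^\vee(a,b)$ and $E_2=\Oc_F(a,b)$, and the conditions become $\Ext_F^\bullet(E_2,E_1)=H^\bullet(F,\Qc_k^\vee)=0$ and $\Ext_F^\bullet(E_1\otimes L,E_2)=H^\bullet(F,\Qc_k(-1,-1))=0$. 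In each pair the first vanishing holds because $\Uc_k$, resp. $\Qc_k^\vee$, is an acyclic bundle pulled back from the Grassmannian, and the second is the same $(-1,-1)$-twisted vanishing used above; note that this second vanishing is precisely the statement that $E_1$ is $L$-semiorthogonal to $E_2$, which is also what would make the mutation commute with $l^*$ through Lemma \ref{lem_Lsemiorthogonalcommutes}. The only genuinely nonformal input is therefore the Borel--Weil--Bott computation packaged in Lemma \ref{vanishingpushforward}, and the one point demanding care is the shift bookkeeping in the mutation cone.
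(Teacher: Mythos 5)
Your argument is correct and takes essentially the same route as the paper's: both read the two mutations off the (restricted, twisted) dual tautological sequence $0\arw\Qc_k^\vee\arw V_5^\vee\otimes\Oc\arw\Uc_k^\vee\arw 0$, identify $\Ext^\bullet_M(\Oc(a,b),\Uc_k^\vee(a,b))\simeq V_5^\vee[0]$ (and its counterpart $V_5[0]$ for the right mutation) so that the mutation triangle is exactly the evaluation/coevaluation map of that sequence, and then check Condition \ref{cond_mutationsroof} with the same choices $E_1=\Oc(a,b)$, $E_2=\Uc_k^\vee(a,b)$ (resp. $E_1=\Qc_k^\vee(a,b)$, $E_2=\Oc(a,b)$) and the same four vanishings on $F$. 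The only cosmetic differences are that the paper cites Borel--Weil--Bott directly where you route the $(-1,-1)$-twisted vanishings through the Koszul resolution and Lemma \ref{vanishingpushforward}, and that your Ext vanishings are stated with the arguments in the correct slots, whereas the paper's display transposes them (as literally written, $\Ext^\bullet_F(\Oc,\Uc_k^\vee)$ and $\Ext^\bullet_F(\Qc_k^\vee,\Oc)$ equal $V_5^\vee$ and $V_5$, not zero).
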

\begin{proof}
Let us compute $\LL_{\mathcal O(a,b)}\mathcal U^\vee_k(a,b)$. The following result
\begin{equation}
\operatorname{Ext}^\bullet_M(\Oc(a,b), \Uc^\vee_k(a,b)) = V_5^\vee[0]
\end{equation}
follows from the Borel--Weil--Bott theorem, it tells us that the  mutation we are interested in is the cone of the morphism
\begin{equation}
V_5^\vee\otimes\mathcal O(a,b)\longrightarrow \Uc_k^\vee(a,b).
\end{equation}
From the dual of the universal sequence
\begin{equation}
0\longrightarrow\mathcal U\longrightarrow V_5\otimes\mathcal O\longrightarrow\mathcal Q\longrightarrow 0
\end{equation}
we see that the morphism is surjective, thus the cone yields the kernel $\Qc_k^\vee(a,b)$. The mutation $\RR_{\mathcal O(a,b)}\mathcal Q^\vee_k(a,b)$ follows from an identical argument.\\
\\
Both mutations satisfies Condition \ref{cond_mutationsroof}  because of the relations:
\begin{equation}
    \begin{split}
        \Ext^\bullet_F(\Oc, \Uc_k^\vee) &= \Ext^\bullet_F(\Uc_k^\vee(1,1), \Oc) = 0 \\
        \Ext^\bullet_F(\Qc_k^\vee, \Oc) &= \Ext^\bullet_F(\Oc(1,1), \Qc_k^\vee) \,= 0 \\
    \end{split}
\end{equation}
which can be verified by the Borel--Weil--Bott theorem.
\end{proof}
\begin{lemma}\label{getridoftilde}
In the derived category of $M$, for every $a$ and $b$ one has the following mutations, which satisfy Condition \ref{cond_mutationsroof} :
\begin{equation}
    \begin{split}
        \RR_{\mathcal O(a+1, b-1)}\mathcal Q_3(a,b) &= \mathcal Q_2(a,b)\\
        \RR_{\mathcal O(a+1, b-1)}\mathcal U_3(a,b) &= \mathcal U_2(a,b)\\
        \LL_{\mathcal O(a-1, b+1)}\mathcal Q_3^\vee(a,b) &= \mathcal Q_2^\vee(a,b)\\
        \LL_{\mathcal O(a-1, b+1)}\mathcal U_3^\vee(a,b) &= \mathcal U_2^\vee(a,b).
    \end{split}
\end{equation}
\end{lemma}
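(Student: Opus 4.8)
The four identities all descend from the tautological flag $0\subset\mathcal{U}_2\subset\mathcal{U}_3\subset V_5\otimes\mathcal{O}$ on $F(2,3,V_5)$, so the plan is to read each mutation off a short exact sequence, exactly as Lemma \ref{mutationUQ} does from the universal sequence. The key preliminary observation is the isomorphism $\mathcal{U}_3/\mathcal{U}_2\simeq\mathcal{O}(1,-1)$, which one sees by comparing determinants ($\det\mathcal{U}_2\simeq\mathcal{O}(-1,0)$, $\det\mathcal{U}_3\simeq\mathcal{O}(0,-1)$). Twisting the two flag sequences $0\arw\mathcal{U}_2\arw\mathcal{U}_3\arw\mathcal{U}_3/\mathcal{U}_2\arw0$ and $0\arw\mathcal{U}_3/\mathcal{U}_2\arw\mathcal{Q}_2\arw\mathcal{Q}_3\arw0$ by $\mathcal{O}(a,b)$, together with their duals, and restricting to $M$ (which preserves exactness, the terms being locally free), yields
\begin{align*}
    0&\arw\mathcal{O}(a+1,b-1)\arw\mathcal{Q}_2(a,b)\arw\mathcal{Q}_3(a,b)\arw 0,\\
    0&\arw\mathcal{U}_2(a,b)\arw\mathcal{U}_3(a,b)\arw\mathcal{O}(a+1,b-1)\arw 0,\\
    0&\arw\mathcal{Q}_3^\vee(a,b)\arw\mathcal{Q}_2^\vee(a,b)\arw\mathcal{O}(a-1,b+1)\arw 0,\\
    0&\arw\mathcal{O}(a-1,b+1)\arw\mathcal{U}_3^\vee(a,b)\arw\mathcal{U}_2^\vee(a,b)\arw 0.
\end{align*}

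Next I would identify each mutation with the corresponding sequence. Consider the first: I would compute $\Ext^\bullet_M(\mathcal{Q}_3(a,b),\mathcal{O}(a+1,b-1))\simeq\Ext^\bullet_M(\mathcal{Q}_3,\mathcal{O}(1,-1))$ and show it is one-dimensional, concentrated in degree $1$. Through the Koszul resolution $0\arw\mathcal{O}(-1,-1)\arw\mathcal{O}\arw\mathcal{O}_M\arw0$ this reduces to the groups $H^\bullet_F(\mathcal{Q}_3^\vee(1,-1))$ and $H^\bullet_F(\mathcal{Q}_3^\vee(0,-2))$ on $F=F(2,3,V_5)$, which I would evaluate using the $\PP^2$-bundle $h_2:F\arw G(3,V_5)$, where $\mathcal{O}(1,-1)$ is the relative hyperplane bundle $\mathcal{O}_{h_2}(1)$ and $\mathcal{O}(0,1)=h_2^*\mathcal{O}_{G(3,V_5)}(1)$. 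Since $h_{2*}\mathcal{O}_{h_2}(1)\simeq\mathcal{U}_3$, the first group equals $H^\bullet(G(3,V_5),\mathcal{Q}_3^\vee\otimes\mathcal{U}_3)=H^\bullet(G(3,V_5),\Omega^1_{G(3,V_5)})=\CC[-1]$, while the second vanishes because $\mathcal{Q}_3^\vee(0,-2)$ is pulled back from $G(3,V_5)$ and acyclic there by Borel--Weil--Bott. The right-mutation triangle then reads $\RR_{\mathcal{O}(a+1,b-1)}\mathcal{Q}_3(a,b)\arw\mathcal{Q}_3(a,b)\arw\mathcal{O}(a+1,b-1)[1]$, whose structural map is forced to be a nonzero multiple of the connecting class of the first sequence, so its fibre is $\mathcal{Q}_2(a,b)$, as claimed. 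The remaining cases are identical in spirit: for $\mathcal{U}_3$ the controlling Ext is one-dimensional in degree $0$ and the sequence map is the surjection onto $\mathcal{O}(a+1,b-1)$ with kernel $\mathcal{U}_2(a,b)$, while the two dual statements are treated with $h_1:F\arw G(2,V_5)$ in the same fashion.

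To verify Condition \ref{cond_mutationsroof} I would take as lifts $E_1,E_2$ the same bundles regarded on $F$ and check the two vanishings $\Ext^\bullet_F(E_2,E_1)=0$ and $\Ext^\bullet_F(E_1\otimes L,E_2)=0$, where $L=\mathcal{O}(1,1)$. For the first identity ($E_1=\mathcal{Q}_3(a,b)$, $E_2=\mathcal{O}(a+1,b-1)$) these are $H^\bullet_F(\mathcal{Q}_3(-1,1))=0$ and $H^\bullet_F(\mathcal{Q}_3^\vee(0,-2))=0$; the former vanishes because $\mathcal{Q}_3(-1,1)\simeq h_2^*\mathcal{Q}_3\otimes\mathcal{O}_{h_2}(-1)$ has no cohomology along the $\PP^2$-fibres, and the latter is already recorded above. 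The other three mutations reduce to the analogous groups $H^\bullet_F(\mathcal{U}_3(-1,1))$, $H^\bullet_F(\mathcal{U}_3^\vee(0,-2))$ and their $G(2,V_5)$-counterparts, all of which vanish by the same fibration argument.

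I expect the only genuinely delicate part to be the bookkeeping: keeping the Borel--Weil--Bott conventions consistent (in particular the ordering of quotient- and subbundle-weights) and matching each cohomological degree against the shift appearing in the mutation triangle, so that the connecting map of the relevant short exact sequence is genuinely identified with the (co)evaluation map. Once each Ext group is pinned down to be one-dimensional in the predicted degree, the identification of the mutation with the desired bundle is automatic, and the $L$-semiorthogonality vanishings demanded by Condition \ref{cond_mutationsroof} turn out to be the very same cohomology computations read in a complementary degree.
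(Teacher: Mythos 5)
Your proposal is correct and takes essentially the same route as the paper's own proof: both identify the four mutations with the short exact sequences $0\to\Oc(1,-1)\to\Qc_2\to\Qc_3\to0$ and $0\to\Uc_2\to\Uc_3\to\Oc(1,-1)\to0$ (and their duals) via the one-dimensional groups $\Ext^\bullet_M(\Qc_3(a,b),\Oc(a+1,b-1))=\CC[-1]$ and $\Ext^\bullet_M(\Uc_3(a,b),\Oc(a+1,b-1))=\CC[0]$, and both verify Condition \ref{cond_mutationsroof} through the same vanishings on $F$, which the paper quotes from Borel--Weil--Bott and you re-derive geometrically via $h_2$-pushforwards. The only (harmless) slip is at the end: the two dual mutations are controlled by literally the same groups $H^\bullet_M(\Qc_3^\vee(1,-1))$, $H^\bullet_M(\Uc_3^\vee(1,-1))$ and the same vanishings on $F$ already computed, so no $h_1$-side or ``$G(2,V_5)$-counterpart'' computation is actually needed.
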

\begin{proof}
With the Borel--Weil--Bott theorem we have:
\begin{equation}
    \begin{split}
        \operatorname{Ext}^\bullet_M(\mathcal Q_3(a,b), \mathcal O(a+1,b-1)) &= \mathbb C[-1]\\
        \operatorname{Ext}^\bullet_F(\mathcal Q_3(a+1,b+1), \mathcal O(a+1,b-1)) &= 0\\
        \operatorname{Ext}^\bullet_F(\mathcal O(a+1,b-1), \mathcal Q_3(a,b)) &= 0
    \end{split}
\end{equation}
so $\RR_{\mathcal O(a+1, b-1)}\mathcal Q_3(a,b)$ is an extension and satisfies Condition \ref{cond_mutationsroof} . The relevant exact sequence is
\begin{equation}\label{sequenceOQQ}
0\longrightarrow\mathcal O(1,-1)\longrightarrow\mathcal Q_2\longrightarrow\mathcal Q_3\longrightarrow 0,
\end{equation}
which can be found computing the rank one cokernel of the injection $\mathcal U_2\xhookrightarrow{}\mathcal U_3$, comparing the universal sequences of the two Grassmannians and applying the Snake Lemma, this proves our first claim.\\
In order to verify the second one, we write the sequence involving the injection between the universal bundles, which is
\begin{equation}\label{sequenceUUO}
0\longrightarrow\mathcal U_2\longrightarrow\mathcal U_3\longrightarrow\mathcal O(1,-1)\longrightarrow 0.
\end{equation}
The related Ext, in this case, is $\mathbb C[0]$, so the mutation is the cone of the morphism $\Uc_3\longrightarrow\mathcal O(1,-1)$, yielding the desired result. Condition \ref{cond_mutationsroof}  is satisfied due to the following cohomological results:

\begin{equation}
    \begin{split}
        \operatorname{Ext}^\bullet_F(\mathcal U_3(a+1,b+1), \mathcal O(a+1,b-1)) &= 0\\
        \operatorname{Ext}^\bullet_F(\mathcal O(a+1,b-1), \mathcal U_3(a,b)) &= 0.
    \end{split}
\end{equation}
The proof for the last two mutations follow from the same arguments applied to the duals of Equations \ref{sequenceUUO} and \ref{sequenceOQQ} and similar cohomological computations.
\end{proof}

\begin{proposition}\label{prop_derivedeqAG4}
Let $Y_1$ and $Y_2$ be the zero loci of the pushforwards of a general $s\in H^0(F,\mathcal O(1,1))$. Then there is a composition of mutations satisfying Condition \ref{cond_mutationsroof} , which yields an equivalence of categories 
\begin{equation}\label{eq_collectionF235_aftermutations}
    \begin{split}
    \left<\phi_2 D^b(Y_2), D^b(G(3,V_5))\otimes\Oc(1,1), D^b(G(3,V_5))\otimes\Oc(2,2)\right>  \xrightarrow{\sim}\\ 
    \left<\psi D^b(Y_2), D^b(G(2,V_5))\otimes\Oc(1,1), D^b(G(2,V_5))\otimes\mathcal O(2,2)\right>
    \end{split}
\end{equation}
where $\psi$ is given by a composition of mutations and $\phi_2 =k_{2*}\nu_2^*$ in the notation of Diagram \ref{eq_diagramfiber}. Moreover, $Y_1$ and $Y_2$ are derived equivalent.
\end{proposition}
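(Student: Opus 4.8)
The plan is to reproduce the sequence of mutations of \cite{kr} that carries the semiorthogonal decomposition of $\dbcoh(M)$ built from the $G(3,V_5)$-collection \ref{kuznetsov35} onto the one built from the $G(2,V_5)$-collection \ref{kuznetsov25}, and to certify that every elementary mutation along the way obeys Condition \ref{cond_mutationsroof}. First I would record the two starting points. Restricting the full collection \ref{eq_collectionF235} to $M=Z(s)$ by the Cayley trick \cite[Proposition 2.10]{cayleytrick} gives
\[
    \dbcoh(M) = \langle \phi_2\dbcoh(Y_2),\ \dbcoh(G(3,V_5))\otimes\Oc(1,1),\ \dbcoh(G(3,V_5))\otimes\Oc(2,2)\rangle,
\]
while the dual collection assembled from \ref{kuznetsov25} gives, symmetrically,
\[
    \dbcoh(M) = \langle \phi_1\dbcoh(Y_1),\ \dbcoh(G(2,V_5))\otimes\Oc(1,1),\ \dbcoh(G(2,V_5))\otimes\Oc(2,2)\rangle,
\]
with $\phi_i=k_{i*}\nu_i^*$ as in Diagram \ref{eq_diagramfiber}. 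The whole argument reduces to mutating the first right-hand side into the second while transporting $\phi_2\dbcoh(Y_2)$ to a subcategory $\psi\dbcoh(Y_2)$.

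The core move is to turn the $\Qc_3$-type generators of the two $G(3,V_5)$-blocks into the $\Uc_2^\vee$-type generators of the $G(2,V_5)$-blocks. For this I would interleave the two atomic mutations already at our disposal: Lemma \ref{getridoftilde}, which mutates between $\Qc_3$- and $\Qc_2$-twists (and their duals) through a line bundle, and Lemma \ref{mutationUQ}, which exchanges $\Qc_k^\vee$- and $\Uc_k^\vee$-twists. These are combined with applications of the Serre functor of $M$, which twist the trailing exceptional objects by $\omega_M=\Oc(-2,-2)$ and recycle them to the front of the collection, and with overall twists by powers of $L=\Oc(1,1)$, following the bookkeeping of \cite{kr}. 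The $\Oc$- and $\Uc_2^\vee$-twists that must be slid past intervening objects are handled by the semiorthogonalities of Lemmas \ref{vanishingQO}, \ref{vanishingOO} and \ref{vanishingpushforward}. This brings the decomposition into the form \ref{eq_collectionF235_aftermutations}, carrying $\phi_2\dbcoh(Y_2)$ to some $\psi\dbcoh(Y_2)$.

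The point that genuinely needs checking is that every mutation in this chain meets Condition \ref{cond_mutationsroof}, and this is already packaged in the cited lemmas: the mutations of Lemmas \ref{mutationUQ} and \ref{getridoftilde} are stated to satisfy it, while the transport mutations are governed by the $\Ext^\bullet_F$-vanishings of Lemmas \ref{vanishingQO}, \ref{vanishingOO} and \ref{vanishingpushforward}, which furnish exactly the two conditions $\Ext^\bullet_F(E_2,E_1)=0$ and $\Ext^\bullet_F(E_1\otimes L,E_2)=0$ demanded of the companion objects $E_1,E_2$ on $F=F(2,3,V_5)$. The mutations of $\phi_2\dbcoh(Y_2)$ through a single exceptional object are admissible by construction and fall under the fourth operation allowed by Assumption \ref{itm:ass_a2}, so they require no further check. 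I expect the main obstacle to be organizational rather than conceptual: at each intermediate stage one must exhibit the companion objects $E_1,E_2$ on $F$ with $l^*E_i=F_i$ and verify the $F$-level vanishings (and not merely the weaker $M$-level ones), so that Condition \ref{cond_mutationsroof} can actually be read off; keeping the explicit chain of mutations and the repeated Serre twists mutually consistent is the delicate bookkeeping.

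Finally I would conclude by comparison. The mutated decomposition \ref{eq_collectionF235_aftermutations} and the $G(2,V_5)$-decomposition recorded at the start both have the same two trailing blocks $\dbcoh(G(2,V_5))\otimes\Oc(1,1)$ and $\dbcoh(G(2,V_5))\otimes\Oc(2,2)$; hence the subcategory left-orthogonal to them, ${}^{\perp}\langle \dbcoh(G(2,V_5))\otimes\Oc(1,1),\ \dbcoh(G(2,V_5))\otimes\Oc(2,2)\rangle$, is uniquely determined, forcing $\psi\dbcoh(Y_2)=\phi_1\dbcoh(Y_1)$ inside $\dbcoh(M)$. Since $\psi$ and $\phi_1$ are fully faithful, this yields the equivalence $\dbcoh(Y_1)\simeq\dbcoh(Y_2)$, which is the final assertion.
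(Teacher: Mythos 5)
Your proposal follows the paper's own proof essentially step for step: the same Cayley-trick starting decompositions, the same atomic moves (Lemmas \ref{mutationUQ} and \ref{getridoftilde}) interleaved with the transport vanishings of Lemmas \ref{vanishingQO}, \ref{vanishingOO} and \ref{vanishingpushforward}, the same Serre-functor and twist bookkeeping from \cite{kr}, and the same concluding comparison forcing $\psi\dbcoh(Y_2)=\phi_1\dbcoh(Y_1)$ as the common left orthogonal of the two trailing blocks $\dbcoh(G(2,V_5))\otimes\Oc(1,1)$ and $\dbcoh(G(2,V_5))\otimes\Oc(2,2)$. The only difference is that the paper carries out the explicit chain of mutations you defer to bookkeeping, and your observation that Condition \ref{cond_mutationsroof} demands the $F$-level rather than merely $M$-level $\Ext$-vanishings is precisely the point the paper's lemmas are formulated to settle.
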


\begin{proof}
The idea of the proof is writing a full exceptional collection for $M$ in a way such that we can use our cohomology vanishing results to transport line bundles $\mathcal O(a+1,b-1)$ to the immediate right of $\mathcal Q_3(a,b)$, then use Lemma \ref{getridoftilde} to get rid of $\mathcal Q_3(a,b)$, thus transforming pullbacks of vector bundles on $G(2,V_5)$ to pullbacks of vector bundles on $G(3,V_5)$.

We start from:

\begin{align*}
D^b(M)= \langle & \psi_1 D^b(Y_2), \mathcal O, \mathcal Q_3,\mathcal O(0,1), \mathcal Q_3(0,1),\mathcal O(0,2), \mathcal Q_3(0,2),\mathcal O(0,3), \mathcal Q_3(0,3), \mathcal O(0,4), \mathcal Q_3(0,4), \\ 
&\mathcal O(1,1), \mathcal Q_3(1,1),\mathcal O(1,2), \mathcal Q_3(1,2),\mathcal O(1,3), \mathcal Q_3(1,3),\mathcal O(1,4), \mathcal Q_3(1,4),\mathcal O(1,5), \mathcal Q_3(1,5)\rangle
\end{align*}

which is obtained by applying the Cayley trick to the collection \ref{kuznetsov35} for $\dbcoh(G(3, V_5)$, then twisting the whole decomposition by $\Oc(-1,-1)$. We defined $\psi_1$ as $\psi_1  = \phi_2(-)\otimes\Oc(-1,-1)$.\\
\\
Our first operation is moving the first five bundles past $\psi_1 D^b(Y_2)$, then  sending them to the end: they get twisted  by the anticanonical bundle of $M$, which, with the adjunction formula, can be shown to be $\omega_M^\vee=\mathcal O(2,2)$.
\begin{align*}
D^b(M)= \langle & \psi_2 D^b(Y_2), \mathcal Q_3(0,2),\mathcal O(0,3), \mathcal Q_3(0,3),\mathcal O(0,4), \mathcal Q_3(0,4), \mathcal O(1,1), \mathcal Q_3(1,1),
\mathcal O(1,2), \mathcal Q_3(1,2),\\
&\mathcal O(1,3), \mathcal Q_3(1,3),\mathcal O(1,4), \mathcal Q_3(1,4),\mathcal O(1,5), \mathcal Q_3(1,5),\mathcal O(2,2), \mathcal Q_3(2,2),\mathcal O(2,3), \mathcal Q_3(2,3),\mathcal O(2,4)\rangle
\end{align*}
where we introduced the functor
\begin{equation}
\psi_2 = \RR_{\left<\mathcal O(0,0), \mathcal Q_3(0,0),\mathcal O(0,1), \mathcal Q_3(0,1),\mathcal O(0,2)\right>}\psi_1
\end{equation}
Applying Lemma \ref{vanishingQO}, we observe that $\mathcal O(1,1)$  can be moved next to $\mathcal Q_3(0,2)$ and these mutations satisfy Condition \ref{cond_mutationsroof}. Then we can use Lemma \ref{getridoftilde} transforming $\mathcal Q_3(0,2)$ in $\mathcal Q_2(0,2)$. This can be done twice due to the invariance of the operation up to overall twists, yielding:
\begin{align*}
D^b(M)=\langle &\psi_2 D^b (Y_2), \mathcal O(1,1),\mathcal Q_2(0,2),\mathcal O(0,3), \mathcal Q_3(0,3),\mathcal O(0,4), \mathcal Q_3(0,4), \mathcal Q_3(1,1),  \mathcal O(1,2), \mathcal Q_3(1,2),\\
& \mathcal O(1,3), \mathcal O(2,2),\mathcal Q_2(1,3),\mathcal O(1,4), \mathcal O(2,3), \mathcal Q_3(1,4),\mathcal O(1,5), \mathcal Q_3(1,5), \mathcal Q_3(2,2), \mathcal Q_3(2,3),\mathcal O(2,4)\rangle.
\end{align*}
The next step is to move $\Oc(1,2)$ one step to the left. Since $\Qc_3(1,1)\simeq\Qc_3^\vee(1,2)$ the result follows from Lemma \ref{mutationUQ}. Then, since by Lemma \ref{vanishingQO} and \ref{vanishingOO} $\Oc(1,2)$ is orthogonal to $\mathcal O(0,4)$ and $\mathcal Q_3(0,4)$ we apply Lemma \ref{getridoftilde} to transform $\mathcal Q_3(0,3)$ in $\mathcal Q_2(0,3)$. Again, all these operations can be performed twice (by invariance to overall twist) and they fulfill Condition \ref{cond_mutationsroof} :
\begin{align*}
D^b(M) = \langle & \psi_2 D^b(Y_2), \mathcal O(1,1),\mathcal Q_2(0,2),\mathcal O(0,3),\mathcal O(1,2),\mathcal Q_2(0,3),\mathcal O(0,4), \mathcal Q_3(0,4), \mathcal U_3^\vee(1,2), \mathcal Q_3(1,2),\\
& \mathcal O(1,3),\mathcal O(2,2),\mathcal Q_2(1,3),\mathcal O(1,4), \mathcal O(2,3), \mathcal Q_2(1,4),\mathcal O(1,5), \mathcal Q_3(1,5), \mathcal U_3^\vee(2,3), \mathcal Q_3(2,3),\mathcal O(2,4)\rangle.
\end{align*}
By Lemma \ref{vanishingOO} let us move $\Oc(0,3)$ one step to the right. Then, by a similar application of the Borel--Weil--Bott theorem we observe that $\Uc_3^\vee(1,2)$ is orthogonal to the two bundles at its left, hence we can move it to the immediate right of $\Qc_2(0,4)$. These operations fulfill Condition \ref{cond_mutationsroof}  due to the following vanishings:
\begin{equation}
    \begin{split}
        \Ext^\bullet_F(\Uc_3(1,2), \Qc_3(0,4)) &= 0\\
        \Ext^\bullet_F(\Uc_3(1,2), \Oc(0,4)) &= 0\\
        \Ext^\bullet_F(\Qc_3(1,5), \Uc_3(1,2)) &= 0\\
        \Ext^\bullet_F(\Oc(1,5), \Uc_3(1,2)) &= 0.
    \end{split}
\end{equation}
We can then move $\Uc_3^\vee(1,2)$ one additional step to the left using Lemma \ref{getridoftilde}. Applying the same sequence of mutations to the $\Oc(1,1)$-twist of these objects we get the following collection:
\begin{align*}
D^b(M)=\langle & \psi_2 D^b(Y_2), \mathcal O(1,1),\mathcal Q_2(0,2),\mathcal O(1,2),\mathcal U_2(0,3),\mathcal U_2^\vee(1,2),\mathcal O(0,3),\mathcal O(0,4), \mathcal Q_3(0,4),  \mathcal Q_3(1,2),\\
& \mathcal O(1,3),\mathcal O(2,2),\mathcal Q_2(1,3),\mathcal O(2,3), \mathcal U_2(1,4), \mathcal U_2^\vee(2,3),\mathcal O(1,4),\mathcal O(1,5), \mathcal Q_3(1,5),  \mathcal Q_3(2,3),\mathcal O(2,4)\rangle.
\end{align*}
Again, thanks to Lemma \ref{mutationUQ}, one has $\RR_{\mathcal O(1,3)}\Qc_3(1,2)\simeq\Uc_3^\vee(1,3)$, then we can apply Lemma \ref{getridoftilde} to transform $\mathcal Q_3(0,4)$ in $\mathcal Q_2(0,4)$. But then $\mathcal O(1,3)$ ends up next to $\mathcal O(0,4)$, which is orthogonal to it by application of Lemma \ref{vanishingOO}, so they can be exchanged. Passing through $\mathcal Q_2(0,4)$ via Lemma \ref{mutationUQ} and mutating it to $\mathcal U_2(0,4)$, $\mathcal O(0,4)$ goes right next to $\mathcal U_3^\vee(1,3)$, which is mutated to $\mathcal U_2^\vee(1,3)$ by applying Lemma \ref{getridoftilde}. Again all these mutations satisfy Condition \ref{cond_mutationsroof} .\\
\\
Once we have done the same for the $\Oc(1,1)$-twists, we have transformed all the rank 2 and rank 3 pullbacks from $G(3,V_5)$ in pullbacks from $G(2,V_5)$. 
Removing all the duals by the identity $\Qc_2\simeq\wedge^2\Qc_2^\vee\otimes\det(\Qc_2)$ and the analogous one for $\Uc_2$, we get the following result:
\begin{equation}\label{eq_collectionbeforereordering}
    \begin{split}
        D^b(M) = \langle & \psi_2 D^b(Y_2), \mathcal O(1,1),\mathcal Q_2(0,2),\mathcal O(1,2),\mathcal U_2(0,3),\mathcal U_2(2,2),\mathcal O(0,3),\mathcal O(1,3), \mathcal U_2(0,4),\mathcal U_2(2,3),\\
        & \mathcal O(0,4), \mathcal O(2,2),\mathcal Q_2(1,3),\mathcal O(2,3), \mathcal U_2(1,4), \mathcal U_2(3,3),\mathcal O(1,4),\mathcal O(2,4),\mathcal U_2(1,5),\mathcal U_2(3,4),\mathcal O(1,5)\rangle.
    \end{split}
\end{equation}
First we send $\mathcal O(1,1)$ to the end, then we use Lemma \ref{vanishingpushforward} and the fact that $\Oc(2,-2)$ and $\mathcal U_2^\vee(2,-2)$ are acyclic to order the bundles by their power of the second twist:
\begin{equation}\label{eq_collectionafterreordering}
    \begin{split}
        D^b(M)= \langle & \psi_3 D^b(Y_2), \mathcal Q_2(0,2),\mathcal O(1,2),\mathcal U_2(2,2),\mathcal O(2,2),\mathcal U_2(0,3),\mathcal O(0,3),\mathcal O(1,3), \mathcal U_2(2,3), \mathcal Q_2(1,3), \\
        & \mathcal O(2,3), \mathcal U_2(3,3),\mathcal O(3,3)\mathcal U_2(0,4),\mathcal O(0,4), \mathcal U_2(1,4), \mathcal O(1,4),\mathcal O(2,4),\mathcal U_2(3,4),\mathcal U_2(1,5),\mathcal O(1,5)\rangle
    \end{split}
\end{equation}

where we defined
\begin{equation}
    \psi_3 = \RR_{\mathcal O(1,1)}\psi_2.
\end{equation}
Note that each time we exchanged objects $E_1(a,b), E_2(c,d)$ because $d-b = -1$ in order to get the collection \ref{eq_collectionafterreordering}, such mutations satisfy Condition \ref{cond_mutationsroof}  by Lemma \ref{vanishingpushforward} because $E_2(c+1,d+1)$ is semiorthogonal to $E_1(a,b)$.  This semiorthogonality can be easily checked in the collection \ref{eq_collectionbeforereordering} (using the Serre functor if necessary).\\
\\
Now we send the last 10 objects to the beginning and reorder again the collection with respect to the second twist, obtaining the following:
\begin{align*}
 D^b(M)=\langle & \psi_4 D^b(Y_2), \mathcal U_2(1,1),\mathcal O(1,1),\mathcal U_2(-2,2),\mathcal O(-2,2),\mathcal U_2(-1,2),\mathcal O(-1,2),\mathcal O(0,2), \mathcal U_2(1,2),\mathcal Q_2(0,2),\\
 & \mathcal O(1,2),\mathcal U_2(2,2),\mathcal O(2,2), \mathcal U_2(-1,3),\mathcal O(-1,3), \mathcal U_2(0,3),\mathcal O(0,3),\mathcal O(1,3),\mathcal U_2(2,3),\mathcal Q_2(1,3),\mathcal O(2,3)\rangle,
\end{align*}
where
\begin{equation}
\psi_4 = \LL_{\left<\mathcal U_2(1,1),\mathcal O(1,1),\mathcal U_2(-2,2),\mathcal O(-2,2),\mathcal U_2(-1,2),\mathcal O(-1,2),\mathcal O(0,2),\mathcal U_2(1,2),\mathcal U_2(-1,3),\mathcal O(-1,3)\right>}\psi_3
\end{equation}
Now we observe that $\mathcal Q_2(0,2)$ is orthogonal to $\mathcal U_2(1,2)$, so they can be exchanged: this allows to mutate $\mathcal Q_2(0,2)$ to $\mathcal U_2(0,2)$ sending it one step to the left (this mutation satisfies Condition \ref{cond_mutationsroof}  by a simple Borel--Weil--Bott computation). After doing the same thing with $\mathcal O(1,1)$--twists of these bundles, the last steps are sending the first two bundles to the end and twisting everything by $\mathcal O(-1,-1)$. We get:
\begin{equation}\label{eq_finalcollectionG25}
    \begin{split}
        D^b(M)=\langle & \psi_5 D^b(Y_2), \mathcal U_2(-3,1),\mathcal O(-3,1),\mathcal U_2(-2,1),\mathcal O(-2,1),\mathcal U_2(-1,1),\mathcal O(-1,1), \mathcal U_2(0,1), \mathcal O(0,1),\mathcal U_2(1,1),\\
        & \mathcal O(1,1), \mathcal U_2(-2,2),\mathcal O(-2,2),\mathcal U_2(-1,2), \mathcal O(-1,2), \mathcal U_2(0,2),\mathcal O(0,2),\mathcal U_2(1,2),\mathcal O(1,2), \mathcal U_2(2,2),\mathcal O(2,2)\rangle
    \end{split}
\end{equation}
where we defined the functor
\begin{equation}
\psi_5 = \mathcal T(-1,-1)\RR_{\left<\mathcal U_2(1,1),\mathcal O(1,1)\right>}\psi_4
\end{equation}
where $\mathcal T(-1,-1)$ is the twist with $\mathcal O(-1,-1)$.\\

Observe that, by the isomorphism $\Uc_2\simeq\Uc_2^\vee(-1)$ and the fact that $\omega_{G(2, V_5)}\simeq\Oc(-5)$, one has:
\begin{equation}
    \begin{split}
        \dbcoh(G(2, V_5)) = \langle \mathcal U_2(-4),\mathcal O(-4),\mathcal U_2(-3),\mathcal O(-3),\mathcal U_2(-2,0),\mathcal O(-2), \mathcal U_2(-1), \mathcal O(-1),\mathcal U_2(0),\mathcal O(0)\rangle
    \end{split}
\end{equation}
Hence the collection \ref{eq_finalcollectionG25} has the form
\begin{equation}
    \dbcoh(M) = \langle\psi_5\dbcoh(Y_2), \dbcoh(G(2, V_5))\otimes\Oc(1,1), \dbcoh(G(2, V_5))\otimes\Oc(2,2)\rangle.
\end{equation}
The proof is completed once we compare this last decomposition with the following one, obtained by Cayley trick:
\begin{equation}
    \dbcoh(M) = \langle\phi_1\dbcoh(Y_1), \dbcoh(G(2, V_5))\otimes\Oc(1,1), \dbcoh(G(2, V_5))\otimes\Oc(2,2)\rangle.
\end{equation}
\end{proof}

\subsection{Derived equivalence for the roof of type \texorpdfstring{$G_2$}{something}}
The roof of type $G_2$ is the complete flag $F$ of type $G_2$, which admits projections $h_1$ and $h_2$ to the two $G_2$-Grassmannians $G/P_1$ and $G/P_2$ of dimension five, where the former is a smooth quadric.\\
\\
The main result of this section is due to Kuznetsov in the paper \cite{kuznetsovimou}, where by a sequence of mutations on a suitable semiorthogonal decomposition of $M$, a derived equivalence for the associated Calabi--Yau pair has been given. What we summarize here is the same argument, with some minor variation to make the result compatible with Condition \ref{cond_mutationsroof}, and therefore to Theorems \ref{thm_derivedequivalencefibrations} and \ref{thm_derivedequivalencekequivalence}. First, one has the following semiorthogonal decompositions for the quadric \cite{kapranov}:
\begin{equation}
    \dbcoh(G/P_1)= \langle \Oc, \Sc^\vee, \Oc(1), \Oc(2), \Oc(3), \Oc(4), \rangle
\end{equation}
where $\Sc$ is the spinor bundle. By Serre functor, once we observe that $\omega_{G/P_1} = \Oc(-5)$, we write:
\begin{equation}
    \dbcoh(G/P_1)= \langle \Oc(-3), \Oc(-2), \Oc(-1), \Oc, \Sc^\vee, \Oc(1) \rangle.
\end{equation}
One has $\Ext^\bullet_{G/P_1}(\Oc, \Sc^\vee) \simeq\CC^8[0]$ by the Borel--Weil--Bott theorem, and in light of the following exact sequence \cite[Theorem 2.8]{ottaviani}:

\begin{equation}
    0\arw\Sc\arw\Oc^{\oplus 8}\arw\Sc^\vee\arw 0
\end{equation}

we compute $\LL_{\Oc}\Sc^\vee \simeq\Sc$, hence the following decomposition \cite[Equation 6]{kuznetsovimou}:

\begin{equation}
    \dbcoh(G/P_1)= \langle \Oc(-3), \Oc(-2), \Oc(-1), \Sc, \Oc, \Oc(1) \rangle.
\end{equation}

On the other hand, for the second $G_2$-Grassmannian there is the full exceptional collection \cite[Section 6.4]{kuznetsovhyperplane}:

\begin{equation}
    \dbcoh(G/P_2)= \langle \Oc, \Uc^\vee, \Oc(1), \Uc^\vee(1), \Oc(2), \Uc^\vee(2) \rangle
\end{equation}

where $\Uc$ is the tautological bundle. By Serre functor (since $\omega_{G/P_2}\simeq \Oc(-3)$) and by the isomorphism $\Uc^\vee\simeq\Uc(1)$ we write:

\begin{equation}\label{eq_kuznetsovscollectionforG/P1}
    \dbcoh(G/P_2)= \langle \Oc(-1), \Uc, \Oc, \Uc^\vee, \Oc(1), \Uc^\vee(1) \rangle
\end{equation}

By Cayley trick one has the following semiorthogonal decompositions:

\begin{equation}\label{eq_G2withthequadric}
    \begin{split}
        \dbcoh(M) = \langle \phi_1\dbcoh(Y_1), & \Oc_M(-2,1), \Oc_M(-1,1), \Oc_M(0,1), l^*\Sc(1,1), \Oc_M(1,1), \Oc_M(2,1)\rangle.
    \end{split}
\end{equation}
\begin{equation}\label{eq_G2startingpointforme}
    \begin{split}
        \dbcoh(M) = \langle \phi_2\dbcoh(Y_2), & \Oc_M(1,0), l^*\Uc(1,1), \Oc_M(1,1), l^*\Uc^\vee(1,1), \Oc_M(1,2), l^*\Uc^\vee(1,2)\rangle
    \end{split}
\end{equation}
For later convenience, let us also write the following exceptional collections for $F$, which are an application of \cite[Corollary 2.7]{orlovblowup} to the two projective bundle structures of $F$:
\begin{equation}\label{eq_G2projbundlecollections}
    \begin{split}
        \dbcoh(F) = \langle & \Oc(-3,0), \Oc(-2,0), \Oc(-1,0), \Sc(0,0), \Oc(0,0), \Oc(1,0), \Oc(-2,1), \Oc(-1,1), \Oc(0,1), \Sc(1,1), \Oc(1,1), \Oc(2,1)\rangle\\
                  = \langle & \Oc(0,0), \Uc^\vee(0,0), \Oc(0,1), \Uc^\vee(0,1), \Oc(0,2), \Uc^\vee(0,2), \Oc(1,1), \Uc^\vee(1,1), \Oc(1,2), \Uc^\vee(1,2), \Oc(1,3), \Uc^\vee(1,3)\rangle.
    \end{split}
\end{equation}
We are now ready to formulate the following result, as a corollary to \cite[Theorem 5]{kuznetsovimou}

\begin{corollary}\label{cor_derivedequivalenceG2}
    In the setting above, there is a sequence of mutations of exceptional objects of $\dbcoh(M)$ satisfying Condition \ref{cond_mutationsroof}  and giving:
    \begin{equation}
        \begin{split}
            \langle \phi_2\dbcoh(Y_2), & \Oc_M(1,0), l^*\Uc(1,1), \Oc_M(1,1), l^*\Uc^\vee(1,1), \Oc_M(1,2), l^*\Uc^\vee(1,2)\rangle\\
            &= \langle \psi\dbcoh(Y_2), \Oc_M(-2,1), \Oc_M(-1,1), \Oc_M(0,1), l^*\Sc(1,1), \Oc_M(1,1), \Oc_M(2,1)\rangle.
        \end{split}
    \end{equation}
\end{corollary}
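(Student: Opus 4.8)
The plan is to run the sequence of mutations of \cite[proof of Theorem 5]{kuznetsovimou}, which carries the exceptional part of the decomposition \ref{eq_G2startingpointforme} (induced by the collection \ref{eq_kuznetsovscollectionforG/P1} for $G/P_2$) into the exceptional part of \ref{eq_G2withthequadric} (induced by the collection for the quadric $G/P_1$), and to certify that each pairwise mutation obeys Condition \ref{cond_mutationsroof}. Every object occurring along the way is the restriction to $M$ of a homogeneous bundle on $F$ --- a line bundle $\Oc(a,b)$, a twist of the spinor bundle $\Sc$, or a twist of $\Uc$ or $\Uc^\vee$ --- so for a mutation $(l^*E_1, l^*E_2)\arw(\LL_{l^*E_1}l^*E_2, l^*E_1)$ Condition \ref{cond_mutationsroof} amounts to the two vanishings $\Ext^\bullet_F(E_2, E_1)=0$ and $\Ext^\bullet_F(E_1\otimes L, E_2)=0$ with $L=\Oc(1,1)$, both computable by Borel--Weil--Bott on the $G_2$-flag.

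First I would isolate the two nontrivial building-block mutations. The exchange of a twist of $\Uc^\vee$ past a line bundle is handled exactly as in Lemma \ref{mutationUQ}: the tautological sequence of $G/P_2$ gives $\LL_{\Oc(a,b)}l^*\Uc^\vee(a,b)\simeq l^*\Qc^\vee(a,b)$ and its dual and right-mutation variants, while the vanishings $\Ext^\bullet_F(\Uc^\vee(a,b), \Oc(a,b))=\Ext^\bullet_F(\Oc(a+1,b+1), \Uc^\vee(a,b))=0$ follow directly from Borel--Weil--Bott. The mutation of the spinor bundle is governed, as on the quadric, by $\Ext^\bullet_{G/P_1}(\Oc, \Sc^\vee)\simeq\CC^8[0]$ together with the Euler-type sequence $0\arw\Sc\arw\Oc^{\oplus 8}\arw\Sc^\vee\arw 0$, yielding $\LL_\Oc\Sc^\vee\simeq\Sc$; for this step I would add the checks $\Ext^\bullet_F(\Sc^\vee, \Oc)=\Ext^\bullet_F(\Oc(1,1), \Sc^\vee)=0$ demanded by Condition \ref{cond_mutationsroof}.

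For the remaining bookkeeping --- reorderings of line bundles, passages of line bundles across $\Uc$-type objects, and applications of the Serre functor ($\omega_M\simeq\Oc(-1,-1)$) sending a block to the opposite end --- I would read off the first required vanishing $\Ext^\bullet_F(E_2, E_1)=0$ from the semiorthogonality of the two presentations \ref{eq_G2projbundlecollections} of $\dbcoh(F)$, choosing whichever projective-bundle collection places the pair in the correct order and applying the Serre functor of $F$ when needed. The genuinely new input, which I expect to be the main obstacle, is the $L$-semiorthogonality vanishing $\Ext^\bullet_F(E_1\otimes L, E_2)=0$: it is not supplied by the collections \ref{eq_G2projbundlecollections} and must be verified individually by a Borel--Weil--Bott computation on the $G_2$-flag, which is combinatorially heavier than the $SL$-type estimates of Lemmas \ref{lem_mukaivanishing}--\ref{vanishingpushforward}. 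Once every pairwise mutation is certified, the transformed decomposition has exceptional part $\langle \Oc_M(-2,1), \Oc_M(-1,1), \Oc_M(0,1), l^*\Sc(1,1), \Oc_M(1,1), \Oc_M(2,1)\rangle$; taking $\psi$ to be the composite of all mutations applied to $\phi_2$ gives the stated identity, and comparison with \ref{eq_G2withthequadric} then produces the derived equivalence $\dbcoh(Y_1)\simeq\dbcoh(Y_2)$.
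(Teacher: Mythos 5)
Your overall strategy is exactly the paper's: rerun the mutation sequence of \cite[proof of Theorem 5]{kuznetsovimou} on $\dbcoh(M)$ and certify Condition \ref{cond_mutationsroof} at each pairwise mutation, obtaining the first vanishing $\Ext^\bullet_F(E_2,E_1)=0$ from the semiorthogonality of the collections \ref{eq_G2projbundlecollections} and the new $L$-semiorthogonality vanishing $\Ext^\bullet_F(E_1\otimes L, E_2)=0$ from Borel--Weil--Bott on the $G_2$-flag (the paper imports most of these computations from \cite[Lemma 1, Corollary 2]{kuznetsovimou}). However, both of the ``building-block mutations'' you isolate are misidentified, and the second misidentification hides the one step that actually drives the proof. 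First, no mutation of the form $\LL_{\Oc(a,b)}l^*\Uc^\vee(a,b)\simeq l^*\Qc^\vee(a,b)$ occurs here: that is the $SL(5)$-Grassmannian statement of Lemma \ref{mutationUQ}, and on the $G_2$-Grassmannian $G/P_2$ one has $\Ext^\bullet(\Oc,\Uc^\vee)\simeq\CC^7[0]$, so the analogous mutation would produce a rank-five bundle that never appears in either target collection. The mutations actually needed to pass line bundles and $\Uc$-twists across each other are governed by the short exact sequence $0\arw\Oc(-1,1)\arw\Uc^\vee\arw\Oc(1,0)\arw 0$ of \cite[Equation 5]{kuznetsovimou}, which yields, e.g., $\RR_{l^*\Uc^\vee(-1,0)}\Oc_M\simeq\Oc_M(-2,1)$ and $\RR_{l^*\Uc^\vee(0,0)}\Oc_M(1,0)\simeq\Oc_M(-1,1)$ --- this is how the line bundles $\Oc_M(-2,1)$, $\Oc_M(-1,1)$ of the right-hand collection are created.

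Second, the spinor bundle does not enter $\dbcoh(M)$ through the sequence $0\arw\Sc\arw\Oc^{\oplus 8}\arw\Sc^\vee\arw 0$: the paper uses that sequence, together with $\Ext^\bullet_{G/P_1}(\Oc,\Sc^\vee)\simeq\CC^8[0]$, only \emph{before} the corollary, to normalize the quadric's exceptional collection on $G/P_1$ into the shape appearing in \ref{eq_G2withthequadric}. Inside the mutation sequence on $M$, the spinor bundle arises as the rank-four extension $\LL_{l^*\Uc^\vee(-1,0)}l^*\Uc(0,0)\simeq l^*\Sc$, coming from $\Ext^\bullet_M(l^*\Uc^\vee(-1,0), l^*\Uc(0,0))\simeq\CC[-1]$ and \cite[Lemma 4]{kuznetsovimou}, with Condition \ref{cond_mutationsroof} certified by $\Ext^\bullet_F(\Uc(0,0),\Uc^\vee(-1,0))=0$ and $\Ext^\bullet_F(\Uc^\vee(0,1),\Uc(0,0))=0$. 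This is the conceptual heart of the passage from the $G/P_2$-side objects to the $G/P_1$-side objects, and your toolkit as written does not contain it; without it your plan stalls at the point where $l^*\Sc(1,1)$ must appear. The gap is repairable within your framework --- replace your two building blocks by the extension of \cite[Lemma 4]{kuznetsovimou} and the sequence $0\arw\Oc(-1,1)\arw\Uc^\vee\arw\Oc(1,0)\arw 0$, then attach your $(\dagger)$-checks to those actual pairs (including the exchange of $\Oc_M(-2,1)$ with $\Oc_M(1,0)$, which needs the acyclicity of $\Oc(3,-1)$, $\Oc(2,-2)$ and $\Oc(-3,0)$) --- but as stated the proposal does not reconstruct the sequence it claims to run.
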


\begin{proof}
    Since $\omega_M = \Oc_M(-1,-1)$, by applying the Serre functor to the last six objects of the collection \ref{eq_G2startingpointforme} we find:
    \begin{equation}\label{eq_G2startingpointforkuznetsov}
        \begin{split}
            \dbcoh(M) = \langle \Oc_M(0,-1), l^*\Uc(0,0), \Oc_M(0,0), l^*\Uc^\vee(0,0), \Oc_M(0,1), l^*\Uc^\vee(0,1), \phi_2\dbcoh(Y_2)\rangle
        \end{split}
    \end{equation}
    which can also be derived directly by applying Orlov's blowup formula \cite[Theorem 4.3]{orlovblowup} to the collection \ref{eq_kuznetsovscollectionforG/P1}, since $M$ is the blowup of $G/P_1$ in $Y_1$, as it is explained in \cite{kuznetsovimou}.\\
    The first step is to send $\Oc_M(0,1)$ and $l^*\Uc^\vee(0,1)$ to the beginning by Serre functor, we get:
    \begin{equation}
        \begin{split}
            \dbcoh(M) = \langle \Oc_M(-1,0), l^*\Uc^\vee(-1,0), \Oc_M(0,-1), l^*\Uc(0,0), \Oc_M(0,0), l^*\Uc^\vee(0,0), \psi_1\dbcoh(Y_2)\rangle
        \end{split}
    \end{equation}
    where we defined $\psi_1:= \LL_{\langle\Oc_M(0,1), l^*\Uc^\vee(0,1)\rangle}\phi_2$. The next step is to move $\Oc_M(0,-1)$ to the beginning of the collection. By the Koszul resolution of $M$ and the results of \cite[Lemma 1, Corollary 2]{kuznetsovimou} one finds $\Ext^\bullet_F(\Oc(-1,0), \Oc(0,-1)) = \Ext^\bullet_M(\Oc_M(-1,0), \Oc_M(0,-1)) = 0$ and $\Ext^\bullet_F(\Uc^\vee(-1,0), \Oc(0,-1)) = \Ext^\bullet_M(l^*\Uc^\vee(-1,0), \Oc_M(0,-1)) = 0$. Therefore we have $\LL_{\langle \Oc_M(-1,0), l^*\Uc^\vee(-1,0)\rangle}\Oc_M(0,-1) \simeq \Oc_M(0,-1)$, and by semiorthogonality of \ref{eq_G2projbundlecollections} these mutations satisfy Condition \ref{cond_mutationsroof} .\\
    We get the following collection:
    \begin{equation}
        \begin{split}
            \dbcoh(M) = \langle \Oc_M(0,-1), \Oc_M(-1,0), l^*\Uc^\vee(-1,0), l^*\Uc(0,0), \Oc_M(0,0), l^*\Uc^\vee(0,0), \psi_1\dbcoh(Y_2)\rangle
        \end{split}
    \end{equation}
    Again by applying \cite[Lemma 1, Corollary 2]{kuznetsovimou} we have the vanishings $\Ext^\bullet_F(\Uc^\vee(0,1), \Uc(0,0)) = 0$ and \linebreak $\Ext^\bullet_M(l^*\Uc^\vee(-1,0), l^*\Uc(0,0)) = \CC[-1]$, while by semiorthogonality of the collections of Equation \ref{eq_G2projbundlecollections} also \linebreak $\Ext^\bullet_F \Uc(0,0), (\Uc^\vee(-1,0)) = 0$ holds, hence the mutation $\LL_{l^*\Uc^\vee(-1,0)}l^*\Uc(0,0)$ satisfies Condition \ref{cond_mutationsroof} . The result is a rank four extension which is isomorphic to $\Sc$ \cite[Lemma 4]{kuznetsovimou}. We get:
    \begin{equation}
        \begin{split}
            \dbcoh(M) = \langle \Oc_M(0,-1), \Oc_M(-1,0), \Sc(0,0), l^*\Uc^\vee(-1,0), \Oc_M(0,0), l^*\Uc^\vee(0,0), \psi_1\dbcoh(Y_2)\rangle
        \end{split}
    \end{equation}
    The next operation is to move the first bundle to the end by means of the inverse Serre functor, and then move $\psi_1\dbcoh(Y_2)$ one step to the right. We find:
    \begin{equation}
        \begin{split}
            \dbcoh(M) = \langle \Oc_M(-1,0), \Sc(0,0),  l^*\Uc^\vee(-1,0), \Oc_M(0,0), l^*\Uc^\vee(0,0), \Oc_M(1,0), \psi_2\dbcoh(Y_2)\rangle
        \end{split}
    \end{equation}
    where we defined $\psi_2 = \RR_{\Oc_M(1,0)}\psi_1$.\\
    By the results of \cite[Lemma 1, Corollary 2]{kuznetsovimou} it follows that  $\Ext_F^\bullet(\Uc^\vee(0,1), \Oc) = 0$ and $\Ext_M^\bullet(l^*\Uc^\vee(-1,0), \Oc_M) = \CC[0]$, while one finds $\Ext_F^\bullet(\Oc, \Uc^\vee(-1,0), ) = 0$ as a consequence of the semiorthogonality of \ref{eq_G2projbundlecollections}. Hence, we can define a mutation $\RR_{l^*\Uc^\vee(-1,0)}\Oc_M$ which satisfies Condition \ref{cond_mutationsroof} . We can compute such mutations by means of the following short exact sequence \cite[Equation 5]{kuznetsovimou}:
    \begin{equation}
        0\arw \Oc(-1,1)\arw \Uc^\vee \arw\Oc(1,0)\arw 0.
    \end{equation}
    The result is $\RR_{l^*\Uc^\vee(-1,0)}\Oc_M \simeq \Oc_M(-2,1)$. In the same way we find $\RR_{l^*\Uc^\vee(0,0)}\Oc_M(1,0) \simeq \Oc_M(-1,1)$ and the semiorthogonal decomposition becomes:
    \begin{equation}
        \begin{split}
            \dbcoh(M) = \langle \Oc_M(-1,0), \Sc(0,0),  \Oc_M(0,0), \Oc_M(-2,1), \Oc_M(1,0), \Oc_M(-1,1), \psi_2\dbcoh(Y_2)\rangle
        \end{split}
    \end{equation}
    Observe that by the vanishing of the cohomology of $\Oc_M(3, -1)$, $\Oc(2, -2)$ and $\Oc(-3,0)$ we see that the mutation obtained by exchanging $\Oc_M(-2,1)$ with $\Oc_M(1,0)$ satisfies Condition \ref{cond_mutationsroof} . After applying such operation, let us move $\psi_2\dbcoh(Y_2)$ two steps to the left, then send the block $\Oc_M(-1,0), \Sc(0,0),  \Oc_M(0,0), \Oc_M(1,0)$ to the end. Summing all up, we obtain:
    \begin{equation}
        \begin{split}
            \dbcoh(M) = \langle \psi_3\dbcoh(Y_2), & \Oc_M(-2,1), \Oc_M(-1,1), \Oc_M(0,1), \hspace{15pt} \Sc(1,1), \Oc_M(1,1), \Oc_M(2,1)\rangle
        \end{split}
    \end{equation}
    where we introduced the functor $\psi_3 = \RR_{\langle\Oc_M(-2,1), \Oc_M(-1,1)\rangle}\psi_2$. The proof is concluded by setting $\psi_3 = \psi$.
\end{proof}




\begin{thebibliography}{100000000000000}




\bibitem[ADM19]{addingtondonovanmeachan}
Nicholas Addington, Will Donovan, Ciaran Meachan. Mukai flops and P-twists. J. Reine. Angew. Math. 748, 227–240 (2019).
\vspace{3pt}

\bibitem[ADS15]{addingtondonovansegal}
Nicolas Addington, Will Donovan, Ed Segal. The Pfaffian–Grassmannian equivalence revisited Algebr. Geom. 2 (2015) 332 MR3370126.
\vspace{3pt}

\bibitem[Bal06]{balajinotes}
Vikraman Balaji. Lectures on principal bundles. 3rd cycle. Guanajuato (Mexique), 2006
\vspace{3pt}

\bibitem[Bei78]{beilinson}
Alexander Beilinson. Coherent sheaves on \texorpdfstring{$\PP^n$}{something} and problems in linear algebra. Funktsional. Anal. i Prilozhen. 12 (1978), no. 3, 68–69.
\vspace{3pt}

\bibitem[Bon89]{bondal}
Alexiei Bondal. Representations of associative algebras and coherent sheaves. Izv. Akad. Nauk SSSR,
Ser. Mat., 53 (1989), 25–44; translation in Math. USSR-Izv., 34 (1990), 23–42.
\vspace{3pt}

\bibitem[BC09]{borisovcaldararupg}
Lev Borisov, Andrei C\u{a}ld\u{a}raru. The Pfaffian-Grassmannian derived equivalence.
J. Algebraic Geom. 18 (2009), 201-222 DOI: https://doi.org/10.1090/S1056-3911-08-00496-7 
\vspace{3pt}

\bibitem[BCP20]{borisovcaldararuperry}
Lev A. Borisov, Andrei C\u{a}ld\u{a}raru, Alexander Perry. Intersections of two Grassmannians in $\PP^9$. J. Reine. Angew. Math., 760, 133-162 (2020). https://doi.org/10.1515/crelle-2018-0014
\vspace{3pt}

\bibitem[B\"oh06]{boehning}
Christian B\"ohning. Derived categories of coherent sheaves on rational homogeneous manifolds. Doc. Math. 11 (2006), 261–331. MR 2262935
\vspace{3pt}

\bibitem[BM02]{bridgelandmaciocia}
Tom Bridgeland, Antony Maciocia. Fourier-Mukai transforms for K3 and elliptic fibrations. J. Algebraic Geom. 11 (2002), 629-57  DOI:https://doi.org/10.1090/S1056-3911-02-00317-X 
\vspace{3pt}

\bibitem[BO01]{bondalorlovreconstruction}
Alexei Bondal, Dmitiri Orlov. Reconstruction of a Variety from the Derived Category and Groups of Autoequivalences. Compos. Math. 125, 327-344 (2001). https://doi.org/10.1023/A:1002470302 976
\vspace{3pt}

\bibitem[BO02]{bondalorlovdk}
Alexei Bondal and Dmitri Orlov. Derived categories of coherent sheaves. International Congress of Mathematicians, page 47, 2002.
\vspace{3pt}

\bibitem[BO95]{bondalorlovflop}
Alexei Bondal, Dmitri Orlov. Semiorthogonal decomposition for algebraic varieties. e-Print:arXiv:alg-geom/9506012v1. 1995.
\vspace{3pt}

\bibitem[DK20]{debarrekuznetsovjacobians}
Olivier Debarre, Alexander Kuznetsov. Gushel--Mukai varieties: intermediate Jacobians.  Épijournal de Géométrie Algébrique, December 17, 2020, Volume 4 - https://doi.org/10.46298/epiga.2020.volume4.6475
\vspace{3pt}

\bibitem[Fon20]{fonarev}
Anton Fonarev. Full exceptional collections on Lagrangian Grassmannians. Int. Math. Res. Not. (2020), rnaa098, https://doi.org/10.1093/imrn/rnaa098
\vspace{3pt}

\bibitem[FI73]{fossumiversen}
Robert M. Fossum, Birger Iversen. On Picard groups of algebraic fibre spaces.  J. Pure Appl. Algebra 3 (1973), 269-280
\vspace{3pt}

\bibitem[Ful98]{fulton}
William Fulton. Intersection theory. (1998). Springer-Verlag, Ergebnisse der Mathematik und ihrer Grenzgebiete, 3. Folge, Volume 2
\vspace{3pt}

\bibitem[GM03]{gelfandmanin}
Sergei I. Gelfand, Yuri I. Manin. Methods of homological algebra. Springer Monographs in Mathematics, Ed. 2, Springer-Verlag, Berlin, 2003. https://doi.org/10.1007/978-3-662-12492-5
\vspace{3pt}

\bibitem[Hal04]{halic}
Mihai Halic. Quotients of affine spaces for actions of reductive groups. e-Print:arXiv:math/0412278
\vspace{3pt}

\bibitem[Huy06]{huybrechts}
Daniel Huybrechts. \emph{Fourier-Mukai Transforms in Algebraic Geometry}. Oxford Mathematical Monographs, ISSN 0964-9174. Clarendon Press, 2006.
\vspace{3pt}

\bibitem[Har77]{hartshorne}
Robin Hartshorne. Algebraic Geometry. Graduate Texts in Mathematics, Volume 52, ISSN 0072-5285. Springer Science \& Business Media, 1977.
\vspace{3pt}

\bibitem[IIM19]{inoueitomiura}
Daisuke Inoue, Atsushi Ito, Makoto Miura. Complete intersection Calabi–Yau manifolds with respect to homogeneous vector bundles on Grassmannians. Math. Z. 292, 677–703 (2019). https://doi.org/10.1007/s00209-018-2163-5
\vspace{3pt}

\bibitem[IMOU16]{imouexceptional}
Atsushi Ito, Makoto Miura, Shinnosuke Okawa, Kazushi Ueda. Calabi--Yau complete intersections in exceptional Grassmannians. e-Print:arXiv:1606.04076
\vspace{3pt}

\bibitem[IMOU19]{imou}
Atsushi Ito, Makoto Miura, Shinnosuke Okawa, Kazushi Ueda. The class of the affine line is a zero divisor in the Grothendieck ring: via \texorpdfstring{$G_2$}{something}-Grassmannians. J. Algebraic Geom. 28 (2019), 245-250
\vspace{3pt}

\bibitem[IMOU20]{imouk3}
Atsushi Ito, Makoto Miura, Shinnosuke Okawa, Kazushi Ueda.
Derived equivalence and Grothendieck ring of varieties: the case of K3 surfaces of degree 12 and abelian varieties. Selecta Math. (N.S.) 26 (2020), no. 3, Paper No. 38
\vspace{3pt}

\bibitem[Kan16]{kanemitsuottaviani}
Akihiro Kanemitsu. Extremal Rays and Nefness of Tangent Bundles. Michigan Math. J. 68 (2) 301 - 322, June 2019. https://doi.org/10.1307/mmj/1549681299
\vspace{3pt}

\bibitem[Kan18]{kanemitsu}
Akihiro Kanemitsu. Mukai pairs and simple K-equivalence. 2018. e-Print: arXiv:1812.05392v1
\vspace{3pt}

\bibitem[Kapr88]{kapranov}
Mikhail Kapranov. On the derived categories of coherent sheaves on some homogeneous spaces. Invent. Math. 92 (1988),
no. 3, 479-508.
\vspace{3pt}

\bibitem[Kap13]{michalconifold}
Micha\l\ Kapustka. Mirror symmetry for Pfaffian Calabi-Yau 3-folds via conifold transitions. e-Print:arXiv:1310.2304
\vspace{3pt}

\bibitem[Kaw02]{kawamatadk}
Yujiro Kawamata. D-equivalence and K-equivalence. J. Differ. Geom., 61(1):147-171, 2002.
\vspace{3pt}

\bibitem[King94]{king}
Alastair King. Moduli of representations of finite dimensional algebras.
Quarterly J. Math. Oxford 45 (1994) 515-530.
\vspace{3pt}

\bibitem[Kuz06]{kuznetsovhyperplane}
Alexander Kuznetsov. Hyperplane sections and derived categories. Izv. Ross. Akad. Nauk Ser. Mat. 70 (2006), no. 3, 23-128 (Russian); English translation in Izv. Math. 70 (2006), no. 3, 447-547 MR-2238172
\vspace{3pt}

\bibitem[Kuz08]{kuznetsovisotropiclines}
Alexander Kuznetsov. Exceptional collections for Grassmannians of isotropic lines. Proc. London Math. Soc. (3) 97 (2008) 155-182 Ce2008 London Mathematical Society doi:10.1112/plms/pdm056
\vspace{3pt}

\bibitem[Kuz11]{kuznetsovbasechangesod}
Alexander Kuznetsov. Base change for semiorthogonal decompositions. Compos. Math. 147 (2011), 852–876. doi:10.1112/S0010437X10005166
\vspace{3pt}

\bibitem[Kuz10]{kuznetsovcubic4folds}
Alexander Kuznetsov. Derived categories of cubic fourfolds. ``Cohomological and Geometric Approaches to Rationality Problems. New Perspectives'' Series: Progress in Mathematics, Vol. 282, Bogomolov, Fedor; Tschinkel, Yuri (Eds.) 2010, X, 314 p. 47 illus., Hardcover ISBN: 978-0-8176-4933-3 A Birkhauser book
\vspace{3pt}

\bibitem[Kuz16]{kuznetsovimou}
Alexander Kuznetsov. Derived equivalence of Ito-Miura-Okawa-Ueda Calabi–Yau 3-folds. J. Math. Soc. Japan, Volume 70, Number 3 (2018), 1007-1013.
\vspace{3pt}

\bibitem[Kuz16b]{kuznetsovlevico}
Alexander Kuznetsov. Derived Categories View on Rationality Problems. In: Pardini R., Pirola G. (eds) Rationality Problems in Algebraic Geometry. Lecture Notes in Mathematics, vol 2172. Springer, Cham. https://doi.org/10.1007/978-3-319-46209-7 3
\vspace{3pt}

\bibitem[Kuz07]{kuznetsovhpd}
Alexander Kuznetsov. Homological projective duality. Publ.math.IHES 105, 157-220 (2007). https://doi.org/10.1007/s10240-007-0006-8
\vspace{3pt}

\bibitem[KP21]{kuznetsovperrypj}
Alexander Kuznetsov, Alexander Perry. Categorical joins. J. Amer. Math. Soc. 34 (2021), 505-564 
\vspace{3pt}

\bibitem[KR19]{kr}
Micha{\l} Kapustka, Marco Rampazzo. Torelli problem for Calabi-Yau threefolds with GLSM description. Commun. Number Theory Phys. 13(4)
DOI: 10.4310/CNTP.2019.v13.n4.a2. 2019.
\vspace{3pt}

\bibitem[KR20]{kr2}
Micha{\l} Kapustka, Marco Rampazzo. Mukai duality via roofs of projective bundles. e-Print:arXiv:2001.06385
\vspace{3pt}

\bibitem[Laz04a]{lazarsfeld1}
Robert Lazarsfeld. Positivity in Algebraic Geometry I. 2004. A Series of Modern Surveys in Mathematics, vol 48. Springer, Berlin, Heidelberg. http://doi-org-443.webvpn.fjmu.edu.cn/10.1007/978-3-642-18808-4 7
\vspace{3pt}

\bibitem[Laz04b]{lazarsfeld2}
Robert Lazarsfeld. Positivity in Algebraic Geometry II. 2004. A Series of Modern Surveys in Mathematics, vol 49. Springer, Berlin, Heidelberg.
\vspace{3pt}

\bibitem[LX19]{leungxie}
Naichung Conan Leung, Ying Xie. DK Conjecture for Derived Categories of Grassmannian Flips. 2019. e-Print:arXiv:1911.07715
\vspace{3pt}

\bibitem[Li19]{li}
Duo Li. On certain K-equivalent birational maps. Math. Z. 291, 959–969 (2019). https://doi.org/10.1007/s00209-018-2169-z
\vspace{3pt}

\bibitem[Mit01]{mitchellnotes}
Stephen A. Mitchell. Notes on principal bundles and classifying spaces. Available at http://math.mit.edu/ mbehrens/18.906spring10/prin.pdf
\vspace{3pt}

\bibitem[Mor19]{morimura}
Hayato Morimura. Derived equivalences for the flops of type $C_2$ and $A^G_4$ via mutation of semiorthogonal decomposition. e-Print:arXiv:1812.06413
\vspace{3pt}

\bibitem[Muk88]{mukaipairs}
Shigeru Mukai. Problems on characterization of the complex projective space. Birational Geometry of Algebraic Varieties, Open Problems, Katata the 23rd International Symp., Taniguchi Foundation, 1988, pp. 57-60.
\vspace{3pt}

\bibitem[Muk98]{mukaiduality}
Shigeru Mukai. Duality of polarized K3 surfaces. Proceedings of Euroconference on Algebraic Geometry, (K. Hulek and M. Reid ed.), Cambridge University Press, 1998, 107122.
\vspace{3pt}

\bibitem[Mum12]{mumfordabelian}
David Mumford. Abelian varieties. Tata Institute of Fundamental Research Publications
Volume: 13; 2012
\vspace{3pt}

\bibitem[MOSCWW15]{campanapeternellsurvey}
Roberto Mun\~oz, Gianluca Occhetta, Luis E. Sol\'a Conde, Kiwamu Watanabe, Jaros\l aw W\'isniewski. A survey on the Campana--Peternell conjecture. Rend. Istit. Mat. Univ. Trieste Volume 47 (2015), 127–185 DOI: 10.13137/0049-4704/11223
\vspace{3pt}

\bibitem[Nam03]{namikawa}
Yoshinori Namikawa. Mukai flops and derived categories. J. reine angew. Math. 560 (2003), 65-76
\vspace{3pt}

\bibitem[Nor82]{nori}
Madhav V. Nori. The fundamental group-scheme. Proc. Indian Acad. Sci. (Math. Sci.) 91, 73–122 (1982). https://doi.org/10.1007/BF02967978
\vspace{3pt}

\bibitem[Occ99]{occhettathesis}
Gianluca Occhetta. Extremal rays
of smooth projective varieties. PhD Thesis, 1999. Available at https://core.ac.uk/download/pdf/150080249.pdf
\vspace{3pt}

\bibitem[Orl92]{orlovblowup}
Dmitri Orlov. Projective bundles, monoidal transformations, and derived categories of coherent sheaves. Izv. Akad. Nauk SSSR Ser. Mat., 56 (1992): 852-862; English transl., Russian Acad. Sci. Izv. Math., 41 (1993): 133-141.
\vspace{3pt}

\bibitem[Orl03]{cayleytrick}
Dmitri Orlov. Triangulated categories of singularities and equivalences between Landau-Ginzburg models.
Sbornik: Mathematics 197.12 (2006): 1827.00-224.
\vspace{3pt}

\bibitem[Ott88]{ottaviani}
Giorgio Ottaviani. Spinor bundles on quadrics. Trans. Amer. Math. Soc., 307, 1 (1988).
\vspace{3pt}


\bibitem[OR17]{ottemrennemo}
John C. Ottem, J{\o}rgen V. Rennemo. A counterexample to the birational Torelli problem for Calabi–Yau threefolds. J. London Math. Society 2017 DOI:10.1112/jlms.12111.
\vspace{3pt}

\bibitem[OT21]{okonekteleman}
Christian Okonek, Andrei Teleman. Graded tilting for gauged Landau–Ginzburg models and geometric applications. Pure Appl.Math.Quart. 17 (2021) 1, 185-235
\vspace{3pt}

\bibitem[R21]{pythonscript}
Marco Rampazzo. A Python script for Borel--Weyl--Bott computations. GitHub repositiory available at https://github-dotcom.gateway.web.tr/marcorampazzo/bott-theorem


\bibitem[RS17]{rennemosegal}
J{\o}rgen Vold Rennemo Ed Segal. Hori-mological projective duality. Duke Math. J. 168 (2019), no. 11, 2127-2205. doi:10.1215/00127094-2019-0014. https://projecteuclid.org/euclid.dmj/1563328950
\vspace{3pt}

\bibitem[ORS20]{romanoetal}
Gianluca Occhetta, Luis E. Sol{\'a} Conde, Eleonora A. Romano. Manifolds with two projective bundle structures. 2020. e-Print:arXiv:2001.06215
\vspace{3pt}

\bibitem[R{\o}d00]{rodland}
E. R{\o}dland. The Pfaffian Calabi-Yau, its Mirror, and their Link to the Grassmannian
Gr(2, 7). Compos. Math. 122 (2000), no. 2, 135-149.
\vspace{3pt}

\bibitem[Sam06]{samokhin}
Alexander Samokhin. Some remarks on the derived categories of coherent sheaves on homogeneous spaces.
J. London Math. Soc. 76(1) DOI: 10.1112/jlms/jdm038. 2007.
\vspace{3pt}

\bibitem[Seg15]{segalC2}
Ed Segal. A new 5-fold flop and derived equivalence.
B. London Math. Soc. 48(3)
DOI: 10.1112/blms/bdw026. 2015.
\vspace{3pt}

\bibitem[Shi12]{shipman}
Ian Shipman.  A geometric approach to Orlov’s theorem. Compos. Math., 148(5), 1365-1389. doi:10.1112/S0010437X12000255. (2012).
\vspace{3pt}

\bibitem[Ued19]{uedaflop}
Kazushi Ueda. G2-Grassmannians and derived equivalences. Manuscripta Math. 159(3-4), 549-559
(2019).
\vspace{3pt}

\bibitem[Wey03]{weyman}
Jerzy Weyman. \emph{Cohomology of vector bundles and syzygies}. Cambridge University Press. 2003.
\end{thebibliography}
\end{document}